\documentclass{article}
\usepackage[a4paper]{geometry}

\usepackage{amsmath}
\usepackage{amssymb}
\usepackage{amsthm}
\usepackage{ dsfont } 
\usepackage{tikz}
\usetikzlibrary{decorations.markings}
\usetikzlibrary{decorations.pathreplacing}
\usetikzlibrary{arrows}
\usetikzlibrary{shapes.misc}
\usepackage{hyperref}
\usepackage[normalem]{ulem}

\newenvironment{rules}
{\begin{list}{\(\cdot\)}{
\itemsep=0em
\leftmargin=5mm
\labelwidth=3mm
\labelsep=3mm
}}{\end{list}}

\newtheorem{lemma}{Lemma}[section]
\newtheorem{corollary}[lemma]{Corollary}
\newtheorem{prop}[lemma]{Proposition}
\newtheorem{thm}[lemma]{Theorem}
\newtheorem{thm*}{Theorem}

\theoremstyle{definition}

\newtheorem{Def}[lemma]{Definition}
\newtheorem*{notation}{Notation}
\newtheorem*{convention}{Convention}
\newtheorem{rmk}[lemma]{Remark}

\newtheorem{example}[lemma]{Example}
\newtheorem{standingassumption}[lemma]{Standing Assumptions}

\theoremstyle{remark}

\newcommand{\cuco}[1]{{\mathcal #1}}

\def\RR{\mathbb{R}}
\def\ZZ{\mathbb{Z}}
\def\NN{\mathbb{N}}
\def\H{\mathcal{H}}
\def\F{\mathcal{F}}

\def\1{\mathds{1}}

\def\S{\mathfrak{S}}

\def\MCG{G}
\def\G{G}

\def\GG{\mathcal{G}}
\def\X{\mathcal{X}}

\def\preg{\mathbb{P}}

\let\temp\epsilon
\let\epsilon\varepsilon
\let\varepsilon\temp

\title{Hyperbolic HHS I: Factor Systems and quasiconvex subgroups}
\author{Davide Spriano, ETH Z\"urich, Switzerland.   \\
{\normalsize Address: R\"amistrasse 101, Z\"urich.}\\{\normalsize Email \href{mailto:davide.spriano@math.ethz.ch}{davide.spriano@math.ethz.ch}}}
\date{}
\begin{document}

\maketitle

\abstract{
In this paper we provide a procedure to obtain a non-trivial HHS structure on a hyperbolic space. In particular, we prove that given a finite collection \(\F\) of quasiconvex subgroups of a hyperbolic group \(G\), there is an HHG structure on \(G\) that is compatible with \(\F\).
We will use this to provide explicit descriptions of the Gromov Boundary of hyperbolic HHS and HHG, and we recover results from Hamenst\"adt, Manning, Tran for the case when \(G\) is hyperbolic relative to \(\F\).
Further applications in the construction of new HHG will be presented in a subsequent paper.}

\tableofcontents
\section{Introduction}

The study of hierarchically hyperbolic spaces and groups (respectively HHSs and HHGs) was introduced by Behrstock, Hagen and Sisto in \cite{HHSI}, and led to a number of remarkable results explored, for instance, in \cite{HHSII,HHSAsdim2015,HHSFlats,HHSBoundaries,HagenSusse}.
HHSs and HHGs form a very large class of spaces which contains several examples of interest including mapping class groups, right-angled Artin groups, proper cocompact CAT(0) cube complexes, most 3-manifold groups, Teichm\"uller space (in any of the standard metrics), etc. 
Important properties of HHGs include that, for example, any HHG admits an acylindrical action on a hyperbolic space that depends on the HHG strucutre (\cite{HHSI}). This recovers the result that Mapping Class Groups of non-sporadic surfaces, and  non-cyclic, non directly indecomposable RAAGs are acylindrically hyperbolic (\cite{MasurMinskyI, BowditchTightGeodesics,SistoContracting, KimKoberdaRAAGS, OsinAcyl}). 
Another remarkable result that was unknown before is that, under very mild conditions, every top-dimensional quasi-flat in an HHS lies within  finite distance of a union of standard orthants. This proves open conjectures of Farb in the case of Mapping Class Group, and Brock in the case of Teichm\"uller space.

Work of Bowditch and Osin (\cite{BowditchRelHypGroups, OsinRelHypGroups}) shows that given a hyperbolic group \(G\) and a collection \(\F\) of proper  subgroups, \(G\) is hyperbolic relative to \(\F\) if and only if \(\F\) is an almost malnormal collection of quasiconvex subgroups.

The extra structure on \(G\) given by the family \(\F\) provides useful information on \(G\).
An example of this can be found in the Appendix of \cite{AgolVirtualHaken}, where Dehn fillings in hyperbolic relatively hyperbolic groups are used in a step of the proof of the virtual Haken conjecture.
Moreover, independent works of Manning and Tran (\cite{ManningBowditch_Boundary,TranRelationsBetween}) provide an explicit decomposition of the boundary of \(G\) in terms of \(\F\). We will elaborate on this result later. 

The natural question that arises is: What happens if the malnormality condition is weakened (or removed)? The theory of hierarchically hyperbolic spaces provides some answers in these cases.
We recall that an HHS structure on a quasi-geodesic space \(\X\) consists of a collection of hyperbolic spaces \(\{CU \mid  U \in \S\}\) indexed by a set \(\S\), and projections that relate them, satisfying some axioms and relations (see Section \ref{sec:hierarchically hyperbolic spaces} for more details). An HHG is a group that acts on an HHS in a compatible way. The typical example is given by a group admitting an HHS structure on its Cayley graph (with some addition hypotheses of "equivariance of such a structure").

It is showed in \cite{HHSII} that a space \(G\) which is hyperbolic relative to a (uniform) family \(\F\) of HHSs is itself an HHS. In particular, the result of Bowditch implies that given an almost malnormal family \(\F\) of quasiconvex subgroups of \(G\), the family \(\F\) provides an HHS structure on the Cayley graph of \(G\), where each element \(CU\) is either a coset of an element of \(\F\), or the cone-off of \(G\) with respect to \(\F\). 

In this paper, we show that the malnormality condition can be completely dropped if we are looking for an HHS structure.

\begin{thm*}[\ref{cor: HHG structure on G}]\label{Theorem A introduction}
Let \(G\) be a hyperbolic group and let \(\F= \{F_1, \dots, F_N\}\) be a finite family of infinite quasiconvex subgroups. 
Let \(\sim\) be the equivalence relation between subset of \(G\) given by having finite Hausdorff distance in \(\mathrm{Cay}(G)\).
Then there exists a finite family of quasiconvex subgroups \(\F'\) that contains \(\F\) such that if \(\F'_{\mathrm{cos}}\) is the set of cosets of \(\F'\), then \((G, \F'_{\mathrm{cos}} /_\sim )\) is a hierarchically hyperbolic structure on \(G\), and each  \(CU\) is a cone-off of a coset of an element \(F \in (\F' \cup G)\).  
\end{thm*}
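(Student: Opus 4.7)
The plan is to obtain $\F'$ by closing $\F$ under coarse intersections of conjugates inside $G$. Since intersections of quasiconvex subgroups of a hyperbolic group are quasiconvex (Short), and since the height of a finite collection of quasiconvex subgroups is finite (Gitik–Mitra–Rips–Sageev), the process of taking iterated conjugate intersections produces only finitely many conjugacy classes of infinite quasiconvex subgroups; discarding those whose cosets are $\sim$-equivalent to cosets already in the list gives a finite family $\F'\supseteq\F$, closed under ``essential'' coarse intersection. This turns the set $\F'_{\mathrm{cos}}/_\sim$ into a naturally partially ordered finite-complexity combinatorial object, ordered by coarse containment of cosets in $\mathrm{Cay}(G)$.

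Next I would put the HHS data on $G$ as follows. The index set is $\S:=\F'_{\mathrm{cos}}/_\sim$ together with a distinguished $\sqsubseteq$-maximal element $[G]$. For each $[gF]\in\S$ I would let $C[gF]$ be the cone-off of $gF$ (or of $G$ in the top case) over the union of all cosets that are strictly coarsely contained in $gF$; by a standard lemma, coning off a uniformly quasiconvex family in a hyperbolic space yields a hyperbolic space in which the coned-off sets become uniformly bounded. The projections $\pi_{[gF]}:G\to C[gF]$ are closest-point projections onto $gF$ followed by the quotient to the cone-off, and the relative projections $\rho^U_V$ are defined analogously using closest-point projection between the corresponding cosets. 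Nesting is declared to be coarse containment (which, after passing to $\sim$, is a genuine partial order), orthogonality is declared empty, and transversality is declared to be the complement of $\sqsubseteq$; here the closure property of $\F'$ is what ensures the dichotomy ``nested or has bounded coarse intersection''.

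The bulk of the work is to verify the HHS axioms for this data. Finite complexity is immediate from finiteness of $\F'$ up to $G$-action together with the height bound on chains. Hyperbolicity of the $C[gF]$ is the cone-off lemma above. The consistency, bounded geodesic image, and large link axioms all reduce to the familiar statement that for quasiconvex subsets $A,B$ in a hyperbolic space, either $A$ and $B$ are coarsely comparable or the projections $\pi_A(B)$ and $\pi_B(A)$ are uniformly bounded; the partial realization axiom reduces to choosing a point in the coarse intersection of a pairwise non-transverse collection, which exists by the closure of $\F'$ under essential intersection. The main obstacle I expect is not any single axiom but rather calibrating the closure $\F'$ correctly: one must show both that the iterative intersection procedure terminates in a truly finite family (after identifying $\sim$-equivalent cosets) and that $\F'$ so obtained is rich enough that every pair of non-transverse cosets admits a common coarse ``meet'' in $\F'$, which is exactly what makes the transversality/nesting dichotomy and partial realization go through.
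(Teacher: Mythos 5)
Your construction of \(\F'\) and of the candidate structure matches the paper: you close \(\F\) under infinite intersections of conjugates (the paper's iterated \(\mathrm{Prox}\) construction, Definition \ref{Definition families}), use the Gitik--Mitra--Rips--Sageev height bound (Theorem \ref{GMRS}) for termination and for finite complexity, take the index set \(\F'_{\mathrm{cos}}/_\sim\) with nesting given by coarse containment, empty orthogonality, cone-offs \(CU\), and closest-point projections. Up to this point you are following the same route as the paper (Theorem \ref{thm:wfs for groups} plus Proposition \ref{prop: H factor system}).

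The gap is in the verification of the axioms, which you compress into the assertion that consistency, bounded geodesic image and large links ``reduce to the familiar statement'' that two quasiconvex sets are either coarsely comparable or have uniformly bounded mutual projections. That statement does give the consistency inequalities (Propositions \ref{first B. inequality}, \ref{second B. inequality}) and, combined with the closure property and Lemma \ref{some conditions}, the transverse/nested dichotomy (Proposition \ref{bounded projections}); but it does not give the Large Link axiom, and you omit the Uniqueness axiom altogether. Both of these concern the interaction between the coned-off metrics and the original metric: for Uniqueness one must show that if \(x,y\) are far in \(\mathrm{Cay}(G)\) but close in the top-level cone-off, then some coset has a large shadow \emph{as measured in its own coned-off space} \(\widehat{V}\), and then induct on complexity; for Large Links one must produce a linearly bounded list \(\{T_i\}\) of cosets such that every domain with large projection distance is nested in some \(T_i\). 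In the paper these are Propositions \ref{Uniqueness} and \ref{LLL}, and their proofs rest on the de-electrification technology of Section \ref{sec:introductory tools} --- in particular Proposition \ref{Bound Hausdorff distance} (a \(\widehat{\Gamma}\)-quasi-geodesic whose de-electrification is a \(\Gamma\)-quasi-geodesic staying near \([x,y]\)) and the pigeonhole Lemmas \ref{Pigeonhole for cone-offs} and \ref{partial pigeonhole} --- none of which is a consequence of the bounded-projection dichotomy. Without an argument of this kind (or some substitute controlling how distances collapse under coning-off), the proposal does not establish these two axioms, and they are exactly where the paper's main technical work lies. A smaller inaccuracy: partial realization concerns pairwise \emph{orthogonal} families, which are singletons here since you declared orthogonality empty, so it is trivial and needs no ``coarse meet''; and the step from ``\(p_{F_V}(F_W)\) coarsely covers \(F_V\)'' to ``\(F_V\) is coarsely contained in \(F_W\)'' needs the long-quasi-geodesic condition coming from infiniteness of the subgroups (condition (3) of Definition \ref{def: Weak factor system} and Lemma \ref{some conditions}), which you should make explicit.
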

The family \(\F'\) of the Theorem is obtained from the family \(\F\) by considering an appropriate set of intersections of conjugates of the elements of \(\F\).

The key ingredient in the above result is the concept of \emph{weak factor system}. A weak factor system consists of a  family \(\F\) of  uniformly quasiconvex subspaces such that there exist constants \(c,q,\xi, B, D\) such that the following hold:
\begin{enumerate}
	\item Every chain of proper coarse inclusions \(H_n \precnsim \cdots \precnsim H_1\) of elements of \(\F\) has length at most \(c\).
	\item Given \(V, W \in \F\), then either \(\mathrm{diam}_V(p_V(W)) < \xi\), or there exists \(U \in \ F\) such that
		\(d_{\mathrm{Haus}}(U, p_V(W)) \leq B\).
	\item For each \(V\in \F\) and every \(v \in V\) there is an arbitrarily long \(q\)--quasi-geodesic segment with endpoints on \(V\) whose midpoint lies at distance at most \(D\) from \(v\).
\end{enumerate}

The first two condition can be reinterpreted as: the closure process given by coarse projection terminates, and the third is always true in an infinite quasiconvex subgroup of a hyperbolic group. 
In the case of a finite family \(\F\) of quasiconvex subgroup, the first two conditions are basically a consequence of the fact that \(\F\) has \emph{finite height} (see \cite{GMRS1998widths}). Having finite height should be thought as "almost malnormality after finitely many intersections". As mentioned before, almost malnormality in hyperbolic groups implies relative hyperbolicity. In \cite{GradedRelHyp}, Dahmani and Mj explored this analogy and introduced the notion of \emph{graded relative hyperbolicity}, which is the analogous of relative hyperbolicity for a family of subgroups of finite height. 
The definition of (weak) factor system is closely related (and it is basically a special case) to graded relative hyperbolicity. 
A natural question to ask is if there is a relative analogous of Theorem \ref{Theorem A introduction} when \((G, \F)\) has graded relative hyperbolicity.

The first three sections of this paper will investigate the geometry of cone-offs of hyperbolic spaces. If \(\widehat{\X}\) is the cone-off of \(\X\) with respect to a family \(\F\), given a path \(\widehat{\gamma}\) of \(\widehat{\X}\), we can obtain a path \(\gamma \) of \(\X\) substituting each connected component of \(\gamma - X\) with a geodesic of \(X\) connecting the endpoints of such connected component.
Such a path \(\gamma\) is called a \emph{de-electrification} of \(\widehat{\gamma}\). 

We show (\ref{Bound Hausdorff distance}) that if the family \(\F\) satisfies some mild hypotheses, which are implied by the fact that \(\F\) consists of uniformly-quasiconvex subsets of \(\X\), then for every pair of points \(x,y\) of \(X\), there is a \(\tau_1\)--quasi-geodesic \(\widehat{\gamma}\) of \(\widehat{\X}\) connecting them, such that its de-electrification is a \(\tau_2\)--quasi-geodesic of \(X\). 
An analogous result was previously established by Hamenst\"adt in the case where \(X\) is hyperbolic relative to \(\F\) (\cite{Hamenstadt_Hyp_rel_hyp_graphs}).

Using this result (although we will need it in full generality only in a subsequent paper), and other considerations, we will pin down a handful of properties that the family \(\F\) needs to satisfy in order for \((\X,\F)\) to be an HHS. In this case we say that \(\F\) is a factor system.
In Section \ref{sec: Obtaining a factor system}, we will show that such properties can be weakened to a weak-factor system, adding the extra cost of an equivalence relation, as can be seen in Theorem \ref{Theorem A introduction}.

As an application, in the last section we provide an explicit description of the Gromov boundary for a hyperbolic HHS. It was proved in \cite{HHSBoundaries} that for a hyperbolic HHS \((\X, \S)\) the following holds:
\[\partial \X = \bigcup_{U \in \S} \partial CU,\]
where \(\partial Z\) denotes the Gromov boundary of \(Z\), for a hyperbolic space \(Z\). 
Combining this with our construction, we get a very explicit description of the Gromov boundary in a variety of cases. Indeed, given a (weak) factor system \(\F\) on a hyperbolic space \(\X\), we can decompose the boundary of \(\X\) as the union of boundaries of the various cone-offs with respect to elements of \(\F\).

\begin{thm*}[\ref{cor:bdry_dec_for_gps}]
Let \(G\) be a hyperbolic group and let \(\F\) be a finite family of quasiconvex subgroups of \(G\). Then there is a family \(\F'\) such that
\[\partial G = \bigcup_{U \in \S} \partial CU,\]
where each \(CU\) is as in Theorem \ref{Theorem A introduction}. 
\end{thm*}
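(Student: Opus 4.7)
The plan is to deduce this statement as an immediate corollary of Theorem \ref{cor: HHG structure on G} combined with the boundary decomposition theorem for hyperbolic HHSs recalled above from \cite{HHSBoundaries}. Essentially no new work is required beyond identifying the two inputs correctly.

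First, I would apply Theorem \ref{cor: HHG structure on G} to the hyperbolic group $G$ and the given finite family $\F$ of quasiconvex subgroups. This produces the finite family $\F'$ (obtained by closing $\F$ under a controlled set of intersections of conjugates) such that $(G, \F'_{\mathrm{cos}}/_\sim)$ is an HHG structure on $G$. I would then take $\S := \F'_{\mathrm{cos}}/_\sim$; by construction, each $CU$ for $U \in \S$ is a cone-off of a coset of some $F \in \F' \cup \{G\}$, exactly in the form demanded by the statement.

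Second, because $G$ is a hyperbolic group the HHS $(G, \S)$ just produced has hyperbolic underlying space, so it is in particular a hyperbolic HHS in the sense of \cite{HHSBoundaries}. The boundary theorem of that paper, quoted in the paragraph immediately preceding the statement, then gives
\[\partial G = \bigcup_{U \in \S} \partial CU,\]
which is the desired decomposition.

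I do not expect a serious obstacle: the hard work lives entirely inside Theorem \ref{cor: HHG structure on G}, and the \cite{HHSBoundaries} result is applied verbatim. The only small checks are that our HHG structure meets the (mild) hypotheses of the \cite{HHSBoundaries} theorem, which is automatic for HHG structures on proper hyperbolic groups, and that the $\sqsubseteq$-maximal domain of $\S$ (corresponding to $G$ itself, whose $CU$ is the cone-off of $G$ over the cosets of the proper elements of $\F'$) is treated on the same footing as the other domains; both are already taken care of within the HHS-boundary framework. Thus the proof is a one-line combination of the two ingredients.
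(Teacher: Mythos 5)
Your proposal is correct and matches the paper's own route: the corollary is obtained exactly by feeding the HHG structure of Corollary \ref{cor: HHG structure on G} into the hyperbolic-HHS boundary identification of \cite{HHSBoundaries} (as packaged in Theorem \ref{thm:main_result_bdry}). The only minor remark is that the ``mild hypothesis'' is automatic here not because of properness but because the factor-system construction declares no orthogonality relations, so boundary points are supported on single domains and the union decomposition holds even if some \(CU\) were bounded.
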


This recovers two results of Hamenst\"adt (\cite{Hamenstadt_Hyp_rel_hyp_graphs}), namely the case when \(\X\) is hyperbolic relative to \(\F\), and the case of the disk graph of a handlebody. Indeed, the proof of the latter implies that the family of electrified disk graphs of a certain class of subsurfaces of the boundary handlebody is a factor system for the disk graph of the handlebody.

In the case where \(\X\) is hyperbolic relative to \(\F\), we also recover a result proved independently by Manning and Tran (respectively, \cite[Theorem 1.3]{ManningBowditch_Boundary}, \cite[Section 6]{TranRelationsBetween}). Namely, in this case the Bowditch boundary of \((\X,\F)\) can be expressed as the quotient 
\[\partial (\X, \F) = \partial \X /_\sim,\]
where \(x \sim y\) if there is \( U \in \F\) such that \(x, y \in \partial CU\). 

In a follow-up paper, we will use Theorem \ref{Theorem A introduction} to prove a combination Theorem for HHG. 
Indeed, the flexibility coming from the mild hypotheses on the family \(\F\) allows to turn a large class of graphs of groups into graphs of HHG. 
Roughly speaking, given a graph \(\GG\) of HHG, we can add a new vertex \(v\) with vertex group a hyperbolic group \(G\). 
Then, the HHG structure on \(G\) will be the one induced choosing \(\F\) to be the set of images of the edge groups adjacent to \(G\). 
As an application we get that if \(H\) is a quasiconvex subgroup of a hyperbolic group \(G\), and if, moreover, \(H\) is hyperbolically embedded in the Mapping Class Group of a surface \(\Sigma\), then \(G \ast_{H} \mathrm{MCG}(\Sigma)\) is an HHG.

\subsection*{Outline}
Subsections \ref{subsec: Approximating metric spaces}, \ref{subsec:Basics on hyp sp} and \ref{subsec: Behrstock's inequalities}, contain some background on hyperbolic spaces and approximating graphs, subsection \ref{subsec:coning-off} contains the bulk of the theory on the cone-off procedure that is going to be used in Sections \ref{sec:hierarchically hyperbolic spaces} and \ref{sec: Obtaining a factor system}. 
Section \ref{sec:hierarchically hyperbolic spaces} contains the definition of factor system (\ref{Factor System}) and the proof of the fact that a factor system on a hyperbolic space determines a non-trivial HHS structure (Theorem \ref{thm:main result for factor system}).
Section \ref{sec: Obtaining a factor system} provides some coarser conditions (\ref{def: Weak factor system}) that are sufficient to obtain an HHS structure (Theorem \ref{thm: Main result for weak factor systems}). 
In Section \ref{section:Groups with quasiconvex}, we prove that a finite family of infinite quasiconvex subgroups induces a weak factor system for a hyperbolic group, providing a variety of new HHG structures on hyperbolic groups (Corollary \ref{cor: HHG structure on G}).
In Section \ref{sec:boundaries}, we use the HHS structure to give an explicit description of the Gromov boundary of \(X\), where \(X\) is a hyperbolic space equipped with a factor system (Theorem \ref{thm:main_result_bdry}). We use this result to give an explicit description of the Bowditch boundary of a hyperbolic relative hyperbolic space (Theorem \ref{thm:Bowditch Boundary}).

\subsection*{Acknowledgments}
The author would like to thank Alessandro Sisto for suggesting the topic and for very helpful comments and suggestions, and Dominik Gruber for very helpful conversations on the geometry of Cayley graphs.

\section{Introductory tools}\label{sec:introductory tools}

\subsection{Approximating metric spaces and standing conventions}\label{subsec: Approximating metric spaces}
It is a well known fact that every quasi-geodesic metric space (Definition \ref{def:quasi-geodesic-metric space}) is quasi-isometric to a geodesic one. Moreover, working in the setting of geodesic metric spaces allows to simplify several arguments.
Throughout the paper, we will assume that that all geodesic metric spaces are geodesic and that all quasi-geodesics are continuous and rectifiable. The goal of this subsections is to justify such assumption and set up some notations. Those are standard facts which are included for the sake of completeness. The expert reader can confidently skip this part. 

We recall the definition of quasi-geodesic metric space.

\begin{Def}[Quasi-geodesic metric space]\label{def:quasi-geodesic-metric space}
A metric space \(X\) is \emph{\((C, \epsilon)\)--quasi-geodesic} if any two points can be joined by a \((C, \epsilon)\)--quasi-geodesic. A metric space \(X\) is a \emph{quasi-geodesic metric space} if there exists \((C, \epsilon)\) such that \(X\) is a \((C, \epsilon)\)--quasi-geodesic metric space.
\end{Def}

\begin{Def}[Quasiconvex subset]\label{def: quasiconvex subset}
Let \(X\) be a metric space, \(Y\) be a subset of \(X\) and let \(K \colon \RR^2 \to \RR\) be a non-decreasing function. We say that \(Y\) is \emph{$K$--quasiconvex} if for every pair of points \(x,y \in Y\) and \((C,\epsilon)\)--quasi-geodesic \(\gamma\) between them, we have that \(\gamma\) is contained in the \(K(C,\epsilon)\)--neighborhood of \(Y\). The map \(K\) is called the \emph{quasiconvexity gauge} of \(Y\).
\end{Def}

\begin{lemma}[Approximation Lemma]\label{lem: approximation Lemma}
Let \(X\) be a quasi-geodesic metric space and \(\F\) a family of \(K\)--quasiconvex subsets of \(X\).
Then there exists a connected graph \(\Omega(X)\) and a quasi-isometry \(\omega \colon X \to \Omega(X)\) such that for each \(W \in \F\), the image \(\omega(W)\) is connected and quasi-isometric to \(W\), where \(W\) is equipped with the restriction of the metric on \(X\), and \(\omega(W)\) with the graph metric. 
Moreover, suppose that a group \(G\) acts coboundedly on \(X\) and suppose that the family \(\F\) is \(G\)--invariant and has finitely many \(G\)--orbit. Then the map \(\omega\) can be chosen to be \(G\)--equivariant. 
\end{lemma}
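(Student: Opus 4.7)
The plan is a standard net-and-edges construction, adapted to respect the quasiconvex family \(\F\). I would take \(\mathcal{N} \subseteq X\) to be a maximal \(\epsilon\)-separated subset that contains, for each \(W \in \F\), a maximal \(\epsilon\)-separated subset \(\mathcal{N}_W \subseteq W\). The graph \(\Omega(X)\) has vertex set \(\mathcal{N}\) and an edge between two vertices whenever their \(X\)-distance is at most \(R\), for a constant \(R\) depending on the quasi-geodesic constants \((C,\epsilon)\) of \(X\) and on \(\epsilon\). The map \(\omega\) sends each point of \(X\) to a closest point of \(\mathcal{N}\), with ties broken in some fixed way (to be specified equivariantly in the group case).

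First I would check that \(\Omega(X)\) is connected and that \(\omega \colon X \to \Omega(X)\) is a quasi-isometry. This is by the standard argument: along any \((C,\epsilon)\)-quasi-geodesic in \(X\) joining two points \(x,y\), one selects a chain \(x = y_0, y_1, \dots, y_n = y\) of sample points at spacing roughly \(R/2\); each \(y_i\) lies within \(\epsilon\) of some \(v_i \in \mathcal{N}\), so consecutive \(v_i, v_{i+1}\) are at \(X\)-distance at most \(R\), hence adjacent in \(\Omega(X)\). This yields a path from \(\omega(x)\) to \(\omega(y)\) in \(\Omega(X)\) of length linear in \(d_X(x,y)\); the reverse bound is immediate because each edge of \(\Omega(X)\) has \(X\)-length at most \(R\).

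Next, to prove that \(\omega(W)\) is connected and that \(\omega|_W \colon W \to \omega(W)\) is a quasi-isometry, I would use that \(K\)-quasiconvex subsets of a quasi-geodesic metric space are themselves quasi-geodesic in the induced metric, with constants depending only on \((K,C,\epsilon)\): given \(x, y \in W\), any \((C,\epsilon)\)-quasi-geodesic of \(X\) between them lies in a \(K(C,\epsilon)\)-neighborhood of \(W\), and projecting the sampled points onto \(W\) produces a quasi-geodesic in \(W\) with uniform constants. Applying the net argument above inside \(W\) to the subnet \(\mathcal{N}_W\) shows that \(\mathcal{N}_W\) is already connected as a subgraph of \(\Omega(X)\), since consecutive subnet points along such an internal quasi-geodesic have the same \(X\)- and \(W\)-distance, which is at most \(R\). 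Because \(\omega(W)\) is at uniformly finite Hausdorff distance from \(\mathcal{N}_W\) in \(\Omega(X)\), it stays connected; and the quasi-isometry property for \(\omega|_W\) follows from the fact that the induced graph distance on \(\omega(W)\) is comparable to \(d_X|_W\), hence to the intrinsic quasi-geodesic metric on \(W\).

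For the equivariant version, the main obstacle is arranging the nets to be \(G\)-invariant. I would pick representatives \(W_1, \dots, W_k\) of the finitely many \(G\)-orbits in \(\F\) and, for each \(W_i\), choose a \(\mathrm{Stab}_G(W_i)\)-invariant, \(\epsilon\)-separated, cobounded subset \(\mathcal{N}_{W_i} \subseteq W_i\) (for instance, the \(\mathrm{Stab}_G(W_i)\)-orbit of a point, thinned out to be \(\epsilon\)-separated). Setting \(\mathcal{N}_{g \cdot W_i} := g \cdot \mathcal{N}_{W_i}\) is then well-defined on each orbit, and extending \(\bigcup_{W \in \F} \mathcal{N}_W\) to a \(G\)-invariant maximal \(\epsilon\)-separated subset of \(X\) is done by adjoining the \(G\)-orbit of a point with cobounded orbit (which exists because \(G\) acts coboundedly) and thinning equivariantly. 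The only delicate point is defining the closest-point map \(\omega\) equivariantly: this is handled by specifying an equivariant tie-breaking rule, for example by choosing \(\omega\) on a measurable fundamental domain and translating. Once this is in place, \(\omega\) is \(G\)-equivariant by construction, and all previous verifications go through unchanged.
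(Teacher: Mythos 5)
Your construction breaks at the very first step. You ask for a single maximal \(\epsilon\)-separated set \(\mathcal{N}\subseteq X\) that contains, for \emph{every} \(W\in\F\), a maximal \(\epsilon\)-separated subset \(\mathcal{N}_W\subseteq W\). When distinct members of \(\F\) overlap or run \(\epsilon\)-close to each other -- which is exactly the situation this lemma is used in (nested and intersecting quasiconvex subsets, e.g.\ cosets of different quasiconvex subgroups) -- such a set need not exist: the required subnets of two nearby members are forced to contain points within \(\epsilon\) of one another, so their union cannot sit inside one \(\epsilon\)-separated set. Concretely, in \(X=\mathbb{R}\) with \(\epsilon=1\), \(W=[0,10]\) and \(W'=[0.5,10.5]\): once \(\mathcal{N}_W=\{0,1,\dots,10\}\) is chosen, every point of \(W'\) lies within \(0.5\) of \(\mathcal{N}_W\), so no nonempty \(\mathcal{N}_{W'}\subseteq W'\) compatible with separation exists (group-theoretic versions occur already for the cosets \(\langle a^2\rangle\) and \(a\langle a^2\rangle\) inside \(\langle a\rangle\le F_2\)). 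Since your connectivity and quasi-isometry arguments for \(\omega(W)\) are all routed through \(\mathcal{N}_W\), this is a genuine gap, not a cosmetic one. The paper avoids it by never requiring the net to meet the \(W\)'s: it fixes one maximal \(\zeta\)-net \(N\), defines \(\Omega(W)\) as the subgraph induced by the image \(\omega(W)\) of \(W\) under closest-point projection to \(N\), and uses quasiconvexity to show that sampling a quasi-geodesic of \(X\) with endpoints in \(W\) at unit parameter steps produces points uniformly close to \(W\), whose projections give a path of uniformly short edges inside \(\Omega(W)\); this yields the two-sided distance comparison with no compatibility problem among the members of \(\F\).

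The equivariant part has two further problems. First, the lemma does not assume the elements of \(\F\) are orbits or cosets, so a \(\mathrm{Stab}_G(W_i)\)-orbit of a point need not be cobounded in \(W_i\) (the stabilizer can even be trivial); your proposed \(\mathcal{N}_{W_i}\) then fails to be a net of \(W_i\). Second, you cannot ``thin a \(G\)-orbit equivariantly'': any nonempty \(G\)-invariant subset of a single orbit is the whole orbit, so if the orbit is not \(\epsilon\)-separated no equivariant thinning exists; likewise the measurable-fundamental-domain tie-breaking is neither available for an arbitrary cobounded isometric action nor needed. The paper's equivariant argument sidesteps both points: it takes the vertex set to be one orbit \(G\cdot x\) (no separation required), lets \(G\) act on the resulting graph by construction, and uses the set-valued closest-point projection, which is automatically \(G\)-equivariant as a coarse map, so no tie-breaking is necessary.
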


The proof is different for the non-equivariant and the equivariant case. We start by setting some notation.

\begin{notation}
Let \(X\) be a metric space. For a subset \(Y\) of \(X\), we define \(p_Y \colon  X \rightarrow 2^Y\) to be the shortest distance projection from \(X\) to \(Y\). 
\end{notation}

Note that for a general subspace \(Y\), the projection \(p_Y (x)\) of a point may be empty.
In order to avoid this, we will abuse notation and denote in the same way the \emph{\(\epsilon\)--projection} \(p_Y\) which is defined as \(p_Y(x)=\{y \in Y \mid  d(y,x) - d( Y,x) < \epsilon\}\) for some very small \(\epsilon\).
From now on, with an abuse, we will implicitly assume that all projections are, in fact, \(\epsilon\)--projections.

\begin{Def}[Maximal nets and approximation graphs]\label{defn: approximation graph}
	Let \(X\) be a metric space, and let \(\zeta >0\). A \emph{\(\zeta\)--net} for \(X\) is a collection \(N\) of points of \(x\) such that for each pair of 
	elements \(x,y\) of \(N\), we have \(d(x,y) \geq k\). A \(\zeta\)--net is \emph{maximal} if it is not possible to add more points to it. 
	
	Given a metric space \(X\), a \(\zeta\)--net \(N\) and \(\lambda>0\), the \emph{\((\zeta, \lambda)\)--approximation graph} on \(N\) is the graph \(\Omega (X)\)
	whose vertex set is the set \(N\) with the condition that two vertices are connected
	 if and only if their distance as points of \(X\) is at most \(\lambda\).
	
	Let \(\omega \colon X \rightarrow 2^{\Omega(X)}\) be the map that associates to each point of \(X\) its closest point projection on \(N\), which in general is a set of vertices of \(\Omega(X)\). For each subspace \(Y \subseteq X\), we denote by \(\Omega(Y)\) the subgraph of \(\Omega(X)\) induced by \(\omega(Y)\). 
\end{Def}

\begin{convention}
When dealing with a graph, unless differently specified, we will consider the standard graph metric. This also applies to subgraphs of a given graph. For this reason, when considering subgraphs we will try to ensure that they are connected.
\end{convention}

\begin{proof}[Proof of Lemma \ref{lem: approximation Lemma}, non equivariant case.]

 \label{some inequality}
	Let  \(X\) be a \((C, \epsilon)\)--quasi-geodesic metric space and 
	let \(\F\) be a family of \(K\)--quasiconvex subsets of \(X\).
	Let \(\zeta = \max\{C + \epsilon, K(C,\epsilon)+1\}\), \(N\) be a \(\zeta\)--maximal net for \(X\) and 
	 \(\Omega(X)\) be a \((\zeta,5\zeta )\)--approximation graph for \(X\) obtained
	from \(N\).
	We claim that for each \(W \in \F \cup \{X\}\) and \(x,y \in W\), we have:
	\[\frac{1}{5\zeta}d_X(x,y) -5\zeta \leq d_{\Omega(W)}(\omega(x), \omega(y)) \leq C d_X(x,y) + \epsilon +1,\] where \(d_{\Omega(W)}\) denotes the graph metric on \(\Omega(W)\).
	
	Let \(\eta\colon [a,b] \rightarrow X\) be a \((C,\epsilon)\)--quasi-geodesic in \(X\) between \(x\) and \(y\). 
	We observe that \(\eta([a,b]) \subseteq N_{2\zeta} (p_N(W))\), where \(p_{N(W)}\) represents the closest point projection of \(W\) on the net \(N\). 
	Indeed, by quasiconvexity, each point of \(\eta([a,b])\) lies at distance at most \(K(C,\epsilon)\)
	from a point of \(W\), and every point of \(W\) lies at distance at most \(\zeta\) from \(N\), since \(N\) is a maximal \(\zeta\)--net.
	
	Let \(a=t_0 < \cdots < t_s=b\) be a minimal (i.e. $s\leq b-a +1$) partition of \([a,b]\) satisfying \(|t_i - t_{i+1}|\leq 1\). This implies \(d(\eta(t_i), \eta(t_{i+1}))\leq C +\epsilon \leq \zeta\),  for all \(i < s\). Choose representatives 
	\(v_i \in \omega(\eta(t_i))\), setting \(v_0 = x'\), \(v_s = y'\). We claim that  \(v_0 \cdots v_s\) is a path in \(\Omega(W)\) joining \(x'\) and \(y'\).
	Indeed, observe that the \(X\)--distance between two consecutive \(v_i\) and \(v_{i+1}\) is at most \(5\zeta\), meaning that the pair \(v_i, v_{i+1}\) forms an edge of \(\Omega (W)\). In particular \(d_{\Omega(X)}(x',y') \leq s\).
	 Since \(s \leq b-a +1\) and since \(\eta \colon [a,b] \rightarrow X\) is a \((C,\epsilon)\)--quasi-geodesic, we get 
	 \(d_{\Omega(W)} (\omega(x), \omega(y))\leq Cd_X(x,y) + \epsilon + 1\). 
	 \sloppy
	 On the other hand, by triangular inequality we have \(d_{X}(x, y) \leq 5\zeta d_{\Omega(W)}(\chi(x), \chi(y)) + 5\zeta\). 
	
	This shows that the map \(\omega \colon X \to \Omega(X)\) is a quasi-isometry, and it restricts to a uniform quasi-isometry for each \(W \in \F\). Moreover, since being quasiconvex is a quasi-isometric invariant property, the result follows. 
\end{proof}
\label{cor:omega(W)q.i.emb}
\begin{proof}[of Lemma \ref{lem: approximation Lemma}, equivariant case.]
By assumption, the group \(G\) acts coboundedly on \(X\). This allows us to apply a standard Milnor-Schwarz argument and obtain that there exists a (possibly infinite) generating set for \(G\) and a Cayley graph \(\Omega(X)\) of \(G\) which is quasi-isometric to \(X\).

More precisely, let \(x\) be a point of \(X\) and let \(N = G\cdot x\). Since the action of \(G\) is cobounded, there exists \(D\) such that \(X\) is contained in the \(D\)--neighbourhood of \(N\).
Let \(\zeta =  \max\{C + \epsilon, K(C,\epsilon)+1, D\}\) and let \(\Omega(X)\) be the graph with vertex set \(N\) an with and between two points \(gx\) and \(hx\) if and only if \(d_{X}(gx, hx) \leq 5\zeta\). Following the same reasoning as before, we obtain that \(\Omega(X)\) is quasi-isometric to \(X\). Moreover, since the vertex set is an orbit of \(G\) in \(X\) and since \(G\) acts by isometries on \(X\), we obtain that \(G\) acts by isometries on \(\Omega(X)\) as well. Thus there is an equivariant quasi-isometry \(\Omega(X) \to X\) given by inclusion on the level of vertices. Moreover, it is not hard to see that \(\Omega(X)\) is equivariantly quasi-isometric to  a Cayley graph for \(G\) with generating set \(\{g \in G \mid d_{X}(x, gx) \leq 5\zeta\}\), and the isometry is obtained by choosing a representative of the stabilizer of \(x\). 
\end{proof}

We recall the definition of length of a curve (see, for instance, \cite[Chapter I.1, Definition 1.18]{BridsonHaefliger}
\begin{Def}
Let \(X\) be a metric space. The \emph{length} of a curve \(\gamma \colon [a,b]\to X\) is:
\[L(\gamma) = \sup_{a=t_0 \leq t_1 \leq \cdots \leq t_n = b} \sum_{i=0}^{n-1}d_X(\gamma(t_i), \gamma (t_{i+1})),\]
where the supremum is taken over all finite partitions (no bound on \(n\)) with \(a=t_0 \leq t_1 \leq \cdots \leq t_n = b\).
If \(L(\gamma) < \infty\) we say that \(\gamma\) is \emph{rectifiable}.
\end{Def}

\begin{lemma}[Taming quasi-geodesic, {\cite[Chapter III.H, Lemma 1.11]{BridsonHaefliger}}]\label{lem: Taming quasi-geodesics}
Let \(X\) be a geodesic metric space. Given any \((C, \epsilon)\)--quasi-geodesic \(\gamma \colon [a,b] \to X\), one can find a continuous \((C, \epsilon')\)--quasi-geodesic \(\gamma'\colon [a,b] \to X\) such that: \begin{enumerate}
    \item \(\gamma(a)= \gamma'(a)\) and \(\gamma(b) = \gamma'(b)\);
    \item \(\epsilon'= 2(C+ \epsilon)\);
    \item \(L(\gamma'_{|[t, t']}) \leq k_1 d(\gamma'(t), \gamma'(t')) + k_2\), where \(k_1= C (C+\epsilon)\) and \(k_2= (C\epsilon'+3)(C + \epsilon)\);
    \item The Hausdorff distance between the images of \(\gamma\) and \(\gamma'\) is less than \(C+\epsilon\).
\end{enumerate}
\end{lemma}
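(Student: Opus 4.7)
The plan is to construct $\gamma'$ as a piecewise-geodesic path obtained by sampling $\gamma$ at integer times and joining consecutive samples by geodesics (which exist since $X$ is assumed geodesic). Choose a partition $a=t_0<t_1<\dots<t_n=b$ with $t_{i+1}-t_i\leq 1$ for each $i$, and let $x_i:=\gamma(t_i)$. Because $\gamma$ is a $(C,\epsilon)$--quasi-geodesic we have $d(x_i,x_{i+1})\leq C+\epsilon$. Pick a geodesic $\sigma_i\colon[t_i,t_{i+1}]\to X$ from $x_i$ to $x_{i+1}$, parametrized affinely so that $\sigma_i$ has constant speed $d(x_i,x_{i+1})/(t_{i+1}-t_i)$. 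The concatenation $\gamma'$ of the $\sigma_i$ is continuous by construction, satisfies $\gamma'(a)=\gamma(a)$ and $\gamma'(b)=\gamma(b)$, and since each $\sigma_i$ is a geodesic we immediately get, for $t,t'$ in the same subinterval, $L\bigl(\gamma'|_{[t,t']}\bigr)=d(\gamma'(t),\gamma'(t'))$.

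Next I would prove the Hausdorff estimate (4). For any $t\in[t_i,t_{i+1}]$, the point $\gamma'(t)$ lies on a geodesic of length at most $C+\epsilon$ emanating from $x_i\in\gamma([a,b])$, so $d(\gamma'(t),\gamma([a,b]))\leq C+\epsilon$; symmetrically, $\gamma(t)$ is within $C+\epsilon$ of $x_i=\gamma'(t_i)$. This simultaneously gives the pointwise comparison $d(\gamma(t),\gamma'(t))\leq 2(C+\epsilon)$, which I would use in the subsequent step to transfer quasi-geodesic bounds from $\gamma$ to $\gamma'$.

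Third, for the quasi-isometric estimate (2): the upper bound $d(\gamma'(t),\gamma'(t'))\leq C|t-t'|+\epsilon'$ follows by a triangle inequality through $\gamma(t),\gamma(t')$ together with the pointwise bound from the previous paragraph, and the lower bound is obtained the same way from the lower inequality for $\gamma$. Tracking the constants carefully yields $\epsilon'=2(C+\epsilon)$ (up to the conventions used, this is where the factor of two appears; the worst case is when $t,t'$ lie in distinct subintervals).

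Finally, for the length bound (3): on each interval $[t_i,t_{i+1}]$, $\gamma'$ is a geodesic of length at most $C+\epsilon$, so for any $t\leq t'$ the piece $\gamma'|_{[t,t']}$ meets at most $|t'-t|+1$ such subintervals and thus has length at most $(|t'-t|+1)(C+\epsilon)$. Using the just-established lower quasi-isometry inequality for $\gamma'$ to bound $|t'-t|\leq C\,d(\gamma'(t),\gamma'(t'))+C\epsilon'$ gives a bound of the form $k_1d(\gamma'(t),\gamma'(t'))+k_2$ with $k_1=C(C+\epsilon)$ and $k_2=(C\epsilon'+3)(C+\epsilon)$, matching the statement. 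The only real subtlety, and the step most prone to bookkeeping mistakes, is honest tracking of the additive constants in the two quasi-geodesic inequalities so that $\epsilon'$, $k_1$, $k_2$ come out exactly as advertised; the geometric content is entirely captured by the integer-time sampling construction.
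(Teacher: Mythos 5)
Your construction---sampling \(\gamma\) at integer times, joining consecutive samples by geodesics, and transferring the quasi-geodesic bounds---is exactly the standard Bridson--Haefliger argument, which is all the paper relies on here: it cites the lemma from \cite{BridsonHaefliger} (Chapter III.H, Lemma 1.11) without reproving it. Two bookkeeping points if you want the stated constants verbatim: the partition must have spacing essentially equal to \(1\) (integer points plus the endpoints), otherwise the count of subintervals met by \([t,t']\), and hence the length bound in (3), can fail for an arbitrarily fine partition of mesh \(\leq 1\); and to get \(\epsilon'=2(C+\epsilon)\) you should compare \(\gamma'(t)\) with the \emph{nearer} endpoint of its geodesic segment (distance at most \((C+\epsilon)/2\)) and use \(C\geq 1\) in the lower bound, since routing through the pointwise estimate \(d(\gamma(t),\gamma'(t))\leq 2(C+\epsilon)\) only yields the weaker additive constant \(\epsilon+4(C+\epsilon)\).
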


\begin{standingassumption} From now on we will assume the followings.
\begin{enumerate}
    \item We use the approximation Lemma \ref{lem: approximation Lemma} to assume that metric spaces are geodesic. 
    \item We assume that all quasi-geodesics are as in Lemma \ref{lem: Taming quasi-geodesics}. In particular, we can define the length of quasi-geodesics and we will freely identify (quasi-)geodesics with their images.  Given two points \(x,y\) in \(X\), we denote by \([x,y]\) a geodesic segment connecting them.  
\end{enumerate}
\end{standingassumption}

\subsection{Basics on hyperbolic spaces}\label{subsec:Basics on hyp sp}

When considering Gromov-hyperbolic spaces, quasiconvex subsets can be easily described in terms of geodesics. Indeed, suppose that \(X\) is a geodesic \(\delta\)--hyperbolic space and \(Y\subseteq X\) is a subset with the property that there exists \(k\) such that all geodesic with endpoints on \(Y\) are contained in  the \(k\)--neighbourhood of \(Y\). Then there exist a function \(K\), determined by \(\delta\) and \(k\), such that \(Y\) is \(K\)--quasiconvex in the sense of Definition \ref{def: quasiconvex subset}.
We will slightly abuse notation and denote \(k = K(1,0)\) simply with \(K\).
The above is a consequence of the, so called, Morse Lemma for hyperbolic spaces, that we recall. 

\begin{thm}[Morse Lemma, {\cite[Chapter III.H, Theorem 1.7]{BridsonHaefliger}}]\label{lem:Morse Lemma}
Let \(X\) be a geodesic Gromov hyperbolic space. Then there exists a function \(Q\colon  \RR^2\rightarrow \RR\) such that for each geodesic \(\alpha\) and \((C,\epsilon)\)--quasi-geodesic \(\gamma\) with the same endpoints of \(\alpha\), we have:
\[d_{\mathrm{Haus}}(\gamma, \alpha) \leq Q(C, \epsilon).\] 
\end{thm}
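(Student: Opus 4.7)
The plan is to follow the classical two-step argument: first show that the geodesic $\alpha$ is contained in a bounded neighborhood of the quasi-geodesic $\gamma$, then bound the reverse inclusion. By the standing assumptions and Lemma \ref{lem: Taming quasi-geodesics}, I may replace $\gamma$ by a continuous $(C,\epsilon')$--quasi-geodesic satisfying the length bound $L(\gamma_{|[t,t']}) \leq k_1 d(\gamma(t),\gamma(t'))+k_2$ for constants $k_1,k_2$ depending only on $C,\epsilon$. The whole proof only requires an eventual Hausdorff bound, and all constants will depend only on $C,\epsilon,\delta$.

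\emph{Step 1: $\alpha \subseteq N_D(\gamma)$.} Let $p=\alpha(t_0)$ be a point maximizing $d(\alpha(s),\gamma)$ over $s\in[a,b]$, and call this supremum $D$. Walk along $\alpha$ from $p$ a distance $D$ in each direction (or to the corresponding endpoint), obtaining points $p_\pm$. Project $p_\pm$ to nearest points $q_\pm\in\gamma$; note $d(p_\pm,q_\pm)\le D$. Concatenating a geodesic from $p_-$ to $q_-$, the sub-arc of $\gamma$ from $q_-$ to $q_+$, and a geodesic from $q_+$ to $p_+$ yields a continuous curve from $p_-$ to $p_+$ of length at most $2D + k_1 d(q_-,q_+) + k_2 \le 2D + k_1(4D+2D)+k_2$. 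By choice of $D$, this curve avoids the open ball of radius $D$ around $p$. The key ingredient is now the exponential divergence estimate in a $\delta$--hyperbolic space: a continuous path joining the endpoints of a geodesic segment and staying outside the open ball of radius $R$ around an interior point of that geodesic must have length bounded below by a function growing like $2^{R/\delta}$. Applying this to the subsegment $[p_-,p_+]\subseteq\alpha$ forces an exponential-in-$D$ lower bound against the linear-in-$D$ upper bound above, yielding $D \le D_0(C,\epsilon,\delta)$.

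\emph{Step 2: $\gamma\subseteq N_{D'}(\alpha)$.} Suppose $\gamma(t)$ satisfies $d(\gamma(t),\alpha) > D'$ for some large $D'$. Let $t_-<t<t_+$ be the nearest parameters with $d(\gamma(t_\pm),\alpha)\le D$, and choose corresponding points $a_\pm\in\alpha$. Since $\alpha\subseteq N_D(\gamma)$ from Step 1, a short triangle inequality gives $d(a_-,a_+)\le d(\gamma(t_-),\gamma(t_+))+2D$, so the sub-arc of $\alpha$ between $a_-$ and $a_+$ has controlled length. The tamed quasi-geodesic $\gamma|_{[t_-,t_+]}$ has length at least $2(D'-D)$ but at most $k_1 d(\gamma(t_-),\gamma(t_+))+k_2$. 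Combining these, using again that $\gamma|_{[t_-,t_+]}$ is forced by Step 1 to stay in a neighborhood of the short segment $[a_-,a_+]$ of $\alpha$, bounds $D'$ in terms of $D,C,\epsilon,\delta$. Setting $Q(C,\epsilon) := \max\{D_0,D_0'\}$ completes the proof.

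\emph{Main obstacle.} The technical heart of the argument is the exponential divergence estimate used in Step 1, which is proved by iteratively bisecting the geodesic and invoking slim triangles to show that every halving step reduces the ``height'' above the geodesic by roughly $\delta$, forcing a geometric doubling of length. Once this lemma is in hand, the remainder of the proof consists of careful but routine triangle inequality and length-versus-distance bookkeeping that can be bundled into the single function $Q(C,\epsilon)$.
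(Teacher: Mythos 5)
The paper does not prove this statement at all; it is quoted verbatim from Bridson--Haefliger (III.H, Theorem 1.7), so the only meaningful comparison is with the classical argument, which is indeed the one you outline (taming, exponential divergence, then the two inclusions). Your Step 1 is essentially the standard proof, with one quantitative slip: you must move a distance \(2D\) (not \(D\)) along \(\alpha\) from \(p\) to obtain \(p_\pm\); otherwise the connecting geodesics \([p_\pm,q_\pm]\), which have length up to \(D\), may enter the open ball \(B(p,D)\) and the divergence lemma cannot be applied to the concatenated path. Your own estimate \(d(q_-,q_+)\le 4D+2D\) already presupposes the \(2D\) choice, so this is easily repaired.

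The genuine gap is in Step 2. From \(L(\gamma|_{[t_-,t_+]})\ge 2(D'-D)\) and \(L(\gamma|_{[t_-,t_+]})\le k_1 d(\gamma(t_-),\gamma(t_+))+k_2\) you get no bound on \(D'\) unless you first bound \(d(\gamma(t_-),\gamma(t_+))\) (equivalently \(t_+-t_-\)) by a constant independent of \(D'\), and nothing you wrote achieves this: the triangle inequality you invoke bounds \(d(a_-,a_+)\) by \(d(\gamma(t_-),\gamma(t_+))\), which is the unhelpful direction, and the clause that \(\gamma|_{[t_-,t_+]}\) is ``forced by Step 1 to stay in a neighborhood of \([a_-,a_+]\)'' is circular, since Step 1 only gives \(\alpha\subseteq N_D(\gamma)\), while containment of \(\gamma\) in a neighborhood of \(\alpha\) is exactly what Step 2 must prove. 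The missing ingredient is the standard connectedness/overlap argument: every point of the subarc of \(\alpha\) between \(a_-\) and \(a_+\) lies within \(D\) of \(\gamma\), and by the choice of \(t_\pm\) the nearby parameter cannot lie in \((t_-,t_+)\); hence the two closed subsets of this subarc consisting of points \(D\)--close to \(\gamma([a,t_-])\) and to \(\gamma([t_+,b])\) are nonempty and cover a connected set, so they intersect, producing \(s\le t_-\le t_+\le s'\) with \(d(\gamma(s),\gamma(s'))\le 2D\). The quasi-geodesic inequality then bounds \(s'-s\), hence \(t_+-t_-\), hence \(L(\gamma|_{[t_-,t_+]})\), and only then does \(2(D'-D)\le \mathrm{const}\) close the argument. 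With that insertion (which is Bridson--Haefliger's own closing step), your proof coincides with the cited one.
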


\begin{Def}[Projection geodesics]
	Let \(X\) be a geodesic metric space and \(Y \subseteq X\) be a subspace of \(X\). 
	We say that a  geodesic \(\gamma\) with endpoints \(x,y\) is a 
	\emph{projection geodesic} for \(Y\) if \(y \in p_Y(x)\). 
\end{Def}
The following lemma is well-known. We recall it as it will be used extensively.

\begin{lemma}[Quadrilateral argument]\label{quadrilateral}
	Let \(X\) be a  geodesic \(\delta\)--hyperbolic space, and let \(H\) be a \(K\)--quasiconvex subspace. 
	Let \(\gamma, \gamma'\) be projection geodesics for \(H\) 
	and let \(a,b,a',b'\) be the endpoints. Up to changing names, we may assume \(b,b' \in H\). 
	Consider the geodesic quadrilateral \(\gamma [b, b'] \gamma' [a',a]\), and let \(I\) be the set of points in the image of \([b,b']\) consisting of
	 points at distance at least \(4\delta +K\) from \(\{b \cup b'\}\).
	Then for each \(s\in I\), we have that \(d(s, [a,a']) < 2\delta\).
	\begin{proof}
		We may assume that \(I\) is non empty, otherwise the lemma is trivially true. 
		Let \(s \in I\) and consider the diagonal \([b,a']\). By hyperbolicity, \(d(s, [b,a'] \cup [a,a']) < \delta\). 
		If \(d(s, [a,a']) < \delta\), then we would get a contradiction. In fact, let \(m\) be a point in the image of \([a,a']\)  witnessing the distance. 
		Then, by triangular inequality, \(d(m, a') \geq d(s, a') - d(a', m) > 3\delta + K\). But since \(H\) is \(K\)--quasiconvex, there is a point
		\(t\in H\) at distance less than \(K\) from \(s\). Then \(d(m,t) < d(m,a')\) contradicting \(\gamma'\) being a projection geodesic. 
		Hence there is a point \(q \in [b,a']\) with \(d(q, s) < \delta\). The same argument on the triangle \(\gamma [b,a'] [a'a]\) shows that 
		\(d(q, [a',a])< \delta\), and hence \(d(s,[a',a]) < 2\delta\). 
	\end{proof}
\end{lemma}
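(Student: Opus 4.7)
The plan is to decompose the geodesic quadrilateral into two triangles by a diagonal and then use \(\delta\)-slimness of each triangle in concert with the projection property of the sides \(\gamma,\gamma'\) to force the witnessing points to land on the opposite side \([a',a]\). I would cut along the diagonal \([b,a']\), obtaining the triangles \(T_1=\gamma\cup[b,a']\cup[a',a]\) and \(T_2=[b,b']\cup\gamma'\cup[b,a']\).

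Fix \(s\in I\) and apply \(\delta\)-slimness of \(T_2\) to the side \([b,b']\): there exists \(m\in\gamma'\cup[b,a']\) with \(d(s,m)<\delta\). To rule out \(m\in\gamma'\), I would use two standard facts. First, since \(b,b'\in H\) and \(H\) is \(K\)-quasiconvex, there is \(t\in H\) with \(d(s,t)\leq K\), so \(d(m,t)<\delta+K\). Second, every point on a projection geodesic realises its distance to \(H\) at the endpoint in \(H\), so if \(m\in\gamma'\) then \(d(m,H)=d(m,b')\geq d(s,b')-d(s,m)>3\delta+K\), contradicting \(d(m,t)<\delta+K\). Hence \(m\) lies on the diagonal; set \(q:=m\in[b,a']\).

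Now apply \(\delta\)-slimness of \(T_1\) to the side \([b,a']\) containing \(q\): there is \(n\in\gamma\cup[a',a]\) with \(d(q,n)<\delta\). The same projection-plus-quasiconvexity argument rules out \(n\in\gamma\): the triangle inequality gives \(d(s,n)<2\delta\), so \(d(n,t)<2\delta+K\), while if \(n\in\gamma\) then \(d(n,H)=d(n,b)\geq d(s,b)-d(s,n)>2\delta+K\), a contradiction. Therefore \(n\in[a',a]\), and one last triangle inequality yields \(d(s,[a',a])\leq d(s,q)+d(q,n)<2\delta\), as required.

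The main difficulty is bookkeeping the constants: the threshold \(4\delta+K\) has to be chosen large enough to survive one \(\delta\)-jump (for \(m\)) \emph{and} two \(\delta\)-jumps (for \(n\)) while still strictly dominating the quasiconvexity witness, and this is exactly what makes \(4\delta+K\) the sharp choice. Minor care is also needed because projections are only defined up to a small \(\epsilon\) slack; this can be absorbed by inflating \(K\) slightly and does not affect the structure of the argument.
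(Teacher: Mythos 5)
Your proof is correct and takes essentially the same route as the paper's: cut the quadrilateral along the diagonal \([b,a']\), apply \(\delta\)--slimness to the two resulting triangles, and rule out the projection-geodesic sides \(\gamma'\) and \(\gamma\) by combining \(K\)--quasiconvexity of \(H\) (giving the witness \(t\) near \(s\)) with the fact that a point on a projection geodesic realizes its distance to \(H\) at the endpoint lying in \(H\). The only difference is cosmetic: you spell out the constant bookkeeping in the second slimness step, which the paper compresses into ``the same argument.''
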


\begin{lemma}[Closest point projections are quasi-Lipschitz]\label{projection q.Lips} 
Let \(X\) be a \(\delta\)--hyperbolic space and \(H\) a \(K\)--quasiconvex subspace. Then there exists \(\rho = \rho (K,\delta)\) such that the map \(p_H\) is \((1, \rho)\)--quasi-Lipschitz.
	\begin{proof}
		Consider \(x,y\) in \(X\), and their projections \(p_H (x)\), \(p_H(y)\) on \(H\). 
		Pick points \(p \in p_H (x)\), \(q \in p_H(y)\) realizing \(d(p_H (x), p_H (y))\). 
		Consider the sub-interval \(I\) of a geodesic \([p,q]\) consisting of points that have distance at least \(4\delta + K\) from both \(p\) and \(q\). 			A hyperbolic quadrilateral argument (Lemma \ref{quadrilateral}) shows that for each \(s \in I\), \(d(s, [x,y]) < 2\delta\). In particular, if we 
		consider the endpoints of \(I\) and choose points \(u,v \in [x,y]\) realizing the minimal distance, we get \(|L(I) - d(u,v)| \leq 4\delta\). 
		In particular \( d(p_H(x), p_H(y)) \leq d(x,y) + 12\delta + 2K\). 
	\end{proof}
\end{lemma}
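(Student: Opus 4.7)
The plan is to read the statement as a direct consequence of the just-established Quadrilateral Argument (Lemma \ref{quadrilateral}). Given $x, y \in X$, I would choose representatives $p \in p_H(x)$ and $q \in p_H(y)$ realizing the set-distance $d(p_H(x), p_H(y))$, so that $[x,p]$ and $[y,q]$ are projection geodesics for $H$ in the sense of the preceding definition. These two projection geodesics, together with $[p,q]$ and $[x,y]$, form precisely a geodesic quadrilateral of the shape required by Lemma \ref{quadrilateral} (with $a=x, b=p, b'=q, a'=y$).

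The second step is to extract a length bound on $[p,q]$ from the shadowing provided by the quadrilateral argument. Let $I$ be the sub-segment of $[p,q]$ consisting of points at distance at least $4\delta + K$ from both endpoints. If $I$ is empty then $d(p,q) \leq 8\delta + 2K$ and the conclusion is immediate; otherwise Lemma \ref{quadrilateral} gives that every $s \in I$ satisfies $d(s,[x,y]) < 2\delta$. Choosing witnesses $u, v \in [x,y]$ within $2\delta$ of the two endpoints of $I$ and applying the triangle inequality yields $L(I) \leq d(u,v) + 4\delta \leq d(x,y) + 4\delta$. Combining with the trivial bound $L(I) \geq d(p,q) - 2(4\delta + K)$ and rearranging gives
\[ d(p_H(x), p_H(y)) \;=\; d(p,q) \;\leq\; d(x,y) + 12\delta + 2K, \]
so one may take $\rho = 12\delta + 2K$.

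There is no substantial obstacle here: the entire geometric content is already packaged in the Quadrilateral Argument, and the only care needed is the mild bookkeeping that $I$ may be empty (covered by the trivial bound $d(p,q)\le 8\delta+2K$) and that the nearest-point witnesses on $[x,y]$ for the endpoints of $I$ combine with the triangle inequality in the right direction. In particular, the constant $\rho$ depends only on $K$ and $\delta$, exactly as required.
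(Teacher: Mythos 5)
Your proof is correct and follows the same route as the paper: the same quadrilateral argument applied to the projection geodesics $[x,p]$, $[y,q]$, the same sub-interval $I$ of $[p,q]$, and the same bookkeeping yielding $\rho = 12\delta + 2K$. The only difference is that you explicitly handle the case where $I$ is empty, which the paper leaves implicit.
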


\begin{corollary}\label{nbhd inclusion of proj}
	Let \(X\) be \(\delta\)--hyperbolic and \(H \subset X\) be \(K\)--quasiconvex. Then for each \(R\) there exists \(S= S(R, K, \delta)\) such that for each 	
	quasiconvex \(Y \subseteq X\), we have \(p_H(N_R(Y)) \subseteq N_S (p_H (Y))\). 
	\begin{proof}
		This is a easy consequence of the fact that the map \(p_H\) is quasi-Lipschitz.
		In fact, let \(x \in N_R(Y)\), and let \(y \in Y\) such that \(d(x,y) < R\). 
		Then, by Lemma \ref{projection q.Lips}, \(d(p_H(x), p_H(y)) < R + 12\delta + 2K\).
		Since \(\mathrm{diam}(p_H (x))\) is uniformly bounded, we get the claim.
	\end{proof}
\end{corollary}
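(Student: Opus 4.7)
The plan is to exploit the quasi-Lipschitz property of closest point projection in hyperbolic spaces, which was established in Lemma \ref{projection q.Lips}, and to note that the diameter of the projection of a single point is itself uniformly bounded. So the whole argument will reduce to choosing any representative projection for a near-by point in $Y$.

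More concretely, given $x \in N_R(Y)$, I would first pick $y \in Y$ with $d(x,y) \leq R$. By Lemma \ref{projection q.Lips}, the Hausdorff distance between $p_H(x)$ and $p_H(y)$ is controlled: any chosen point $p \in p_H(x)$ and $q \in p_H(y)$ realizing the shortest pairwise distance satisfy $d(p,q) \leq d(x,y) + 12\delta + 2K \leq R + 12\delta + 2K$. Since $q \in p_H(y) \subseteq p_H(Y)$, this already places one point of $p_H(x)$ within $R + 12\delta + 2K$ of $p_H(Y)$.

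The only remaining issue is to pass from \emph{one} point of $p_H(x)$ being close to $p_H(Y)$ to \emph{every} point of $p_H(x)$ being close. This is handled by the standard observation that in a $\delta$-hyperbolic space, the closest-point projection of a single point onto a $K$-quasiconvex set has uniformly bounded diameter, say bounded by some constant $\Delta = \Delta(K,\delta)$ (again an application of the quadrilateral/thin triangle argument, comparing two candidate projections). Combining, every point of $p_H(x)$ lies within $R + 12\delta + 2K + \Delta$ of $p_H(Y)$, so we may take
\[
S(R,K,\delta) = R + 12\delta + 2K + \Delta(K,\delta).
\]

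I do not expect any serious obstacle: everything follows mechanically from Lemma \ref{projection q.Lips} together with the bounded-projection-diameter fact, both of which are immediate consequences of the quadrilateral argument (Lemma \ref{quadrilateral}). Note that the quasiconvexity of $Y$ is actually not used in the argument at all; only the fact that a point of $Y$ lies within $R$ of $x$ matters.
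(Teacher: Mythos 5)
Your proposal is correct and follows essentially the same route as the paper: apply the quasi-Lipschitz property of $p_H$ (Lemma \ref{projection q.Lips}) to a nearby point $y \in Y$, then use the uniform bound on $\mathrm{diam}(p_H(x))$ to promote closeness of one projection point to closeness of the whole set. Your side remark that the quasiconvexity of $Y$ plays no role is accurate and consistent with the paper's argument.
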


\subsection{Behrstock's inequalities}\label{subsec: Behrstock's inequalities}

\begin{prop}\label{first B. inequality}
Let \(X\) be a \(\delta\)--hyperbolic space and \(V,W\) be \(K\)--quasiconvex subspaces of \(X\).
Then there exists \(\kappa_1= \kappa_1 (K, \delta)\) such that for each \(x\) in \(X\)
\[\mathrm{min} \{d(p_V (x), p_V (W)), d(p_W(x), p_W(V))\} \leq \kappa_1.\]
	\begin{proof}
		Suppose that \(d(p_W(x), p_W(V)) > 8\delta + 2K\). We claim that this will imply a uniform bound on \(d(p_V(x), p_V(W))\).
		Choose points \(a\in p_V (x)\),  \(b \in p_W(a) \subseteq p_W(V)\) and \(c\in p_W (x)\). By assumption, \(d(b,c) > 8\delta +2K\).
		Consider a geodesic quadrilateral between \(x,a,b,c\). Observe that \([x,a], [x,c]\) and \([a,b]\) are projection geodesics.
		Since \(L([b,c]) > 8\delta + 2K\), a quadrilateral argument (Lemma \ref{quadrilateral}) gives that there exists a (non-empty) sub-interval 
		\(I\) of \([b,c]\)
		with \(d(I, [x,a]) < 2\delta\). Quasiconvexity of \(W\) gives \( [x,a] \cap N_{K+2\delta +1} (W) \neq \emptyset\).
		Since \([x,a]\) is a projection geodesic for \(V\), we have that \(p_V (x) \cap (N_{K + 2\delta + 1} (W))\neq \emptyset\). 
		Lemma \ref{nbhd inclusion of proj} gives a uniform \(\kappa'= \kappa'(K + 2\delta +1,K,\delta)= \kappa' (K, \delta)\) such that
		\(p_V (N_{K+ 2\delta + 1} (W)) \subseteq N_\kappa (p_V (W))\). In particular, \(p_V (x) \cap N_\kappa (p_V (W))\neq \emptyset\). 
		Since \(\kappa_1 =\mathrm{max}\{\kappa', 8\delta + 2K\}\) depends only on \(K\) and \(\delta\), we get the claim.
	\end{proof}
\end{prop}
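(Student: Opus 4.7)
The plan is to prove the proposition by a dichotomy argument: assume the right-hand quantity \(d(p_W(x), p_W(V))\) exceeds \(8\delta + 2K\), and extract a uniform bound on \(d(p_V(x), p_V(W))\) depending only on \(K\) and \(\delta\). To set this up, I would choose specific representatives \(a \in p_V(x)\), \(c \in p_W(x)\), and \(b \in p_W(a) \subseteq p_W(V)\), noting that the working hypothesis forces \(d(b, c) > 8\delta + 2K\). This is precisely the threshold appearing in the Quadrilateral Lemma \ref{quadrilateral}.

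The second step is to apply the Quadrilateral Lemma to the geodesic quadrilateral \(x, a, b, c\) with \(H = W\). The sides \([x, c]\) and \([a, b]\) are both projection geodesics onto \(W\), so the lemma produces a non-empty sub-interval \(I\) of \([b, c]\) such that each \(s \in I\) satisfies \(d(s, [x, a]) < 2\delta\). Since the endpoints of \([b, c]\) lie in \(W\) and \(W\) is \(K\)-quasiconvex, \(I \subseteq N_K(W)\), and consequently there is a point \(y \in [x, a]\) with \(d(y, W) < 2\delta + K\).

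The last step is to transfer this geometric information into a bound on \(d(p_V(x), p_V(W))\). On the one hand, Corollary \ref{nbhd inclusion of proj} applied to \(y \in N_{2\delta+K}(W)\) gives \(p_V(y) \subseteq N_{\kappa'}(p_V(W))\) for some \(\kappa' = \kappa'(K,\delta)\). On the other hand, since \([x, a]\) is a projection geodesic for \(V\) ending at \(a\), for any \(a' \in p_V(y)\) we have \(d(x, a') \leq d(x, y) + d(y, a') \leq d(x, y) + d(y, a) + \epsilon = d(x, a) + \epsilon\), so \(a'\) is nearly a closest point of \(V\) to \(x\) and hence lies within a uniform distance of \(a\). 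Concatenating these two estimates yields the desired constant \(\kappa_1 = \kappa_1(K, \delta)\).

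The main delicate point I expect is precisely this last transfer: verifying that a nearly-minimizing point of \(V\) for \(x\) is genuinely close to the chosen \(a \in p_V(x)\). This is really the coarse well-definedness of closest-point projection onto a quasiconvex subset of a hyperbolic space, implicitly the same fact that underlies Lemma \ref{projection q.Lips}; the bookkeeping of the \(\epsilon\)-projection slack against \(\delta\) and \(K\) is where all the numerical work concentrates.
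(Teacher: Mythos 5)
Your proposal is correct and follows essentially the same route as the paper: the same dichotomy, the same application of the Quadrilateral Lemma \ref{quadrilateral} to the quadrilateral \(x,a,b,c\) with the projection geodesics \([x,c]\) and \([a,b]\), and the same use of Corollary \ref{nbhd inclusion of proj} to land near \(p_V(W)\). Your final transfer step (projecting the point \(y\in[x,a]\cap N_{2\delta+K}(W)\) to \(V\) and invoking coarse uniqueness of near-closest points to conclude \(p_V(y)\) is uniformly close to \(a\)) is just a more explicit justification of the paper's terse assertion that, \([x,a]\) being a projection geodesic for \(V\), the projection \(p_V(x)\) inherits the proximity to \(W\).
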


\begin{prop}\label{second B. inequality}
Let \(X\) be a \(\delta\)--hyperbolic space and \(V \subseteq W\) be \(K\)--quasiconvex subspaces of \(X\). Then 
\[\mathrm{diam} (p_V(x) \cup p_V(p_W (x))) \leq 12\delta + 4K.\]
	\begin{proof}
		Let \(a,b \in p_V(x) \cup p_V(p_W (x))\) be witnessing the diameter. We can assume that \(a \in p_V(x)\) and \(b \in p_V (p_W(x))\), 
		otherwise we would have a uniform bound since projections are quasi-Lipschitz. We claim that \(d(a,b) \leq 12\delta +4K\). 
		Suppose that it is not the case.
		Let \(c\in p_W(x)\) be such that \(b \in p_V(c)\). Consider the quadrilateral \(a,b,c,x\). By assumption, there is a point 
		\(s \in [a,b]\) such that \(d(s,a)> 4\delta +K\) and \(d(s,b)> 8\delta + 3K\). 
		Since \([x,a]\) and \([c,b]\) are projection geodesics onto \(V\), we have that \(d(s, [x,c]) < 2\delta\). Let \(m\) be a point on \([x,c]\)
		witnessing the distance. Since \(V \subseteq W\) and \(c \in p_W (m)\), we have that \(d(m,c) < 2\delta + K\).
		By triangular inequality, \(d(c,b) < 4\delta + 2K\).
		Again by triangular inequality, we get that \(d(b,s) < 8\delta +3K\), obtaining a contradiction. 
	\end{proof}
\end{prop}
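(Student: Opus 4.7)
The plan is to follow the same contradiction template used in the proofs of Lemma \ref{projection q.Lips} and Proposition \ref{first B. inequality}, exploiting the containment \(V \subseteq W\) in the final step. Fix \(a \in p_V(x)\) and \(b \in p_V(p_W(x))\) realizing the diameter; the cases where both points come from the same projection set are handled by Lemma \ref{projection q.Lips} (or by the uniform bound on the diameter of a single projection set), so we may assume \(b \in p_V(c)\) for some \(c \in p_W(x)\). Suppose for contradiction that \(d(a,b) > 12\delta + 4K\). Then \([a,b]\) contains a point \(s\) with \(d(s,a) > 4\delta + K\) and \(d(s,b) > 8\delta + 3K\).

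Now I would consider the geodesic quadrilateral on the vertices \(x, a, b, c\). The sides \([x,a]\) and \([c,b]\) are both projection geodesics for \(V\), so the quadrilateral argument (Lemma \ref{quadrilateral}) applies: since \(d(s,a) > 4\delta + K\) and \(d(s,b) > 4\delta + K\), we obtain a point \(m \in [x,c]\) with \(d(s,m) < 2\delta\). This is the configuration that will be squeezed by the hypothesis \(V \subseteq W\).

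The key step is to transfer information from \(V\) to \(W\). Since \([a,b]\) is a geodesic with endpoints on \(V\), quasiconvexity gives \(d(s,V) \leq K\); using \(V \subseteq W\) this upgrades to \(d(m,W) \leq d(m,s) + d(s,W) < 2\delta + K\). But \(c \in p_W(x)\) and \(m \in [x,c]\), so \(c\) also realizes the projection of \(m\) onto \(W\) (projection geodesics are monotone in this sense), hence \(d(m,c) = d(m,W) < 2\delta + K\). Combined with \(d(s,m) < 2\delta\), this yields \(d(s,c) < 4\delta + K\), and since \(b \in p_V(c)\) while \(d(s,V) \leq K\), also \(d(b,c) = d(c,V) \leq d(c,s) + d(s,V) < 4\delta + 2K\). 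Triangle inequality then forces \(d(s,b) < 8\delta + 3K\), contradicting the choice of \(s\).

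The main obstacle is step three, the passage from \(V\) to \(W\) along the diagonal \([x,c]\): one must be sure that on a projection geodesic for \(W\), the \emph{only} way for an interior point to be close to \(W\) is to be close to the endpoint \(c\). This monotonicity of closest-point projections along geodesics is the crucial input, and it is exactly where the hypothesis \(V \subseteq W\) is used; without it one would at best get a bound depending on the projection distance between \(V\) and \(W\) rather than the universal constant \(12\delta + 4K\).
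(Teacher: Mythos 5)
Your proposal is correct and follows essentially the same argument as the paper: the same quadrilateral configuration on \(x,a,b,c\) with \([x,a]\) and \([c,b]\) as projection geodesics for \(V\), the same point \(s\) with \(d(s,a)>4\delta+K\) and \(d(s,b)>8\delta+3K\), and the same use of \(V\subseteq W\) via the fact that \(c\) realizes the projection of any point of \([x,c]\) onto \(W\), yielding the identical contradiction \(d(s,b)<8\delta+3K\).
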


\subsection{Coning-off}\label{subsec:coning-off}

\begin{Def}[Coning-off]
	Let \(\Gamma\) be a graph, and \(H\) be a connected subgraph of \(\Gamma\). 
	The \emph{cone-off} of \(\Gamma\) with respect to \(H\), denoted by \(\widehat{\Gamma}\),
	is the graph obtained from \(\Gamma\) adding 
	an edge connecting each pair of vertices in \((H \times H) -\Delta_{H\times H}\), where \(\Delta_{H\times H}\) denotes the diagonal.
	We call the edges added in such a way \emph{\(H\)--components}.
	Similarly, the cone-off with respect to a family of connected subgraphs \(\H=\{H_i\}\) is obtained adding the 
	\(H_i\)--components for each \(H_i \in \H\). 
	An edge is an \(\H\)--component if it is a \(H_i\)--component for some \(H_i \in \H\).
\end{Def}

Given a graph \(\Gamma\) and its cone-off \(\widehat{\Gamma}\) with respect to some family \(\H\), we have that \(V(\Gamma) = V(\widehat{\Gamma})\).
In particular, if we regard \(\Gamma\) and \(\widehat{\Gamma}\) as metric spaces, this implies that there is a distance-non-increasing bijection \(i \colon \Gamma \rightarrow \widehat{\Gamma}\).
To simplify notation, we will identify points of \(\Gamma\) and \(\widehat{\Gamma}\) and simply write \(d(x,y)_\Gamma \geq d_{\widehat{\Gamma}}(x,y)\).

The goal of this section is to understand the relation between \(\Gamma\) and \(\widehat{\Gamma}\) through the bijection \(i\). The first observation is that \(i\) sends paths of \(\Gamma\) to paths of \(\widehat{\Gamma}\), but the converse does not hold. In order to solve this, we introduce a standard way to obtain a path in \(\Gamma\) from a path in \(\widehat{\Gamma}\).

\begin{notation}
Recall that for a geodesic metric space \(X\) and points \(x,y \in X\) we denote with \([x,y]\) a geodesic in \(X\) connecting \(x\) and \(y\). When working with a graph \(\Gamma\) and its cone-off \(\widehat{\Gamma}\), we denote with \([x,y]\) a geodesic in the space \(\Gamma\) (and not in \(\widehat{\Gamma}\)).
\end{notation}

\begin{Def}[Pieces]
	Let \(\eta= \eta_1 * \cdots * \eta_n\) be a concatenation of geodesic segments. 
	Then we call each of the non-trivial \(\eta_i\) a \emph{piece} of \(\eta\).  
\end{Def}

Note that, in general,  a path \(\eta\) can be expressed in different ways as the concatenations of its geodesic sub segments. For this reason, the definition of piece depends on the chosen subdivision of the path \(\eta\).

\begin{Def}[De-electrifications]
	Let \(\widehat{\Gamma}\) be the cone-off of a graph \(\Gamma\) with respect to a family of subgraphs \(\H\). Let 
	\(\gamma = u_1 *e_1 * \cdots * e_n * u_{n+1}\) be a path of \(\widehat{\Gamma}\), where the \(e_i\) are \(\H\)--components
	and the \(u_i\) are (possibly trivial) segments of \(\Gamma\). 
	The \emph{total de-electrification} (or simply de-electrification) \(\widetilde{\gamma}\)
	of \(\gamma\) is the concatenation \(u_1 * \eta_1 * \cdots * \eta_n *u_{n+1}\) where each \(\eta_i\) is a geodesic segment of \(\Gamma\) connecting 
	the endpoints of \(e_i\). If \(e_i\) was an \(H\)--component, we say that \(\eta_i\) is an \emph{\(H\)--piece}.
	A piece of \(\widetilde{\gamma}\) is an \emph{\(\H\)--piece} if it is an \(H\)--piece for some \(H \in \H\). 
\end{Def}
Even though the definition of de-electrification makes formal sense for any family of subgraphs \(\H\), in practice we will be interested in the case of  the elements of \(\H\) being quasiconvex. Indeed, this will guarantee that for each \(H\)--component \(e\), the corresponding geodesic segment \(\eta\) will be coarsely contained in \(H\). Without this property (or maybe some different property of the same flavor), there is almost no relation between the original path and the de-electrification of it. However, it is possible to establish at least some mild result about the combinatorial properties of de-electrifications.
\begin{lemma}[Pigeonhole for cone offs]\label{Pigeonhole for cone-offs}
Let \(\Gamma\) be a graph and \(\widehat{\Gamma}\) be the cone off with respect to a family of graphs \(\H\). 
Then for each \(\theta\) there exists a \(T= T(\theta)\) such that if \(d(x,y)_\Gamma \geq T\), then for each 
\(\widehat{\Gamma}\)--path \(\gamma\) connecting \(x\) and \(y\), either \(L_{\widehat{\Gamma}}(\gamma)\geq \theta\) or 
\(\widetilde{\gamma}\) has a \(\H\)--piece that has \(\Gamma\)--length greater or equal \(\theta\).
	\begin{proof}
		We can assume that \(\theta>1\).
		We claim that \(T={2\theta^2}\) does the job. 
		Consider \(x,y\) with \(d(x,y)_\Gamma = T_0 \geq T\), and fix a \(\widehat{\Gamma}\)--path \(\gamma\) between them.
		Assume that \(L_{\widehat{\Gamma}}(\gamma) = \epsilon < \theta\), and let \(P\) be the number of \(\H\)--components of \(\gamma\).
		Consider a de-electrification \(\widetilde{\gamma}\) of \(\gamma\) and let
		\(A = \sum_P L(u_i)_\Gamma\), where the \(u_i\) are the \(\H\)--pieces of \(\widetilde{\gamma}\) and \(B= L(\widetilde{\gamma})-A\).
		Then we have that \(A+B = T_0\) and \(B + P= \epsilon\). Hence \(A = P + T_0 - \epsilon\). Since \(A\) is the sum of \(P\) terms, 
		we have that there is at lest one that has value greater or equal \(1 + \frac{1}{P}(T_0-\epsilon)\). 
		Since \(P \leq \epsilon < \theta\), we get 
		\begin{align*}
		1 + \frac{1}{P}(T_0-\epsilon) \geq 1 + \frac{1}{P}(2\theta^2-\epsilon) \geq 1 + \frac{2\theta^2-\theta}{\theta}\geq \theta.		
		\end{align*}
	\end{proof}
\end{lemma}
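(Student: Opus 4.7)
The plan is a straightforward pigeonhole / double-counting argument. Given $\theta > 1$ and a $\widehat{\Gamma}$-path $\gamma$ from $x$ to $y$ whose $\widehat{\Gamma}$-length $\epsilon$ is less than $\theta$, I would write $\gamma = u_1 * e_1 * \cdots * e_P * u_{P+1}$ where the $e_i$ are the $\H$-components and the $u_j$ are (possibly trivial) $\Gamma$-segments, and let $\widetilde{\gamma}$ be its de-electrification, replacing each $e_i$ by a $\Gamma$-geodesic piece $\eta_i$. The two quantities to keep track of are $A := \sum_{i=1}^{P} L_\Gamma(\eta_i)$ (the total $\Gamma$-length of $\H$-pieces) and $B := \sum_j L_\Gamma(u_j)$.

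The two accounting identities that drive the proof are: first, since each $\H$-component has $\widehat{\Gamma}$-length exactly $1$ and the non-piece segments have the same length in $\Gamma$ and $\widehat{\Gamma}$, we get $B + P = \epsilon$; and second, since $\widetilde{\gamma}$ is a $\Gamma$-path from $x$ to $y$, we have $A + B \geq d_\Gamma(x,y) \geq T$. Subtracting yields $A \geq T - \epsilon + P$. Since $\gamma$ has $\widehat{\Gamma}$-length $\epsilon$, we also have $P \leq \epsilon < \theta$.

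I would then choose $T = T(\theta)$ large enough (say $T = 2\theta^{2}$) so that pigeonhole on the $P$ pieces summing to $A$ produces at least one $\H$-piece $\eta_{i_0}$ with
\[
L_\Gamma(\eta_{i_0}) \;\geq\; \frac{A}{P} \;\geq\; 1 + \frac{T - \epsilon}{P} \;\geq\; 1 + \frac{2\theta^{2} - \theta}{\theta} \;=\; 2\theta \;\geq\; \theta,
\]
using $P \leq \epsilon < \theta$ in the second inequality. This gives the desired $\H$-piece of $\Gamma$-length at least $\theta$, contradicting the assumption that no such piece exists.

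There is no real obstacle here; the only mild subtlety is making sure one uses the correct direction of the inequality $L_\Gamma(\widetilde{\gamma}) \geq d_\Gamma(x,y)$ rather than an equality (the $u_j$'s and $\eta_i$'s need not concatenate into a $\Gamma$-geodesic), and correctly identifying that each $\H$-component contributes exactly $1$ to $L_{\widehat{\Gamma}}(\gamma)$ — without this the counting breaks. I would also handle the trivial case $\theta \leq 1$ separately by noting the statement is vacuous or immediate.
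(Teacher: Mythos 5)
Your proof is correct and follows essentially the same double-counting pigeonhole argument as the paper: the identities $B+P=\epsilon$, $P\leq\epsilon<\theta$, and the bound $A\geq T-\epsilon+P$ with $T=2\theta^2$ yield an $\H$--piece of $\Gamma$--length at least $\theta$. Your remark that one should use $L_\Gamma(\widetilde{\gamma})\geq d_\Gamma(x,y)$ rather than an equality is a small but genuine refinement of the paper's write-up (which states $A+B=T_0$), and it does not change the argument.
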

It is easily seen that the de-electrification of a geodesics of \(\widehat{\Gamma}\) consists of a concatenation of geodesic segments in \(\Gamma\). We will now record a basic fact about concatenations of geodesic segments. 
\begin{lemma}\label{hyperbolic polygon}
		Let \(X\) be a \(\delta\)--hyperbolic space, \(\eta\) be a geodesic segment of \(X\) and \(\sigma\) the concatenation of \(n\) geodesic segments.
		If \(\sigma \cap N_{(n+1)\delta}(\eta) = \emptyset\), then \(\mathrm{diam}(p_\eta (\sigma)) < 8\delta\).
	\begin{proof}	
		Let \(a,b \in p_\eta(\sigma)\) be two points realizing the diameter, and let \(a',b'\) be preimages in \(\sigma\). 
		Then \(\sigma_{|_{[a',b']}} * [a',b']\) is an \(m\)--gon, with \(m \leq n+1\). In particular, we get that \([a',b']\) is contained
		in the \((n-1)\delta\)--neighborhood of \(\sigma\), and hence \(N_{2\delta} \cap [a',b'] = \emptyset\). A quadrilateral argument gives the claim.
	\end{proof}
\end{lemma}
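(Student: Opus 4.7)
The plan is to pick $a, b \in p_\eta(\sigma)$ realizing the diameter, with preimages $a', b' \in \sigma$ satisfying $a \in p_\eta(a')$, $b \in p_\eta(b')$, and derive a contradiction whenever $d(a,b) \geq 8\delta$. The two ingredients are (a) a thin-polygon estimate showing that the geodesic $[a', b']$ stays close to $\sigma|_{[a', b']}$, and hence uniformly far from $\eta$; and (b) the quadrilateral argument of Lemma \ref{quadrilateral}, applied with $H = \eta$, which forces a point of $\eta$ to lie close to $[a', b']$ as soon as $d(a,b)$ is large. These two conclusions will be incompatible.

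For step (a), the closed path obtained by concatenating $\sigma|_{[a', b']}$ (at most $n$ geodesic pieces) with the geodesic $[a', b']$ is an $m$-gon with $m \leq n+1$. Iterating $\delta$-thinness of triangles, each side of an $m$-gon lies in the $(m-2)\delta$-neighborhood of the union of the other sides, so $[a', b'] \subseteq N_{(n-1)\delta}(\sigma|_{[a',b']})$. Combined with the hypothesis $\sigma \cap N_{(n+1)\delta}(\eta) = \emptyset$ and the triangle inequality, this gives $[a', b'] \cap N_{2\delta}(\eta) = \emptyset$.

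For step (b), consider the quadrilateral with sides $[a, a']$, $[a', b']$, $[b', b]$, and the sub-segment $[b, a] \subseteq \eta$; here $[a, a']$ and $[b, b']$ are projection geodesics onto $\eta$, and $\eta$ is $0$-quasiconvex as a geodesic. Assuming for contradiction $d(a, b) \geq 8\delta$, Lemma \ref{quadrilateral} with $K = 0$ produces a point $s \in [a,b] \subseteq \eta$ with $d(s, [a', b']) < 2\delta$, contradicting step (a). Hence $d(a,b) < 8\delta$, as desired. The main step to get right is (a): one must confirm the $m$-gon uses only $\sigma|_{[a',b']}$ closed up by $[a',b']$ (the projection geodesics are \emph{not} part of it), so that the $(m-2)\delta$ thin-polygon bound exactly matches the $(n+1)\delta$ separation hypothesis, leaving the precise $2\delta$ buffer that the quadrilateral argument in (b) can then exploit.
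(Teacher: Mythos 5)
Your proof is correct and follows essentially the same route as the paper: close up $\sigma|_{[a',b']}$ with $[a',b']$ into an $m$-gon ($m\leq n+1$), use thinness to place $[a',b']$ in the $(n-1)\delta$-neighborhood of $\sigma$ and hence outside $N_{2\delta}(\eta)$, then invoke the quadrilateral argument (Lemma \ref{quadrilateral}) with the two projection geodesics to force a point of $\eta$ within $2\delta$ of $[a',b']$ if $d(a,b)\geq 8\delta$. The only difference is that you spell out the contradiction and the constant bookkeeping that the paper leaves implicit.
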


\begin{prop}\label{19 pieces}
	Let \(\Gamma\) be a \(\delta\)--hyperbolic graph, \(\H\) a family of connected subgraphs and \(\widehat{\Gamma}\) the cone-off of \(\Gamma\) 
	with respect to \(\H\).
	Then there exist \(D', p\) depending only on \(\delta\) such that the following holds.
	For each pair of points \(x,y\) of \(\Gamma\), for each geodesic \(\gamma\) of \(\widehat{\Gamma}\) connecting \(x\) and \(y\) and for each geodesic \([x,y]\)
	of \(\Gamma\) connecting \(x\) and \(y\), every connected component of \(\widetilde{\gamma} - N^\Gamma_{D'}([x,y])\) has at most \(p\) pieces. 
\begin{proof}
	To simplify notation, throughout this proof we will drop the superscript \(\Gamma\) and assume that \(N_D(Y)\) denotes the 
	\(D\)--neighborhood of \(Y\) with respect to the metric of \(\Gamma\). 
	Let \(\xi = 8\delta +1\), \(D'= \delta(\xi +1)\), and suppose that \(\widetilde{\gamma}\) leaves the \(D'\)--neighborhood of \([x,y]\).
	Let \(a,b\)	be the endpoints of one of the connected components of \(\widetilde{\gamma} - N_{D'}([x,y])\). To simplify notation
	let \(\sigma = \widetilde{\gamma}_{|_{[a,b]}}\). Let \(P\) be the number of pieces of \(\sigma\), and let \(q\) be the maximal integer 
	such that \(P= q\xi+ r\), for some \(r< \xi\). 
	We subdivide the concatenation \(\sigma\) into sub-concatenations \(\sigma_i\) 
	such that:
	\begin{rules}
		\item each \(\sigma_i\) is the concatenation of consecutive pieces of \(\sigma\);
		\item each \(\sigma_i\) contains at most \(\xi\) pieces of \(\sigma\);
		\item the subdivision is chosen in such a way that the number of \(\sigma_i\) is minimal (in fact, it is at most \(q+1\)).
		\end{rules}
	To simplify notation, let \(q\leq Q\leq q+1\) be the number of the sub-concatenations \(\sigma_i\).
	Our goal is to give a uniform bound on \(Q\) and hence on the number of pieces.
	Let \(a', b'\) be closest point projections in \(\Gamma\) of \(a\) and \(b\) on \([x,y]\).
	We want to argue that \(L( [a,a'] * [a',b'] *[b', b])_{\Gamma} < L( \gamma_{|_{[a,b]}})_{\widehat{\Gamma}}\). 
	In particular, this will imply that \([a,a']*[a',b']*[b',b]\) is a short cut for \(\gamma\) in \(\widehat{\Gamma}\). 
	Note that \(L([a,a'])_\Gamma \leq D'\) and  \(L([b,b'])_\Gamma \leq D'\). Moreover, 
	\(L([a',b'])_\Gamma \leq \sum_{i=1}^{Q} \mathrm{diam}_\Gamma (p_{[x,y]}(\sigma_i))\). 
	Note, however, that Lemma \ref{hyperbolic polygon} guarantees that for each \(i\leq Q\), 
	\(\mathrm{diam}_\Gamma (p_{[x,y]}(\sigma_i)) < 8\delta < \xi - 1\).
	Hence we have that \(L([a,a'] * [a',b'] *[b', b])_\Gamma < 2\delta (\xi +1) + Q(\xi-1)\). 
	On the other hand, since \(\sigma\) has \(P\) pieces, we have that \(L(\sigma)_{\widehat{\Gamma}} \geq P = Q\xi  + r \geq  Q\xi \). 
	But it is clear that for a large enough  \(Q_0\), the following holds: \(2\delta (\xi +1) + Q_0(\xi-1) < Q_0\xi \). 
	That is, for large enough values of \(Q\), we get a short-cut.
	Hence we get the desired bound on \(Q\), and thus a bound on the maximum number of pieces that \(\sigma\) can have. 
\end{proof}
\end{prop}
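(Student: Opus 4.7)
The strategy is a shortcut-by-projection argument: if a component of $\widetilde{\gamma}$ leaving a sufficiently large neighborhood of $[x,y]$ has too many pieces, then projecting it onto $[x,y]$ produces a path in $\widehat{\Gamma}$ that is strictly shorter than the original subpath of $\gamma$, contradicting $\gamma$ being a $\widehat{\Gamma}$-geodesic.

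More concretely, I would set $\xi := 8\delta + 1$ and $D' := \delta(\xi+1)$. Fix a connected component $\sigma$ of $\widetilde{\gamma} - N_{D'}([x,y])$, with endpoints $a,b$, and let $a' \in p_{[x,y]}(a)$, $b' \in p_{[x,y]}(b)$. Let $P$ be the number of pieces of $\sigma$. Partition the pieces of $\sigma$ into $Q$ consecutive sub-concatenations $\sigma_1,\dots,\sigma_Q$, each containing at most $\xi$ pieces, chosen so $Q$ is minimal (so $Q \leq \lceil P/\xi\rceil$). Since each $\sigma_i$ lies outside $N_{D'}([x,y]) = N_{(\xi+1)\delta}([x,y])$ and consists of at most $\xi$ geodesic segments, Lemma \ref{hyperbolic polygon} gives $\mathrm{diam}_\Gamma(p_{[x,y]}(\sigma_i)) < 8\delta < \xi - 1$ for each $i$. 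Concatenating the projections along $[x,y]$ yields
\[
L_\Gamma([a',b']) \le \sum_{i=1}^{Q} \mathrm{diam}_\Gamma(p_{[x,y]}(\sigma_i)) < Q(\xi - 1),
\]
so the candidate shortcut $[a,a'] \ast [a',b'] \ast [b',b]$ has $\Gamma$-length less than $2D' + Q(\xi - 1)$.

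The second ingredient is a lower bound on the $\widehat{\Gamma}$-length of the corresponding subpath of $\gamma$. Each piece of $\sigma$ is either an $\H$-piece (replacing a single $\H$-component of $\widehat{\Gamma}$-length $1$) or a non-trivial $u_i$ segment of $\Gamma$-length $\ge 1$; in either case, it contributes at least $1$ to $L_{\widehat{\Gamma}}(\gamma|_{[a,b]})$. Hence $L_{\widehat{\Gamma}}(\gamma|_{[a,b]}) \ge P \ge Q\xi$. Since the bijection $\Gamma \to \widehat{\Gamma}$ is distance-non-increasing, the shortcut has $\widehat{\Gamma}$-length at most $2D' + Q(\xi-1)$, and if $Q > 2D'$ the inequality $2D' + Q(\xi - 1) < Q\xi$ produces a strictly shorter path in $\widehat{\Gamma}$ between the endpoints of $\gamma|_{[a,b]}$, contradicting the geodesicity of $\gamma$. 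Therefore $Q \le 2D'$, giving $P \le 2D'\xi =: p$, a quantity depending only on $\delta$.

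The delicate point is the bookkeeping: making sure both the grouping into at most $\xi$-piece subconcatenations and the lower bound $L_{\widehat{\Gamma}}(\gamma|_{[a,b]}) \ge P$ are used consistently, so that the gain $\xi$ per group strictly dominates the cost $\xi - 1$ of projecting each group onto $[x,y]$; the uniform neighborhood radius $D'$ must be chosen exactly so that Lemma \ref{hyperbolic polygon} applies to each $\sigma_i$ of size $\le \xi$. Once these are aligned, no hypothesis on the subgraphs $\H$ is needed beyond their being subgraphs, which fits the statement.
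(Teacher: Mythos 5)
Your proposal is correct and follows essentially the same argument as the paper: the same choice of $\xi = 8\delta+1$ and $D' = \delta(\xi+1)$, the same grouping of $\sigma$ into sub-concatenations of at most $\xi$ pieces so that Lemma \ref{hyperbolic polygon} bounds each projection by $8\delta < \xi-1$, and the same shortcut $[a,a']*[a',b']*[b',b]$ contradicting geodesicity of $\gamma$ in $\widehat{\Gamma}$ once $Q$ is large. Your write-up is in fact slightly more explicit than the paper's at two points: justifying that each piece contributes at least $1$ to $L_{\widehat{\Gamma}}(\gamma|_{[a,b]})$, and extracting the concrete bound $Q \leq 2D'$, $p = 2D'\xi$, where the paper only asserts that a sufficiently large $Q_0$ yields a shortcut.
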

\begin{rmk}
	In the above proof, the quantities \(D', p\) depends on \(\xi\), which, ultimately, only needs to satisfy \(\xi -1 > 8\delta\).
	In particular, the bounds on \(p\) and \(Q\) of the above proof are surely not optimal, and it is possible to minimize and explicitly compute them
	via varying \(\xi\). 
\end{rmk}

The proof of Proposition \ref{19 pieces}, gives us two important corollaries:
\begin{corollary}\label{endpoints sigma}
	In the hypotheses and notations of Proposition \ref{19 pieces}, we have the following: There exist  \(D=D(\delta)\) such that 
	for each connected component \(\sigma\) of \(\gamma - N_D([x,y])\), the distance in \(\Gamma\) between the endpoints of \(\sigma\) is uniformly
	bounded by \(2D + 8\delta\).
	\begin{proof}
		Set \(D=\mathrm{max}\{\delta(p+1),D'\} \), where \(p, D'\) are as in Proposition \ref{19 pieces}. Then we have that each connected component 
		\(\sigma\) of 	\(\gamma - N_D([x,y])\) has at most \(p\) pieces. But then Lemma \ref{hyperbolic polygon} gives that the projection 
		of \(\sigma\) on \([x,y]\) has diameter at most \(8\delta\). Hence, by triangular inequality, the distance between the endpoints of \(\sigma\) 
		is at most \(2D + 8\delta\).
	\end{proof}
\end{corollary}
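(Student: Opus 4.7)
The plan is to show that this is essentially a direct bookkeeping consequence of Proposition \ref{19 pieces} combined with Lemma \ref{hyperbolic polygon}. Proposition \ref{19 pieces} already tells us that any connected component of the de-electrification that strays outside the $D'$-neighborhood of $[x,y]$ consists of at most $p$ pieces. The remaining task is to upgrade this ``bounded number of pieces'' conclusion into a uniform bound on the $\Gamma$-distance between the endpoints of such a component, which is exactly what Lemma \ref{hyperbolic polygon} is designed to do once we push $D$ a little further out from $[x,y]$.

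Concretely, I would set $D = \max\{D',\,(p+1)\delta\}$, where $p$ and $D'$ are the constants produced by Proposition \ref{19 pieces}. Let $\sigma$ be a connected component of $\gamma - N_D([x,y])$, with endpoints $a$ and $b$. Since $D \geq D'$, the same $\sigma$ is also a subset of a connected component of $\gamma - N_{D'}([x,y])$, so Proposition \ref{19 pieces} yields that $\sigma$ is a concatenation of at most $p$ geodesic segments in $\Gamma$. Moreover, by our choice $D \geq (p+1)\delta$, the hypothesis $\sigma \cap N_{(p+1)\delta}([x,y]) = \emptyset$ of Lemma \ref{hyperbolic polygon} is satisfied, with the geodesic segment playing the role of $\eta$ being $[x,y]$.

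Applying Lemma \ref{hyperbolic polygon} then gives $\mathrm{diam}_\Gamma(p_{[x,y]}(\sigma)) < 8\delta$. To conclude, I would pick projections $a' \in p_{[x,y]}(a)$ and $b' \in p_{[x,y]}(b)$. Since $a$ and $b$ are endpoints of a connected component of the complement of $N_D([x,y])$, each of them lies essentially on the boundary of that neighborhood, so $d_\Gamma(a, a') \leq D$ and $d_\Gamma(b, b') \leq D$ (up to an additive constant coming from the discreteness of the graph, which can be absorbed in the choice of $D$). The triangle inequality then yields
\[
d_\Gamma(a, b) \leq d_\Gamma(a, a') + d_\Gamma(a', b') + d_\Gamma(b', b) \leq D + 8\delta + D = 2D + 8\delta,
\]
which is the desired bound.

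There is no real obstacle here; the corollary is a clean packaging of Proposition \ref{19 pieces} plus Lemma \ref{hyperbolic polygon}. The only point requiring a small amount of care is choosing $D$ large enough to simultaneously (i) force the ``at most $p$ pieces'' conclusion via Proposition \ref{19 pieces} and (ii) make $(p+1)\delta$ available as a buffer so that Lemma \ref{hyperbolic polygon} applies to $\sigma$ and $[x,y]$. Taking the maximum of the two relevant thresholds takes care of both simultaneously.
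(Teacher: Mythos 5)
Your proposal is correct and follows essentially the same route as the paper: the same choice \(D=\max\{D',(p+1)\delta\}\), the same use of Proposition \ref{19 pieces} to bound the number of pieces by \(p\), Lemma \ref{hyperbolic polygon} to bound \(\mathrm{diam}(p_{[x,y]}(\sigma))\) by \(8\delta\), and the triangle inequality to conclude. The only difference is that you spell out a few small verifications (containment in a component of \(\gamma - N_{D'}([x,y])\), the choice of projection points) that the paper leaves implicit.
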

\begin{corollary}\label{cor 19 pieces}
In the hypotheses and notations of the proof of Proposition \ref{19 pieces}, we have that there exists \(D= D(\delta)\) such that \([x,y] \subseteq N_{10\delta + D} (\widetilde{\gamma})\).
	\begin{proof}
	
	Let \(z\) be a point of \([x,y]\). 
	It is a well known fact of \(\delta\)--hyperbolic spaces that, if \(\eta\) is a geodesic and \(\gamma\) a path with the same endpoints of \(\eta\), then 
	the projection of \(\gamma\) onto \(\eta\) is \(2\delta\)--dense.
	Thus, there is a point \(z' \in [x,y]\), with \(d(z,z')_\Gamma <2\delta\) that is contained \(p_{[x,y]} (\gamma)\).
	If there is a preimage of \(z'\) in \(N_D([x,y]) \cap \gamma\), then we are done.
	So suppose this is not the case, and let \(a\) be a preimage of \(z'\) in \(\gamma\). 
	Let \(\sigma\) be the connected component of \(\gamma -N_D([x,y])\) that contains \(a\). 
	Let \(D\) as in Corollary \ref{endpoints sigma}, that is \(D= \mathrm{max}\{\delta (p+1), D'\}\), 
	where \(p, D'\) are as in Proposition \ref{19 pieces}.
	Since \(\sigma\) has at most \(p\) components,  Lemma \ref{hyperbolic polygon} gives that that \(\mathrm{diam}(p_{[x,y]} (\sigma)) < 8\delta\).
	Let \(\sigma^\pm\) be the endpoints of \(\sigma\). Since both \(p_{[x,y]}(\sigma^\pm), z'\) are contained in the projection
	\(p_{[x,y]}(\sigma)\), it follows that \(d(z', p_{[x,y]} (\sigma^\pm)) < 8\delta\).
	By triangular inequality, we get that \(d(z, \sigma^\pm) < 10\delta +D\), which proves the claim. 
	\end{proof}
\end{corollary}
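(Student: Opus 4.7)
The plan is to locate, for an arbitrary \(z \in [x,y]\), a nearby point of \(\widetilde{\gamma}\) by combining a standard hyperbolicity fact with the excursion control from Proposition \ref{19 pieces} and Corollary \ref{endpoints sigma}. The standard fact I would invoke is that in a \(\delta\)--hyperbolic space, whenever \(\eta\) is a geodesic and \(\gamma'\) is any path sharing its endpoints, the set \(p_\eta(\gamma')\) is \(2\delta\)--dense in \(\eta\). Applying this with \(\eta = [x,y]\) and \(\gamma' = \widetilde{\gamma}\) produces a point \(z' \in [x,y]\) with \(d_\Gamma(z,z') < 2\delta\) and a preimage \(a \in \widetilde{\gamma}\) such that \(z' \in p_{[x,y]}(a)\). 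The goal is then to locate a point of \(\widetilde{\gamma}\) within \(10\delta + D\) of \(z\).

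I would fix \(D = \max\{\delta(p+1),\, D'\}\) as in Corollary \ref{endpoints sigma} and split on whether \(a\) lies in \(N_D([x,y])\) or not. If \(a \in N_D([x,y])\), then since \(z' \in p_{[x,y]}(a)\) we have \(d_\Gamma(a,z') < D\), hence \(d_\Gamma(a,z) < 2\delta + D\) and \(a\) itself witnesses the containment. Otherwise \(a\) lies in a connected component \(\sigma\) of \(\widetilde{\gamma} - N_D([x,y])\); Proposition \ref{19 pieces} bounds the number of geodesic pieces of \(\sigma\) by \(p\), whence Lemma \ref{hyperbolic polygon} gives \(\mathrm{diam}_\Gamma(p_{[x,y]}(\sigma)) < 8\delta\).

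Let \(\sigma^\pm\) be the endpoints of \(\sigma\). They lie within \(D\) of \([x,y]\) by definition of the decomposition, so their projections lie in the same set \(p_{[x,y]}(\sigma)\) that contains \(z'\); hence \(d_\Gamma(z', p_{[x,y]}(\sigma^\pm)) < 8\delta\), and then \(d_\Gamma(z',\sigma^\pm) < 8\delta + D\). A final triangle inequality with \(d_\Gamma(z,z') < 2\delta\) gives \(d_\Gamma(z,\sigma^\pm) < 10\delta + D\), and since \(\sigma^\pm \in \widetilde{\gamma}\) this is the desired bound.

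The only real obstacle is calibrating the single constant \(D\) so that both inputs are available simultaneously: it must be large enough for the \(p\)--piece bound of Proposition \ref{19 pieces} to apply to every component of \(\widetilde{\gamma} - N_D([x,y])\), and also large enough that the endpoints of such a component actually sit in \(N_D([x,y])\). The choice \(D = \max\{\delta(p+1),D'\}\) inherited from Corollary \ref{endpoints sigma} serves both purposes at once, and after that the argument is a routine chain of triangle inequalities using Lemma \ref{hyperbolic polygon}, with no further idea required.
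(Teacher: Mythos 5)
Your proposal is correct and follows essentially the same route as the paper: the \(2\delta\)--density of \(p_{[x,y]}(\widetilde{\gamma})\), the case split on whether the preimage \(a\) lies in \(N_D([x,y])\), the bound of at most \(p\) pieces from Proposition \ref{19 pieces} feeding into Lemma \ref{hyperbolic polygon}, and the same final triangle-inequality chain with \(D=\max\{\delta(p+1),D'\}\). No substantive difference from the paper's argument.
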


Now we would like to prove the other inclusion, namely that there is a constant \(D'\) such that \(\widetilde{\gamma} \subseteq N_{D'} ([x,y])\). Without further assumptions on the family \(\H\), this is easily seen to be hopeless.
This motivates to consider a family \(\H\) with more structure, in particular, we will require the elements of \(\H\) to satisfy a slightly more general version of uniform-quasiconvexity.

\begin{Def}[Cone-off quasiconvexity]
Let \(\Gamma\) be a connected graph, \(\H\) a family of subgraphs of \(\Gamma\) and \(\widehat{\Gamma}\) the cone-off of \(\Gamma\) with respect to \(\H\).
We say that a subset \(S\subseteq\Gamma\) is \emph{\(K\)--cone-off quasiconvex} with respect to \((\Gamma, \H)\) if the following holds.
For each two points \(s, t\) of \(S\), each geodesic \([s,t]\) of \(\Gamma\) between them and each point \(z \in [s,t]\), we have:
\[d_{\widehat{\Gamma}}(z, S) \leq K.\]
Similarly, we say that the family \(\H\) is \emph{\(K\)--cone-off quasiconvex} (\(K\)--COQC) if for each \(H \) in \( \H\), \(H\) is \(K\)--COQC with respect to \((\Gamma, \H)\). 
\end{Def}
We emphasize that a \(K\)--quasiconvex subset is also \(K\)--COQC, regardless of the coning-off family. 

We will now introduce the definition of interruption.
The idea is the following: consider a path \(\gamma =u_1 e u_2\) of \(\widehat{\Gamma}\) and assume that \(e\) is an \(H\)--component (for simplicity assume the only one) of \(\gamma\), for some \(K\)--quasiconvex \(H\). If \(x,y\) are the endpoints of the edge \(e\), then \(\widetilde{\gamma}= u_1 [x,y] u_2\) is a de-electrification for \(\gamma\),  for some geodesic \([x,y]\) between \(x, y\) in \(\Gamma\).
It may happen that we are interested in some point \(z \in [x,y]\). In general, such a \(z\) will be an element of \(\widetilde{\gamma}\), but not of \(\gamma\). In order to solve this, we will modify \(\gamma\) into a path whose \(\widehat{\Gamma}\)--length is comparable to the one of \(\gamma\), 
but contains \(z\). A pictorial illustration of this can be found in Figure \ref{fig: motivation interruption}.

\begin{figure}[h]
\resizebox{12cm}{6cm}{
\definecolor{qqqqff}{rgb}{0,0,1}
\begin{tikzpicture}[line cap=round,line join=round,>=triangle 45,x=1cm,y=1cm]
\draw [shift={(-0.9318,-3.4966)}] plot[domain=0.98082:2.3610,variable=\t]({1*6.3383*cos(\t r)},{1*6.3383*sin(\t r)});
\draw [shift={(0.8006,-4.9417)}] plot[domain=0.9450:2.5650,variable=\t]({1*4.729725131958314*cos(\t r)},{1*4.7297*sin(\t r)});
\draw [shift={(-4.1352,-0.5876)}] plot[domain=2.268:5.212,variable=\t]({1*2.02389*cos(\t r)},{1*2.02389*sin(\t r)});
\draw [shift={(-0.608,-0.9212)}] plot[domain=-0.04471:0.6987,variable=\t]({1*4.1839*cos(\t r)},{1*4.1839*sin(\t r)});
\draw (-5.5418,-3.8875)-- (-4.2308,-2.3508);\draw (3.2779,-0.6155)-- (6.8497,-1.2635);
\draw [dash pattern=on 1pt off 1pt] (-4.2308,-2.3508)-- (3.2779,-0.6155);
\draw [shift={(-0.4764,-1.4831)},line width=1.7pt]  plot[domain=0.2271:3.3687,variable=\t]({1*3.8533*cos(\t r)},{1*3.8533*sin(\t r)});
\draw [color=qqqqff] (-1.1183,-0.6188)-- (-0.6760,-1.5292);
\draw [shift={(-2.37019584204191,-2.0318223715423795)},line width=1.6pt,color=qqqqff]  plot[domain=0.8457846446972239:3.311380732370041,variable=\t]({1*1.8877573328876038*cos(\t r)+0*1.8877573328876038*sin(\t r)},{0*1.8877573328876038*cos(\t r)+1*1.8877573328876038*sin(\t r)});
\draw [shift={(1.0803501837152183,-1.3234824910444012)},line width=1.6pt,color=qqqqff]  plot[domain=0.31165455260748604:2.8314566805375345,variable=\t]({1*2.3088407094160615*cos(\t r)+0*2.3088407094160615*sin(\t r)},{0*2.3088407094160615*cos(\t r)+1*2.3088407094160615*sin(\t r)});
\begin{scriptsize}
	\draw [fill=black] (-4.2308,-2.350) circle (2.5pt);
	\draw[color=black] (-3.889,-2.1387) node {$x$};
	\draw [fill=black] (3.277,-0.615) circle (2.5pt);
	\draw[color=black] (3.440,-0.906) node {$y$};
	\draw[color=black] (1.2997086534525255,-1.473870176540018) node {$[x,y]$};
	\draw [fill=black] (-0.6760,-1.5292) circle (2.5pt);
	\draw[color=black] (-0.4193,-1.9441) node {$z$};
	\draw[color=black] (-1.0031991000557792,2.4832389210376227) node {$e$};
	\draw [fill=black] (-1.1183,-0.6188) circle (2.5pt);
	\draw[color=black] (-1.1491,-0.9549) node {$z'$};
	\draw[color=qqqqff] (-2.9493183283726565,-0.046716075774311694) node {$e_1$};
	\draw[color=qqqqff] (1.3807969546323955,1.283132063575551) node {$e_2$};
	\draw[color=black] (-5,0) node {\large{$H$}};
	\draw[color=black] (-4.7, -3.4) node {$u_1$};
	\draw[color=black] (5.5, -0.7) node {$u_2$};
	\draw [line width=0.5 pt, black, decorate,decoration={brace,amplitude=8pt,  mirror}] (-0.6760,-1.5292) -- (-1.1183,-0.6188) node[black,midway,xshift=0.7cm, yshift=0.1cm] {$\leq K$ };
\end{scriptsize}
\end{tikzpicture}}
\caption{The point \(z\) is an element of the de-electrification, but not of the original path. To solve this, we substitute \(e\) with the concatenation \(e_1 [z',z][z,z'] e_2\) (blue in the picture). Since the space \(H\) is \(K\)--quasiconvex, the geodesic \([z,z']\) has length smaller or equal \(K\). 
Thus we have that the total length of \(e_1 [z',z][z,z'] e_2\) is bounded above by \(2+2K\).}\label{fig: motivation interruption}
\end{figure}
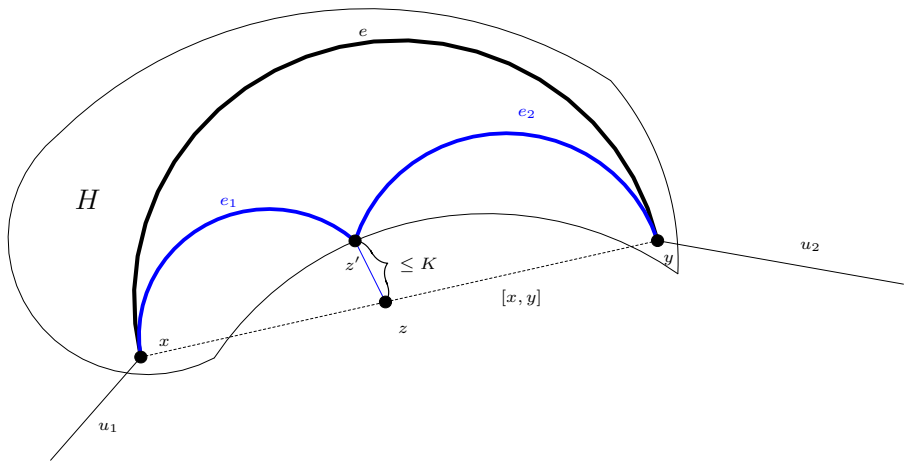

\begin{Def}[Interruption]\label{Def interruption}
Let \(\Gamma\) be a graph, \(\widehat{\Gamma}\) be the cone-off of \(\Gamma\) with respect to a \(K\)--COQC family \(\H\) 
	and let \(\gamma\) be a path of \(\widehat{\Gamma}\). Suppose that \(\gamma = u_1 e u_2\), where \(e\) is a \(\H\)--component for some \(H\in\H\), and
	let	\(u_1 \eta u_2\) be a de-electrification for \(u_1 e u_2\). Let \(z\) be a point in \(\eta\). We define 
	the \emph{interruption of \(\gamma\) in \(z\)} to be the path \(u_1 e_1 [z',z]_\Gamma [z,z']_\Gamma e_2 u_2\) of \(\widehat{\Gamma}\), 
	where  \(z'\) 
	realizes the shortest distance projection of \(z\) on \(H\) and \(e_1, e_2\) are defined as the edges \((e^{-}, z)\), \((z, e^{+})\) respectively.
	If \(\widetilde{\gamma}\) is a de-electrification of \(\gamma\) and \(S\) is a set of points belonging to \(\H\)--pieces of \(\widetilde{\gamma}\), 
	we similarly define the \emph{interruption of \(\gamma\) in \(S\).}
\end{Def}
\begin{rmk}\label{length interruption}
Let \(S\) be as in Definition \ref{Def interruption}.
If \(\gamma^S\) is the interruption of \(\gamma\) with respect to \(S\), we have that \(L\left( \gamma^S\right)_{\widehat{\Gamma}} \leq L\left( \gamma \right)_{\widehat{\Gamma}} + |S|(2K+1)\). This is easily seen because, in the notation of Definition \ref{Def interruption}, we substitute \(\H\) components \(e\) with concatenations of the form \(e_1 [z',z]_\Gamma [z,z']_\Gamma e_2\). By definition of \(K\)--COQC, \(L_{\widehat{\Gamma}}([z,z']) \leq K\).
\end{rmk}
It is quite remarkable that the estimate of Remark \ref{length interruption} is the only step of the proof or Proposition \ref{Bound Hausdorff distance} that uses quasiconvexity. 

We recall that given a graph \(\Gamma\), a point \(p \in \Gamma\) and a number \(R \in \RR\), the \emph{ball of center \(p\) and radius \(R\)},
is the set \(\{x \in \Gamma \mid  d(x,p) < R\}\), and the \emph{sphere of center \(p\) and radius \(R\)} is the set \(\{ x\in \Gamma \mid 
d(x,p) = \lceil R \rceil\}\).

\begin{prop}\label{Bound Hausdorff distance}
Let \(\Gamma\) be a \(\delta\)--hyperbolic graph, \(\H\) a family of \(K\)--COQC subgraphs and \(\widehat{\Gamma}\) the cone-off of \(\Gamma\) with respect to \(\H\). 
Then there exist \(\tau_1 = \tau_1 (\delta, K)\) and \(\tau_2= \tau_2 (\delta, K)\) such that for each pair of points \(x,y \in \Gamma\) there exists a \(\tau_1\)--quasi-geodesic \(\gamma'\) of \(\widehat{\Gamma}\) with the property that for each de-electrification \(\widetilde{\gamma}'\) of \(\gamma'\), 
\(\widetilde{\gamma}'\) is a \(\tau_2\)--quasi-geodesic of \(\Gamma\).
	\begin{proof}
	Consider a geodesic \(\gamma\) of \(\widehat{\Gamma}\) between \(x\) and \(y\) and fix a de-electrification \(\widetilde{\gamma}\). 
	We will stick to the convention that \([x,y]\) denotes a (fixed once and for all) geodesic segment of \(\Gamma\) between \(x\) and \(y\).	
	
	Our goal is to modify \(\gamma\) into a \(\tau_1\)--quasi-geodesic \(\gamma'\) with the same endpoints of \(\gamma\) such that each de-electrification
	of \(\gamma'\) is a \(\tau_2\)--quasi-geodesic, where \(\tau_1\) and \(\tau_2\) do not depend on \(x,y\).
	Since \(\gamma\) is a geodesic of \(\widehat{\Gamma}\), if \(\widetilde{\gamma}\) was already uniformly a quasi-geodesic of \(\Gamma\), 
	then this will conclude the proof. However, it is easily seen that the de-electrification \(\widetilde{\gamma}\)
	can, in general, be arbitrarily far away  in \(\Gamma\) from \([x,y]\). Since \(\Gamma\) is \(\delta\)--hyperbolic, 
	this clearly constitutes an obstruction for \(\widetilde{\gamma}\) to be uniformly a quasi-geodesic. 
	So, our first step will be to modify \(\gamma\) to some quasi-geodesic \(\gamma''\) such that 
	each de-electrification	of \(\gamma''\) is contained in a uniform \(\Gamma\)--neighborhood of \([x,y]\). 
	After this step, it is indeed possible to show that each de-electrification of \(\gamma''\) is a \(\tau_3\)--quasi-geodesic of \(\Gamma\), 
	but \(\tau_3\) will be depending on the distance between the points \(x\) and \(y\). 
	This is because the de-electrifications may have backtracking, which, in general, can only be estimated in terms of \(d_\Gamma(x,y)\).
	So, the second step will be to further modify \(\gamma''\) into a new path \(\gamma'\) such that the de-electrifications will (coarsely) not
	backtrack.
	The path \(\gamma'\) will be the desired one. The last two steps will consist in showing that \(\gamma'\) is still uniformly a quasi-geodesic
	of \(\widehat{\Gamma}\), and that the de-electrifications are uniformly quasi-geodesics of \(\Gamma\).

	\textbf{Step 1:} Obtain that \(\widetilde{\gamma}\subseteq N_{\lceil D+4\delta \rceil} ([x,y])\). 
	
	\begin{figure}[h]
	\begin{center}
	\begin{tikzpicture}[line cap=round,line join=round,>=triangle 45,x=1cm,y=1cm]
\draw [line width=0.6pt] (-4,0)-- (4,0);
\draw [dash pattern=on 1pt off 1pt] (-4,0.75)-- (4,0.75);
\draw [dash pattern=on 1pt off 1pt] (-4,-0.75)-- (4,-0.75);
\draw [shift={(-4,0)},dash pattern=on 1pt off 1pt]  plot[domain=1.5707963267948966:4.71238898038469,variable=\t]({1*0.75*cos(\t r)+0*0.75*sin(\t r)},{0*0.75*cos(\t r)+1*0.75*sin(\t r)});
\draw [shift={(4,0)},dash pattern=on 1pt off 1pt]  plot[domain=-1.5707963267948966:1.5707963267948966,variable=\t]({1*0.75*cos(\t r)+0*0.75*sin(\t r)},{0*0.75*cos(\t r)+1*0.75*sin(\t r)});
\draw [shift={(-4.03591659633896,0.5837078223964038)}] plot[domain=-1.5093420015989958:0.13113327527046542,variable=\t]({1*0.5848117849524986*cos(\t r)+0*0.5848117849524986*sin(\t r)},{0*0.5848117849524986*cos(\t r)+1*0.5848117849524986*sin(\t r)});\draw [shift={(-4.9046601182181435,0.04080474251354631)}] plot[domain=-0.41548670247160935:0.4040582554332149,variable=\t]({1*1.5753961553289493*cos(\t r)+0*1.5753961553289493*sin(\t r)},{0*1.5753961553289493*cos(\t r)+1*1.5753961553289493*sin(\t r)});\draw [shift={(-3.081109930045289,-0.7419667384190645)}] plot[domain=0.6706887823089361:2.77466878604076,variable=\t]({1*0.40944323351594125*cos(\t r)+0*0.40944323351594125*sin(\t r)},{0*0.40944323351594125*cos(\t r)+1*0.40944323351594125*sin(\t r)});\draw [shift={(-5.451088380682982,0.75)}] plot[domain=-0.43106196853185796:0.08451272350666827,variable=\t]({1*2.9616587633625366*cos(\t r)+0*2.9616587633625366*sin(\t r)},{0*2.9616587633625366*cos(\t r)+1*2.9616587633625366*sin(\t r)});\draw [shift={(-2.9020295021109126,1.763829025861302)}] plot[domain=5.196881493181275:6.184433712648291,variable=\t]({1*0.8631700303623696*cos(\t r)+0*0.8631700303623696*sin(\t r)},{0*0.8631700303623696*cos(\t r)+1*0.8631700303623696*sin(\t r)});\draw [shift={(-1.6554111337374706,2.028614259305032)}] plot[domain=3.8758280955114084:5.343490130420632,variable=\t]({1*0.5222027526542912*cos(\t r)+0*0.5222027526542912*sin(\t r)},{0*0.5222027526542912*cos(\t r)+1*0.5222027526542912*sin(\t r)});\draw [shift={(-0.7801668992174987,0.6829426640814864)}] plot[domain=2.121254709370696:3.1758288033992677,variable=\t]({1*1.084210804573463*cos(\t r)+0*1.084210804573463*sin(\t r)},{0*1.084210804573463*cos(\t r)+1*1.084210804573463*sin(\t r)});\draw [shift={(-1.550802200054347,1.21629563481753)}] plot[domain=4.2106440333824535:6.1111284336389975,variable=\t]({1*0.6506625625770598*cos(\t r)+0*0.6506625625770598*sin(\t r)},{0*0.6506625625770598*cos(\t r)+1*0.6506625625770598*sin(\t r)});\draw [shift={(0.24608559532811014,2.1487176500158838)}] plot[domain=3.876112651042882:4.637990207750469,variable=\t]({1*1.5574054974519438*cos(\t r)+0*1.5574054974519438*sin(\t r)},{0*1.5574054974519438*cos(\t r)+1*1.5574054974519438*sin(\t r)});\draw [shift={(0.1748978409819358,0.8666535124377105)}] plot[domain=-1.7337984267030917:0.16300209990819509,variable=\t]({1*0.27467401861417723*cos(\t r)+0*0.27467401861417723*sin(\t r)},{0*0.27467401861417723*cos(\t r)+1*0.27467401861417723*sin(\t r)});\draw [shift={(-0.08118772024898299,0.1883182240518979)}] plot[domain=0.06244295831963052:0.9407651500586107,variable=\t]({1*0.8946801371781796*cos(\t r)+0*0.8946801371781796*sin(\t r)},{0*0.8946801371781796*cos(\t r)+1*0.8946801371781796*sin(\t r)});\draw [shift={(1.2076338886148839,0.646419882264407)}] plot[domain=3.934992011532371:6.01194085142506,variable=\t]({1*0.5644000250045035*cos(\t r)+0*0.5644000250045035*sin(\t r)},{0*0.5644000250045035*cos(\t r)+1*0.5644000250045035*sin(\t r)});\draw [shift={(0.40528490350060825,0.850530965930843)}] plot[domain=3.4915013208930623:6.025103951368576,variable=\t]({1*1.3922219153727449*cos(\t r)+0*1.3922219153727449*sin(\t r)},{0*1.3922219153727449*cos(\t r)+1*1.3922219153727449*sin(\t r)});\draw [shift={(0.0015602150315912648,0.05956934630324587)}] plot[domain=2.8076357169485853:4.4075176969258,variable=\t]({1*0.9570061699709894*cos(\t r)+0*0.9570061699709894*sin(\t r)},{0*0.9570061699709894*cos(\t r)+1*0.9570061699709894*sin(\t r)});\draw [shift={(0.6661536929676898,-2.4316157437844175)}] plot[domain=1.3218970728182158:2.1134824690673906,variable=\t]({1*1.8431220906181545*cos(\t r)+0*1.8431220906181545*sin(\t r)},{0*1.8431220906181545*cos(\t r)+1*1.8431220906181545*sin(\t r)});\draw [shift={(2.356862217442911,-1.8787341143670584)}] plot[domain=1.1673852125235118:2.357504434439636,variable=\t]({1*1.7466414788207454*cos(\t r)+0*1.7466414788207454*sin(\t r)},{0*1.7466414788207454*cos(\t r)+1*1.7466414788207454*sin(\t r)});\draw [shift={(4.35059486468616,0.8703469449138664)}] plot[domain=3.2628955012143463:3.8595915302354475,variable=\t]({1*1.7368656076078013*cos(\t r)+0*1.7368656076078013*sin(\t r)},{0*1.7368656076078013*cos(\t r)+1*1.7368656076078013*sin(\t r)});\draw [shift={(4.652374886131706,3.116167613487951)}] plot[domain=4.02266400148017:4.506017815718893,variable=\t]({1*3.1837232271990206*cos(\t r)+0*3.1837232271990206*sin(\t r)},{0*3.1837232271990206*cos(\t r)+1*3.1837232271990206*sin(\t r)});
\draw [very thin, ->] (-4,0) -- (-4.592648595692134,-0.45963859936273926);
\draw[line width=0.8pt] (-2.49, 0.75 ) ..  controls (-2, 0.40) .. (-1.86, 0.75 );
\begin{scriptsize}
	\draw [fill=black] (-4,0) circle (1pt);
	\draw[color=black] (-3.8,0.17) node {$x$};
	\draw [fill=black] (4,0) circle (1pt);
	\draw[color=black] (4.1280161810558145,0.17014531407230715) node {$y$};
	\draw[color=black] (0.06810802167821486,0.15) node {$[x,y]$};
	\draw[color=black] (-4.1,-0.29) node {$\Delta$};
	\draw[color=black] (-2.7, 0.3) node {$\widetilde{\gamma}$};
	\draw[fill=black] (-2.49, 0.75 ) circle (1pt);
	\draw[color=black] (-2.7,0.95) node{$s_1^-$};
	\draw[fill=black] (-1.86, 0.75 ) circle (1pt);
	\draw[color=black] (-2,0.95) node{$s_1^+$};
	\draw[color=black] (-2, 0.35) node{$\eta_1$};
\end{scriptsize}
\end{tikzpicture}
\end{center}
\caption{The de-electrification \(\widetilde{\gamma}\) may exit the \(\Delta\)--neighborhood of \([x,y]\). 
In this case we will "cut-off" the parts that exits the \(\Delta\)--neighborhood with more than two pieces, and replace them with a geodesic segment \(\Gamma\). In the picture, there is only one such connected component, which is replaced by the segment \(\eta_1\).}
	\end{figure}
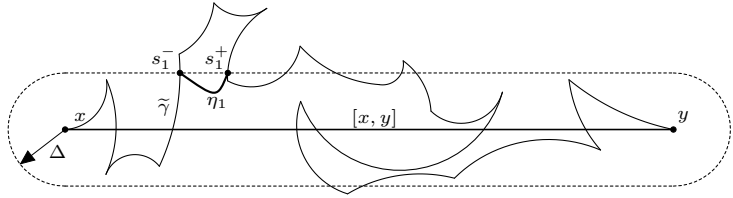	
	
	If \(\widetilde{\gamma} \subseteq N_D([x,y])\), this step is trivial.
	So suppose \(\widetilde{\gamma}\) exits the \(D\)--neighborhood of \([x,y]\), and let \(\Sigma=\{\sigma_\kappa\}\) be the set of connected components
	of \(\widetilde{\gamma} - N_D([x,y])\) that have at least two pieces. We can restrict to such components because if
	a connected component \(\sigma\) consists of only one piece,
	then it is easy to see that \(\sigma \subseteq N_{D + 4\delta} ([x,y])\). In order to simplify notation, 
	let \(\Delta = \lceil D + 4\delta \rceil \in \ZZ\). 
	Suppose, then,  that \(\Sigma\) is non empty.
	Let \(S\) be the set of endpoints of elements of \(\Sigma\) that are not contained in \(\gamma\).
	That is, the set of endpoints that are contained in \(\H\)--pieces of \(\widetilde{\gamma}\).
	We note that \(|S| \leq 2P \leq 2L_{\widehat{\Gamma}}(\gamma)\), where \(P\) is the number of \(\H\)--components of \(\gamma\). 
	Let \(\gamma^S\) be the interruption of \(\gamma\) with respect to \(S\). Note that 
	\(L(\gamma^S)_{\widehat{\Gamma}} \leq L_{\widehat{\Gamma}}(\gamma) + |S|(2K+1)\). 
	For each \(\sigma_\kappa \in \Sigma\), let \(s_\kappa^{\pm}\) be the endpoints of \(\sigma_\kappa\), and let \(\eta_\kappa\) be a geodesic 
	in \(\Gamma\) connecting \(s_\kappa^{+}\) and \(s_\kappa^{-}\).
	Applying Corollary \ref{endpoints sigma} to the endpoints \(s_\kappa^\pm\),  we get:
	\[L\left( \eta_\kappa\right)_\Gamma  \leq 2\Delta - 2.\]
	Let \(\gamma''\) be the path in \(\widehat{\Gamma}\) obtained from \(\gamma^S\) substituting each \(\gamma^S_{|_{[s_\kappa^{+}, s_\kappa^{-}]}}\) with 
	\(\eta_\kappa\), and let \(\widetilde{\gamma}''\) be the path in \(\Gamma\) obtained from \(\widetilde{\gamma}\) substituting all the
	\(\sigma_\kappa\) with \(\eta_\kappa\).
	It is clear from the construction that \(\widetilde{\gamma}''\subseteq N_{\Delta}([x,y])\), and that it is a de-electrification of \(\gamma''\).

	We want now to show that \(\gamma''\) is a quasi-geodesic of \(\widehat{\Gamma}\).
	Consider a subpath \(\chi\) of \(\gamma''\). We want produce a uniform estimate of the length of \(\chi\). Since \(\chi\) is arbitrary, this
	will guarantee that \(\gamma''\) is a quasi-geodesic. First, suppose that the endpoints of \(\chi\) are contained in \(\gamma^S\), that is, they are
	not part of the geodesic segments \(\eta_\kappa\).
	Note that this implies that \(s_\kappa^+\) is a point of \(\chi\)
	if and only if \(s_\kappa^-\) is.
	Finally \(S_\chi = S \cap \chi\), and let \(\chi^S\) be the restriction of \(\gamma^S\) between the endpoints of \(\chi\). 
	\begin{align*}	
	 L (\chi)_{\widehat{\Gamma}} &= L (\chi^S)_{\widehat{\Gamma}} + 
	\sum_\kappa \left(L\left(\eta_\kappa\right)_{\widehat{\Gamma}} - L\left({\chi^S}_{|_{[s_\kappa^{+}, s_\kappa^{-}]}}\right)_{\widehat{\Gamma}}\right) \leq \\
	&\leq L (\chi^S)_{\widehat{\Gamma}} + 
	\sum_\kappa \left(L\left(\eta_\kappa\right)_{\widehat{\Gamma}} \right) \leq \\
	&\leq  d_{\widehat{\Gamma}}(\chi^-, \chi^+) + |S_\chi|(2K+1) + 
	\sum_\kappa \left(L\left(\eta_\kappa\right)_{\Gamma}\right)  \leq \\
	&\leq d_{\widehat{\Gamma}}(\chi^-, \chi^+) +|S_\chi|(2K+1) + |S_\chi| (2\Delta -2) = \\
	&= d_{\widehat{\Gamma}}(\chi^-, \chi^+) + |S_\chi|(2\Delta +2K-1 ).	
	\end{align*}
	Since \(|S_\chi| \leq 2 L(\chi)_{\widehat{\Gamma}}\), we have the desired estimate. 
	Since, by Corollary \ref{endpoints sigma}, we can uniformly bound the length of the segments \(\eta_\kappa\), up to increasing the additive constant, 
	we get the estimate for general endpoints of \(\chi\). 
	
	\textbf{Step 2:} Removing backtracking.
	
\begin{figure}[h]
\begin{center}
\begin{tikzpicture}[line cap=round,line join=round,>=triangle 45,x=1cm,y=1cm]
\draw [line width=0.6pt] (-4,0)-- (4,0);
\draw [shift={(-4.03591659633896,0.5837078223964038)},dotted]  plot[domain=-1.5093420015989958:0.13113327527046542,variable=\t]({1*0.5848117849524986*cos(\t r)+0*0.5848117849524986*sin(\t r)},{0*0.5848117849524986*cos(\t r)+1*0.5848117849524986*sin(\t r)});
\draw [shift={(-4.9046601182181435,0.04080474251354631)},dotted]  plot[domain=-0.41548670247160935:0.4040582554332149,variable=\t]({1*1.5753961553289493*cos(\t r)+0*1.5753961553289493*sin(\t r)},{0*1.5753961553289493*cos(\t r)+1*1.5753961553289493*sin(\t r)});
\draw [shift={(-3.081109930045289,-0.7419667384190645)},dotted]  plot[domain=0.6706887823089361:2.77466878604076,variable=\t]({1*0.40944323351594125*cos(\t r)+0*0.40944323351594125*sin(\t r)},{0*0.40944323351594125*cos(\t r)+1*0.40944323351594125*sin(\t r)});
\draw [shift={(-5.451088380682982,0.75)},dotted]  plot[domain=-0.43106196853185796:0.08451272350666827,variable=\t]({1*2.9616587633625366*cos(\t r)+0*2.9616587633625366*sin(\t r)},{0*2.9616587633625366*cos(\t r)+1*2.9616587633625366*sin(\t r)});
\draw [shift={(-2.9020295021109126,1.763829025861302)},dotted]  plot[domain=5.196881493181275:6.184433712648291,variable=\t]({1*0.8631700303623696*cos(\t r)+0*0.8631700303623696*sin(\t r)},{0*0.8631700303623696*cos(\t r)+1*0.8631700303623696*sin(\t r)});
\draw [shift={(-1.6554111337374706,2.028614259305032)},dotted]  plot[domain=3.8758280955114084:5.343490130420632,variable=\t]({1*0.5222027526542912*cos(\t r)+0*0.5222027526542912*sin(\t r)},{0*0.5222027526542912*cos(\t r)+1*0.5222027526542912*sin(\t r)});
\draw [shift={(-0.7801668992174987,0.6829426640814864)},dotted]  plot[domain=2.121254709370696:3.1758288033992677,variable=\t]({1*1.084210804573463*cos(\t r)+0*1.084210804573463*sin(\t r)},{0*1.084210804573463*cos(\t r)+1*1.084210804573463*sin(\t r)});\draw [shift={(-1.550802200054347,1.21629563481753)},dotted]  plot[domain=4.2106440333824535:6.1111284336389975,variable=\t]({1*0.6506625625770598*cos(\t r)+0*0.6506625625770598*sin(\t r)},{0*0.6506625625770598*cos(\t r)+1*0.6506625625770598*sin(\t r)});\draw [shift={(0.24608559532811014,2.1487176500158838)},dotted]  plot[domain=3.876112651042882:4.637990207750469,variable=\t]({1*1.5574054974519438*cos(\t r)+0*1.5574054974519438*sin(\t r)},{0*1.5574054974519438*cos(\t r)+1*1.5574054974519438*sin(\t r)});\draw [shift={(0.1748978409819358,0.8666535124377105)},dotted]  plot[domain=-1.7337984267030917:0.16300209990819509,variable=\t]({1*0.27467401861417723*cos(\t r)+0*0.27467401861417723*sin(\t r)},{0*0.27467401861417723*cos(\t r)+1*0.27467401861417723*sin(\t r)});\draw [shift={(-0.08118772024898299,0.1883182240518979)},dotted]  plot[domain=0.06244295831963052:0.9407651500586107,variable=\t]({1*0.8946801371781796*cos(\t r)+0*0.8946801371781796*sin(\t r)},{0*0.8946801371781796*cos(\t r)+1*0.8946801371781796*sin(\t r)});\draw [shift={(1.2076339971618337,0.6464227590939446)},dotted]  plot[domain=3.934995449716556:6.011936893961497,variable=\t]({1*0.5644021515821692*cos(\t r)+0*0.5644021515821692*sin(\t r)},{0*0.5644021515821692*cos(\t r)+1*0.5644021515821692*sin(\t r)});\draw [shift={(0.405285597219999,0.8505326245508408)}] plot[domain=3.491502269231681:6.025103065816627,variable=\t]({1*1.392223135651016*cos(\t r)+0*1.392223135651016*sin(\t r)},{0*1.392223135651016*cos(\t r)+1*1.392223135651016*sin(\t r)});\draw [shift={(0.0015602150315912648,0.05956934630324587)}] plot[domain=2.8076357169485853:4.4075176969258,variable=\t]({1*0.9570061699709894*cos(\t r)+0*0.9570061699709894*sin(\t r)},{0*0.9570061699709894*cos(\t r)+1*0.9570061699709894*sin(\t r)});\draw [shift={(0.6661536929676898,-2.4316157437844175)}] plot[domain=1.3218970728182158:2.1134824690673906,variable=\t]({1*1.8431220906181545*cos(\t r)+0*1.8431220906181545*sin(\t r)},{0*1.8431220906181545*cos(\t r)+1*1.8431220906181545*sin(\t r)});\draw [shift={(2.356862217442911,-1.8787341143670584)}] plot[domain=1.1673852125235118:2.357504434439636,variable=\t]({1*1.7466414788207454*cos(\t r)+0*1.7466414788207454*sin(\t r)},{0*1.7466414788207454*cos(\t r)+1*1.7466414788207454*sin(\t r)});\draw [shift={(4.35059486468616,0.8703469449138664)}] plot[domain=3.2628955012143463:3.8595915302354475,variable=\t]({1*1.7368656076078013*cos(\t r)+0*1.7368656076078013*sin(\t r)},{0*1.7368656076078013*cos(\t r)+1*1.7368656076078013*sin(\t r)});\draw [shift={(4.652374886131706,3.116167613487951)}] plot[domain=4.02266400148017:4.506017815718893,variable=\t]({1*3.1837232271990206*cos(\t r)+0*3.1837232271990206*sin(\t r)},{0*3.1837232271990206*cos(\t r)+1*3.1837232271990206*sin(\t r)});\draw [dash pattern=on 1pt off 1pt] (1.7514,0.4952) circle (1.0530068721170978cm);
\draw [very thin, ->] (1.7514,0.4952) -- (1.1207950174225378,1.3385035210849723);
\draw[line width=0.8pt] (1.1175, -0.3456 ) -- (2.8, 0.52 );
	\begin{scriptsize}
	\draw [fill=black] (-4,0) circle (1pt);
	\draw[color=black] (-3.859385910866348,0.19390866540199736) node {$x$};
	\draw [fill=black] (4,0) circle (1pt);
	\draw[color=black] (4.142016419907008,0.19390866540199736) node {$y$};
	\draw[color=black] (0.07485842453051775,0.1584650227407644) node {$[x,y]$};
	\draw [fill=black] (1.7514,0.4952) circle (1pt);
	\draw[color=black] (1.9710933069064847,0.6369541986674093) node {$t_i$};
	\draw [fill=black] (2.8040146689978287,0.523937281298438) circle (1pt);
	\draw[color=black] (2.9812371227516254,0.5926496453408681) node {$b$};
	\draw[color=black] (1.7229878082778534,1.000251535945047) node {$3\Delta$};
	\draw [fill=black] (1.1175718892228943,-0.3456837010636835) circle (1pt);
	\draw[color=black] (1.1736113470287415,-0.19597140387156514) node {$a$};
	\end{scriptsize}
\end{tikzpicture}
\end{center}
\caption{In order to remove backtracking, we connect with a geodesic of \(\Gamma\) the first and the last point that intersects the 
sphere of radius \(3\Delta\) around \(t_i\).}
\end{figure}
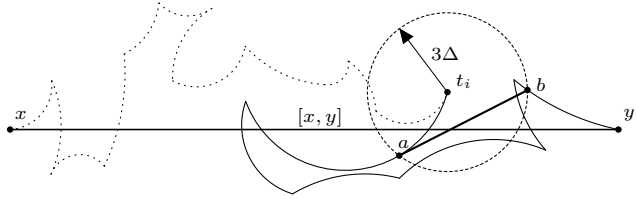

	The general strategy will be similar to the one of Step 1. First we remark that there is a natural order on a geodesic induced by the 
	choice of an order on the endpoints. 
	From now on, we will always consider the order on the geodesics induced by the choice \(x\leq y\). 
	
	We want now to further modify \(\gamma''\) and \(\widetilde{\gamma}''\). We will do this inductively, obtaining a sequence \(\{t_i\}_{i=1}^m\)
	of points
	in \(\widetilde{\gamma}''\), a sequence \(\{\gamma'_{t_i}\}_{i=1}^m\) of paths in \(\widehat{\Gamma}\), and a sequence 
	\(\{\widetilde{\gamma}'_{t_i}\}_{i=1}^m\) 
	of de-electrifications  in \(\Gamma\).
	We start the process setting \(t_0=x\), \(\gamma'_{t_{-1}}= \gamma''\) and
	\(\widetilde{\gamma}'_{-1} = \widetilde{\gamma}''\).

	Suppose that \(t_i\), \(\gamma'_{t_{i-1}}\) and \(\widetilde{\gamma}'_{t_{i-1}}\) are defined, and that \(t_i \in \gamma'_{t_{i-1}}\).
	Let \(B\) be the ball of center \(t_i\) and radius \(3\Delta\) in \(\Gamma\).
	For the rest of Step 2, we will consider only connected components of  \(\widetilde{\gamma}'_{t_{i-1}} -B\) that:
	\begin{rules}
	\item have at least two pieces or contain \(y\), 
	\item come after the point \(t_i\) in \(\widetilde{\gamma}'_{t_{i-1}}\).
	\end{rules}
	Let \(\mathcal{T}_i\) be the set of points obtained intersecting each connected component as above with the sphere of same radius and center as \( B\).
	If the set \(\mathcal{T}_i\) is empty or contains only one element, then no action is performed. 
	Otherwise, let \(a\) and \(b\) be the first and the last points of \(\mathcal{T}_i\).
	Then we obtain \(\widetilde{\gamma}'_{t_i}\) from  \(\widetilde{\gamma}'_{t_{i-1}}\) by removing \(\widetilde{\gamma}''_{|_{[a,b]}}\)
	and adding a geodesic \([a,b]\) of \(\Gamma\) connecting 
	\(a\) and \(b\). We modify \(\gamma'_{t_{i-1}}\) accordingly (namely, interrupting it on \(\{a,b\}\) and then substituting 
	\(\gamma''_{|_{[a,b]}}\) with \([a,b]\)). Note that \(\widetilde{\gamma}'_{t_i}\) is a de-electrification of 
	\(\gamma'_{t_i}\), and that \(L([a,b])_{\widehat{\Gamma}} - L \left({\gamma''_{t_i}}_{|_{[a,b]}}\right)_{\widehat{\Gamma}} \leq 2\Delta -2 \).
	If \(y \in B\), then we stop.
	Otherwise, we repeat the whole procedure with \(t_{i+1}=b\). Note that  if \(y\not\in B\), then \(\mathcal{T}_i\) is not empty and 
	\(\widetilde{\gamma}'_{|_{[b,y]}} \cap B = \emptyset\). This implies that \(y\) and \(b\) lie on the same connected component of 
	\(N_{\Delta}([x,y]) - B\). This, and the fact that the radius of \(B\) is \(3\Delta\), implies the following two:
	\begin{align}
	&d_\Gamma (p_{[x,y]}(a), p_{[x,y]}(b)) \geq \Delta,\\
	& d_\Gamma (p_{[x,y]}(a), y) \geq d_\Gamma (p_{[x,y]}(b), y) + \Delta,
	\end{align}
	 where \(p_{[x,y]}(a), p_{[x,y]}(b)\) denote the shortest distance projections of \(a\) and \(b\) on \([x,y]\).
	 In particular, there is \(m < \infty\) such that \(y\) is contained in the ball of radius \(3\Delta\) around \(t_m\), namely, the process stops
	 after finitely many steps.
	 We set \(\gamma'= \gamma'_{t_m}\) and \(\widetilde{\gamma}'=\widetilde{\gamma}'_{t_m}\). 
	 We remark that the sequence \(\{t_i\}_{i+1}^m\), may contain points that are not elements of \(\gamma'\). 
	 In fact, in the cases where the set \(\mathcal{T}_i\) is empty, no action is performed, and in particular no interruption on \(\gamma''\).

	\textbf{Step 3:} \(\widetilde{\gamma}'\) is a quasi-geodesic in \(\Gamma\).
	
	We claim that the sequence \(t_0, \dots, t_m\) provides a quasi-isometric embedding of \(\{0, \dots, m\}\) into \(\Gamma\).
	For each \(i<m\), since \(t_{i+1}\) is on the sphere of radius \(3\Delta\) around \(t_i\), clearly \(d_\Gamma (t_i, t_{i+1})= 3\Delta\).
	Hence, we get the bound \(d_\Gamma (t_i, t_j) \leq 3\Delta|i-j|\).
	We want to provide a lower bound. Given \(t_i\), \(t_j\), and applying inequality (2) to the sequence 
	\(p_{[x,y]}(t_i), p_{[x,y]}(t_{i+1}), \dots , p_{[x,y]}(t_j)\), we get that 
	\(d_\Gamma (p_{[x,y]}(t_i), y) \geq d_\Gamma (p_{[x,y]}(t_j), y) + |i-j|\Delta\). Since all the points considered are on a geodesic, we have that
	\(d_\Gamma (p_{[x,y]}(t_i), p_{[x,y]}(t_j)) \geq |i-j|\Delta\). The fact that the distances \(d_\Gamma(t_i, p_{[x,y]} (t_i))\) are uniformly bounded, 
	provides the desired bound. 
	Now we claim that for each \(i\), the path \(\widetilde{\gamma}'_{[t_i, t_{i+1}]}\) is uniformly a quasi-geodesic. 
	Note that it consists of a concatenation of pieces contained in a ball of radius \(\Delta\). Hence each piece has length at most \(2\Delta\).
	Moreover, it is clear by construction that \(\widetilde{\gamma}'_{[t_i, t_{i+1}]} = \widetilde{\gamma}''_{[t_i, a]} * [a, t_{i+1}]\),
	where \(a\) is as in Step 2.
	Let  \(q_1, q_2\) be any two points on \(\widetilde{\gamma}''_{[t_i, a]}\). Since: \(\gamma''\) is a quasi-geodesic 
	of \(\widehat{\Gamma}\), \(\widetilde{\gamma}''\) is a de-electrification for \(\gamma''\), 
	and \(\widetilde{\gamma}'_{[q_1, q_2]} = \widetilde{\gamma}''_{[q_1, q_2]}\), arguing as in Step 1, we get that the number of pieces between 
	them can be bounded by a linear function of the distance between \(q_1\) and \(q_2\). In particular, the bound on the length of each 
	piece gives an upper bound on \(L \left( \widetilde{\gamma}'_{[q_1, q_2]}\right) \).
	Now consider the case of general endpoints: since the length of \([a, t_{i+1}]\) is uniformly bounded by \(2\Delta\), up to increasing the 
	additive constant by \(4\Delta\), we get the desired bound.
	Hence, we get the claim.
	But a concatenation obtained joining a set of quasi-isometrically embedded points with uniform quasi-geodesic is a quasi-geodesic. 
	
	We remark that our construction of \(\gamma'\) depends on the choice of de-electrification \(\widetilde{\gamma}\) of \(\gamma\), and 
	in order to perform the construction, we used the auxiliary sequence \(\{\widetilde{\gamma}'_{t_i}\}\) of paths in \(\Gamma\). 
	This gave us a particular choice for the de-electrification of \(\gamma'\).
	However, we remark that after \(\gamma'\) is constructed, the argument of Step 3 applies to any de-electrification of \(\gamma'\), and not
	only to the de-electrification obtained during the construction.

	\textbf{Step 4:} \(\gamma'\) is a quasi-geodesic in \(\widehat{\Gamma}\).
	
		Consider the set \( X = \{t_i\}_{i=1}^m \cap \gamma'\). As remarked in Step 2, it may happen that \(X \subsetneq \{t_i\}\).
	Note that it is not possible that two elements \(t_i, t_j\) of \(X\) belong to the same \(\H\)--piece of \(\widetilde{\gamma}'\).
	Indeed, suppose that this holds for  \(t_i\) and \(t_j\), with \(j>i\).
	Then, since \(\widetilde{\gamma}''_{t_{i-1}}\) restricted to \(t_i, t_{i+1}\) is a geodesic segment, it follows that the set \(\mathcal{T}_i\) contains
	only one element.  Hence no action is performed. Since the point \(t_{i+1}\) is on an \(\H\)--piece and no interruption is performed, it follows that
	\(t_{i+1} \not\in\gamma'\). Repeating the above reasoning for \(t_{i+1}, \dots, t_{j-1}\) gives the claim.
	Since the number \(P\) of \(\H\)--components of \(\gamma\) is greater of equal to the number of \(\H\)--components of \(\gamma'\), 
	we have that \(|X| \leq P \leq L_{\widehat{\Gamma}}(\gamma) = d_{\widehat{\Gamma}}(x,y)\).
	Consider a restriction \(\chi\) of \(\gamma'\) with endpoints \(q_1, q_2\) and suppose that \(q_1, q_2 \in \gamma''\). 
	If \(X_\chi = X \cap \chi\), we similarly have that \(|X_\chi| \leq d_{\widehat{\Gamma}}(q_1, q_2)\). For each element of \(X_\chi\), let \(a_i\) and
	\(b_i\) be the endpoints of the corresponding \(\H\)--component. Finally, we observe that by Step 2, \(\gamma''\) is uniformly a quasi-geodesic. 
	In particular, there exists \(C\) such that \(L \left(\gamma''_{|_{[q_1, q_2]}}\right)_{\widehat{\Gamma}} \leq Cd_{\widehat{\Gamma}}(q_1, q_2) + C\).
	Then we have:
	 \begin{align*}	
	 L (\chi)_{\widehat{\Gamma}} &= L (\gamma''_{|_{[q_1, q_2]}})_{\widehat{\Gamma}} + 
	\sum_{X_\chi} \left(L\left([a_i, b_i])\right)_{\widehat{\Gamma}} - L\left({\gamma''}_{|_{[a_i, b_i]}}\right)_{\widehat{\Gamma}}\right) \leq \\
	&\leq L (\gamma'_{|_{[q_1, q_2]}})_{\widehat{\Gamma}} +  |X_\chi|(2K+1)+
	\sum_{X_\chi} \left(L\left([a_i, b_i]\right)_{\Gamma} - L\left({\gamma''}_{|_{[a_i, b_i]}}\right)_{\widehat{\Gamma}}\right) \leq \\
	&\leq Cd_{\widehat{\Gamma}}(q_1, q_2) + C + |X_\chi|(2K+1)+
	\sum_{X_\chi} \left(2\Delta d_\Gamma(q_1, q_2) + 2\Delta \right) =\\
	&= Cd_{\widehat{\Gamma}}(q_1, q_2) + C  + |X_\chi| \left(12\Delta^2 + 2\Delta +2K +1 \right) \leq \\
	&\leq  Cd_{\widehat{\Gamma}}(q_1, q_2) + C  + d_{\widehat{\Gamma}}(q_1, q_2) \left(12\Delta^2 + 2\Delta +2K+1 \right)=\\
	&= d_{\widehat{\Gamma}}(q_1, q_2)(C+12\Delta^2 + 2\Delta +2K+1) + C .	
	\end{align*}
	Since the \(\Gamma\)--geodesic segments \([a_i, b_i]\) have uniformly bounded length in \(\widehat{\Gamma}\), 
	we get that, up to uniformly increase the additive constant, we can bound the length of each subsegment of \(\gamma'\).
	Hence, \(\gamma'\) is uniformly a quasi geodesic of \(\widehat{\Gamma}\).
	\end{proof}
\end{prop}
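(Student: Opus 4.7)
The plan is to begin with a $\widehat{\Gamma}$-geodesic $\gamma$ between $x$ and $y$ together with a chosen de-electrification $\widetilde\gamma$, and to modify $\gamma$ in two successive stages: first forcing the de-electrification to stay inside a uniform $\Gamma$-neighborhood of a fixed $\Gamma$-geodesic $[x,y]$, and then eliminating backtracking inside that neighborhood. The COQC hypothesis will enter through Remark \ref{length interruption}, which is what allows interruption at points that lie inside $\H$-pieces rather than only at $\H$-components already present in $\gamma$.

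For the first stage, I would apply Corollary \ref{endpoints sigma}: every connected component $\sigma_\kappa$ of $\widetilde\gamma-N_D([x,y])$ with at least two pieces has endpoints at uniformly bounded $\Gamma$-distance. I would interrupt $\gamma$ at the collection $S$ of all such endpoints and then splice in short $\Gamma$-geodesics $\eta_\kappa$ between the endpoints of each excursion, producing a new path $\gamma''$ whose de-electrification $\widetilde{\gamma}''$ is contained in $N_\Delta([x,y])$ for $\Delta=\lceil D+4\delta\rceil$. The bound $|S|\leq 2L_{\widehat\Gamma}(\gamma)$, together with the uniform $\Gamma$-length of each $\eta_\kappa$ and the per-interruption cost $2K+1$ from Remark \ref{length interruption}, should be enough to see that $\gamma''$ remains a quasi-geodesic in $\widehat\Gamma$ with constants depending only on $\delta$ and $K$.

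The second stage has to remove backtracking: even after stage one the de-electrification can wind back and forth within the $\Delta$-neighborhood. My plan is iterative. Starting from $t_0=x$, I choose $t_{i+1}$ on the sphere of radius $3\Delta$ around $t_i$ as the last intersection with $\widetilde{\gamma}''$ that lies on a multi-piece excursion from the ball and comes after $t_i$, then replace the subpath between the first and last such sphere intersections by a $\Gamma$-geodesic. The auxiliary observation needed to show termination is that the projections $p_{[x,y]}(t_i)$ must march monotonically toward $y$ with step at least $\Delta$, so after finitely many steps $y$ is captured in the ball around $t_m$. The resulting path is $\gamma'$, with the associated $\widetilde{\gamma}'$.

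The main obstacle is then a two-sided quasi-geodesic check for the final object. For $\widetilde{\gamma}'$ in $\Gamma$: the sequence $\{t_i\}$ should quasi-isometrically embed $\{0,\dots,m\}$, with upper bound $3\Delta$ coming from the sphere construction and lower bound $\Delta$ coming from the monotonic projection estimate; meanwhile each subsegment $\widetilde{\gamma}'|_{[t_i,t_{i+1}]}$ lies in a ball of radius $\Delta$ and, by stage one, has a uniformly bounded number of pieces, so is a uniform $\Gamma$-quasi-geodesic; concatenating uniform quasi-geodesics at a quasi-isometrically embedded sequence yields a quasi-geodesic. For $\gamma'$ in $\widehat\Gamma$: any subsegment can be compared to the corresponding subsegment of the stage-one quasi-geodesic $\gamma''$, picking up at most one interruption cost $2K+1$ and one bounded-length insertion per modification; since no two $t_i$ can lie in the same $\H$-piece (otherwise no interruption would have been performed at that step), the number of modifications in a subsegment is bounded by the number of $\H$-components it crosses, hence by its $\widehat\Gamma$-distance, giving the required linear estimate.
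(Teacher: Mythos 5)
Your proposal is correct and follows essentially the same route as the paper's own proof: the same Step 1 truncation of multi-piece excursions via Corollary \ref{endpoints sigma} and interruption (Remark \ref{length interruption}), the same sphere-of-radius-$3\Delta$ iteration with monotone projections onto $[x,y]$ to kill backtracking, and the same two quasi-geodesic checks (quasi-isometrically embedded $\{t_i\}$ plus uniform subsegments for $\widetilde{\gamma}'$, and comparison with $\gamma''$ counting interruptions against $\H$-components for $\gamma'$).
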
	

Suppose that \(\widehat{\Gamma}\) is the cone-off of \(\Gamma\) with respect to a general family of uniformly quasi-isometrically embedded subgraphs \(\H\). Sometimes it may be useful to consider a slightly different definition of de-electrification.
\begin{Def}
Let \(\widehat{\Gamma}\) be the cone-off of a graph \(\Gamma\) with respect to a family of uniformly quasi-isometrically embedded subgraphs \(\H\). Let 
	\(\gamma = u_1 *e_1 * \cdots * e_n * u_{n+1}\) be a path of \(\widehat{\Gamma}\), where each \(e_i\) is an \(H_i\)--component for some
	\(H_i \in \H\), and the \(u_i\) are (possibly trivial) segments of \(\Gamma\). 
	The \emph{embedded-de-electrification} \(\widetilde{\gamma}^e\)
	of \(\gamma\) is the concatenation \(u_1 * \eta_1 * \cdots * \eta_n *u_{n+1}\) where each \(\eta_i\) is a geodesic segment of \(H_i\) connecting 
	the endpoints of \(e_i\). We define \(H\)--pieces and \(\H\)--pieces as in the case of de-electrifications. 
\end{Def}
Note that if \(\Gamma\) is \(\delta\)--hyperbolic and the elements of \(\H\) are uniformly quasi-isometrically embedded, the two definitions are coarsely the same. Indeed, given a path \(\gamma\) of \(\widehat{\Gamma}\) it is easy to see that \(\widetilde{\gamma}\) is a quasi-geodesic of \(\Gamma\) if and only if \(\widetilde{\gamma}^e\) is. 

As a corollary, we get the following version of Proposition \ref{Bound Hausdorff distance}.
\begin{corollary}\label{cor: Bound Hausdorff distance}
Let \(\Gamma\) be a \(\delta\)--hyperbolic graph, \(\H\) a family of uniformly quasi-isometrically embedded subgraphs and \(\widehat{\Gamma}\) the cone-off of \(\Gamma\) with respect to \(\H\). 
Then there exist \(\tau_1 = \tau_1 (\delta, K)\) and \(\tau_2= \tau_2 (\delta, K)\) such that for each pair of points \(x,y \in \Gamma\) there exists a \(\tau_1\)--quasi-geodesic \(\gamma'\) of \(\widehat{\Gamma}\) with the property that for each  embedded-de-electrification \(\widetilde{\gamma}'\) of \(\gamma'\), \(\widetilde{\gamma}'\) is a \(\tau_2\)--quasi-geodesic of \(\Gamma\).
\end{corollary}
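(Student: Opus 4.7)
My plan is to derive this corollary directly from Proposition \ref{Bound Hausdorff distance} together with the Morse lemma, exploiting the remark that total de-electrifications and embedded-de-electrifications are coarsely equivalent in a hyperbolic graph.

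The first step is to upgrade the hypothesis ``uniformly quasi-isometrically embedded'' to ``uniformly COQC''. Since \(\Gamma\) is \(\delta\)--hyperbolic and each \(H \in \H\) is a uniform quasi-isometric embedding, the Morse lemma (Theorem \ref{lem:Morse Lemma}) implies that each \(H\) is uniformly quasiconvex in \(\Gamma\) with a gauge \(K = K(\delta, C, \epsilon)\) depending only on \(\delta\) and the quasi-isometric embedding constants. As noted in the paper, any \(K\)--quasiconvex subset is \(K\)--COQC regardless of the coning-off family, so \(\H\) is uniformly COQC. This allows us to apply Proposition \ref{Bound Hausdorff distance} and obtain, for each pair \(x,y\), a \(\tau_1\)--quasi-geodesic \(\gamma'\) of \(\widehat{\Gamma}\) such that every total de-electrification \(\widetilde{\gamma}'\) is a \(\tau_2\)--quasi-geodesic of \(\Gamma\), with constants depending only on \(\delta\) and \(K\).

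The remaining step is to transfer this from the total to the embedded de-electrification. Fix an embedded-de-electrification \(\widetilde{\gamma}'^e\) of \(\gamma'\) and the corresponding total de-electrification \(\widetilde{\gamma}'\) obtained by replacing each \(\eta_i \subseteq H_i\) by a \(\Gamma\)--geodesic \(\eta_i^\Gamma\) with the same endpoints. Because \(H_i\) is uniformly quasi-isometrically embedded, the path \(\eta_i\) is a uniform quasi-geodesic of \(\Gamma\) with the same endpoints as \(\eta_i^\Gamma\); by the Morse lemma, the Hausdorff distance between \(\eta_i\) and \(\eta_i^\Gamma\) is bounded by a constant \(M = M(\delta, K)\). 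Applying this piecewise, the Hausdorff distance in \(\Gamma\) between \(\widetilde{\gamma}'\) and \(\widetilde{\gamma}'^e\) is bounded by \(M\), and moreover the parameterizations agree outside the \(\H\)--pieces.

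Finally, I would use the standard fact that a path at bounded Hausdorff distance from a \(\tau_2\)--quasi-geodesic, whose restrictions to the deformed pieces have uniformly bounded length (the pieces \(\eta_i\) have length at most a linear function of \(L(\eta_i^\Gamma) \leq L(\widetilde{\gamma}')\) on each piece, with uniform multiplicative and additive constants from the q.i.~embedding), is itself a quasi-geodesic with constants depending only on \(\tau_2\) and \(M\). Concretely, for any two points \(p,q\) on \(\widetilde{\gamma}'^e\) one estimates \(L(\widetilde{\gamma}'^e_{|[p,q]})\) by comparing with the corresponding subsegment of \(\widetilde{\gamma}'\), paying a uniform cost per affected piece, which is controlled because the number of \(\H\)--components of \(\gamma'\) is at most \(d_{\widehat{\Gamma}}(x,y) \le d_\Gamma(p,q)\). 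Adjusting \(\tau_2\) to a new constant \(\tau_2' = \tau_2'(\delta, K)\) yields the desired conclusion; the only mild subtlety is the bookkeeping ensuring all constants depend only on \(\delta\) and the quasi-isometric embedding data, not on \(x\) and \(y\).
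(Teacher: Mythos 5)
Your proposal is correct and follows essentially the same route as the paper, which deduces the corollary from Proposition \ref{Bound Hausdorff distance} (after noting that uniform quasi-isometric embeddedness in a hyperbolic graph gives uniform quasiconvexity, hence COQC) together with the remark that total and embedded de-electrifications are coarsely the same by the Morse lemma. One small bookkeeping slip: the inequality ``number of \(\H\)--components \(\leq d_{\widehat{\Gamma}}(x,y)\leq d_\Gamma(p,q)\)'' is not valid for arbitrary subsegment endpoints \(p,q\); the cleaner fix is that each nontrivial \(\Gamma\)--geodesic piece has length at least \(1\), so the uniform additive cost per piece coming from the quasi-isometric embedding constants can be absorbed multiplicatively, after which your comparison of \(\widetilde{\gamma}'\) and \(\widetilde{\gamma}'^e\) goes through with constants depending only on \(\delta\) and \(K\).
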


\section{Hierarchically hyperbolic spaces}\label{sec:hierarchically hyperbolic spaces}

\subsection{Factor Systems}

\begin{Def}[Factor System]\label{Factor System}
	Let \(\Gamma\) be a connected graph. We say that a family \(\H\) of connected subgraphs of \(\Gamma\) is a 
	\emph{factor system} for \(\Gamma\) if there are  \(K,c, \xi, B\) such that the following are satisfied. 
	\begin{enumerate}
	\item Each \(W \in \H\) is \(K\)--quasi-isometrically embedded in \(\Gamma\), with respect to the path metric on \(W\).
	\item Given \(W_1, W_2 \in \H\), either \(\mathrm{diam} (p_{W_1} (W_2)) \leq \xi\), 
	or there is \(U \in \H\) with such that \(U \subseteq W_1\) and \(d_{\mathrm{Haus}}(p_{W_1} (W_2), U ) \leq B\).
	\item Given \(W_1, W_2 \in \H\), if \(d_{\mathrm{Haus}}(p_{W_1}(W_2), W_1) \leq B\), then \(W_1 \subseteq W_2 \).
	\item Every ascending chain of inclusions \(W_1 \subsetneq W_2 \subsetneq \cdots \subsetneq W_n\) has length bounded above by \(c\).
	\item Given \(W_1, W_2 \in \H\), if \(d_{\mathrm{Haus}}(W_1, W_2) < \infty\), then \(W_1 = W_2\).
	\end{enumerate}
\end{Def}
We remark that if \(\Gamma\) is hyperbolic, then being uniformly quasi-isometrically embedded implies being uniformly quasiconvex.
Note that the inclusion induces a partial order on \(\H \cup \{\Gamma\}\) with unique maximal element \(\Gamma\).

\begin{Def}
	Let \(\Gamma\) be a connected graph, let \(\H\) be a factor system for \(\Gamma\) and let \(W \in \H\). We define
	\(\H_W = \{U \in \H \mid  U \subsetneq W\}\). We denote with \(\widehat{W}\) the cone off of \(W\) with respect to 
	\(\H_W\). Note that \(\widehat{W} \subseteq \widehat{\Gamma}\), where \(\widehat{\Gamma}\) is the cone-off of \(\Gamma\) with respect to \(\H\). 
\end{Def}

\begin{rmk}\label{sub factor system}
	If \(\Gamma\) is a graph, \(\H\) is a factor system for \(\Gamma\) and \(W \in \H\), then \(\H_W\) is a factor system for \(W\) 
	with constants \(K', c-1, \xi'\), where \(K' = K'(K), \xi' = \xi'(K, \xi)\).  
\end{rmk}

In the setting of factor systems, we want to introduce a finer version of de-electrification.
\begin{Def}[Partial de-electrification]
	Let \(\H\) be a factor system for a connected graph \(\Gamma\). Let 
	\(\gamma = u_1 *e_1 * \cdots * e_n * u_{n+1}\) be a path of \(\widehat{\Gamma}\), where the \(e_i\) are \(W_i\)--components, for \(W_i \in \H\)
	and the \(u_i\) are (possibly trivial) segments of \(\Gamma\). 
	The \(C\)--\emph{partial de-electrification} 
	of \(\gamma\) is the concatenation \(u_1 * \eta_1 * \cdots * \eta_n *u_{n+1}\) where each \(\eta_i\) is a  \(C\)--quasi-geodesic segment of 
	\(\widehat{W_i}\)
	connecting 	the endpoints of \(e_i\). We denote it by \(\widetilde{\gamma}^{(1)}_C\).
	We denote the partial \(C\)--de-electrification of \(\widetilde{\gamma}^{(1)}_C\) by \(\widetilde{\gamma}^{(2)}_C\), and so on.
	For simplicity, we denote \(\widetilde{\gamma}^{(0)}=\gamma \).
\end{Def}

Note that a partially de-electrified path still contains \(\H\) components. We gained that if \(e\) is a \(W\)--component for a partially
de-electrified geodesic, then necessarily there exists \(V\in\H\) with \(W \subsetneq V\).

\begin{lemma}[Partial pigeonhole for cone-offs]\label{partial pigeonhole}
Let \(\Gamma\) be a \(\delta\)--hyperbolic graph and \(\H\) be a factor system for \(\Gamma\). 
Then for each \(\theta\) there exists a \(T=T(K, c)\) and \(\tau= \tau (\delta, \H)\) such that for any pair of points \(x,y\) satisfying \(d(x,y)_\Gamma \geq T\), there is a path \(\gamma\) of \(\widehat{\Gamma}\), a number \(n\)
and \(W \in \H\cup \{\Gamma\}\) such that:
\begin{rules}
\item  the de-electrification \(\widetilde{\gamma}\) is uniformly a quasi-geodesic of \(\Gamma\),
\item the path \(\widetilde{\gamma}^{(n)}_\tau\cap W\) is uniformly a quasi-geodesic of \(\widehat{W}\), 
\item \(L(\widetilde{\gamma}^{(n)}_\tau\cap W)_{\widehat{W}} \geq \theta\).
\end{rules}
\begin{proof}
	We induct on the complexity of \(\H\). 
	So suppose now that the claim holds for \(c \leq m-1\).  
	Let \(\gamma\) be the path provided by Proposition \ref{Bound Hausdorff distance}. Lemma \ref{Pigeonhole for cone-offs} 
	guarantees that for each \(T\), there exists \(T' = T'(T)\) such that if \(d(x,y) \geq T'\), 
	then either there is a \(U\)--piece \(\sigma\) of \(\widetilde{\gamma}\)
	with \(L(\sigma)_U \geq T \) for some \(U\), or \(L_{\widehat{\Gamma}}(\gamma) \geq T\).
	In the second case, we are done choosing \(n=0\), because \(\gamma^{(0)}=\gamma\) is uniformly a quasi-geodesic and, by the choice of \(\gamma\), 
	every de-electrification is uniformly a quasi-geodesic.
	So suppose that there is \(U\in \H\) with \(L(\widetilde{\gamma} \cap U)_{U} \geq T \). Let \(a,b\) be the endpoints \(\widetilde{\gamma} \cap U\).
	By Remark \ref{sub factor system} \(U\) is a hyperbolic graph with a factor system \(\H_U\) of complexity strictly less than the complexity of \(\H\).
	Since \(U\) is uniformly quasi-isometrically embedded in \(\Gamma\), we can control the \(U\) distance between \(a, b\). In particular, we get a path
	\(\eta\) of \(\widehat{U}\) as in the statement of the Lemma.
	We claim that the path obtained substituting  \(\eta\) to \(\gamma_{|_{[a,b]}}\) satisfies the original requirements.
	This is because \(\eta\) is uniformly a quasi-geodesic of \(\Gamma\), and thus there is a uniform \(\tau\) such that \(\eta\) is 
	part of a \(\tau\)--partial de-electrification of \(\gamma\). 
	Since the original complexity was finite, and at each step all the constants can be chosen uniformly, this proves the claim.
	\end{proof}

\end{lemma}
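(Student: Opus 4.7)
The plan is to induct on the complexity parameter $c$ of the factor system $\H$. At $c = 1$ (so $\H$ has no proper inclusions), one invokes Proposition \ref{Bound Hausdorff distance} to produce a $\tau_1$-quasi-geodesic $\gamma$ of $\widehat{\Gamma}$ connecting $x$ and $y$ whose de-electrification $\widetilde{\gamma}$ is a $\tau_2$-quasi-geodesic of $\Gamma$, and apply Lemma \ref{Pigeonhole for cone-offs} directly with $W = \Gamma$ and $n = 0$; the pigeonhole statement guarantees that, choosing $T$ large enough compared to $\theta$, either $L_{\widehat{\Gamma}}(\gamma) \geq \theta$ already, or some $\H$-piece of $\widetilde{\gamma}$ is long, which in this base case provides the desired conclusion inside a fixed element of $\H$.

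For the inductive step, assume the result for factor systems of complexity $\leq m-1$ and suppose $\H$ has complexity $m$. Given $x, y$ with $d_\Gamma(x,y)$ sufficiently large, obtain $\gamma$ and $\widetilde{\gamma}$ from Proposition \ref{Bound Hausdorff distance}, then apply Lemma \ref{Pigeonhole for cone-offs} with a threshold $\theta' = \theta'(\theta, K) \geq K\theta$. In the easy case $L_{\widehat{\Gamma}}(\gamma) \geq \theta$ we finish with $W = \Gamma$, $n = 0$. Otherwise some $U$-piece $\sigma$ of $\widetilde{\gamma}$ has $\Gamma$-length at least $\theta'$, with $U \in \H$. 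Let $a, b$ denote the endpoints of $\sigma$; since axiom (1) of Definition \ref{Factor System} gives that $U$ is $K$-quasi-isometrically embedded, $d_U(a,b) \geq \theta$. By Remark \ref{sub factor system}, $\H_U$ is a factor system for $U$ of complexity at most $m-1$, so the inductive hypothesis applied to $a, b \in U$ produces a path $\eta$ in $\widehat{U}$, an integer $n'$, and a subgraph $W' \in \H_U \cup \{U\}$ satisfying the three conditions inside $(U, \H_U)$.

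Finally, form $\gamma'$ by replacing the single edge $\sigma$ in $\gamma$ by $\eta$. Because $\widetilde{\eta}$ is a quasi-geodesic of $U$ (hence of $\Gamma$, by uniform quasi-isometric embedding) and $\widetilde{\gamma}$ was a quasi-geodesic of $\Gamma$ staying close to $[x,y]$, the concatenation $\widetilde{\gamma'}$ is again a uniform quasi-geodesic of $\Gamma$. The key observation is that, by choosing $\tau$ larger than the quasi-geodesic constant produced at every level of the induction, $\eta$ is admissible as the $U$-piece in a $\tau$-partial de-electrification of $\gamma$; iterating this inside $\eta$ shows that the quasi-geodesic segment of $\widehat{W'}$ provided by induction appears as $\widetilde{\gamma'}{}^{(n'+1)}_\tau \cap W'$, giving the second and third conclusions with $n = n' + 1$ and $W = W'$.

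The main obstacle is bookkeeping the constants across the induction: one must verify that the choice of $\tau$ and the quasi-geodesic constants of the partial de-electrifications do not blow up with the depth of the recursion. This is tractable because Remark \ref{sub factor system} shows that the factor-system constants for $\H_U$ depend only on those for $\H$, and the complexity strictly decreases at each step; so the recursion terminates after at most $c$ iterations, allowing us to take the maximum of finitely many uniform constants as the final $\tau$ and $T$. A secondary subtlety is that replacing a single cone-off edge by a path in $\widehat{U}$ must be interpreted as a legitimate partial de-electrification step, which uses axiom (1) together with the $K$-quasiconvexity that it implies, so the substituted piece remains coarsely contained in the correct factor.
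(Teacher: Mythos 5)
Your proposal is correct and follows essentially the same route as the paper: induction on the complexity of \(\H\), using Proposition \ref{Bound Hausdorff distance} together with Lemma \ref{Pigeonhole for cone-offs} to either finish with \(W=\Gamma\), \(n=0\), or find a long \(U\)--piece, then invoking Remark \ref{sub factor system} and the uniform quasi-isometric embedding of \(U\) to apply the inductive hypothesis inside \((U,\H_U)\) and splice the resulting path of \(\widehat{U}\) back into \(\gamma\) as a partial de-electrification, with uniform constants because the complexity is finite. Your extra bookkeeping (the threshold \(\theta'\geq K\theta\), \(n=n'+1\), and the remark that the spliced piece is a legitimate \(\tau\)--partial de-electrification) matches the paper's argument in substance.
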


\subsection{Hierarchical structure}
In this section, we will prove that given a \(\delta\)--hyperbolic graph \(\Gamma\) and a factor system \(\H\), the pair \((\Gamma, \H)\) provides a hierarchically hyperbolic structure for \(\Gamma\). We emphasize that the non trivial part of the claim is the one concerning the indexing set. In fact, every hyperbolic space \(X\) admits \((X, \{X\})\) as a hierarchically  hyperbolic structure.

First, we recall the definition of hierarchically hyperbolic space.

\begin{Def}[Hierarchically hyperbolic space, \cite{HHSII}]\label{defn:space_with_distance_formula}\  \\
The $q$--quasigeodesic space  $(\cuco X,d_{\cuco X})$ is a \emph{hierarchically hyperbolic space} if there exists $\delta\geq0$, an index set $\mathfrak S$ and a set $\{CW \mid W\in\mathfrak S\}$ of $\delta$--hyperbolic spaces $(C U,d_U)$,  such that the following conditions are satisfied:\begin{enumerate}
\item\textbf{(Projections.)}\label{item:dfs_curve_complexes} There is
a set $\{\pi_W\colon \cuco X\rightarrow2^{C W}\mid W\in\mathfrak S\}$
of \emph{projections} sending points in $\cuco X$ to sets of diameter
bounded by some $\xi\geq0$ in the various $C W\in\mathfrak S$.
Moreover, there exists $K$ so that each $\pi_W$ is $(K,K)$--coarsely
Lipschitz.

 \item \textbf{(Nesting.)} \label{item:dfs_nesting} $\mathfrak S$ is
 equipped with a partial order $\sqsubseteq$, and either $\mathfrak
 S=\emptyset$ or $\mathfrak S$ contains a unique $\sqsubseteq$--maximal
 element; when $V\sqsubseteq W$, we say $V$ is \emph{nested} in $W$.  We
 require that $W\sqsubseteq W$ for all $W\in\mathfrak S$.  For each
 $W\in\mathfrak S$, we denote by $\mathfrak S_W$ the set of
 $V\in\mathfrak S$ such that $V\sqsubseteq W$.  Moreover, for all $V,W\in\mathfrak S$
 with $V$ properly nested in $W$ there is a specified subset
 $\rho^V_W\subset C W$ with $\mathrm{diam}_{C W}(\rho^V_W)\leq\xi$.
 There is also a \emph{projection} $\rho^W_V\colon C
 W\rightarrow 2^{C V}$.  (The similarity in notation is
 justified by viewing $\rho^V_W$ as a coarsely constant map $C
 V\rightarrow 2^{C W}$.)
 
 \item \textbf{(Orthogonality.)} 
 \label{item:dfs_orthogonal} $\mathfrak S$ has a symmetric and
 anti-reflexive relation called \emph{orthogonality}: we write $V\perp
 W$ when $V,W$ are orthogonal.  Also, whenever $V\sqsubseteq W$ and $W\perp
 U$, we require that $V\perp U$.  Finally, we require that for each
 $T\in\mathfrak S$ and each $U\in\mathfrak S_T$ for which
 $\{V\in\mathfrak S_T\mid V\perp U\}\neq\emptyset$, there exists $W\in
 \mathfrak S_T-\{T\}$, so that whenever $V\perp U$ and $V\sqsubseteq T$, we
 have $V\sqsubseteq W$.  Finally, if $V\perp W$, then $V,W$ are not
 $\sqsubseteq$--comparable.
 
 \item \textbf{(Transversality and consistency.)}
 \label{item:dfs_transversal} If $V,W\in\mathfrak S$ are not
 orthogonal and neither is nested in the other, then we say $V,W$ are
 \emph{transverse}, denoted $V\pitchfork W$.  There exists
 $\kappa_0\geq 0$ such that if $V\pitchfork W$, then there are
  sets $\rho^V_W\subseteq C W$ and
 $\rho^W_V\subseteq C V$ each of diameter at most $\xi$ and 
 satisfying: $$\min\left\{d_{
 W}(\pi_W(x),\rho^V_W),d_{
 V}(\pi_V(x),\rho^W_V)\right\}\leq\kappa_0$$ for all $x\in\cuco X$.
 
 For $V,W\in\mathfrak S$ satisfying $V\sqsubseteq W$ and for all
 $x\in\cuco X$, we have: $$\min\left\{d_{
 W}(\pi_W(x),\rho^V_W),\mathrm{diam}_{C
 V}(\pi_V(x)\cup\rho^W_V(\pi_W(x)))\right\}\leq\kappa_0.$$ 
 
 The preceding two inequalities are the \emph{consistency inequalities} for points in $\cuco X$.
 
 Finally, if $U\sqsubseteq V$, then $d_W(\rho^U_W,\rho^V_W)\leq\kappa_0$ whenever $W\in\mathfrak S$ satisfies either $V\sqsubseteq W$ and \(V \neq W\) or $V\pitchfork W$ and $W\not\perp U$.
 
 \item \textbf{(Finite complexity.)} \label{item:dfs_complexity} There exists $n\geq0$, the \emph{complexity} of $\cuco X$ (with respect to $\mathfrak S$), so that any set of pairwise--$\sqsubseteq$--comparable elements has cardinality at most $n$.
  
 \item \textbf{(Large links.)} \label{item:dfs_large_link_lemma} There
exist $\lambda\geq1$ and $E\geq\max\{\xi,\kappa_0\}$ such that the following holds.
Let $W\in\mathfrak S$ and let $x,x'\in\cuco X$.  Let
$N=\lambda d_{_W}(\pi_W(x),\pi_W(x'))+\lambda$.  Then there exists $\{T_i\}_{i=1,\dots,\lfloor
N\rfloor}\subseteq\mathfrak S_W-\{W\}$ such that for all $T\in\mathfrak
S_W-\{W\}$, either $T\in\mathfrak S_{T_i}$ for some $i$, or $d_{
T}(\pi_T(x),\pi_T(x'))<E$.  Also, $d_{
W}(\pi_W(x),\rho^{T_i}_W)\leq N$ for each $i$.

 \item \textbf{(Bounded geodesic image.)} \label{item:dfs:bounded_geodesic_image} For all $W\in\mathfrak S$, all $V\in\mathfrak S_W-\{W\}$, and all geodesics $\gamma$ of $C W$, either $\mathrm{diam}_{C V}(\rho^W_V(\gamma))\leq E$ or $\gamma\cap N_E(\rho^V_W)\neq\emptyset$. 
 
 \item \textbf{(Partial Realization.)} \label{item:dfs_partial_realization} There exists a constant $\alpha$ with the following property. Let $\{V_j\}$ be a family of pairwise orthogonal elements of $\mathfrak S$, and let $p_j\in \pi_{V_j}(\cuco X)\subseteq C V_j$. Then there exists $x\in \cuco X$ so that:
 \begin{itemize}
 \item $d_{V_j}(x,p_j)\leq \alpha$ for all $j$,
 \item for each $j$ and 
 each $V\in\mathfrak S$ with $V_j\sqsubseteq V$, we have 
 $d_{V}(x,\rho^{V_j}_V)\leq\alpha$, and
 \item if $W\pitchfork V_j$ for some $j$, then $d_W(x,\rho^{V_j}_W)\leq\alpha$.
 \end{itemize}

\item\textbf{(Uniqueness.)} For each $\kappa\geq 0$, there exists
$\theta_u=\theta_u(\kappa)$ such that if $x,y\in\cuco X$ and
$d(x,y)\geq\theta_u$, then there exists $V\in\mathfrak S$ such
that $d_V(x,y)\geq \kappa$.\label{item:dfs_uniqueness}
\end{enumerate}
We often refer to $\mathfrak S$, together with the nesting
and orthogonality relations, the projections, and the hierarchy paths,
as a \emph{hierarchically hyperbolic structure} for the space $\cuco
X$.  
\end{Def}

\begin{convention}
As before, we will assume that a hyperbolic graph \(\Gamma\) and a factor system \(\H\) for \(\Gamma\) are fixed.
\end{convention}

\begin{Def}\label{Def rho}
	Let \(\widehat{\Gamma}\) be the cone-off of \(\Gamma\) with respect to \(\H\), and let \(i_\Gamma\colon \Gamma \rightarrow \widehat{\Gamma}\) 
		be the bijection on the vertex set.
		As remarked before, \(i_\Gamma\) is distance-non-increasing, and \(i_\Gamma^{-1}\) is distance-non-decreasing.
	\begin{enumerate} 
	\item For each \(W \in \H\), let \(\widehat{W}\) be the cone-off of \(W\) with respect to the family \(\H_W = \{H \in \H \mid  H \subsetneq W\}\). 
		Since \(\widehat{W} \subseteq \widehat{\Gamma}\), the maps \(i_\Gamma\) and \(i_\Gamma^{-1}\) restrict to maps \(i_W\), \(i_W^{-1}\). As before, 
		the maps \(i_W\) are distance-non-increasing and \(i_W^{-1}\) distance-non-decreasing.
		Note that \(\widehat{W} = i_W (W)\).
	\item We denote with \(\pi_W\colon \Gamma \rightarrow 2^{\widehat{W}}\) the map \(i_W \circ p_W\), where \(p_W\) denotes the shortest distance 
		projection in \(\Gamma\).
	\item For \(V, W \in\H\) such that \( V \not\subseteq W\), 
		we define \(\rho^W_V\) to be  \(p_V(W)\). Condition 2 of the definition of factor system yields that the sets \(\rho^W_V\) have uniformly bounded diameter.
	\item For \(V, W \in \H\) such that \(V \subseteq W\), we define a map \(\rho^W_V\colon \widehat{W} \rightarrow 2^{\widehat{V}}\) as 
	\(\rho^W_V =  \pi_V \circ i_W^{-1} \).	
	\end{enumerate}
\end{Def}

The following result is proved in \cite[Proposition 2.6]{KapovichRafi2014}. It is an application of the  Bowditch criterion for hyperbolicity (\cite{bowditch2006intersection}).

\begin{prop}[Kapovich-Rafi, Bowditch]\label{Kapovich-Rafi}
	Let \(\Gamma\) be a connected graph with simplicial metric \(d_\Gamma\) such that \((X,d_X)\) is \(\delta\)--hyperbolic. 
	Let \(K> 0\) and \(\H\) be a family of \(K\)--quasiconvex subgraphs of \(\Gamma\).
	Let \(\widehat{\Gamma}\) be the cone-off of \(\Gamma\) with respect to the family \(\H\).
	Then \(\widehat{\Gamma}\) is \(\delta'\)--hyperbolic (with respect to the path metric) 
	for some constant \(\delta' >0\) depending only on \(K\) and \(\delta\).  Moreover there exists \(H=H(K,\delta )>0\) such that whenever
	\(x,y\in V(\Gamma)\), \([x,y]_\Gamma\) is a \(d_\Gamma\)--geodesic from \(x\) to \(y\) in \(\Gamma\) and
	\( [x,y]_{\widehat{\Gamma}}\) is a \(d_{\widehat{\Gamma}}\)--geodesic from \(x\) to \(y\) in \(\widehat{\Gamma}\) 
	then \([x,y]_\Gamma\) and \([x,y]_{\widehat{\Gamma}}\) are \(H\)--Hausdorff close in \((\widehat{\Gamma},d_{\widehat{\Gamma}})\).
\end{prop}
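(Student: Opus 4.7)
The plan is to apply Bowditch's criterion for hyperbolicity to \(\widehat{\Gamma}\), taking as preferred paths the \(\tau_1\)--quasi-geodesics \(\gamma'_{xy}\) of \(\widehat{\Gamma}\) produced by Proposition \ref{Bound Hausdorff distance} for each pair \(x,y \in V(\Gamma)\). The crucial feature of these paths is that a de-electrification \(\widetilde{\gamma}'_{xy}\) is simultaneously a \(\tau_2\)--quasi-geodesic of the genuinely hyperbolic space \(\Gamma\). This will let me analyze the combinatorics of triangles in \(\Gamma\) and transport the conclusions to \(\widehat{\Gamma}\) via the distance-non-increasing bijection \(i_\Gamma\).

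For the slim-triangle estimate, fix \(x,y,z \in V(\Gamma)\) and consider the three paths \(\gamma'_{xy},\gamma'_{yz},\gamma'_{xz}\) together with their de-electrifications. Since each de-electrification is a \(\tau_2\)--quasi-geodesic in \(\Gamma\), the Morse Lemma (\ref{lem:Morse Lemma}) places each within \(Q(\tau_2,\tau_2)\) of the corresponding \(\Gamma\)--geodesic, and these geodesics form a \(\delta\)--slim triangle in \(\Gamma\). Hence the de-electrified triangle is uniformly slim in \(\Gamma\), and so also in \(\widehat{\Gamma}\) because \(i_\Gamma\) is \(1\)--Lipschitz. Next, \(K\)--quasiconvexity of the elements of \(\H\) implies that every geodesic segment of \(\Gamma\) inserted by the de-electrification of an \(\H\)--component lies in the \(K\)--neighbourhood of some \(H \in \H\), and hence at \(\widehat{\Gamma}\)--distance at most \(K+1\) from the corresponding \(\H\)--component of \(\gamma'\); so \(\gamma'_{xy}\) and \(\widetilde{\gamma}'_{xy}\) are \((K+1)\)--Hausdorff close in \(\widehat{\Gamma}\). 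Combining these, the triangles formed by the preferred paths are uniformly slim in \(\widehat{\Gamma}\) with constants depending only on \(K\) and \(\delta\), and Bowditch's criterion then yields a hyperbolicity constant \(\delta' = \delta'(K,\delta)\).

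The Hausdorff comparison of \(\Gamma\)-- and \(\widehat{\Gamma}\)--geodesics then follows by chaining three estimates in \(\widehat{\Gamma}\): the Morse Lemma in \(\Gamma\) makes \(\widetilde{\gamma}'_{xy}\) close to \([x,y]_\Gamma\) in \(\Gamma\), hence in \(\widehat{\Gamma}\); the preceding paragraph makes \(\widetilde{\gamma}'_{xy}\) close to \(\gamma'_{xy}\) in \(\widehat{\Gamma}\); and, with hyperbolicity of \(\widehat{\Gamma}\) now available, the Morse Lemma inside \(\widehat{\Gamma}\) makes \(\gamma'_{xy}\) close to \([x,y]_{\widehat{\Gamma}}\). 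The main obstacle is verifying Bowditch's criterion carefully, since the preferred paths are not automatically closed under taking sub-arcs: one needs every sub-arc of a \(\gamma'_{xy}\) to behave like a preferred path between its own endpoints. This can be handled because restrictions of \(\gamma'_{xy}\) remain \(\tau_1\)--quasi-geodesics of \(\widehat{\Gamma}\) whose de-electrifications remain \(\tau_2\)--quasi-geodesics of \(\Gamma\), so the slim-triangles argument applies uniformly to sub-paths as well.
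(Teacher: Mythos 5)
Your argument is correct, but it takes a genuinely different route from the paper, which does not prove this proposition at all: it simply cites Kapovich--Rafi, whose own proof applies Bowditch's criterion directly to the family of \(\Gamma\)--geodesics \([x,y]_\Gamma\), viewed as connected subsets of \(\widehat{\Gamma}\). That direct argument needs only two observations: \(\Gamma\)--geodesic triangles are \(\delta\)--thin in \(\Gamma\), hence in \(\widehat{\Gamma}\) since the identity on vertices is distance-non-increasing; and if \(d_{\widehat{\Gamma}}(x,y)\leq 1\) then either \(d_\Gamma(x,y)\leq1\) or \(x,y\) lie in a common \(H\in\H\), so by \(K\)--quasiconvexity \([x,y]_\Gamma\) lies in \(N^\Gamma_K(H)\) and has \(\widehat{\Gamma}\)--diameter at most roughly \(2K+2\). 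The Hausdorff-closeness statement then comes with the criterion (or from the Morse Lemma in \(\widehat{\Gamma}\), as you do). You instead take as preferred paths the quasi-geodesics \(\gamma'\) of Proposition \ref{Bound Hausdorff distance}; this is logically sound (that proposition uses only the hyperbolicity of \(\Gamma\), so there is no circularity), and your chain of estimates — Morse Lemma in \(\Gamma\) for the de-electrifications, \((K+1)\)--closeness in \(\widehat{\Gamma}\) between \(\gamma'\) and \(\widetilde{\gamma}'\) via quasiconvexity and the cone edges, then the Morse Lemma in \(\widehat{\Gamma}\) once \(\delta'\) is established — does yield both conclusions with constants depending only on \((\delta,K)\). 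The trade-off is that you invoke the heaviest result of Section \ref{sec:introductory tools} to prove a statement that has an elementary two-line verification, whereas the gain is that your proof is entirely self-contained within the paper. Two small points: Bowditch's criterion has no closure-under-subpaths requirement, so the "main obstacle" you flag is vacuous (though your resolution of it is correct); on the other hand it does require the bounded-diameter condition when \(d_{\widehat{\Gamma}}(x,y)\leq1\), which you do not check explicitly — it is immediate from \(\gamma'\) being a \(\tau_1\)--quasi-geodesic of \(\widehat{\Gamma}\), but it should be said.
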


\begin{corollary}\label{delta' hyperbolicity}
	There is a uniform \(\delta'\) such that the spaces \(\widehat{W}\), for \(W\in\H\cup\{ X\}\) are \(\delta'\)--hyperbolic.
\end{corollary}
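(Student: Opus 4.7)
The plan is to apply Proposition \ref{Kapovich-Rafi} to each pair \((W, \H_W)\) and verify that all input constants are uniform in \(W\). There are essentially three things to check: each \(W\) is hyperbolic with uniform constant, the coning-off family \(\H_W\) consists of uniformly quasiconvex subgraphs of \(W\), and \(W\) itself is a connected graph so that Proposition \ref{Kapovich-Rafi} applies. The case \(W=\Gamma\) is immediate from the proposition applied to \((\Gamma,\H)\) together with axiom (1) of factor systems plus the Morse Lemma (Theorem \ref{lem:Morse Lemma}).

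For \(W\in\H\), I first note that axiom (1) of Definition \ref{Factor System} says \(W\) is \(K\)--quasi-isometrically embedded in \(\Gamma\). Since quasi-isometry preserves Gromov hyperbolicity of geodesic spaces, there exists \(\delta_0=\delta_0(\delta,K)\) such that the path metric on \(W\) is \(\delta_0\)--hyperbolic. This constant depends only on \(\delta\) and the factor-system constant \(K\), so it is uniform over \(W\in\H\).

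Next, by Remark \ref{sub factor system}, \(\H_W\) is a factor system for \(W\) with constants \(K',c-1,\xi'\) where \(K',\xi'\) depend only on \(K,\xi\). In particular, every \(U\in\H_W\) is \(K'\)--quasi-isometrically embedded in \(W\). Combining this with the \(\delta_0\)--hyperbolicity of \(W\) and the Morse Lemma, each \(U\in\H_W\) is \(K''\)--quasiconvex in \(W\) for a uniform constant \(K''=K''(K,\delta)\) (cf.\ the discussion following Theorem \ref{lem:Morse Lemma}).

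Having verified that \(W\) is uniformly hyperbolic and that \(\H_W\) is a family of uniformly quasiconvex subgraphs of \(W\), Proposition \ref{Kapovich-Rafi} immediately gives that \(\widehat{W}\) is \(\delta'\)--hyperbolic with a constant \(\delta'=\delta'(K'',\delta_0)\) depending only on \(\delta\) and the factor-system constants. Taking the maximum over this value and the constant produced for \(W=\Gamma\) yields the desired uniform \(\delta'\). No step is a real obstacle here: the whole content is that the relevant constants produced by axioms (1)--(5) of the factor system together with Remark \ref{sub factor system} are uniform, so Proposition \ref{Kapovich-Rafi} can be applied with the same output constant for every \(W\in\H\cup\{\Gamma\}\).
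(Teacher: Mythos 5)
Your argument is correct and is exactly the route the paper intends: Corollary \ref{delta' hyperbolicity} is stated as an immediate consequence of Proposition \ref{Kapovich-Rafi}, applied to \((\Gamma,\H)\) and to each \((W,\H_W)\) using Remark \ref{sub factor system}, with uniform quasiconvexity of the coned-off families coming from uniform quasi-isometric embeddedness plus the Morse Lemma, just as you wrote. The only cosmetic point is that rather than invoking ``quasi-isometry preserves hyperbolicity'' one should say that a geodesic space quasi-isometrically embedded in a hyperbolic space is itself hyperbolic (with constant depending only on \(\delta\) and \(K\)), which is the standard fact you are in effect using.
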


\begin{prop}[Bounded projections]\label{bounded projections}
	There exists \(\Theta\) such that for each pair \(F,W\in\H\), one of the following holds:
		\begin{rules}
			\item \(\mathrm{diam}_{\widehat{F}} (\rho^\Gamma_F ( \widehat{W}) )  \leq \Theta\);
			\item \(F \subseteq W\).
		\end{rules}
	\begin{proof}
		Let \(\Theta = 2B + \xi + 2\) and suppose that the first does not hold.
		Unraveling the definitions, we get: 
		\[\rho^\Gamma_F (\widehat{W}) =  \pi_F \circ i_\Gamma^{-1} ( i_W (W)) = \pi_F(W) = i_F (p_F(W)).\]
		Since \(i_F\) is distance-non-increasing, we get that \(\mathrm{diam}_F (p_F(W)) > \Theta > \xi \). 
		By definition of factor system, this implies that there is \(U \in \H\) such that \(U \subseteq F\) and  \(d_{\mathrm{Haus}}(p_F(W), U) \leq B\). 
		Since \(\Theta = 2B +\xi +2\), we have that \(\mathrm{diam}_{\widehat{F}}(U) \geq 2\), thus \(U\) is not coned-off in \(\widehat{F}\). 
		Since each element of \(\H_F\) is coned-off in \(\widehat{F}\), we have that \(U \not \in \H_F\) and hence \(U=F\). 
		Thus, \(d_{\mathrm{Haus}}(F, p_F(W))\leq B\). The definition of factor system then implies that \(F \subseteq W\). 
	\end{proof}
\end{prop}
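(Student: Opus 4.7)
The plan is to unravel the definition of $\rho^\Gamma_F(\widehat{W})$ until the claim becomes a statement purely about the $F$-projection $p_F(W)$, and then invoke axioms (2) and (3) of Definition \ref{Factor System} in contrapositive form, with the cone-off feature that proper sub-factors of $F$ collapse to diameter at most $1$ doing the crucial work.

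First I would compute, directly from Definition \ref{Def rho}, that
\[\rho^\Gamma_F(\widehat{W}) \;=\; \pi_F\circ i_\Gamma^{-1}(i_W(W)) \;=\; i_F(p_F(W)).\]
Since $i_F$ is distance-non-increasing, any lower bound on $\mathrm{diam}_{\widehat{F}}(i_F(p_F(W)))$ transfers to the same lower bound on $\mathrm{diam}_F(p_F(W))$, and any $F$-Hausdorff estimate on $p_F(W)$ also holds in $\widehat{F}$. So, choosing $\Theta$ at least $\xi$, assuming the first alternative fails forces $\mathrm{diam}_F(p_F(W)) > \xi$, and axiom (2) supplies $U \in \H$ with $U \subseteq F$ and $d_{\mathrm{Haus}}(p_F(W), U) \leq B$.

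The second step is a dichotomy on $U$. If $U$ were a proper sub-factor of $F$, then $U \in \H_F$, so $U$ is coned off in $\widehat{F}$, giving $\mathrm{diam}_{\widehat{F}}(U) \leq 1$; combined with the $B$-Hausdorff proximity (which persists in $\widehat{F}$) this would bound $\mathrm{diam}_{\widehat{F}}(p_F(W))$ above by $2B+1$, contradicting our assumption once $\Theta > 2B+1$. The only remaining possibility is $U = F$ itself, and then $d_{\mathrm{Haus}}(F, p_F(W)) \leq B$, which by axiom (3) upgrades to the nesting $F \subseteq W$. Setting $\Theta = 2B + \xi + 2$ comfortably accommodates both numerical requirements ($\Theta \geq \xi$ to trigger axiom (2), and $\Theta > 2B + 1$ to rule out the proper-sub-factor case).

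No step is genuinely difficult, and there is no substantive obstacle: the content is simply arranging the three features of a factor system in sequence — axiom (2) to produce $U$, the cone-off definition to collapse any $U \in \H_F$ to $\widehat{F}$-diameter at most $1$, and axiom (3) to convert Hausdorff closeness into actual nesting — while bookkeeping the fact that the bijection $i_F$ is distance-non-increasing so that estimates in $F$ pass to estimates in $\widehat{F}$.
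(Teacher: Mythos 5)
Your proposal is correct and follows essentially the same route as the paper's proof: unravel $\rho^\Gamma_F(\widehat{W}) = i_F(p_F(W))$, use the distance-non-increasing bijection to transfer the diameter bound, apply axiom (2) to produce $U \subseteq F$ within Hausdorff distance $B$ of $p_F(W)$, rule out $U \in \H_F$ via the coning-off (the paper states this as $\mathrm{diam}_{\widehat{F}}(U)\geq 2$, you state it contrapositively as $\mathrm{diam}_{\widehat{F}}(p_F(W))\leq 2B+1$, which is the same arithmetic), and conclude $U=F$ so that axiom (3) yields $F \subseteq W$. The choice $\Theta = 2B+\xi+2$ and the bookkeeping are identical to the paper's.
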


\begin{lemma}[Bounded geodesic image]\label{bounded geodesic image}
There exists \(B \) such that for all \(W \in \H\) and \(V \in \H_W\) and all geodesics \(\gamma\) of \(\widehat{W}\), either \(\mathrm{diam}_{\widehat{V}} (\rho^W_V (\gamma)) < B\), or 
\(\gamma \cap N_B (\rho^V_W) \neq \emptyset\).
\begin{proof}
	Suppose that \(\mathrm{diam} (\rho^W_V (\gamma))> 8\delta + K\), where  \(K\) is the uniform quasiconvexity constant of the elements of \(\H\).
	We claim that this implies that \(\gamma \cap N_{2 \delta + K + H} (\rho^V_W) \neq \emptyset\), where \(H\)
	is the constant of Proposition \ref{Kapovich-Rafi}. 
	Fix \(a,b \in \widehat{V}\) witnessing \(\mathrm{diam} (\rho^W_V (\gamma))\) and let
	\(x,y\in \gamma\) be in the preimage under \(\rho^W_V\) of \(a,b\). 
	Note that \(d_\Gamma (a,b) \geq d_{\widehat{V}}(a,b) = 8\delta + K\). Then a quadrilateral argument gives that \(d_\Gamma (V, [x,y]) < 2\delta +K\), 
	where \([x,y]\) is a geodesic in \(\Gamma\). But since, by Proposition \ref{Kapovich-Rafi}, \(d_{\mathrm{Haus}}([x,y], \gamma_{|_{[x,y]}}) \leq H\), 
	we get the claim.
\end{proof}
\end{lemma}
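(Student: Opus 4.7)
The plan is to prove the contrapositive: if a geodesic \(\gamma\) of \(\widehat{W}\) fails to pass close to \(\rho^V_W = V\) in \(\widehat{W}\), then the \(\widehat{V}\)-diameter of its projection must be bounded. The strategy is to run the standard hyperbolic quadrilateral argument in \(\Gamma\), where projection geodesics live naturally, and then transfer the conclusion back to \(\widehat{W}\) via Proposition \ref{Kapovich-Rafi}.

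First, suppose \(\mathrm{diam}_{\widehat{V}}(\rho^W_V(\gamma))\) is large and pick witnesses \(a,b \in \rho^W_V(\gamma)\) together with preimages \(x,y \in \gamma\) satisfying \(a \in \pi_V(x)\), \(b \in \pi_V(y)\). Unraveling \(\rho^W_V = \pi_V \circ i_W^{-1}\) and using that \(V\) is \(K\)-quasi-isometrically embedded in \(\Gamma\) by condition 1 of the factor system, a large lower bound on \(d_{\widehat{V}}(a,b)\) translates into a comparably large lower bound on \(d_\Gamma(a,b)\). The segments \([x,a]_\Gamma\) and \([y,b]_\Gamma\) are projection geodesics to the (uniformly) quasiconvex subspace \(V \subseteq \Gamma\), so Lemma \ref{quadrilateral} applied to the quadrilateral with vertices \(x,y,b,a\) produces a point \(z \in [x,y]_\Gamma\) lying within \(2\delta + K\) of \(V\) in \(\Gamma\).

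It remains to see that \(\gamma\) itself comes close to \(V\) in \(\widehat{W}\). Since \(W\) is quasi-isometrically embedded in the hyperbolic space \(\Gamma\), it is itself (uniformly) hyperbolic, and by the Morse lemma \([x,y]_\Gamma\) stays uniformly close to any \(W\)-geodesic \([x,y]_W\) between \(x\) and \(y\). By Remark \ref{sub factor system}, \(\H_W\) is a factor system for \(W\), in particular a family of uniformly quasiconvex subgraphs, so Proposition \ref{Kapovich-Rafi} applies to the pair \((W,\H_W)\) and gives that \([x,y]_W\) is \(H\)-Hausdorff close in \(\widehat{W}\) to the \(\widehat{W}\)-geodesic subsegment of \(\gamma\) between \(x\) and \(y\). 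Since the cone-off map \(W \to \widehat{W}\) is distance-non-increasing on vertices, the point \(z\) near \(V\) in \(\Gamma\) corresponds to a vertex at uniformly bounded \(\widehat{W}\)-distance from \(V = \rho^V_W\), so \(\gamma\) meets a uniform \(\widehat{W}\)-neighborhood of \(\rho^V_W\).

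The main obstacle is book-keeping across the three metrics \(d_\Gamma\), \(d_W\), and \(d_{\widehat{W}}\): the quadrilateral argument belongs in \(\Gamma\), but the conclusion has to be stated for a geodesic of the cone-off \(\widehat{W}\). The factor-system QI-embedding hypothesis is what lets us compare \(d_{\widehat{V}}\) with \(d_\Gamma\) at the input side, while Proposition \ref{Kapovich-Rafi} (together with the Morse lemma passing between \([x,y]_\Gamma\) and \([x,y]_W\)) is the crucial bridge at the output side, and the explicit value of \(B = B(\delta, K, H)\) falls out by tracking these constants.
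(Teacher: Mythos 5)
Your proof is correct and takes essentially the same route as the paper's: choose witnesses \(a,b\) with preimages \(x,y\in\gamma\), observe that \([x,a]_\Gamma\) and \([y,b]_\Gamma\) are projection geodesics to the quasiconvex \(V\) so the quadrilateral argument (Lemma \ref{quadrilateral}) puts a point of \([x,y]_\Gamma\) within \(2\delta+K\) of \(V\), and then use Proposition \ref{Kapovich-Rafi} to compare this with the \(\widehat{W}\)--geodesic \(\gamma\). The only difference is that you make explicit the transfer steps the paper compresses (the QI-embedding of \(V\) to pass from \(d_{\widehat{V}}\) to \(d_\Gamma\), and the Morse lemma together with Kapovich--Rafi applied to the pair \((W,\H_W)\) at the end), which is a welcome clarification rather than a different argument.
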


\begin{prop}[Large link Lemma]\label{LLL}
There exist \(\lambda \geq 1\) and \(E \) such that the following holds.
Let \(W \in \H \cup \{\Gamma\}\) and \(x,y \in \Gamma\). Let \(N = \lambda d(\pi_W(x), \pi_W(y)) + \lambda\).
Then there exists \(\{T_1, \dots , T_{\lceil N \rceil}\} \subseteq \H_W\) such that for each \(V \in \H_W\), either \(V \in \H_{T_i} \cup \{T_i\}\) for some \(i \in \{1, \dots, \lceil N \rceil\}\), or \(d_V(\pi_V(x), \pi_V(y)) < E\). Also, \(d_{W} (\pi_W(x), \rho^{T_i}_W) \leq N\) for each i.
	\begin{proof}
	We will show that if \(E\) is chosen large enough, the proposition holds. We will give a more precise characterization of large during
	the proof. For now, just assume it is much larger than \(\delta, K\) and \(\Theta\), where \(K\) is the quasiconvexity constant of the family \(\H\), 
	and \(\Theta\) is as in Proposition \ref{bounded projections}.
	Since \(W\) is uniformly quasi-isometrically embedded in \(\Gamma\),
	 and since \(\H_W\) is uniformly a factor system, to simplify notation we can assume
	that \(W=\Gamma\). In fact, for each \(U \subseteq W\) and \(x\in \Gamma\), we have that \(p_U(x)\) and \( p_U(p_W(x))\) 
	(and hence \(\pi_U(x)\) and \(\pi_U(\pi_W(x))\)) coarsely coincide (see 
	Proposition \ref{second B. inequality}). Thus we can substitute \(x\) and \(y\) with \(\pi_W(x)\) and \(\pi_W(y)\).
	
	Let \(\gamma\) be a \(\tau_1\)--quasi-geodesic of \(\widehat{\Gamma}\) between \(x\) and \(y\), such that for every de-electrification 
	\(\widehat{\gamma}\), we have \(\widehat{\gamma} \subseteq N_{\Delta}([x,y])\). Proposition \ref{Bound Hausdorff distance} guarantees that
	such a quasi-geodesic exists and that \(\tau_1, \Delta\) can be chosen uniformly.
	Suppose that there is \(F\) with \(d_{\widehat{F}}(\pi_F(x),\pi_F(y)) \geq E\). 
	We claim that if \(\widehat{\gamma}\) does not have a \(F\)--piece, then there is \(U \in \H\) such that \(\widehat{\gamma}\) has
	a \(U\)--piece and \(F\subseteq U\). Note that the number of \(\H\)--pieces of \(\widehat{\gamma}\) is at most 
	\(L_{\widehat{\Gamma}}(\gamma)\leq \tau_1 d_{\widehat{\Gamma}}(x,y) + \tau_1\). Then the spaces \(T_i\) of the statement are precisely
	the elements \(T\) of \(\H\) such that \(\gamma\) has a \(T\) component. Since \(\gamma\) is  uniformly a quasi-geodesic, 
	up to increase \(\lambda \), we get that the second claim translates as 
	\(d_{\widehat{\Gamma}}(x, T_i) \leq L_{\widehat{\Gamma}}(\gamma)\). Since the spaces \(T_i\) have, by construction, non empty intersection 
	with \(\gamma_i\), the second claim follows.
	
	Since \(d_{\widehat{F}}(\pi_F(x),\pi_F(y)) \geq E\), we get that \(\mathrm{diam}_{\widehat{F}}(\rho^\Gamma_F([x,y]) \geq E\).
	If \(E\) is large enough, a quadrilateral argument gives that there are points \(a',b'\) of \([x,y]\) such that:
	\begin{rules}
	\item \(d_\Gamma (a', F) \leq 2\delta + K\) and \(d_\Gamma (b', F) \leq 2\delta + K\), where \(K\) is the quasiconvexity constant of \(F\).
	\item \(d_\Gamma (p_F(a'), p_F(x)) \leq 4\delta + 2K\), and \(d_\Gamma (p_F(b'), p_F(y)) \leq 4\delta + 2K\). 
	\end{rules}
	Since \(\widetilde{\gamma} \subseteq N_\Delta ([x,y])\), we can find points \(a,b\) of \(\widetilde{\gamma}\) such that:
	\begin{rules}
	\item \(d_\Gamma (a, F) \leq 2\delta + K + \Delta\) and \(d_\Gamma (b, F) \leq 2\delta + K + \Delta\). 
	\item \(d_\Gamma (p_F(a), p_F(x)) \leq 2(4\delta + 2K + \Delta)\) and \(d_\Gamma (p_F(b), p_F(y)) \leq 2(2K + 4\delta + \Delta)\).
	\end{rules}
	
	Since \(F\) is uniformly quasi-isometrically embedded in \(\Gamma\), and since distances in \(F\) are larger than distances in \(\widehat{F}\),
	the last set of inequalities gives that we can find a uniform \(\rho\) such that \(d_{\widehat{F}} (\pi_F(a), \pi_F(x))\leq \rho\), 
	and similarly \(d_{\widehat{F}}(\pi_F(b), \pi_F(y)) \leq \rho\). 
	Since, by assumption, \(d_{\widehat{F}}(\pi_F(x), \pi_F(y)) \geq E\), we obtain that \(d_{\widehat{F}}(\pi_F(a), \pi_F(b) ) \leq E - 2 \rho\).
	 We would like to assume that \(a\) and \(b\) are points of \(\gamma\). Since this is not true
	in general, replace \(\gamma\) with its \(a,b\)--interruption \(\widehat{\gamma}\).
	Note that \(\widehat{\gamma}\) is a \((1, 4K + 2\delta)\)--quasi geodesic of \(CX\).
	We want now to estimate \(L(\widehat{\gamma}_{|[a,b]})_{\widehat{\Gamma}}\). 
	Since \(F\) is coned-off in \(\widehat{\Gamma}\), we have that 
	\[ d_{\widehat{\Gamma}}(a,b) \leq d_{\widehat{\Gamma}}(a, F) + d_{\widehat{\Gamma}}(b,F) + 1 \leq 2(2\delta + K + \Delta) +1.\]
	Thus,  \(L(\widehat{\gamma}_{|[a,b]})_{\widehat{\Gamma}} \leq  2(3\delta + 2K + \Delta) +1 = B\). In particular, \(\widehat{\gamma}_{|[a,b]}\)
	 has at most \(B\) pieces.
	Choosing \(E > B\Theta + 2 \rho\), we get that there is at least one
	\(U\)--piece \(\eta\) of \(\widehat{\gamma}_{|[a,b]}\), for some \(U \in \H\), such that 
	\(\mathrm{diam}_{\widehat{F}} (\rho^\Gamma_F(\eta)) > \Theta\). 
	But then by proposition \ref{bounded projections}, we have that \(F \subseteq U\), which proves the claim.
	\end{proof}
\end{prop}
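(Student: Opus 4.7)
The plan is to follow the standard HHS recipe: build a controlled quasi-geodesic of $\widehat{\Gamma}$ from $x$ to $y$, and read off the $T_i$ as the domains of its $\H$--pieces. The Bounded Projections Proposition (\ref{bounded projections}) then converts ``large projection to $V$'' into ``$V$ nested in some $T_i$''.

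First I would reduce to the case $W=\Gamma$. Since $W$ is uniformly quasi-isometrically embedded in $\Gamma$, the structure $\H_W$ is (up to uniform constants) a factor system on the hyperbolic space $W$, and by the second Behrstock inequality (Proposition \ref{second B. inequality}), for any $V \in \H_W$ the projections $\pi_V(x)$ and $\pi_V(p_W(x))$ coarsely coincide; so we may replace $(x,y)$ with $(\pi_W(x), \pi_W(y))$ and argue inside $W$. Next, apply Proposition \ref{Bound Hausdorff distance} to obtain a $\tau_1$--quasi-geodesic $\gamma$ of $\widehat{\Gamma}$ from $x$ to $y$ whose de-electrification $\widetilde{\gamma}$ is a $\tau_2$--quasi-geodesic of $\Gamma$, hence contained in a uniform $\Delta$--neighbourhood of $[x,y]_\Gamma$. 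Enumerate the $\H$--components of $\gamma$ as $e_1,\dots,e_r$, and let $T_i$ be the element of $\H$ associated to $e_i$. Since $L_{\widehat{\Gamma}}(\gamma) \leq \tau_1 d_{\widehat{\Gamma}}(x,y) + \tau_1$, choosing $\lambda$ appropriately gives both $r \leq \lceil N \rceil$ and, because the coned-off segments $e_i$ lie along $\gamma$, the bound $d_{\widehat{W}}(\pi_W(x), \rho^{T_i}_W) \leq N$.

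The main content is the nesting/bound dichotomy: suppose $V \in \H_\Gamma$ satisfies $d_{\widehat{V}}(\pi_V(x),\pi_V(y)) \geq E$; I must produce some $i$ with $V = T_i$ or $V \subsetneq T_i$. Writing $\mathrm{diam}_{\widehat{V}}(\rho^\Gamma_V([x,y])) \geq E - O(1)$ and applying a quadrilateral argument together with the Morse lemma (\ref{lem:Morse Lemma}), I locate points $a',b' \in [x,y]$ with $d_\Gamma(a', V), d_\Gamma(b', V) \leq 2\delta+K$ whose $V$--projections coarsely witness the large diameter. The inclusion $\widetilde{\gamma} \subseteq N_\Delta([x,y])$ then yields corresponding points $a,b \in \widetilde{\gamma}$ with the same properties, and using that $V$ is uniformly quasi-isometrically embedded in $\Gamma$, the distance $d_{\widehat V}(\pi_V(a),\pi_V(b))$ is still at least $E - 2\rho$ for a uniform $\rho$. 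Replacing $\gamma$ by its interruption at $\{a,b\}$ (Remark \ref{length interruption}), the subpath between $a$ and $b$ has $\widehat{\Gamma}$--length bounded by a uniform constant $B$ (because both $a$ and $b$ lie in a uniform $\widehat{\Gamma}$--neighbourhood of the coned-off $V$), so this subpath has at most $B$ many $\H$--pieces. By the pigeonhole principle, choosing $E > B\Theta + 2\rho$ forces at least one of these pieces, say a $U$--piece, to have $\mathrm{diam}_{\widehat V}(\rho^\Gamma_V(\cdot)) > \Theta$. Proposition \ref{bounded projections} then concludes $V \subseteq U$, and $U$ is by construction one of the $T_i$.

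The hard part will be calibrating the constants so that the quadrilateral argument transfers the large $\widehat{V}$--projection from $[x,y]$ to \emph{points on $\gamma$} with bounds depending only on $\delta$, $K$, $\Delta$, and $\Theta$, and so that the interrupted subpath really has uniformly bounded $\widehat{\Gamma}$--length; this is where the interplay between the $\Gamma$--geometry (used in the quadrilateral argument and in the quasiconvexity of $V$) and the $\widehat{\Gamma}$--geometry (where $\gamma$ is quasi-geodesic and $V$ is coned-off) has to be handled simultaneously. Once this calibration is in place, the pigeonhole step combined with Proposition \ref{bounded projections} is essentially automatic.
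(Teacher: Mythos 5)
Your proposal follows essentially the same route as the paper's own proof: reduce to \(W=\Gamma\) via Proposition \ref{second B. inequality}, take the controlled quasi-geodesic from Proposition \ref{Bound Hausdorff distance}, declare the \(T_i\) to be the domains of its \(\H\)--components, and for a domain \(V\) with large projection use the quadrilateral argument, the points \(a,b\) on the de-electrification, the \(\{a,b\}\)--interruption with uniformly bounded \(\widehat{\Gamma}\)--length, and the pigeonhole choice \(E > B\Theta + 2\rho\) feeding into Proposition \ref{bounded projections}. The remaining constant calibration you flag is exactly the bookkeeping carried out in the paper, so the argument is correct as proposed.
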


\begin{prop}[Uniqueness]\label{Uniqueness}
For each \(\theta\) there exists \(T_\theta\) such that if \(x,y \in \Gamma\) and \(d_\Gamma (x,y) \geq T_\theta\), then there exists \(V \in \H\) such
 that \(d_{\widehat{V}}(\pi_V(x),\pi_V(y)) \geq \theta\).
	\begin{proof}
	
	We stick to the convention that \([x,y]\) indicates a geodesic in \(\Gamma\) between \(x\) and \(y\). 
	We will proceed by induction on the complexity of \(\H\). 
	If the complexity is \(0\) or \(1\), the result is respectively trivial or follows from Lemma \ref{Pigeonhole for cone-offs}.
	Suppose the result holds for complexity \(n-1\). 	
	Applying Lemma \ref{partial pigeonhole},
	we have that there exists \(T_C\) such that if \(d_{\Gamma}(x,y) \geq T_C\), then there is \(V \in \H\), a quasi-geodesic \(\widetilde{\gamma}\)
	of \(\Gamma\) and points \(s, t\) in \(V\), such that:
	\begin{rules}
	\item \(s,t \in \widetilde{\gamma}\), 
	\item there is a quasi-geodesic \(\sigma\) of \(\widehat{V}\) connecting \(s\) and \(t\),
	\item \(L_{\widehat{V}}(\sigma) \geq C\), in particular we can estimate \(d_{\widehat{V}} (s,t)\), 
	\item \(\widetilde{\gamma}_{|_{[s,t]}}=\widetilde{\sigma}\) is a de-electrification of \(\sigma\).
	\end{rules}
	Since \(\widetilde{\gamma}\) is a quasi-geodesic of \(\Gamma\), there is \(E\) such that \(\widetilde{\gamma} \subseteq N_E^\Gamma ([x,y])\). 
	
	Let \(s' \in \pi_V (x)\), \(t' \in \pi_V(y)\) realizing the distance \(d_{\widehat{V}}(\pi_V(x), \pi_V(y))\).
	Let \(q_s\) (resp \(q_t\)) be witnessing the closest point projection in \(\Gamma\) of 
	\(s\) (resp \(t\)) on \([x,y]\). Since \(d_{\Gamma} (q_s, s) \leq E\) and \(d_\Gamma (q_t, t) \leq E\),
	triangular inequality gives that \[d_\Gamma (x,s) + d_\Gamma (s,t) + d_\Gamma (t,y) - 4 E \leq d_\Gamma (x,y).\]
	Moreover, by the choice of \(s', t'\), we have that \(d_\Gamma (x,s') \leq d_\Gamma (x,s)\) and \(d_\Gamma (t',y) \leq d_\Gamma (t,y)\).
	Thus, we get that the left-hand-side of the above inequality is bounded below by \(d_\Gamma (x,s') + d_\Gamma (s,t) + d_\Gamma (t',y) - 4 E\). 
	Moreover,  by triangular inequality, the right-hand-side is bounded above by \(d_\Gamma (x,s') + d_\Gamma (s',t') + d_\Gamma (t',y)\).
	Thus we get	\[ d_\Gamma (s,t) -4E \leq d_\Gamma (s',t') .\] 
	In particular, up to increasing \(d_\Gamma (x,y)\), we can arbitrarily increase \(d_V(s',t')\). Since \(\H_V\) is a factor system for \(V\), 
	of complexity \(n-1\), we can apply the induction hypothesis on \(s', t'\). Thus we  will find a space \(U \subseteq V\) such that 
	\(d_{\widehat{U}}(\pi_U(s'), \pi_U (t')) = d_{\widehat{U}}(\pi_U(\pi_V(x)), \pi_U (\pi_V(y))) \leq \theta\). 
	Since, by Proposition \ref{second B. inequality}, the projection \(\pi_U(\pi_V(x))\) and \(\pi_U(x)\) coarsely coincide, for \(U \subseteq V\), 
	up to further increasing \(d_\Gamma (x,y)\), we get the claim.
	We remark that, in order to get uniform bound on the constants, it is crucial for the complexity of \(\H\) to be finite.

	\end{proof}
\end{prop}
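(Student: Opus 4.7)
The plan is to induct on the complexity $n$ of $\H$. For complexity $0$ or $1$ the statement is trivial or follows directly from Lemma \ref{Pigeonhole for cone-offs}: if $x,y$ are far in $\Gamma$ then either they are far in $\widehat{\Gamma}$, or some $\H$-piece of a $\widehat{\Gamma}$-geodesic has large $\Gamma$-length, and in the latter case its endpoints lie in a single $U \in \H$ and are far in $\widehat{U}$ via quasi-isometric embedding. Assume the proposition holds for all factor systems of complexity at most $n-1$.

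Given the target $\theta$, I would first choose a (large) auxiliary constant $C$ to be fixed later and feed it into Lemma \ref{partial pigeonhole}. Provided $d_\Gamma(x,y) \geq T_C$, this yields some $V \in \H$ together with a $\widehat{\Gamma}$-path $\gamma$ whose de-electrification $\widetilde{\gamma}$ is a uniform $\Gamma$-quasi-geodesic, and a sub-segment $\sigma \subseteq \gamma \cap \widehat{V}$ with endpoints $s,t$ such that $\sigma$ is a uniform $\widehat{V}$-quasi-geodesic of $\widehat{V}$-length at least $C$ and $\widetilde{\gamma}_{|[s,t]}$ is a de-electrification of $\sigma$. In particular, $d_{\widehat{V}}(s,t)$ grows linearly with $C$, and since $V$ is uniformly quasi-isometrically embedded in $\Gamma$, so does $d_V(s,t)$.

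The next task is to transfer this information to $\pi_V(x)$ and $\pi_V(y)$. By Proposition \ref{Bound Hausdorff distance}, $\widetilde{\gamma}$ stays uniformly Hausdorff-close to any $\Gamma$-geodesic $[x,y]$, so there exist points $q_s, q_t \in [x,y]$ uniformly close to $s,t$. Choosing $s' \in \pi_V(x)$, $t' \in \pi_V(y)$, a short triangular-inequality computation gives
\[
d_\Gamma(x,s') + d_\Gamma(s,t) + d_\Gamma(t,y) - 4E \leq d_\Gamma(x,s') + d_\Gamma(s',t') + d_\Gamma(t',y),
\]
using $d_\Gamma(x,s') \leq d_\Gamma(x,s)$ and $d_\Gamma(t',y) \leq d_\Gamma(t,y)$. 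This forces $d_\Gamma(s',t') \geq d_\Gamma(s,t) - 4E$, and hence, since $s',t' \in V$ and $V$ is uniformly quasi-isometrically embedded, $d_V(s',t')$ is as large as we please once $C$ is taken large enough.

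By Remark \ref{sub factor system}, $\H_V$ is a factor system for $V$ of complexity at most $n-1$, so the inductive hypothesis applied to the pair $(s',t')$ yields some $U \subsetneq V$ with $d_{\widehat{U}}(\pi_U(s'), \pi_U(t')) \geq \theta$, provided $d_V(s',t')$ exceeds the inductive threshold. To finish, I invoke Proposition \ref{second B. inequality} to replace $\pi_U(\pi_V(x)) = \pi_U(s')$ by $\pi_U(x)$, up to a uniform additive error, and analogously for $y$; absorbing this error into $\theta$ (by enlarging $T_\theta$ once more) gives $d_{\widehat{U}}(\pi_U(x),\pi_U(y)) \geq \theta$, as required. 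The main obstacle is purely quantitative: one must thread four interacting thresholds (the partial-pigeonhole constant, the neighbourhood constant from Proposition \ref{Bound Hausdorff distance}, the quasi-isometric embedding constant of $V$, and the inductive threshold) through a finite recursion, and the finiteness of the complexity of $\H$ is what keeps all constants uniform.
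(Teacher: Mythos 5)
Your proposal is correct and follows essentially the same route as the paper: induction on the complexity of \(\H\), Lemma \ref{partial pigeonhole} to produce \(V\), \(s\), \(t\) with \(d_{\widehat{V}}(s,t)\) large, the same triangle-inequality comparison with \(s'\in\pi_V(x)\), \(t'\in\pi_V(y)\) to make \(d_V(s',t')\) large, then the inductive hypothesis on \((V,\H_V)\) and Proposition \ref{second B. inequality} to replace \(\pi_U(\pi_V(x))\) by \(\pi_U(x)\). The only differences are cosmetic (e.g.\ attributing the containment \(\widetilde{\gamma}\subseteq N_E([x,y])\) to Proposition \ref{Bound Hausdorff distance} rather than to the Morse lemma applied to the quasi-geodesic \(\widetilde{\gamma}\)).
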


\paragraph*{Verifying the axioms} \ \\
We will now verify that given a \(\delta\)--hyperbolic graph \(\Gamma\) and a factor system \(\H\) of \(\Gamma\), the set \(\H \cup \{\Gamma\}\) provides and hierarchically hyperbolic structure on \(\Gamma\). In particular, we have:
\begin{thm}\label{thm:main result for factor system}
Let \(\Gamma\) be a \(\delta\)--hyperbolic graph and \(\H\) a factor system for \(\Gamma\). Then there is a hierarchically hyperbolic space structure on \(\Gamma\) with indexing set \(\H \cup \{\Gamma\}\).
\begin{proof}
We claim that the set \(\{\widehat{W} \mid  W \in \H\cup\{ \Gamma\} \}\) satisfies the required conditions. Clearly the set is indexed by \(\H \cup \{\Gamma\}\). First note that the spaces \(\widehat{W}\) for \(W \in \H \cup \{\Gamma\}\) are uniformly \(\delta'\)--hyperbolic metric spaces (Corollary \ref{delta' hyperbolicity}). Then we have:
\begin{enumerate}
	\item There is a set of projections \(\{\pi_W\colon \Gamma \rightarrow 2^{\widehat{W}}\}\) (definition \ref{Def rho}) that are uniformly coarsely Lipschitz. 
	In fact, by definition, \(\pi_W = i_W  \circ p_W\) which is the composition of a coarsely Lipschitz map (Lemma \ref{projection q.Lips})
	with a distance-non-increasing map (Definition \ref{Def rho}).
	\item The set \(\H \cup \{\Gamma\}\) is naturally equipped with the partial order \(\subseteq \) induced by inclusion with maximal element \(\Gamma\).
	The sets and projections \(\rho^W_V\) are defined in Definition \ref{Def rho}. Since, for \(V \not\subseteq W\),  
	\(\rho^W_V \in \H_V\), we get that \(\rho^W_V\)
	is coned-off in \(V\), and thus	\(\mathrm{diam}_{V}(\rho^W_V) =2\).
	\item There are no orthogonality relations.
	\item \begin{rules}
			\item If \(W \not\subseteq V \not\subseteq W\), then there exists a uniform \(\kappa_0\) such that, for each \(x \in \Gamma\), 
			\[\mathrm{min}\{d_{\widehat{V}}(\pi_V(x), \rho^W_V), d_{\widehat{W}}(\pi_W(x), \rho^V_W) \leq \kappa_0.\]
			This is guaranteed by Lemma \ref{first B. inequality} and the fact that the maps \(i_W\) are distance-non-increasing.
			\item If \(V \subseteq W\), then for all \(x \in \Gamma\):
			\[\mathrm{min}\{d_{\widehat{W}}(\pi_W(x), \rho^V_W), \mathrm{diam}_{\widehat{V}}(\pi_V(x) \cup \rho^W_V(\pi_V(x))\}\leq \kappa_0.\]
			This is guaranteed by Lemma \ref{second B. inequality} and again the distance-non-increasingness.
			\item Since if \(U \subseteq W\), then \(\widehat{U} \subseteq \widehat{W}\), we get that for each \(V\), \(\rho^U_V \subseteq \rho^W_V\).
			\end{rules}
	\item Finite complexity is clear by the definition of factor system.
	\item The large link lemma is proved in  Proposition \ref{LLL}.
	\item The bounded geodesic image property is proved in Proposition \ref{bounded geodesic image}.
	\item The partial realization is trivial since there are no orthogonality relations, hence every family of pairwise orthogonal elements
	consists of a single element. 
	\item The uniqueness property is proved in Proposition \ref{Uniqueness}.
\end{enumerate}
\end{proof}
\end{thm}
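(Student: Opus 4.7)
The plan is to verify each of the nine HHS axioms in Definition \ref{defn:space_with_distance_formula} directly, taking the indexing set to be $\H \cup \{\Gamma\}$ (partially ordered by subgraph inclusion, with $\Gamma$ as the unique maximal element), the associated hyperbolic spaces to be the cone-offs $\{\widehat{W} : W \in \H \cup \{\Gamma\}\}$, and the projections $\pi_W$ and relative projections $\rho^V_W$ to be the ones already introduced in Definition \ref{Def rho}. Uniform hyperbolicity of the $\widehat{W}$ is provided by Corollary \ref{delta' hyperbolicity}, so the only substantive task is to match each axiom to the corresponding result from the previous subsections.

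Several axioms are essentially free. Since the factor system carries no orthogonality relation, I would declare $\perp = \emptyset$; then the entire orthogonality axiom is vacuous, the orthogonality clauses of consistency disappear, and partial realization reduces to the one-element case, which is trivial. Finite complexity is literally condition (4) of Definition \ref{Factor System}. The projection axiom follows immediately from writing $\pi_W = i_W \circ p_W$: the closest point projection $p_W$ is $(1,\rho)$-quasi-Lipschitz by Lemma \ref{projection q.Lips}, and $i_W$ is distance-non-increasing, so the composition is uniformly coarsely Lipschitz with diameter controlled by the $\epsilon$-projection convention.

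For the remaining axioms I would simply invoke the propositions already in place. The nesting axiom asks that $\rho^V_W$ have bounded diameter whenever $V \sqsubsetneq W$: for $V \not\subseteq W$ and $W \not\subseteq V$ this is Proposition \ref{bounded projections} (after applying $i_V$), and for $V \subsetneq W$ it follows because $\rho^V_W$ is a $\widehat{W}$-point-image of a subspace coned off inside $\widehat{W}$. The two consistency inequalities are exactly Propositions \ref{first B. inequality} and \ref{second B. inequality}, translated through the distance-non-increasing maps $i_V$ and $i_W$ (this direction is the correct one, since we want an upper bound in the cone-off). The last consistency clause—monotonicity of $\rho^U_W$ under $U \sqsubseteq V$—is a quick consequence of the fact that $\widehat{U} \subseteq \widehat{W}$ whenever $U \subseteq W$, together with the uniform bounded-projection estimate. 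Bounded geodesic image is Lemma \ref{bounded geodesic image}, the large link axiom is Proposition \ref{LLL}, and uniqueness is Proposition \ref{Uniqueness}.

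I do not expect a real obstacle inside the proof of the theorem itself; all the genuine work has already been done in the auxiliary propositions. The one place to be slightly careful is bookkeeping the constants so that every estimate stated in a cone-off $\widehat{V}$ is derived from an estimate in the ambient metric (by applying the distance-non-increasing map $i_V$ in the right direction), and to keep $\kappa_0, E, \lambda, \alpha, \xi$ all chosen simultaneously large enough that every clause of every axiom holds. Once this is arranged, the proof is essentially a checklist.
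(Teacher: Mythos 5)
Your proposal is correct and follows essentially the same route as the paper: the same indexing set $\H \cup \{\Gamma\}$ with inclusion as nesting and no orthogonality, the same cone-offs $\widehat{W}$ and projections from Definition \ref{Def rho}, and the same checklist matching each axiom to Corollary \ref{delta' hyperbolicity}, Lemma \ref{projection q.Lips}, Propositions \ref{first B. inequality}, \ref{second B. inequality}, \ref{bounded projections}, \ref{bounded geodesic image}, \ref{LLL}, and \ref{Uniqueness}. The only cosmetic difference is that for the bounded diameter of $\rho^W_V$ the paper cites condition (2) of Definition \ref{Factor System} directly (and the fact that nested elements are coned off), while you route it through Proposition \ref{bounded projections}; both are fine.
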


\section{How to obtain a factor system}\label{sec: Obtaining a factor system}

One of the main drawbacks of the definition of factor system of Section \ref{sec:hierarchically hyperbolic spaces} is that it is not coarse. 
 The goal of this section is to address this issue, namely, to provide weaker conditions that a metric space needs to satisfy in order to be quasi-isometric to a graph equipped with a factor system.

\begin{Def}[Coarse inclusion]\label{def:coarse_inclusion}
	Let \(X\) be a metric space and \(V, W\) be two subspaces. We say that \(V\) is \emph{coarsely contained} in \(W\), and denote it by \(V \precsim W\),
	 if there exists \(R\) such that \(V \subseteq N_R(W)\).
	 We say that \(V\) is \emph{properly} coarsely contained in \(W\), and denote it by \(V \precnsim W\), if \(V \precsim W\) and for each \(R\), 
	 \(W \not\subseteq N_R(V)\). The relation \(\precsim\) will be called \emph{coarse inclusion}.
\end{Def}

\begin{Def}[Weak Factor System]\label{def: Weak factor system}
Let  \(X\) be a quasi-geodesic Gromov-hyperbolic space. A \emph{weak factor system} for \(X\) is a family \(\F\) of \(K\)--quasiconvex subspaces such that there exist constants \(c,\xi', D', B',q \) such that the following holds:
\begin{enumerate}
	\item Every chain of proper coarse inclusions \(H_n \precnsim \cdots \precnsim H_1\) of elements of \(\F\) has length at most \(c\).
	\item Given \(V, W \in \F\), then either \(\mathrm{diam}_V(p_V(W)) < \xi'\), or there exists \(U \in \F\) such that
		\(d_{\mathrm{Haus}}(U, p_V(W)) \leq B'\).
	\item For each \(V\in \F\), \(v \in V\) and constant \(\theta\) there is a \(q\)--quasi-geodesic segment \(\gamma\) with endpoints on \(V\) such that \(v \in N_{D'}(\gamma)\) and the distance between \(v\) and the endpoints of \(\gamma\) is at least \(\theta\).
\end{enumerate}
We say that \(\F\) is a \emph{geodesic} factor system for \(X\) if the segment \(\gamma\) in condition \(3\) can be chosen to be geodesic, namely if the following stronger condition is satisfied:
\begin{enumerate}
\item[3'.] For each \(V\in \F\), \(v \in V\) and constant \(\theta\) there is a  geodesic segment \(\gamma\) with endpoints on \(V\) such that \(v \in N_{D'}(\gamma)\) and the distance between \(v\) and the endpoints of \(\gamma\) is at least \(\theta\).
\end{enumerate}

\end{Def} 

\begin{rmk}[Up to quasi-isometry, all weak factor systems are geodesic and all spaces are graphs.]\label{rmk:wfs_holds_for_q_geod}
Let \(X\) be a \((C, \epsilon)\)--quasi-geodesic metric space and \(\F\) be a weak factor system for \(\X\). The approximation lemma (Lemma \ref{lem: approximation Lemma}) provides a quasi-isometry \(\omega \colon X \rightarrow \Omega(X)\) to a connected graph \(\Omega(X)\) such that the images \(\omega(F)\) of the elements of \(\F\) are connected graphs and constitute a geodesic weak factor system for \(\Omega(X)\). Indeed, note that condition (3) is quasi-isometric invariant. Since \(\Omega(X)\) is a hyperbolic geodesic space, the Morse Lemma (Theorem \ref{lem:Morse Lemma}) and condition (3) imply condition (3').
If \(X\) is equipped with a cobounded \(G\)--action, for some group \(G\), such that the action preserves the family \(\F\), the approximation lemma provides a cobounded \(G\)--action on \(\Omega(X)\) that preserves the images \(\omega (F)\), for \(F \in \F\). 
Finally, it is clear that the partial order on \(\F\) given by coarse inclusion is preserved under quasi-isometries. By all the above, we can effectively substitute \(X\) with an approximation graph without any loss of information.
\end{rmk}
\begin{rmk}
The condition (3) of the definition of weak factor system is more restrictive than necessary. Indeed, Example \ref{example:counterex wfs} shows a space \(X\) and a factor system for \(X\) that does not satisfy condition (3).
However, condition (3) can be easily verified for a large class of natural examples, such as infinite quasiconvex subgroups of hyperbolic groups. 
\end{rmk}

\begin{example}\label{example:counterex wfs}
Let \(m \in \NN \cup \infty\), and for each \(0 \leq i \leq m\) let \(I_i\) be a copy of the ray \([0, \infty)\) indexed by \(i\). 
Let \(X\) be the space obtained identifying together the point \(0\) of each of the rays \(I_i\). It is clear that we can regard \(X\) as a graph, in particular as a tree. For each \(n>0\), let \(F_n\) be the union \(I_0 \cup I_n\). 
Then \(\{F_n \mid n>0\} \cup I_0\) is a factor system for \(X\), but the space \(I_0\) does not satisfy condition (3) of the definition of weak factor system.
\end{example}

We consider the following equivalence relation on \(\F\): we say that \(V\) is in relation with \(W\) if \(V \precsim W \precsim V\), that is, if 
\(V\) and \(W\) have bounded Hausdorff distance. 
Let \(\H(\F)\) be the set of equivalence classes of \(\F\). It is easily checked that if \(V \precsim W\), then for each \(V'\in [V]\) and \(W' \in [W]\), one has \(V' \precsim W'\). Thus, the relation \(\precsim\) descends to a partial order on the set \(\H\) that we will denote by \(\sqsubseteq\).
The goal of this section is to prove the following:

\begin{thm}\label{thm: Main result for weak factor systems}
	Let \(X\) be a Gromov hyperbolic quasi-geodesic space space and \(\F\) a  weak factor system for \(X\). 
	Then \((X,\{X\} \cup \H(\F))\) is a hierarchically hyperbolic structure for \(X\).
\end{thm}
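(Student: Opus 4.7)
The plan is to deduce Theorem~\ref{thm: Main result for weak factor systems} from Theorem~\ref{thm:main result for factor system} by promoting the weak factor system $\F$ to a genuine factor system $\F'$ whose elements biject canonically with the equivalence classes in $\H(\F)$. Using Remark~\ref{rmk:wfs_holds_for_q_geod}, I would first reduce to the case where $X$ is a connected hyperbolic graph and each $V \in \F$ is a connected subgraph satisfying the geodesic condition (3'). I would then observe that (3') combined with uniform quasiconvexity promotes each $V \in \F$ to a uniformly quasi-isometrically embedded subgraph: given $v, v' \in V$, concatenating long geodesic segments through $v$ and $v'$ (with endpoints on $V$) with short projection bridges yields an ambient quasi-geodesic confined to a uniform neighbourhood of $V$, which gives path-metric control on $V$ comparable to the ambient distance.

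Next, I would pick a representative $V_*$ for each equivalence class $[V] \in \H(\F)$ and construct $\F'$ inductively along the partial order $\sqsubseteq$, starting from $\sqsubseteq$-minimal classes. For each class $[V]$, I would set
\[V' := N_R(V_*) \cup \bigcup_{[U] \sqsubsetneq [V]} U',\]
where $R$ is a uniform constant larger than the constants $\xi', B', D'$ and the quasiconvexity gauge of $\F$. Condition (1) of Definition~\ref{def: Weak factor system} bounds the recursion depth, and a straightforward induction shows that each $V'$ inherits uniform quasiconvexity and the uniform QI-embedding property from the preceding step.

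I would then verify the five factor-system axioms for $\F' := \{V' : [V] \in \H(\F)\}$: axiom (1) is the uniform QI-embedding of the preceding paragraph; axiom (4) is immediate from condition (1) of the weak factor system; axiom (5) holds by the one-representative-per-class choice; and axioms (2) and (3) follow from condition (2) together with the key observation that whenever some $U \in \F$ is Hausdorff-close to $p_V(W)$ one has $U \precsim V$ (so $[U] \sqsubseteq [V]$), hence $U' \subseteq V'$ by construction, and symmetrically if $p_V(W)$ is Hausdorff-close to $V$ then $V \precsim W$, giving $V' \subseteq W'$. Applying Theorem~\ref{thm:main result for factor system} to $(X, \F')$ then produces the HHS structure on $X$ with indexing set $\F' \cup \{X\}$, and the canonical bijection $\F' \leftrightarrow \H(\F)$ identifies this indexing set with $\H(\F) \cup \{X\}$.

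The main obstacle is the construction of $\F'$ with uniform constants. Although the constants in Definition~\ref{def: Weak factor system} are uniform over $\F$, the Hausdorff defect within a single equivalence class $[V]$ can be arbitrarily large, so a naive fattening of each representative by a fixed radius $R$ does not promote every coarse inclusion to an honest inclusion. The recursive absorption above sidesteps this by forcing $U' \subseteq V'$ whenever $[U] \sqsubseteq [V]$, and the finite height of $\sqsubseteq$ furnished by condition (1) is precisely what keeps the recursion, and the resulting quasiconvexity and QI-embedding constants, under uniform control.
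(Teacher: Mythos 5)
Your overall strategy (reduce to graphs via Remark \ref{rmk:wfs_holds_for_q_geod}, build an honest factor system indexed by \(\H(\F)\), then quote Theorem \ref{thm:main result for factor system}) is the paper's strategy, but your construction of the factor system has a genuine gap, and it stems from a claim you assert as an unavoidable obstacle that is in fact false. You write that ``the Hausdorff defect within a single equivalence class \([V]\) can be arbitrarily large''; the whole point of condition (3') is that it rules this out. The paper's Lemma \ref{condition (3)} shows that condition (3') forces coarse inclusion to hold with a \emph{uniform} constant: there is \(\zeta=\zeta(\delta,\H)\) such that \(V\precsim W\) implies \(V\subseteq N_\zeta(W)\), so in particular any two elements of the same class are \(\zeta\)--Hausdorff close. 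This uniform \(\zeta\) is the key lemma of the paper's proof, and once you have it the natural construction is simply to take, for each class, the geodesic filling \(\preg_V=\mathrm{Approx}_\zeta\bigl(\cup[\precsim V]\bigr)\) of everything coarsely below \(V\); uniform quasiconvexity, uniform QI--embedding, and the Hausdorff bounds in the factor-system axioms then come for free because \(d_{\mathrm{Haus}}(V,\preg_V)\leq\zeta\) (Proposition \ref{prop: H factor system}).

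Your recursive absorption \(V'=N_R(V_*)\cup\bigcup_{[U]\sqsubsetneq[V]}U'\) does not repair the uniformity problem it was designed to sidestep. Without the uniform \(\zeta\), the coarse-inclusion constants of the classes \([U]\sqsubsetneq[V]\) inside \(V\) are not bounded, so \(V'\) need not lie in any uniform neighbourhood of \(V_*\) --- indeed, if infinitely many classes sit below \([V]\) with unbounded inclusion constants, \(V'\) is not even within finite Hausdorff distance of \(V_*\). Then axiom (1) (uniform QI--embedding of \(V'\)), the uniform constant \(B\) in axiom (2), and axiom (5) all break, and the comparisons you invoke between \(p_{V'_1}(V'_2)\) and \(p_{V_1}(V_2)\) (needed for axioms (2) and (3)) presuppose exactly the uniform closeness of \(V'\) to the elements of \([V]\) that is missing. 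Finite height of \(\sqsubseteq\) from condition (1) of Definition \ref{def: Weak factor system} bounds the recursion depth but says nothing about these metric constants. The fix is not the recursion but the observation you denied: extract the uniform \(\zeta\) from condition (3') first (this is where the ``arbitrarily long quasi-geodesic through every point'' hypothesis is genuinely used), and the rest of your outline then essentially collapses to the paper's argument. (A secondary, more minor issue: your derivation of uniform QI--embedding of the original \(V\in\F\) from (3') plus quasiconvexity is not needed and not quite right as sketched; in the paper this is built into the approximation graph, whose graph metric on \(\omega(V)\) is comparable to the ambient metric by construction.)
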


The proof of Theorem \ref{thm: Main result for weak factor systems} will follow as a corollary of Proposition \ref{prop: H factor system}, Remark \ref{rmk:wfs_holds_for_q_geod}  and the fact that hierarchically hyperbolic structures are invariant under quasi-isometry. 
The rest of this section is devolved in proving Proposition \ref{prop: H factor system}.

\begin{convention}
	For the rest of the section, we will use Remark \ref{rmk:wfs_holds_for_q_geod} to assume that a geodesic \(\alpha\)--hyperbolic space \(X\) with a geodesic weak factor system \(\F\) is fixed. In particular,	since no confusion is possible, we will simply denote \(\H = \H(\F)\).
\end{convention}

Our goal is to replace a weak factor system with a factor system, in order to be able to apply the results from Section \ref{sec:hierarchically hyperbolic spaces}. Roughly speaking, we want to associate a single subspace to an equivalence class \([W] \in \H\), in such a way that the requirements of factor system holds.

\begin{lemma}[Main consequence of condition (3')]\label{condition (3)}
Given a geodesic \(\delta\)--hyperbolic space \(X\) and a geodesic weak factor system \(\F\) for \(X\), there exists \(\zeta = \zeta (\delta, \H)\) such that for each \(V, W \in \F\), if  \(V \precsim W\), then \(V \subseteq N_\zeta (W)\).
\begin{proof}
	Let \(\zeta = 2\delta +D' +K\) and \(v \in V\). Then there exists \(v' \in V\) with \(d(v, v') \leq D'\) and a geodesic segment \(\gamma\) with
	endpoints in \(V\) such that \(\gamma\) has length \(2R + 4\delta +1\) and \(v'\) is the midpoint of \(\gamma\). 
	Let \(a, b\) be the endpoints of \(\gamma\) and \(a', b'\) points in \(W\) satisfying 
	\(d(a, a') \leq R\), and \(d(b,b') \leq R\). By hyperbolicity, \(v' \subseteq N_{2\delta} ([a, a'] \cup [a', b'] \cup [b',b])\). 
	We claim that  \(v' \subseteq N_{2\delta} ([a',b'])\). Then, by \(K\)--quasiconvexity of \(W\), the first property follows.
	So suppose that the claim does not hold, and assume that \(d(v', [a,a']) < 2\delta\). Let \(m \in [a,a']\) be a point realizing the distance.
	Since \(d(v', a) = R +2\delta +1\) and \(d(m,a') \leq d(a,a') \leq R\), we get a contradiction.
\end{proof}
\end{lemma}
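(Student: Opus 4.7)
The plan is to exploit condition (3') to certify that any point of $V$ lies close to a long geodesic with endpoints on $V$, and then to use $\delta$-thin quadrilaterals plus $K$-quasiconvexity of $W$ to push that information over to $W$. The subtlety is that the coarse inclusion $V \precsim W$ comes with \emph{some} constant $R$ depending on the pair $(V,W)$, and we have to produce a constant $\zeta$ that is independent of this $R$. Condition (3') is precisely what breaks this dependence, because it lets us choose arbitrarily long geodesics in $V$ after we know $R$.

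\medskip

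More concretely, fix $V,W \in \F$ with $V \subseteq N_R(W)$ for some (a priori enormous) $R$, and take $v \in V$. By condition (3'), applied with threshold $\theta = R + 2\delta + 1$, there is a geodesic segment $\gamma = [a,b]$ with $a,b \in V$ and a point $v' \in \gamma$ with $d(v,v') \le D'$ such that $d(v',a),d(v',b) \ge R + 2\delta + 1$. Since $a,b \in V \subseteq N_R(W)$, pick $a',b' \in W$ with $d(a,a'),d(b,b') \le R$, and consider the geodesic quadrilateral with sides $[a,b]$, $[b,b']$, $[b',a']$, $[a',a]$. By $\delta$-hyperbolicity of $X$, $v'$ lies within $2\delta$ of the union of the other three sides.

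\medskip

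The key observation is that $v'$ cannot be $2\delta$-close to either short side $[a,a']$ or $[b,b']$: both of these sides have length at most $R$, whereas $v'$ is at distance at least $R + 2\delta + 1$ from both $a$ and $b$, so a triangle-inequality argument rules out such proximity. Hence $v'$ is within $2\delta$ of $[a',b']$. Since $W$ is $K$-quasiconvex and $a',b' \in W$, the geodesic $[a',b']$ is contained in $N_K(W)$, so $d(v',W) \le 2\delta + K$, and therefore $d(v,W) \le D' + 2\delta + K$. Setting $\zeta = D' + 2\delta + K$ gives the required uniform bound, which depends only on $\delta$ and the factor-system constants $D',K$ (hence on $\H$), but crucially not on $R$ or on the particular pair $V,W$. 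The only delicate point in writing this up is verifying the triangle-inequality step that excludes the short sides; everything else is a standard thin-quadrilateral and quasiconvexity calculation.
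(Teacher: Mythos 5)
Your argument is correct and follows essentially the same route as the paper: use condition (3') to place a point \(v'\) within \(D'\) of \(v\) on a long geodesic with endpoints on \(V\), rule out the two short sides of the resulting \(2\delta\)--thin quadrilateral by the triangle inequality, and conclude via \(K\)--quasiconvexity of \(W\), yielding the same constant \(\zeta = 2\delta + D' + K\). The only cosmetic point is that (3') bounds the distance from \(v\) (not \(v'\)) to the endpoints, so one should take \(\theta = R + 2\delta + 1 + D'\); this is a trivial adjustment present in the paper's own write-up as well.
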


For a class \([V] \in \H\), define the set \([\precsim V]=\{ U \in \F \mid  U \precsim V\}\). It is easily checked that the definition does not depend on representatives. We want to associate a subgraph to \([\precsim V]\).

\begin{Def}
Let \(\Gamma\) be a graph and \(Q\) be a subgraph of \(\Gamma\). For \(r > 0\) let \(P_r(Q)\) to be the set of \(\Gamma\)--geodesic segments  of length at most \(r\) connecting two points in \(Q\).
Define \[\mathrm{Approx}_r(Q) = \bigcup_{\gamma \in P_r(Q)} \gamma.\]
\end{Def}

\begin{Def}
Given \([V] \in \H(\F)\), we set 
\[\preg_V = \mathrm{Approx}_{\zeta} \left(\cup [\precsim V]\right),\]
where \(\cup [\precsim V] = \cup_{U \in [\precsim V]} U\).
\end{Def}
If we are in the presence of a group action, it is easily seen that  that the construction of  \(\preg_V\) is equivariant.
Indeed, for each \(g\in G\), if \(gV = V'\), then  \([gV] = [gV']\), and \([\precsim gV] = [\precsim gV']\). Thus,  \(g\preg_V = \preg_{V'}\).

As a consequence of Lemma \ref{condition (3)}, we have that for each \(V \in [V]\), \[d_{\mathrm{Haus}} (V, \preg_V) \leq \zeta.\]

\begin{lemma}\label{lem: CV are QI embedded}
The spaces \(\preg_W\) equipped with the induced path metric are uniformly quasi-isometrically embedded in \(X\).
\begin{proof}
This is because each \(W \in [W]\), is a connected graph, uniformly quasi-isometrically embedded in \(X\). Since every point of the graph \(\preg_W\) can be connected to \(W\) with a segment of uniformly bounded length, the claim follows. 
\end{proof}
\end{lemma}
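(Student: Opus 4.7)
The plan is to show that every point of $\preg_W$ lies uniformly close to a fixed representative $W$ of the equivalence class, both in the ambient metric $d_X$ and in the path metric $d_{\preg_W}$, and then to transfer the known quasi-isometric embedding property of $W$ to $\preg_W$. The upper bound $d_X(x,y) \leq d_{\preg_W}(x,y)$ is automatic since the inclusion $\preg_W \hookrightarrow X$ is $1$--Lipschitz, so the work is in producing a uniform linear upper bound for $d_{\preg_W}(x,y)$ in terms of $d_X(x,y)$.

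First, I would fix a representative $W \in [W]$. Since $\F$ consists of uniformly $K$--quasiconvex subgraphs of the hyperbolic graph $X$, each such $W$ (as a connected subgraph with its own path metric) is uniformly quasi-isometrically embedded in $X$; this is the standard fact that connected quasiconvex subgraphs of a hyperbolic graph are QI-embedded with respect to the path metric, with constants depending only on $K$ and $\delta$. Call the resulting QI constant $Q$.

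Next, I would show that every point $p \in \preg_W$ satisfies $d_{\preg_W}(p, W) \leq 2\zeta$. Indeed, by construction $p$ lies on some geodesic segment $\gamma$ of $X$--length at most $\zeta$ with endpoints in $\cup[\precsim W]$; let $u$ be an endpoint of $\gamma$, so $u \in U$ for some $U \in [\precsim W]$ and $d_{\preg_W}(p,u) \leq \zeta$. By Lemma \ref{condition (3)}, there exists $w \in W$ with $d_X(u,w) \leq \zeta$, and the geodesic $[u,w]$ has length at most $\zeta$ with endpoints in $U \cup W \subseteq \cup[\precsim W]$, hence lies in $\preg_W$. This gives $d_{\preg_W}(u,w) \leq \zeta$, and so $d_{\preg_W}(p,w) \leq 2\zeta$. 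In particular also $d_X(p, W) \leq 2\zeta$, and since every edge of $W$ is a length-$1$ geodesic between points of $\cup[\precsim W]$, we have $W \subseteq \preg_W$ with $d_{\preg_W}|_{W \times W} \leq d_W$.

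Combining these pieces for $x, y \in \preg_W$: pick $w_x, w_y \in W$ with $d_{\preg_W}(x,w_x), d_{\preg_W}(y,w_y) \leq 2\zeta$, which by $1$--Lipschitzness also gives $d_X(w_x, w_y) \leq d_X(x,y) + 4\zeta$. Applying the uniform quasi-isometric embedding of $W$ in $X$ yields $d_W(w_x, w_y) \leq Q\, d_X(w_x, w_y) + Q$, hence
\[
d_{\preg_W}(x,y) \leq 4\zeta + d_W(w_x, w_y) \leq Q\, d_X(x,y) + (4Q\zeta + 4\zeta + Q),
\]
where every constant depends only on $\delta$, $K$ and the weak-factor-system data. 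There is no real obstacle here; the only point that needs a moment of care is the fact that connected $K$--quasiconvex subgraphs of a hyperbolic graph are uniformly QI-embedded with respect to the path metric, which is standard and follows by an edge-by-edge tracking of an $X$--geodesic using quasiconvexity together with connectedness of $W$.
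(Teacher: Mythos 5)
Your overall route is the same as the paper's: every point of $\preg_W$ lies within uniformly bounded distance of a fixed representative $W$, and $W$ itself is uniformly quasi-isometrically embedded, so the two metrics on $\preg_W$ are comparable. Your assembly of these two ingredients is correct, and in fact more careful than the paper's sketch: the observation that the connecting geodesics $[u,w]$ of length at most $\zeta$ have endpoints in $\cup[\precsim W]$ and therefore lie \emph{inside} $\preg_W$, so that points of $\preg_W$ are close to $W$ in the path metric of $\preg_W$ and not merely in $d_X$, is exactly the point the paper leaves implicit.

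The one step I would not let stand as written is your justification of the key input, namely that ``connected $K$--quasiconvex subgraphs of a $\delta$--hyperbolic graph are uniformly QI-embedded with respect to the path metric, with constants depending only on $K$ and $\delta$.'' As a general statement this is false: take $X$ to be the graph on vertices $v_0,\dots,v_n$ with edges $v_iv_{i+1}$ and $v_0v_i$ for all $i$ (diameter $2$, hence uniformly hyperbolic), and $W$ the path $v_0v_1\cdots v_n$; its vertex set is all of $V(X)$, hence quasiconvex with a constant independent of $n$, and $W$ is connected, yet $d_W(v_0,v_n)=n$ while $d_X(v_0,v_n)=1$. Quasiconvexity controls where geodesics of $X$ travel, but gives no bound on intrinsic $W$--distances between $X$--close points of $W$ unless $W$ contains edges (or uniformly short paths) between such points. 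In this paper that extra property is supplied by the construction, not by quasiconvexity: by Remark \ref{rmk:wfs_holds_for_q_geod} the elements of the geodesic weak factor system are the approximation graphs $\omega(F)$ of Lemma \ref{lem: approximation Lemma}, which are by construction connected and uniformly quasi-isometric to $F$ with the metric induced from $X$ (two vertices of $\omega(F)$ at bounded $X$--distance are adjacent), hence uniformly QI-embedded. If you replace your appeal to the ``standard fact'' by a citation of this feature of the setup, the rest of your argument goes through verbatim and the constants are uniform as claimed.
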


\begin{corollary}\label{cor: prodregions quasiconvex}
The spaces \(\preg_W\) are uniformly quasiconvex and Gromov hyperbolic.
\end{corollary}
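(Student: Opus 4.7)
The plan is to deduce both properties directly from Lemma \ref{lem: CV are QI embedded}, which tells us that the inclusion $\preg_W \hookrightarrow X$ (with $\preg_W$ given its induced path metric) is a uniform quasi-isometric embedding. Once this is in hand, both hyperbolicity and quasiconvexity are consequences of standard facts about hyperbolic geometry, applied with constants that depend only on $\delta$, $K$, $\zeta$, and the QI constants from Lemma \ref{lem: CV are QI embedded}, hence uniformly over $W$.

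For quasiconvexity, I would proceed as follows. Given $x,y \in \preg_W$, let $\alpha$ be a geodesic of $\preg_W$ from $x$ to $y$ (with respect to the induced path metric); by Lemma \ref{lem: CV are QI embedded}, the image of $\alpha$ in $X$ is a uniform $(C,\epsilon)$--quasi-geodesic of $X$ with $C,\epsilon$ independent of $W$. Applying the Morse Lemma (Theorem \ref{lem:Morse Lemma}), any geodesic of $X$ with the same endpoints lies within Hausdorff distance $Q(C,\epsilon)$ of $\alpha$, and in particular within the $Q(C,\epsilon)$--neighborhood of $\preg_W$. This yields uniform quasiconvexity.

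For hyperbolicity, the strategy is the usual one of transporting thinness from $X$ to $\preg_W$. A geodesic triangle in $\preg_W$ (with its path metric) maps, via the QI-embedding, to a triangle in $X$ whose sides are uniform quasi-geodesics. Since $X$ is $\delta$--hyperbolic, these quasi-geodesics are uniformly thin by the Morse Lemma, and pulling back through the QI-embedding (whose multiplicative and additive constants are uniform) gives uniform thinness of the original triangle in $\preg_W$. Hence $\preg_W$ is uniformly Gromov hyperbolic.

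There is essentially no obstacle here; the substance of the corollary is contained in the previous lemma, and the only care needed is to confirm that all constants produced by the Morse Lemma and by the QI-embedding depend only on $\delta, K, \zeta$ and the constants of the weak factor system, so that the resulting quasiconvexity gauge and hyperbolicity constant are uniform across $W \in \H$.
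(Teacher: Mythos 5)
Your argument is correct and is essentially the paper's (implicit) reasoning: the corollary is stated without proof precisely because it follows from Lemma \ref{lem: CV are QI embedded} together with the standard facts that, in a hyperbolic space, uniformly quasi-isometrically embedded subspaces are uniformly quasiconvex (via the Morse Lemma) and are themselves uniformly hyperbolic. Spelling out the Morse-Lemma and thin-triangle transport as you do is exactly the intended justification.
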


\begin{lemma}\label{lem:equivalence_with_preceq}
The following are equivalent:
\begin{rules}
\item \(\preg_V \precsim \preg_W\),
\item \([V] \sqsubseteq [W]\),
\item \(\preg_V \subseteq \preg_W\).
\end{rules}
\end{lemma}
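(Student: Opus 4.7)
The plan is to prove the equivalence as a cycle (iii) $\Rightarrow$ (i) $\Rightarrow$ (ii) $\Rightarrow$ (iii), exploiting the fact that all three conditions essentially encode the same coarse inclusion relation at different levels of precision. The key observation to keep in mind throughout is that $V$ itself lies in $[\precsim V]$, so $V \subseteq \preg_V$, and combined with Lemma \ref{condition (3)} this gives the two-sided bound $d_{\mathrm{Haus}}(V,\preg_V)\leq 2\zeta$ (each point of $\preg_V$ lies on a geodesic of length $\leq \zeta$ with endpoints in $N_\zeta(V)$).

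The implication (iii) $\Rightarrow$ (i) is immediate since any honest inclusion of subspaces is a coarse inclusion with constant zero.

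For (i) $\Rightarrow$ (ii), I would argue using the Hausdorff estimate above. If $\preg_V\subseteq N_R(\preg_W)$, then
\[ V\subseteq \preg_V \subseteq N_R(\preg_W) \subseteq N_{R+2\zeta}(W), \]
so $V\precsim W$, and this is precisely the statement that $[V]\sqsubseteq [W]$ (one needs to remark that the definition of $\sqsubseteq$ is independent of the choice of representatives, which is built into the excerpt's discussion preceding the lemma).

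For (ii) $\Rightarrow$ (iii), the step relies on transitivity of the coarse-inclusion relation $\precsim$: if $U\subseteq N_{R_1}(V)$ and $V\subseteq N_{R_2}(W)$, then $U\subseteq N_{R_1+R_2}(W)$. Hence assuming $V\precsim W$, for every $U\in [\precsim V]$ one gets $U\precsim V\precsim W$, so $U\in [\precsim W]$. This yields the set-theoretic inclusion $[\precsim V]\subseteq [\precsim W]$, and therefore $\bigcup[\precsim V]\subseteq \bigcup[\precsim W]$. Since the $\mathrm{Approx}_\zeta$ construction is monotone under inclusion of the underlying subgraph (every geodesic of length $\leq \zeta$ between points of the smaller set is also such a geodesic between points of the larger set), we conclude $\preg_V\subseteq \preg_W$.

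No step here is serious; the only mild subtlety is making sure one invokes Lemma \ref{condition (3)} to control $d_{\mathrm{Haus}}(V,\preg_V)$ uniformly in the representative of $[V]$, which is needed so that the chain of coarse inclusions in (i) $\Rightarrow$ (ii) closes up to a genuine statement about the equivalence class. Everything else is just bookkeeping with the definitions of $\precsim$, $\sqsubseteq$, and $\mathrm{Approx}_\zeta$.
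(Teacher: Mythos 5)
Your proof is correct and follows essentially the same route as the paper: the same cycle of implications, using the trivial implication from genuine to coarse inclusion, the bounded Hausdorff distance between \(V\) and \(\preg_V\) coming from Lemma \ref{condition (3)} for (i) \(\Rightarrow\) (ii), and transitivity of \(\precsim\) together with monotonicity of \(\mathrm{Approx}_\zeta\) for (ii) \(\Rightarrow\) (iii). You merely spell out the last step (which the paper dispatches as ``by definition of \(\preg_V\)'') and use the harmless constant \(2\zeta\) in place of the paper's \(\zeta\).
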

\begin{proof}
Since \(\preg_V\) is the union of sets that have Hausdorff distance at most \(\zeta\), it is clear that the first condition implies that for each \(V' \in [V]\) and \(W' \in [W]\), we have \(V' \prec W'\), which is the definition of the second condition. 
The second implies the third because of the definition of the spaces \(\preg_V\).
The third trivially implies the first.
\end{proof}

\begin{lemma}\label{some conditions}
Given \(\preg_V\) and \(\preg_W\), if  \(\preg_V \precsim p_{\preg_V}(\preg_W)\), then \([V] \sqsubseteq [W]\), where \(p_{\preg_{W}}\) denotes the closest point projection on \(\preg_{W}\).
\begin{proof}
By Corollary \ref{cor: prodregions quasiconvex}, the closest point projection on \(\preg_W\) is well defined. 
Our goal is to show that \(p_{\preg_V}(\preg_W) \precsim \preg_W\). By transitivity of \(\precsim\) and by Lemma \ref{lem:equivalence_with_preceq}, this, in turn,  implies \([V]\sqsubseteq [W]\).
Let \(x \in p_{\preg_V}(\preg_W)\) be any point. By definition of \(\preg_V\), there is \(V' \in [V]\) and \(v \in V'\) such that \(d_X(x,v) \leq \zeta\).
By condition (3') of geodesic weak factor system, we can find an arbitrarily long geodesic segment \(\gamma\) with endpoints in \(V'\) and a point  \(v' \in \gamma\) such that \(d(v, v') \leq D'\), where \(D'\) is a uniform constant. 
Since the Hausdorff distance between \(V' and \preg_V\) is uniformly bounded, and since \(\preg_W \precsim p_{\preg_V}(\preg_W)\), we can assume that \(\gamma\) is a uniform quasi-geodesic with endpoints on \(p_{\preg_V}(\preg_W)\). 
Since \(\gamma\) was arbitrarily long, a quadrilateral argument allows us to uniformly bound the distance between \(x\) and \(\preg_W\).
Since all the constants are uniform in \(x\) and \(x\) is arbitrary, we get the claim.
\end{proof}
\end{lemma}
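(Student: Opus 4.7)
The plan is to prove directly that $\preg_V \precsim \preg_W$, since by Lemma \ref{lem:equivalence_with_preceq} this is equivalent to $[V] \sqsubseteq [W]$. Fix an arbitrary $x \in \preg_V$. By the definition of $\preg_V$ as $\mathrm{Approx}_\zeta(\cup [\precsim V])$, there exist $V' \in [V]$ and $v \in V'$ with $d(x,v) \le \zeta$. I will then invoke condition (3') at $v \in V'$: for any $\theta$ (to be fixed later as a universal constant), there is a geodesic $\gamma = [a,b]$ with $a,b \in V' \subseteq \preg_V$ and a midpoint $v' \in \gamma$ satisfying $d(v,v') \le D'$ and $\min\{d(v',a), d(v',b)\} \ge \theta$.

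Next I use the hypothesis $\preg_V \precsim p_{\preg_V}(\preg_W)$ to extract, for a uniform $R$, points $p_a, p_b \in p_{\preg_V}(\preg_W)$ with $d(a,p_a), d(b,p_b) \le R$, together with $q_a, q_b \in \preg_W$ realising the projections $p_a = p_{\preg_V}(q_a)$, $p_b = p_{\preg_V}(q_b)$. The key geometric step is to argue that the midpoint $v'$ is uniformly close to $\preg_W$. Since $d(a,p_a), d(b,p_b) \le R$, a quadrilateral/Morse argument in the $\alpha$-hyperbolic space $X$ shows that $[a,b]$ and $[p_a,p_b]$ are $H_1$-Hausdorff close for some $H_1 = H_1(R,\alpha)$, so there is $\tilde v' \in [p_a,p_b]$ with $d(v',\tilde v') \le H_1$. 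By Corollary \ref{cor: prodregions quasiconvex}, $\preg_W$ is uniformly quasiconvex, hence $[q_a,q_b]$ lies in a uniform neighborhood of $\preg_W$. Applying the quadrilateral Lemma \ref{quadrilateral} to the projection geodesics $[q_a,p_a]$ and $[q_b,p_b]$ onto the quasiconvex $\preg_V$, one obtains that whenever $d(p_a,p_b)$ exceeds a constant depending only on $\alpha$ and the quasiconvexity gauges, the geodesic $[q_a,q_b]$ fellow-travels the middle portion of $[p_a,p_b]$ up to a uniform error. Since $d(p_a,p_b) \ge d(a,b) - 2R \ge 2\theta - 2R$, choosing $\theta$ larger than all these accumulated additive constants forces $\tilde v'$ to land in that middle portion, hence $d(\tilde v',\preg_W)$ is uniformly bounded. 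Chasing the inequalities, $d(x,\preg_W) \le \zeta + D' + H_1 + (\text{uniform constant})$, which is independent of $x$, so $\preg_V \precsim \preg_W$ as desired.

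The main obstacle is the projection-fellow-travelling step: one must verify carefully that when $d(p_a,p_b)$ is sufficiently large, not just the endpoints but a long central subsegment of $[p_a,p_b]$ sits in a uniform neighborhood of $[q_a,q_b]$. This is a standard but slightly delicate consequence of iterating the quadrilateral argument on the two triangles $q_a p_a[q_a,q_b]$-side and $q_b p_b [q_a,q_b]$-side, using that projection geodesics onto a quasiconvex set in a hyperbolic space behave like "perpendiculars" once their lengths exceed $\alpha$-dependent constants. Every other step — the application of condition (3'), the triangle-inequality bookkeeping relating $x$, $v$, $v'$, $\tilde v'$, and the translation back to $[V] \sqsubseteq [W]$ via Lemma \ref{lem:equivalence_with_preceq} — is routine once this core estimate is in place.
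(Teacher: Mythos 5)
Your proof is correct and follows essentially the same route as the paper's: a point of \(\preg_V\) is uniformly close to some \(V'\in[V]\), condition (3') gives an arbitrarily long geodesic near that point whose endpoints, via the hypothesis \(\preg_V\precsim p_{\preg_V}(\preg_W)\), lie near feet of projection geodesics from \(\preg_W\), and Lemma \ref{quadrilateral} together with quasiconvexity of \(\preg_W\) bounds the distance from the middle of that geodesic to \(\preg_W\). The only immaterial difference is that you bound \(d(x,\preg_W)\) for arbitrary \(x\in\preg_V\) directly, whereas the paper bounds it for \(x\in p_{\preg_V}(\preg_W)\) and then concludes by transitivity of \(\precsim\) and Lemma \ref{lem:equivalence_with_preceq}.
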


\begin{lemma}[Coarse commutativity of projections and quasi-isometries]\label{lemma: commuting q.iso-projection}
Let \(f \colon X \rightarrow Y\) be a \(C\)--quasi-isometry. 
Let \(H,J\) be \(K\)--quasiconvex subspaces  of \(X\). Then there exists \(M\) such that \(d_{Haus} \left(f\left(p_H(J)\right), p_{f(H)}\left(f(J)\right)\right) \leq M\).
	\begin{proof}
	Let \(\delta_X\) be the hyperbolicity constant of \(X\) and \(\delta_Y\) the one of \(Y\).
	We claim there exists \(M_1 = M_1(\delta_X, (\delta_Y), C)\) such that for any \(x \in J\), 
	\(d\left(f\left(p_H(x)\right), p_{f(H)}\left(f(x)\right)\right) \leq M_1\).
	In order to simplify notation we put \(y=f(p_H(x))\) and \(z = p_{f(H)}(f(x))\).
	Consider a geodesic triangle between \(f(x), y\) and \(z\). Since \([f(x),z]\) is a projection geodesic, it is easily 
	seen that there is a point \(m \in [f(x), y]\) that has distance at most \(2\delta_Y\) from \(f(H)\).
	However, since geodesics are uniformly near to quasi-geodesics, \(m\) is uniformly close to \(f\left(\left[x, f^{-1}(y)\right]\right)\). 
	Thus, there is a point \(m' \in f\left(\left[x, f^{-1}(y)\right]\right)\) such that it is possible to uniformly estimate the distances \(d(m', z)\) and
	\(d(m', H)\). Since \(f\left(\left[x, f^{-1}(y)\right]\right)\) is uniformly a quasi-geodesic, this proves the claim.
	\end{proof}
\end{lemma}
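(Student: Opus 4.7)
The plan is to reduce the Hausdorff bound to a pointwise closeness statement: fix $x \in J$, choose $y \in p_H(x)$ and $z \in p_{f(H)}(f(x))$, and show $d_Y(f(y), z) \leq M_0$ for a uniform $M_0$. Both inclusions of the Hausdorff bound then follow by unfolding $p_H(J) = \bigcup_{x \in J} p_H(x)$ and $p_{f(H)}(f(J)) = \bigcup_{x \in J} p_{f(H)}(f(x))$. Before the pointwise argument, observe that $f(H)$ is $K'$-quasiconvex in $Y$ with $K' = K'(K, C, \delta_X, \delta_Y)$: any $Y$-geodesic joining two points of $f(H)$ tracks a $(C, C)$-quasigeodesic $f(\alpha)$ for an $X$-geodesic $\alpha \subseteq N_K(H)$ between the corresponding preimages, by the Morse Lemma (\ref{lem:Morse Lemma}). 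Now form the geodesic triangle $f(x), f(y), z$ in $Y$: the side $[f(x), z]$ is a projection geodesic onto $f(H)$, so any of its points is uniformly close to $z$ as soon as it enters the $(\delta_Y + K')$-neighborhood of $f(H)$ (a standard consequence of the quadrilateral argument, Lemma \ref{quadrilateral}); meanwhile $[z, f(y)] \subseteq N_{K'}(f(H))$ by quasiconvexity.

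Assume for contradiction that $d_Y(f(y), z) \geq L$ for some large $L$ to be fixed, pick $m \in [z, f(y)]$ with $\min\{d_Y(m, z), d_Y(m, f(y))\} \geq L/2$, and use $\delta_Y$-thin triangles to locate $m'$ on $[f(x), z] \cup [f(x), f(y)]$ with $d_Y(m, m') \leq \delta_Y$. If $m' \in [f(x), z]$, then $d_Y(m', f(H)) \leq \delta_Y + K'$, so the projection property bounds $d_Y(m', z)$ and hence $d_Y(m, z)$ uniformly, contradicting $d_Y(m, z) \geq L/2$ for $L$ large. If instead $m' \in [f(x), f(y)]$, the Morse Lemma applied to the quasigeodesic $f([x, y])$ produces $t \in [x, y]$ with $d_Y(f(t), m')$ uniformly bounded; hence $d_Y(f(t), f(H))$ is bounded, and applying a coarse inverse of $f$ bounds $d_X(t, H)$. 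Since $[x, y]$ is a projection geodesic onto $H$ in $X$ and $t \in [x, y]$ gives $y \in p_H(t)$, we have $d_X(t, H) = d_X(t, y)$; pushing forward by $f$ bounds $d_Y(f(t), f(y))$, hence $d_Y(m, f(y))$, contradicting $d_Y(m, f(y)) \geq L/2$ for $L$ large. Both contradictions yield a uniform $M_0$, and setting $M = M_0$ completes the proof.

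The main obstacle I expect is bookkeeping the constants in the second case, where the Morse-lemma constant for $f([x, y])$, the coarse-inverse constants of $f$, and the projection property in $X$ must be composed without introducing any dependence on $d_X(x, y)$ or $d_Y(f(x), z)$, both of which may be unbounded. A subsidiary point is extracting the precise form of the projection property used in the first case from Lemma \ref{quadrilateral}: namely, that on a projection geodesic $[f(x), z]$ onto a $K'$-quasiconvex set, every point whose $Y$-distance to the set is at most a fixed constant is also at a uniformly bounded $Y$-distance from $z$.
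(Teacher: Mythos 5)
Your proposal is correct and takes essentially the same route as the paper: the same pointwise reduction for each \(x\in J\), the same geodesic triangle on \(f(x)\), \(f(y)\), \(z\), and the same key ingredients (the projection-geodesic property of \([f(x),z]\), quasiconvexity of \(f(H)\), the Morse Lemma comparison of \([f(x),f(y)]\) with the quasi-geodesic \(f([x,y])\), and a coarse inverse of \(f\) to return to \(H\) in \(X\)). Your two-case contradiction argument on a point of \([z,f(y)]\) simply makes explicit the estimates the paper's terse write-up leaves implicit, with the second case being exactly the paper's mechanism.
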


\begin{lemma}\label{Hausdorff distance for building f. sys.2}
There exists \(\xi, B\) such that if \(\mathrm{diam}(p_{\preg_V} (\preg_W) ) \geq \xi\), then there is \(U \in \F\) such that
\([U] \sqsubseteq [V]\) and \(d_{\mathrm{Haus}}(\preg_U, p_{\preg_V} (\preg_W)) \leq B\).
\begin{proof}

Since projections are quasi-Lipschitz and since, for each \(V' \in [V]\),  \(d_{\mathrm{Haus}}(\chi(V'), CV) \leq 1\), it is easily seen that is possible to uniformly bound the Hausdorff distance between \(p_{\preg_V}(\preg_W)\) and \(\bigcup_{V' \in [V], W' \in [W]} p_{\chi(V')}(\chi(W'))\).
Thus it suffices to show that there exists \(\beta\) such that for each \(V \in [V]\) and \(W \in [W]\), there exists \(U \in \F\) with 
\(d_{\mathrm{Haus}}(\chi(U), p_{\chi(V)}(\chi (W)) \leq \beta\). But this is an easy consequence of Lemma \ref{lemma: commuting q.iso-projection}.
\end{proof}
\end{lemma}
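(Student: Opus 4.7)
The plan is to reduce the statement to condition (2) of the definition of weak factor system, applied to arbitrary chosen representatives \(V' \in [V]\) and \(W' \in [W]\). The key observation, already noted in the excerpt, is that \(d_{\mathrm{Haus}}(V', \preg_V) \leq \zeta\) (and similarly for \(W\)), so projections between the ``fat'' sets \(\preg_V, \preg_W\) and projections between the representatives only differ by a bounded amount.

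First I would prove an auxiliary bookkeeping fact: if \(A, A', B, B'\) are uniformly \(K\)-quasiconvex subspaces of the \(\alpha\)-hyperbolic space \(X\) with Hausdorff distances \(d_{\mathrm{Haus}}(A,A'), d_{\mathrm{Haus}}(B,B') \leq \zeta\), then
\[
d_{\mathrm{Haus}}(p_A(B), p_{A'}(B')) \leq C_1
\]
for a uniform \(C_1 = C_1(\zeta, K, \alpha)\). This is a direct combination of Lemma \ref{projection q.Lips} (projections are quasi-Lipschitz) and Corollary \ref{nbhd inclusion of proj} (nearby points project nearby): given \(x \in p_A(B)\), lift it to \(b \in B\), shift to a nearby \(b' \in B'\), and then compare the projections \(p_A(b)\) and \(p_{A'}(b')\). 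Applying this with \(A = \preg_V\), \(A' = V'\), \(B = \preg_W\), \(B' = W'\) yields \(d_{\mathrm{Haus}}(p_{\preg_V}(\preg_W), p_{V'}(W')) \leq C_1\), noting that the \(\preg\)-sets are uniformly quasiconvex by Corollary \ref{cor: prodregions quasiconvex}.

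Next, I would choose \(\xi \geq \xi' + 2C_1 + 1\), so that the hypothesis \(\mathrm{diam}(p_{\preg_V}(\preg_W)) \geq \xi\) forces \(\mathrm{diam}(p_{V'}(W')) > \xi'\). Condition (2) of the weak factor system then produces \(U \in \F\) with \(d_{\mathrm{Haus}}(U, p_{V'}(W')) \leq B'\). Since \(p_{V'}(W') \subseteq V'\), every point of \(U\) lies in the \(B'\)-neighbourhood of \(V'\), i.e.\ \(U \precsim V'\); hence \([U] \sqsubseteq [V]\). Invoking Lemma \ref{condition (3)} once more gives \(d_{\mathrm{Haus}}(U, \preg_U) \leq \zeta\), and the triangle inequality for Hausdorff distance yields
\[
d_{\mathrm{Haus}}(\preg_U, p_{\preg_V}(\preg_W)) \leq \zeta + B' + C_1 =: B,
\]
which is the conclusion.

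The one nontrivial step is the first: propagating a Hausdorff bound through a closest-point projection when both the source and the target are perturbed simultaneously. The obstacle is mild, amounting to two applications of the quasi-Lipschitz property plus a neighbourhood-inclusion estimate, and uniformity of \(C_1\) is guaranteed because the quasiconvexity gauge \(K\) and the hyperbolicity constant \(\alpha\) are uniform across the whole collection \(\{\preg_V\}_{[V] \in \H}\).
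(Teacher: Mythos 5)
Your proof is correct and takes essentially the same route as the paper: both arguments reduce \(p_{\preg_V}(\preg_W)\) to projections between representatives of the classes (using that each representative lies within Hausdorff distance \(\zeta\) of its \(\preg\)-set), then invoke condition (2) of the weak factor system to produce \(U\), and conclude with the triangle inequality for Hausdorff distance. The only caveat is that your auxiliary stability estimate needs, beyond Lemma \ref{projection q.Lips} and Corollary \ref{nbhd inclusion of proj} (which only control perturbing the set being projected), a short extra argument for replacing the target \(\preg_V\) by \(V'\) — for instance, that points of a quasiconvex set which nearly minimize the distance to a given point lie uniformly close to its true projection, proved via the quadrilateral argument (Lemma \ref{quadrilateral}) — a detail the paper's own proof likewise leaves implicit.
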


\begin{prop}\label{prop: H factor system}
    Let \(X\) be a connected graph and \(\F\) be a family of connected subgraphs forming a weak factor system for \(X\). Then the family \(\{\preg_V \mid  [V] \in \H(\F)\}\) is a factor system for \(X\). If, moreover, \(X\) is equipped with an isometric group action that preserves the family \(\F\), then the group action preserves the family \(\{\preg_V \mid  [V] \in \H(\F)\}\). 
	\begin{proof}
	We will show that all the items of Definition \ref{Factor System} are satisfied.
		\begin{enumerate}
		\item Lemma \ref{lem: CV are QI embedded} gives that the spaces \(\preg_V\) are quasi-isometrically embedded in \(\preg_X\), with constants that 
		can be chosen uniformly.
		\item The second condition follows from Corollary \ref{cor:omega(W)q.i.emb} and Lemma \ref{Hausdorff distance for building f. sys.2}. \item The third follows from Lemma \ref{some conditions}. 
		\item The fourth follows form the requirement of weak factor system and Lemma \ref{lem:equivalence_with_preceq}.
		\item The fifth follows from Corollary \ref{cor:omega(W)q.i.emb} and Lemma \ref{condition (3)}.
		\end{enumerate}
		The equivariance of the action follows from the definition of \(\preg_W\).
	\end{proof}
\end{prop}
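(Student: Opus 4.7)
My plan is to verify, axiom by axiom, the five conditions of Definition \ref{Factor System} for the family $\mathcal{P} = \{\preg_V \mid [V] \in \H(\F)\}$, and then note that $[V] \mapsto \preg_V$ is tautologically $G$-equivariant whenever $G$ preserves $\F$. Most of the heavy lifting has already been done by the lemmas of this section, so the task reduces to a clean assembly.

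For axiom (1) (uniform quasi-isometric embedding), I would simply invoke Lemma \ref{lem: CV are QI embedded}, which produces the required uniform constants. Axiom (4) (finite chains) transfers from condition (1) of Definition \ref{def: Weak factor system}: a proper inclusion $\preg_{V_1} \subsetneq \preg_{V_2}$ forces $[V_1] \precnsim [V_2]$ via Lemma \ref{lem:equivalence_with_preceq}, so any strictly ascending chain in $\mathcal{P}$ corresponds to a strict chain of proper coarse inclusions among representatives, whose length is bounded by the weak factor system constant $c$. Axiom (5) is automatic: if $d_{\mathrm{Haus}}(\preg_V, \preg_W) < \infty$, then $\preg_V \precsim \preg_W$ and $\preg_W \precsim \preg_V$, so Lemma \ref{lem:equivalence_with_preceq} yields $[V] = [W]$, hence $\preg_V = \preg_W$ since $\preg_{\cdot}$ depends only on the class.

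The two substantive axioms are (2) and (3). For axiom (2), I would apply Lemma \ref{Hausdorff distance for building f. sys.2}: provided $\xi$ is at least the threshold from that lemma, either $\mathrm{diam}(p_{\preg_V}(\preg_W)) < \xi$ or there is $U \in \F$ with $[U] \sqsubseteq [V]$ such that $\preg_U$ is uniformly Hausdorff close to $p_{\preg_V}(\preg_W)$. This $\preg_U$ is the element of $\mathcal{P}$ required by the axiom, and the containment $\preg_U \subseteq \preg_V$ is ensured, once more, by Lemma \ref{lem:equivalence_with_preceq}. Axiom (3) is exactly the content of Lemma \ref{some conditions}: small Hausdorff distance between $p_{\preg_V}(\preg_W)$ and $\preg_V$ implies $\preg_V \precsim p_{\preg_V}(\preg_W)$, hence $[V] \sqsubseteq [W]$, which upgrades to $\preg_V \subseteq \preg_W$ by Lemma \ref{lem:equivalence_with_preceq}.

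For the equivariance statement, observe that $[V] \mapsto \preg_V$ is built from operations invariant under isometries (the Hausdorff-distance equivalence relation on $\F$ and the $\mathrm{Approx}_\zeta$ construction), so $g\preg_V = \preg_{gV}$ holds tautologically for every isometry $g$ preserving $\F$. The only real subtlety in the whole plan is uniformity of constants: each of the cited lemmas was proved with constants depending only on the weak factor system data and the hyperbolicity constant of $X$, so I would finally check that the finitely many constants $(K, c, \xi, B)$ demanded by Definition \ref{Factor System} can be selected simultaneously, independently of the particular pair $[V], [W]$ involved. I do not expect any essential difficulty here, since the very point of passing to the spaces $\preg_V$ is to collapse each equivalence class to a canonical representative, making this uniformity visible.
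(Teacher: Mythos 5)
Your proposal is correct and follows essentially the same route as the paper: it assembles the same lemmas (Lemma \ref{lem: CV are QI embedded}, Lemma \ref{Hausdorff distance for building f. sys.2}, Lemma \ref{some conditions}, Lemma \ref{lem:equivalence_with_preceq}, Lemma \ref{condition (3)}) to verify the five axioms of Definition \ref{Factor System}, with equivariance immediate from the definition of \(\preg_V\). The only cosmetic difference is that for axiom (5) you argue directly via mutual coarse inclusion and Lemma \ref{lem:equivalence_with_preceq}, which is an equally valid (and slightly more explicit) justification than the paper's citation.
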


As a corollary, we get a version of Remark \ref{sub factor system} for weak factor systems.

\begin{corollary}\label{cor:WeakSub-FactorSystem}
Let \(X\) be a Gromov hyperbolic space, \(\F\) be a weak factor system for \(X\) and let \(F \in \F\). 
Let \(\H(\F)_F = \{ [V] \in \F \mid  [V] \sqsubseteq [F]\}\). Then \((\preg_F, \{\preg_V \mid  [V] \in \H(F)_F\})\) is a hierarchically hyperbolic space.
\begin{proof}
This is because, by Remark \ref{sub factor system}, \(\{\preg_V \mid  [V] \in \H(F)_F\}-\preg_F \) is a factor system for \(\preg_F\). 
\end{proof}
\end{corollary}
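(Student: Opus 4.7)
The plan is to combine three previously established results: Proposition \ref{prop: H factor system}, Remark \ref{sub factor system}, and Theorem \ref{thm:main result for factor system}. First, via Remark \ref{rmk:wfs_holds_for_q_geod}, I reduce to the case where $X$ is a connected graph and the weak factor system $\F$ consists of connected subgraphs; since HHS structures are quasi-isometry invariant, it suffices to establish the conclusion in this setting. Under this reduction, Proposition \ref{prop: H factor system} directly provides a factor system $\widetilde{\H} := \{\preg_V \mid [V] \in \H(\F)\}$ for $X$ in the sense of Definition \ref{Factor System}.

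Next, I would observe that $\preg_F \in \widetilde{\H}$ and that, by Lemma \ref{lem:equivalence_with_preceq}, the subcollection $\widetilde{\H}_{\preg_F} = \{\preg_V \mid [V] \sqsubseteq [F]\} \setminus \{\preg_F\}$ coincides with the set of elements of $\widetilde{\H}$ that are properly contained in $\preg_F$. Thus Remark \ref{sub factor system}, applied to $\widetilde{\H}$ and the element $\preg_F \in \widetilde{\H}$, yields that $\widetilde{\H}_{\preg_F}$ is itself a factor system for $\preg_F$, with constants depending only on those of $\widetilde{\H}$.

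Finally, Corollary \ref{cor: prodregions quasiconvex} guarantees that $\preg_F$ is uniformly Gromov hyperbolic (with respect to its induced path metric), so Theorem \ref{thm:main result for factor system} applies and produces an HHS structure on $\preg_F$ with indexing set $\widetilde{\H}_{\preg_F} \cup \{\preg_F\} = \{\preg_V \mid [V] \in \H(\F)_F\}$, where $\preg_F$ serves as the unique maximal element.

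I do not anticipate a substantive obstacle: the proof is essentially a bookkeeping exercise that assembles machinery already developed in the paper. The most delicate point is to check that all constants (quasiconvexity gauges, projection bounds, hyperbolicity constants) remain uniformly controlled through the two reductions, namely passing from the weak factor system $\F$ to the factor system $\widetilde{\H}$ on $X$, and then restricting to the sub-factor system on $\preg_F$; but this is precisely what Proposition \ref{prop: H factor system} and Remark \ref{sub factor system} guarantee. The quasi-isometry invariance step at the beginning ensures no loss of generality in assuming the ambient space is a geodesic graph.
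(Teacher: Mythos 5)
Your argument is correct and is essentially the paper's own proof, just spelled out: the paper likewise invokes Proposition \ref{prop: H factor system} (implicitly, via the spaces \(\preg_V\)) together with Remark \ref{sub factor system} to see that \(\{\preg_V \mid [V]\in\H(\F)_F\}\setminus\{\preg_F\}\) is a factor system for \(\preg_F\), and then Theorem \ref{thm:main result for factor system} supplies the HHS structure. Your extra bookkeeping (the reduction via Remark \ref{rmk:wfs_holds_for_q_geod}, the identification of the nested elements via Lemma \ref{lem:equivalence_with_preceq}, and hyperbolicity of \(\preg_F\) from Corollary \ref{cor: prodregions quasiconvex}) only makes explicit what the paper leaves implicit.
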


\section{Groups with quasiconvex subgroups}\label{section:Groups with quasiconvex}

The goal of this section is to prove that given a hyperbolic group \(G\) and a family of infinite quasiconvex subgroups \(\F=\{F_1, \dots, F_n\}\) of \(G\), 
there is an HHS structure on \(G\) that contains the elements of the family \(\F\) in the indexing set.
In particular, we will show that we can extend the family \(\F\) to a weak factor system for \(G\). 
The key idea is to consider a "closure of \(\F\) under projection". Using the results of Section \ref{sec: Obtaining a factor system}, we can promote this to a factor system and obtain an HHS structure on the Cayley grapy of \(G\). Moreover, such a structure will be \(G\)--equivariant. In particular, this provides a, so called, hierarchically hyperbolic group structure on \(G\), which was introduced in \cite{HHSI}.

\subsection{Hierarchically hyperbolic group structure}\label{subsec HHG structure}

We recall the definition of hierarchically hyperbolic group.

\begin{Def}[Hieromorphism, \cite{HHSII}, Definition 1.19]
Let $(\X,\mathfrak S)$ and $(\X',\mathfrak S')$ be
hierarchically hyperbolic structures on the spaces $\X,\X'$
respectively.  A \emph{hieromorphism}, 
consists of a map $f\colon \X\rightarrow\X'$, an injective map $f^\diamondsuit \colon\mathfrak S\rightarrow\mathfrak
S'$ preserving nesting, transversality, and orthogonality, and maps
$f^* (U)\colon C U\rightarrow C(f^\diamondsuit (U))$, for each \(U \in \S\),
which are uniformly quasi-isometric embeddings.
The three maps should preserve the structure of the hierarchically hyperbolic space, that is, they coarsely commute with the maps \(\pi_U\) and \(\rho^U_V\), for \(U, V\) in either \(\S\) or \(S'\), associated to the hierarchical structures.
\end{Def}

\begin{Def}[Automorphism, hierarchically hyperbolic group, \cite{HHSII}, Definition 1.20]\label{defn:hhs_automorphism}
An \emph{automorphism} of the hierarchically hyperbolic space 
$(X,\mathfrak S)$ is a hieromorphism 
$f\colon (\X,\mathfrak S)\rightarrow(\X,\mathfrak S)$ such   
that $f^\diamondsuit$ is bijective and each $f^*(U)$ is an 
isometry.

The finitely generated group $\MCG$ is \emph{hierarchically hyperbolic}
if there exists a hierarchically hyperbolic space $(\X,\mathfrak
S)$ on which \(\MCG\) acts by automorphisms of hierarchically hyperbolic spaces,  so that the uniform quasi-action of $G$ on $\X$ is metrically proper and cobounded
and $\mathfrak S$ contains finitely many $\MCG$--orbits.  Note that if
$\MCG$ is hierarchically hyperbolic by virtue of its action on the
hierarchically hyperbolic space $(\X,\mathfrak S)$, then
$(\MCG,\mathfrak S)$ is a hierarchically hyperbolic structure with
respect to any word-metric on $\MCG$; for any $U\in\mathfrak S$ the 
projection is the
composition of the projection $\X\rightarrow C
U$ with a $\MCG$--equivariant quasi-isometry
$\MCG\rightarrow\X$.  In this case, $(\MCG,\mathfrak S)$ (with the
implicit hyperbolic spaces and projections) is a \emph{hierarchically
hyperbolic group structure}.
\end{Def}

In what follows, we will construct a weak factor system for the Cayley graph of a hyperbolic group \(G\). The weak factor system will be \(G\) equivariant, indeed, it will consists of cosets of subgroups of \(G\). Thus, the HHS structure on the Cayley graph of \(G\) will induce a HHG structure on \(G\).

\subsection{Constructing a factor system}

Fix a generating set for \(G\). All the following results are quasi-isometric invariant and hence holds for all finite sets of generators.
We start by recalling some well known properties of quasiconvex subgroups of hyperbolic groups. All the following facts and lemmas are proven in \cite{BridsonHaefliger}.

\begin{lemma}
If \(G\) is a hyperbolic group and \(H,J\) are \(K\)--quasiconvex subgroups, then \(H\cap J\) is quasiconvex, with quasiconvexity constant depending on \(G\) and \(K\).
\end{lemma}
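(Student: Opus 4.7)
The plan is to follow the classical coset-partition argument (going back to Short). Fix a finite generating set for \(G\) and let \(B(r)\) denote the ball of radius \(r\) around the identity. Given \(g \in H \cap J\) and a geodesic \(\gamma = [e, g]\) in \(\mathrm{Cay}(G)\), the goal is to show that every vertex \(p\) of \(\gamma\) lies within a distance depending only on \(G\) and \(K\) of \(H \cap J\). By \(K\)-quasiconvexity of both \(H\) and \(J\), choose \(h \in H\) and \(j \in J\) with \(d(p,h) \leq K\) and \(d(p,j) \leq K\); triangle inequality gives \(d(h,j) \leq 2K\), so \(h\) belongs to the set \(S := H \cap N_{2K}(J)\).

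The crux is then to show that \(S\) lies in a uniformly bounded neighborhood of \(H \cap J\). Any \(h \in S\) can be written as \(h = jw\) with \(j \in J\) and \(w \in B(2K)\), so \(h \in H \cap Jw\), and therefore \(S = \bigsqcup_{w \in B(2K)} (H \cap Jw)\). The key observation is that whenever \(H \cap Jw\) is non-empty it is a right coset of \(H \cap J\) in \(H\): if \(h_0, h_1 \in H \cap Jw\), then \(h_1 h_0^{-1} \in H\) and \(h_1 h_0^{-1} = (j_1 w)(j_0 w)^{-1} = j_1 j_0^{-1} \in J\), hence in \(H \cap J\). Choose a representative \(h_0^{(w)}\) of each non-empty \(H \cap Jw\) and set \(R := \max_w |h_0^{(w)}|\); this maximum is finite because \(B(2K)\) is finite. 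For any \(h \in H \cap Jw\) the element \(h \cdot (h_0^{(w)})^{-1}\) lies in \(H \cap J\) and is at distance \(|h_0^{(w)}| \leq R\) from \(h\), so \(S \subseteq N_R(H \cap J)\).

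Combining the two steps, \(d(p, H \cap J) \leq d(p, h) + d(h, H \cap J) \leq K + R\) for every vertex \(p\) of \(\gamma\), proving that \(H \cap J\) is \((K+R)\)-quasiconvex. The main (mild) obstacle is Step 2, namely verifying that the cosets \(H \cap Jw\) really do stratify \(S\) and that only finitely many such cosets occur; this is pure group theory and no hyperbolicity is used in that step. Hyperbolicity enters only through the characterization of quasiconvexity via geodesics used to produce the close points \(h, j\) in Step 1.
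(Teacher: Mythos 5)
Your argument is the classical coset--stratification proof of Short, and it does show that \(H\cap J\) is quasiconvex; note that the paper gives no proof of this lemma (it only cites Bridson--Haefliger), so there is no internal argument to compare against. However, as written your proof does not establish the statement actually asserted: the lemma claims a quasiconvexity constant depending only on \(G\) (i.e.\ on \(\delta\) and the chosen finite generating set) and on \(K\), whereas your constant is \(K+R\) with \(R=\max_{w}\bigl|h_0^{(w)}\bigr|\) the maximal length of the chosen shortest representatives of the nonempty sets \(H\cap Jw\), \(w\in B(2K)\). Finiteness of \(B(2K)\) only makes \(R\) finite; nothing in your argument controls \(R\) in terms of \(G\) and \(K\), so a priori it depends on the particular subgroups \(H\) and \(J\). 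Since the surrounding lemmas in this section carefully track exactly this kind of dependence (the next one, for instance, records dependence on \(G\), \(K\) and the conjugating element), the uniformity is part of the statement, and this is a genuine gap. (Two smaller points: the union \(S=\bigcup_{w\in B(2K)}(H\cap Jw)\) need not be disjoint, though only the covering is used; and one should say that general geodesics with endpoints in \(H\cap J\) reduce to geodesics issuing from \(e\) by left translation by an element of \(H\cap J\).)

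The gap is fixable by a pigeonhole/shortening argument, which in fact makes the coset bookkeeping unnecessary. Given a vertex \(p\) on \([e,g]\) with \(g\in H\cap J\), consider paths in the Cayley graph from \(p\) to some element of \(H\cap J\) all of whose vertices \(q\) admit \(a_q,b_q\in B(K)\) with \(qa_q\in H\) and \(qb_q\in J\); the subsegment of \([e,g]\) from \(p\) to \(g\) is such a path by \(K\)--quasiconvexity of \(H\) and \(J\). Take such a path of minimal length. If its vertex number exceeded \(|B(K)|^2\), two vertices \(q_1,q_2\) (in this order) would carry the same pair \((a,b)\), and then \(x=q_1q_2^{-1}=(q_1a)(q_2a)^{-1}=(q_1b)(q_2b)^{-1}\in H\cap J\); replacing the portion of the path beyond \(q_1\) by the \(x\)--translate of the portion beyond \(q_2\) gives a strictly shorter path of the same kind (the translate preserves the \(K\)--closeness to \(H\) and \(J\) because \(x\in H\cap J\), and its endpoint is still in \(H\cap J\)), a contradiction. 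Hence \(d(p,H\cap J)\le |B(K)|^2\), a bound depending only on \(K\) and the generating set, as required. Alternatively, the same pigeonhole applied to a geodesic \([e,h]\), where \(h\) is a shortest element of a nonempty \(H\cap Jw\), bounds your \(R\) in terms of \(\delta\), \(K\) and the generating set, after which your argument goes through verbatim.
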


\begin{lemma}
	If \(G\) is a hyperbolic group and \(H\) is a \(K\)--quasiconvex subgroup, then, for each \(c \in G\),  
	\(cHc^{-1}\) is quasiconvex, with quasiconvexity constant depending on \(G\), \(K\) and \(c\).
\end{lemma}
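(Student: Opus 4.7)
The plan is to reduce the claim to two elementary observations: that left cosets of a quasiconvex subgroup are quasiconvex with the same constant, and that a subset at bounded Hausdorff distance from a quasiconvex subset is itself quasiconvex. First I would note that for any \(c \in G\), left-multiplication \(L_c \colon \mathrm{Cay}(G) \to \mathrm{Cay}(G)\) is an isometry, so the coset \(cH = L_c(H)\) is \(K\)--quasiconvex with the same gauge as \(H\): a geodesic between two points of \(cH\) is the \(L_c\)--image of a geodesic between the corresponding points of \(H\), hence lies in the \(K\)--neighbourhood of \(cH\).

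Next I would bound the Hausdorff distance between \(cH\) and \(cHc^{-1}\) by the word length \(|c|\). Indeed, for any \(h \in H\) left-invariance of the word metric gives
\[d(ch,\, chc^{-1}) = d(1, c^{-1}) = |c|,\]
so every element of \(cHc^{-1}\) is within \(|c|\) of \(cH\) and vice versa; hence \(d_{\mathrm{Haus}}(cH, cHc^{-1}) \leq |c|\).

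Finally I would invoke the standard fact that in a \(\delta\)--hyperbolic space, a subset at Hausdorff distance at most \(D\) from a \(K\)--quasiconvex subset is quasiconvex with gauge depending only on \(K, D, \delta\). Given points \(x, y \in cHc^{-1}\), choose \(x', y' \in cH\) at distance at most \(|c|\) from \(x, y\) respectively; a geodesic \([x,y]\) lies in the \((2\delta + |c|)\)--neighbourhood of a geodesic \([x', y']\) by a quadrilateral argument (cf.\ Lemma \ref{quadrilateral}), which in turn lies in the \(K\)--neighbourhood of \(cH\), and hence in the \((K + 2\delta + 2|c|)\)--neighbourhood of \(cHc^{-1}\). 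This gives quasiconvexity for geodesics, and passing to the quasi-geodesic definition of quasiconvexity used in the paper is then immediate from the Morse lemma (Theorem \ref{lem:Morse Lemma}). Since the resulting gauge depends on \(K\), the hyperbolicity constant \(\delta(G)\), and \(|c|\), it depends only on \(G\), \(K\), and \(c\), as claimed.

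There is no real obstacle here: the statement is routine, and the only bookkeeping consists of tracking how the additive constant \(|c|\) enters the quasiconvexity gauge through the Hausdorff comparison.
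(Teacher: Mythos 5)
Your proof is correct: the Hausdorff-distance bound \(d_{\mathrm{Haus}}(cH, cHc^{-1}) \leq |c|\), the quadrilateral argument, and the passage from geodesics to quasi-geodesics via the Morse lemma all go through, yielding a gauge depending only on \(K\), \(\delta(G)\), and \(|c|\). The paper does not prove this lemma itself but cites Bridson--Haefliger, and your argument is essentially the standard one given there (conjugation moves points of the coset a uniformly bounded amount, and quasiconvexity is stable under bounded Hausdorff perturbation in a hyperbolic space), so there is nothing to add.
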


\begin{prop}\label{prop: existence infinite order elements}
An infinite hyperbolic group contains an infinite order element.
\end{prop}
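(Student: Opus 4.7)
The plan is to prove the contrapositive: if every element of $G$ has finite order, then $G$ is finite. I would argue in two steps, both drawing on classical hyperbolic group theory.

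Step one: invoke the well-known theorem (cf.\ Bridson--Haefliger, Chapter III.$\Gamma$, Theorem 3.2) that in a $\delta$--hyperbolic group with finite generating set $S$, every finite-order element is conjugate to an element of word length bounded by a constant $C = C(\delta, |S|)$. Since the ball of radius $C$ in $G$ is finite, this forces the set of orders of torsion elements to be finite, so under the standing assumption $G$ has finite exponent $N$. Equivalently, for each $g \in G$ the orbit $\{g^n \cdot x : n \in \mathbb{Z}\}$ has size at most $N$ for every vertex $x$ of the Cayley graph $\Gamma$; that is, every element of $G$ acts as an elliptic isometry of $\Gamma$.

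Step two: upgrade this to a global bound. I would appeal to the classical fact (see, e.g., Gromov's \emph{Hyperbolic groups} or Bridson--Haefliger, Chapter III.H) that a group of isometries of a $\delta$--hyperbolic space in which every element is elliptic either has a bounded orbit or fixes a point, or a pair of points, on the Gromov boundary. For the geometric action of $G$ on its own Cayley graph, a bounded orbit immediately forces $G$ to be finite (since the action is proper and $\Gamma$ is locally finite), and a boundary fixed point is ruled out by cocompactness: the associated $G$--invariant horofunction would have to be simultaneously bounded on a coarsely dense set and unbounded below. Either alternative contradicts that $G$ is infinite.

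The main obstacle is executing step two rigorously, since the classification of isometry groups of hyperbolic spaces is somewhat heavy machinery. A safer, more quantitative alternative is to pick $g \in G$ of sufficiently large word length (which exists because $G$ is infinite, so $\Gamma$ is unbounded) and apply the local-to-global theorem for quasi-geodesics in hyperbolic spaces to the bi-infinite concatenation $\bigcup_{i \in \mathbb{Z}} [g^i, g^{i+1}]$: once $|g|$ exceeds a threshold depending on $\delta$ and on the constant $C$ from step one, this path is a local quasi-geodesic (a failure of the local condition at the junction $g^{i+1}$ would yield a short conjugate of $g$ forcing $g$ to be torsion of controlled length), hence a global quasi-geodesic, so $|g^n| \to \infty$ and $g$ has infinite order.
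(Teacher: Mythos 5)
The paper offers no argument of its own for this proposition: it is quoted from Bridson--Haefliger, where the standard proof runs through a geodesic ray in the Cayley graph and the finiteness of cone types. Your first route (torsion elements are conjugate into a ball of radius \(C(\delta)\), so an infinite torsion hyperbolic group would act on its Cayley graph with every element elliptic; then quote the classification of isometric actions of groups without loxodromics and kill both the bounded-orbit case and the fixed-point-at-infinity case) is correct in outline, provided you are content to cite that classification. One imprecision: the Busemann function at a fixed boundary point is not \(G\)--invariant, only quasi-invariant up to the Busemann quasicharacter \(\beta(g)\); you must first note that \(\beta(g)=0\) for every \(g\) because each \(\langle g\rangle\)--orbit is bounded (finite order), and only then does your "bounded on a coarsely dense set, unbounded along a ray to the fixed point" contradiction go through.

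The "safer, more quantitative alternative", however, has a genuine gap, and the gap sits exactly at the crux of the theorem. Taking an arbitrary \(g\) of large word length is not enough: the local-to-global argument for the bi-infinite path \(\bigcup_{i} g^{i}\cdot[1,g]\) needs \(g\) to be (nearly) of minimal length in its conjugacy class, because a failure of the local quasi-geodesic condition at a junction only produces a conjugate of \(g\) that is shorter than \(g\). That is not a contradiction: possessing a short conjugate is precisely what your Step one predicts for every element of the hypothetical all-torsion group, and it certainly does not "force \(g\) to be torsion of controlled length" --- that would be the (false) converse of Step one. So the dichotomy you actually obtain is "either \(g\) has infinite order, or \(g\) is conjugate to an element of length at most \(C'(\delta)\)", and the second horn is harmless. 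To close the argument you would need an element whose conjugacy class has minimal length above the local-to-global threshold \(L_0(\delta)\); but in the purported infinite torsion group, Step one says every conjugacy class meets the ball of radius \(C\), so no such element exists and the quantitative route never gets started. Producing long conjugacy-minimal elements (equivalently, infinitely many conjugacy classes) in an infinite hyperbolic group is essentially the content of the proposition itself; the classical ways to supply this missing ingredient are the geodesic-ray/cone-type argument behind the citation in the paper, or the classification-based route of your first two steps.
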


\begin{lemma}\label{lem: infinite order elements are quasi-geodesics}
	If \(G\) is a hyperbolic group and \(g\) is an infinite-order element of \(G\), then \((g^n)_{n\in \ZZ}\) is a quasi-geodesic.
\end{lemma}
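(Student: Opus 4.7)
Set $f(n) = d_G(1, g^n)$ in a fixed word metric on $G$. By left-invariance of the metric, $d(g^m, g^n) = f(n-m)$, so establishing that $(g^n)_{n \in \ZZ}$ is a quasi-geodesic amounts to bounding $f(n)$ between two linear functions of $|n|$. The upper bound $f(n) \leq |g| \cdot |n|$ is immediate from the triangle inequality applied to $g^n = g \cdots g$, so the entire content lies in producing a linear lower bound.

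My plan is to reduce the lower bound to the positivity of the stable translation length $\tau(g) := \lim_{n \to \infty} f(n)/n$. Two elementary inputs are needed. First, subadditivity $f(m+n) \leq f(m) + f(n)$, a direct consequence of the triangle inequality and left-invariance, lets Fekete's lemma produce the limit $\tau(g) = \inf_{n \geq 1} f(n)/n$. Second, the infinite order of $g$ makes the iterates $g^n$ pairwise distinct, and local finiteness of the Cayley graph of the finitely generated group $G$ then forces $f(n) \to \infty$ as $|n| \to \infty$. Once $\tau(g) > 0$ is known, the infimum formula yields $f(n) \geq \tau(g) \cdot n$ for every $n \geq 1$ (with no additive constant needed), and together with the upper bound and the symmetry $f(-n) = f(n)$, this is exactly the quasi-isometric embedding statement.

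The main obstacle is thus the positivity of $\tau(g)$, which is the only place where hyperbolicity of $G$ enters in an essential way. I would proceed by contradiction: if $\tau(g) = 0$, then for arbitrarily small $\epsilon > 0$ one can find $N$ large with $f(N) < \epsilon N$, so the concatenated path $1 \to g \to g^2 \to \cdots \to g^N$ of length $N|g|$ has endpoints at distance less than $\epsilon N$. The standard fact that a path in a $\delta$-hyperbolic space remains within logarithmic Hausdorff distance of a geodesic between its endpoints then confines all $N+1$ distinct iterates $g^0, g^1, \ldots, g^N$ to a ball of radius $O(\epsilon N + \delta \log N)$ about $1$. Pigeonholing over the bounded number of elements in such a ball and translating by a suitable power of $g^{-1}$ would produce indices $i < j$ with $j - i$ arbitrarily large (as $N$ grows) while $f(j-i)$ is bounded in terms of $\delta$ and $|g|$ alone, contradicting $f(k) \to \infty$. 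With $\tau(g) > 0$ in hand, the conclusion follows as described above. The result is essentially classical and can be cited from \cite{BridsonHaefliger}.
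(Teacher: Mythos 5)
The reduction in your first two paragraphs is fine: subadditivity plus Fekete gives \(\tau(g)=\inf_n f(n)/n\), so \(f(n)\ge \tau(g)\,n\), and together with \(f(n)\le |g|\,n\) and \(f(-n)=f(n)\) this makes \(n\mapsto g^n\) a quasi-geodesic as soon as \(\tau(g)>0\). The gap is in the only step where hyperbolicity enters, and it is twofold. First, the ``standard fact'' you invoke is stated backwards: in a \(\delta\)--hyperbolic space a \emph{geodesic} lies within \(\delta|\log_2 \ell|+1\) of any path of length \(\ell\) joining its endpoints, but the path need not stay anywhere near the geodesic (in a tree, a path that runs out distance \(M\) and back has its endpoints' geodesic reduced to a point while the path is \(M\)--far from it). So nothing confines \(g^0,\dots,g^N\) to a ball of radius \(O(\epsilon N+\delta\log N)\); subadditivity only gives \(f(i)\le \min(i,N-i)|g|+\epsilon N\), which permits excursions of size about \(N|g|/2\), and ruling these out is essentially the statement being proved. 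Second, even granting the confinement, the pigeonhole is quantitatively inadequate: a ball of radius \(\epsilon N\) in a hyperbolic group contains exponentially many (in \(\epsilon N\)) elements, so placing \(N+1\) distinct iterates inside it does not produce indices \(i<j\) with \(j-i\) large and \(d(g^i,g^j)\) bounded, and no contradiction with \(f(k)\to\infty\) results.

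Positivity of the stable translation length of an infinite-order element is exactly the nontrivial content of the lemma (undistortedness of \(\langle g\rangle\)), and it genuinely needs more than the sketch provides: either the route of \cite{BridsonHaefliger} (every element of a group acting properly cocompactly is a semisimple isometry, and a semisimple isometry of infinite order of a hyperbolic space is hyperbolic, with quasi-geodesic axis), or the argument via a shortest representative of the conjugacy class of a suitable power of \(g\) combined with the local-to-global theorem for quasi-geodesics, plus a separate argument when all powers admit uniformly short conjugates. Note that the paper itself offers no proof but only the citation to \cite{BridsonHaefliger}; your attempt at a self-contained argument is welcome, but as written the crucial step does not go through, so either supply one of the arguments above or simply cite the reference as the paper does.
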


\begin{rmk}\label{rmk:Koenigs Lemma}
As a consequence of Proposition \ref{prop: existence infinite order elements} and Lemma \ref{lem: infinite order elements are quasi-geodesics} we have that given a hyperbolic group \(G\), and \(H_1, H_2\)  \(E\)--quasi-isometrically embedded subgroups of \(\G\), there exists \(R = R(\delta, E)\) such that if \(d_{\mathrm{Hauss}}(H_1, H_2) < \infty\), then \(d_{\mathrm{Hauss}}(H_1, H_2) \leq R\).
\end{rmk}

\begin{lemma}\label{lem: distance lateral conjugate}
Let \(G\) be a group and \(H, J\) be two subgroups. Then for any \(g \in G\), we have that \(H \cap gJg^{-1} \subseteq N_{2|g|} (p_{H}(gJ))\).
\begin{proof}
Let \(gyg^{-1} \in H \cap gJg^{-1}\). Then \(d(gyg^{-1}, gy) \leq |g|\). Thus, \(d(gy, H) \leq |g|\). Thus there is a point \(x\in p_{H}(gJ)\) with \(d(x, gy) \leq |g|\). By triangular inequality, \(d(gyg^{-1}, x) \leq 2|g|\).
\end{proof}
\end{lemma}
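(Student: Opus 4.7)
The plan is to take an arbitrary element of \(H \cap gJg^{-1}\), write it in a form that makes its relationship to \(gJ\) manifest, and then apply the triangle inequality twice. The element of \(H \cap gJg^{-1}\) has two natural descriptions: as a member of \(H\) (so it projects to itself under \(p_H\)), and as a conjugate \(gjg^{-1}\) of some \(j \in J\). The key observation is that \(gjg^{-1}\) and \(gj\) differ by right-multiplication by \(g^{-1}\), hence are at distance exactly \(|g|\) in any word metric invariant under right-multiplication (equivalently, in the Cayley graph with the fixed generating set).

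First I would fix \(h \in H \cap gJg^{-1}\) and write \(h = gjg^{-1}\) for some \(j \in J\). The element \(gj\) lies in the coset \(gJ\), and \(d(h, gj) = d(gjg^{-1}, gj) = |g^{-1}| = |g|\). This shows \(d(gj, H) \leq |g|\), so by the definition of (coarse) closest-point projection there is a point \(x \in p_H(gJ)\) with \(d(gj, x) \leq |g|\) (allowing for the \(\varepsilon\)-slack in the projection is harmless and can be absorbed as in the standing conventions).

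Finally, the triangle inequality gives
\[
d(h, x) \;\leq\; d(h, gj) + d(gj, x) \;\leq\; |g| + |g| \;=\; 2|g|,
\]
which is exactly the required containment \(h \in N_{2|g|}(p_H(gJ))\). Since \(h\) was arbitrary, the lemma follows. There is no real obstacle here — the only conceptual point is recognizing that \(h = gjg^{-1}\) witnesses its own proximity to the coset \(gJ\) via the representative \(gj\), which reduces everything to two applications of the triangle inequality. No hyperbolicity or quasiconvexity is used; this is a purely metric-group-theoretic fact about the displacement \(|g|\) between a coset and its conjugate.
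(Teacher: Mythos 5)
Your proof is correct and is essentially identical to the paper's argument: both write the element as \(gjg^{-1}\), use \(d(gjg^{-1}, gj) = |g|\) to see that \(gj \in gJ\) lies within \(|g|\) of \(H\), and conclude by the triangle inequality via a point of \(p_H(gJ)\) near \(gj\). No further comment is needed.
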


We want now to establish some relations between conjugates and cosets.

\begin{lemma}\label{inclusion projection conjugation} 
Let \(G\) be a \(\delta\)--hyperbolic group with respect to a fixed generating set. Let \(H, J\) be \(K\)--quasiconvex subgroups.
Then there exists \(D=D(\delta, K)\) such that the following holds. For each \(a, b \in G\) for which \(\mathrm{diam}\left(p_{aH}(bJ)\right) \geq 2(8\delta +2K) +1\), then 
\[d_{\mathrm{Haus}}\left( p_{aH} (bJ),  a\left( H\cap a^{-1}bJb^{-1} a\right)\right)\leq D.\]
\begin{proof}
	Since left multiplication by \(a^{-1}\) is an isometry, the above is equivalent to showing that for each \(c \in G\) such that \(\mathrm{diam}(p_H(cJ)) \geq 2(8\delta +2K) +1\), we have:
	\[d_{\mathrm{Haus}}\left(p_{H} (cJ), N_D\left( H\cap cJc^{-1}\right)\right) \leq D.\]
	 By a quadrilateral argument, if \(d(H, cJ) > 2\delta + 2K\), we get that \(\mathrm{diam}\left( p_{H}(cJ)\leq 8\delta + 2K\right)\).
	 Thus \(d(H, cJ) \leq 2\delta + 2K\) and hence \(|c| \leq 2\delta + 2K\). By Lemma \ref{lem: distance lateral conjugate}, we get that 
	 \(H \cap cHc^{-1} \subseteq N_{2|c|} (p_{H}(cJ))\).
	Now we want to show that there is a uniform \(D_1\) such that \(p_H(cJ) \subseteq N_{D_1}(H \cap cJc^{-1})\). This will prove the Lemma.
	Let \(a \in p_H(cJ)\). By the assumptions on the diameter, there is \(a' \in p_H(cJ)\) with \(d(a, a') > 8\delta +2K\). Let \(C= 4\delta + K\). 
	A quadrilateral argument shows that there is a point \(x \in H\) with \(d(a, x) \leq C\) and \(d(x, cJ) \leq 2\delta + 2K\). That is: 
	\[p_H(cJ) \subseteq N_C (P_H (N_{2\delta + 2K}(H) \cap cJ)).\]
	We want to show that there is \(R\) such that \(N_{2\delta + 2K}(H) \cap cJ \subseteq N_R(H\cap cJc^{-1})\). Since projections are quasi-Lipschitz
	(see Lemma \ref{nbhd inclusion of proj}), this will imply the claim.
	
	Let \(B = B_{2\delta + 2K} (\1) \) be the ball of radius \(2\delta\) around the identity. 
	Note that \(N_{\delta + 2K}(H)= \bigcup_{g \in B} Hg\). For each \(g \in B\) such that \(Hg \cap cJ \neq \emptyset\), choose once and for all 
	an element \(y_g \in Hg \cap cJ\). Let \(R = \mathrm{max} \{|y_g| \mid   g\in B\}\). This is well defined since \(B\) is finite.
	Consider \(x \in  N_{\delta + 2K}(H) \cap cJ\). Then there exists \(g \in B\), \(h \in H\) and \(j \in J\) such that 
	\(x= hg = cj\). Similarly there are \(h' \in H\) and \(j' \in J\) such that \(y_g = h'g = cj'\). 
	Then it is easily seen that \(xy_g^{-1} \in H\cap cJc^{-1}\), and thus 
	\[x \in (H \cap cJc^{-1}) y_g = N_{|y_g|}(H\cap cJc^{-1}).\]
	Maximizing over \(B\) yields \(N_{\delta + 2K} (H) \cap cJ \subseteq N_R (H \cap cJc^{-1})\).
\end{proof}
\end{lemma}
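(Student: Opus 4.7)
The plan is to exploit the left-invariance of the metric and reduce to the case $a = e$: setting $c = a^{-1}b$ and multiplying by $a^{-1}$ on the left, the statement becomes that if $\mathrm{diam}(p_H(cJ)) \geq 2(8\delta + 2K) + 1$, then $p_H(cJ)$ has Hausdorff distance at most $D = D(\delta, K)$ from $H \cap cJc^{-1}$.

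First I would show the group element $c$ is short. A quadrilateral argument between the $K$-quasiconvex sets $H$ and $cJ$ shows that if $d(H, cJ) > 2\delta + 2K$, the diameter of $p_H(cJ)$ is bounded by $8\delta + 2K$, contradicting the hypothesis; so $d(H, cJ) \leq 2\delta + 2K$ and since $e \in H$ this forces $|c|$ to be uniformly bounded. One direction of the Hausdorff estimate then follows immediately from Lemma \ref{lem: distance lateral conjugate}, which gives $H \cap cJc^{-1} \subseteq N_{2|c|}(p_H(cJ))$.

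For the opposite inclusion, I would use the hypothesis on the diameter: given $a \in p_H(cJ)$, pick $a' \in p_H(cJ)$ with $d(a,a') > 8\delta + 2K$ and run a quadrilateral argument on $H, cJ$ to produce $x \in H$ with $d(a,x) \leq 4\delta + K$ and $d(x, cJ) \leq 2\delta + 2K$. So it suffices to show $N_{2\delta + 2K}(H) \cap cJ$ is contained in a uniform neighborhood of $H \cap cJc^{-1}$, and then compose with the fact that $p_H$ is quasi-Lipschitz (Lemma \ref{projection q.Lips}).

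The main step, and the place I'd expect the principal obstacle, is a finite-pigeonhole argument of the following type. Write $N_{2\delta + 2K}(H) = \bigcup_{g \in B} Hg$ where $B$ is the (finite) ball of radius $2\delta + 2K$ around $e$. For each $g \in B$ with $Hg \cap cJ \neq \emptyset$, fix once and for all a representative $y_g \in Hg \cap cJ$. Any other element $x = hg = cj$ of $Hg \cap cJ$ satisfies $xy_g^{-1} \in H$ and simultaneously $xy_g^{-1} \in cJc^{-1}$, hence $xy_g^{-1} \in H \cap cJc^{-1}$, so $x$ lies in the $|y_g|$-neighborhood of $H \cap cJc^{-1}$. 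Maximizing $|y_g|$ over $B$ (finite, depending only on $\delta$ and $K$) yields the required uniform constant, completing the proof. The delicate point is that the existence of the representatives $y_g$ depends only on $B$, not on the specific $c$, which is what makes the bound depend only on $\delta$ and $K$.
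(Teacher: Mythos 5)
Your proposal is essentially the paper's own proof: the same reduction to \(c=a^{-1}b\) by left multiplication, the same quadrilateral arguments bounding \(|c|\) and pushing \(p_H(cJ)\) into \(N_{4\delta+K}\bigl(p_H\bigl(N_{2\delta+2K}(H)\cap cJ\bigr)\bigr)\), the appeal to Lemma \ref{lem: distance lateral conjugate} for one inclusion, and the identical coset-pigeonhole with representatives \(y_g\in Hg\cap cJ\) giving \(xy_g^{-1}\in H\cap cJc^{-1}\). One small caveat: your closing remark that the \(y_g\) "depend only on \(B\), not on the specific \(c\)" is not literally accurate (they lie in \(Hg\cap cJ\), so the bound \(R=\max_g|y_g|\) a priori depends on \(c\) and on \(H,J\)), but this is exactly the same implicit uniformity step taken in the paper's proof, so it is not a deviation from it.
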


We recall the following Theorem.

\begin{thm}[\cite{brady2000finite}]
Let \(G\) be a group that is \(\delta\)--hyperbolic with respect to a fixed finite set of generators. 
Then there exists \(R=R(\delta)\) such that every finite subgroup can be conjugated to lie in the \(R\)--ball around the identity.
\end{thm}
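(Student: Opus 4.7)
The plan is to exploit the existence of coarse centers for bounded subsets of a hyperbolic space, together with the fact that a subgroup acts on itself. Given a finite subgroup $H\leq G$, the set $H$ is a bounded subset of the Cayley graph $\mathrm{Cay}(G)$, and the aim is to produce a vertex $c\in G$ which is coarsely fixed by left multiplication by every element of $H$. Once such $c$ is at hand, left-invariance of the word metric gives $d(\1,c^{-1}hc)=d(c,hc)$ for each $h\in H$, so the conjugate $c^{-1}Hc$ lies in a ball of controlled radius around the identity.

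The first step is a quasi-center lemma in a geodesic $\delta$--hyperbolic space $X$: every bounded subset $S\subseteq X$ admits a point $c$ with $\sup_{s\in S}d(s,c)\leq \tfrac{1}{2}\mathrm{diam}(S)+K_1\delta$ for a universal constant $K_1$, and any two such quasi-centers lie at distance at most $K_2\delta$. Existence comes from taking $c$ near the midpoint of a geodesic joining two points of $S$ that almost realize the diameter; the thin-triangle condition then forces every other $s\in S$ to project near this midpoint. Coarse uniqueness follows by the same thin-triangle comparison applied to two competing candidate centers simultaneously.

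Applying this to $S=H\subset \mathrm{Cay}(G)$, I choose a vertex $c\in G$ within distance $1$ of the resulting quasi-center. For every $h\in H$ the identity $hH=H$, together with the fact that $G$ acts by isometries, implies that $hc$ is itself a quasi-center for $H$. The coarse uniqueness clause then gives $d(c,hc)\leq K_2\delta+O(1)$, and hence
\[d(\1,c^{-1}hc)=d(c,hc)\leq K_2\delta+O(1)=:R(\delta),\]
so $c^{-1}Hc\subseteq B(\1,R(\delta))$, as required.

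The main obstacle is the coarse uniqueness clause in the quasi-center lemma: this is exactly the geometric input that upgrades a static existence statement about a center into the $H$--equivariance driving the second step. Without it the translates $\{hc\}_{h\in H}$ could drift apart and the conjugation bound would collapse, so one has to be careful to set up the thin-triangle estimate to compare two candidate centers and extract a bound depending only on $\delta$. Everything else in the argument is formal once that input is secured.
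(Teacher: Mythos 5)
Your argument is correct, and it is the standard quasi-centre proof of this statement. Note that the paper does not prove this theorem at all: it is quoted verbatim from Brady's paper \cite{brady2000finite} (and the same statement goes back to Bogopolski--Gerasimov and to the treatment in \cite[Chapter III.$\Gamma$]{BridsonHaefliger}), so there is no internal proof to compare against; your proposal supplies essentially the argument those sources use. The only point that needed care is the one you flagged: you define quasi-centres of a bounded set \(S\) by the condition \(\sup_{s\in S}d(s,c)\leq \tfrac12\mathrm{diam}(S)+K_1\delta\) rather than by almost-minimizing the radius, and coarse uniqueness for this diameter-based notion does hold --- for an almost-diametral pair \(s,s'\) the condition forces the Gromov product \((s\mid s')_c\) to be \(O(\delta)\), hence \(c\) lies \(O(\delta)\)-close to the midpoint of \([s,s']\), so any two such points are \(O(\delta)\)-close, with constants independent of \(\mathrm{diam}(S)\). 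That independence is exactly what makes \(R\) depend only on \(\delta\), and the rest of your argument (isometric left action, \(hH=H\), vertex adjustment by distance \(1\), and \(d(\1,c^{-1}hc)=d(c,hc)\)) is formal, as you say.
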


\begin{corollary}\label{corollary Brady}
Given a hyperbolic group \(G\), there is \(\Delta\) such that every finite subgroup of \(G\) has at most \(\Delta\) elements.
\end{corollary}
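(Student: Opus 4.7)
The plan is to derive this directly from the theorem of Brady cited immediately above. Since $G$ is a hyperbolic group in the sense used here, it is finitely generated, and hence for any fixed finite generating set the ball $B_R(\mathbf{1})$ of radius $R$ about the identity contains only finitely many elements; set $\Delta = |B_R(\mathbf{1})|$, where $R = R(\delta)$ is the constant provided by \cite{brady2000finite}.

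Now let $F \leq G$ be any finite subgroup. By the cited theorem, there is $g \in G$ such that $gFg^{-1} \subseteq B_R(\mathbf{1})$. In particular $|gFg^{-1}| \leq |B_R(\mathbf{1})| = \Delta$. Since conjugation by $g$ is a bijection of $G$ onto itself, it restricts to a bijection $F \to gFg^{-1}$, so $|F| = |gFg^{-1}| \leq \Delta$, which is the desired bound.

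The only (trivial) point to note is that $\Delta$ depends only on $\delta$ and on the chosen finite generating set, and the latter dependence is harmless: changing generating sets only changes the cardinality of $B_R(\mathbf{1})$ by a bounded factor coming from the quasi-isometry between the two word metrics, so the existence of a uniform $\Delta$ is preserved. There is no real obstacle here — the whole argument is a one-line application of Brady's theorem together with local finiteness of Cayley graphs of finitely generated groups.
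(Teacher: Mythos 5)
Your proof is correct and is exactly the intended argument: the paper states this corollary without proof precisely because it follows immediately from Brady's theorem via finiteness of the $R$--ball and the fact that conjugation preserves cardinality. The closing remark about changing generating sets is unnecessary (the corollary only asserts the existence of some $\Delta$ for the group, so fixing one finite generating set suffices), but it does not affect correctness.
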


Given two quasiconvex subgroups \(H\) and \(J\), there are only finitely many cosets \(cJ\) such that \(8\delta + 2 K < \mathrm{diam}(p_H(cJ))< \infty\), and this extends to any finite family. In particular, the next Lemma is trivially satisfied whenever it is applied to a finite family, which will turn out to be the case in our situation. 
However, we record it because the result is slightly more general and the proof does not rely on the above fact.

\begin{lemma}\label{property xi}
Let \(G\) be a \(\delta\)--hyperbolic group, and let \(H, J\) be \(K\)--quasiconvex subgroups of \(G\). Then 
there exists  \(\xi= \xi (\delta, K, G)\) such that for each \(c \in G\),  if \(\mathrm{diam}(p_H(cJ)) \geq \xi\), then \(\mathrm{diam}(p_H(cJ))= \infty\).
	\begin{proof}
	Let \(B\) be the ball of radius \(2\delta + 2K\) around the identity.
	Consider the elements of \(cJ\) that are at distance at most \(2\delta +2K\) from \(H\), that is the set \(\bigcup_{g \in B} cJg \cap H\).
	By hyperbolicity, increasing \(\xi\) we can assume that this set is arbitrarily large. Note that for every \(g \in B\) 
	and every pair of elements \(x,y\in cJg \cap H\), one has that \(xy^{-1}  \in cJc^{-1} \cap H\). 
	Since the ball \(B\) contains only finitely many elements, 
	by the pigeonhole principle we can assume that there is \(g \in B\) such that \(|cJg \cap H| > \Delta +1\), where \(\Delta\) is the constant
	provided by Corollary \ref{corollary Brady}. 
	In particular, we can find  \(x,y \in cJg \cap H\) such that \(xy^{-1}\) has infinite order.
	Thus \(|H \cap cJ c^{-1}|= \infty\) and then \(\mathrm{diam}(p_H(cJ))= \infty\).
	\end{proof}
\end{lemma}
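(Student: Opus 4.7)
My plan is to combine pigeonhole on $H$--cosets with the uniform bound on sizes of finite subgroups given by Corollary \ref{corollary Brady}. The point is that if $\mathrm{diam}(p_H(cJ))$ is finite but very large, I want to produce an infinite-order element of $H \cap cJc^{-1}$; by Lemma \ref{lem: distance lateral conjugate} combined with Lemma \ref{lem: infinite order elements are quasi-geodesics} this immediately forces $p_H(cJ)$ to be unbounded, giving the desired dichotomy.

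The first step is to show that a large projection diameter forces many elements of $cJ$ to lie close to $H$. A standard quadrilateral argument in a $\delta$--hyperbolic space using $K$--quasiconvexity shows that unless $d(cJ, H) \leq 2\delta + 2K$, the diameter $\mathrm{diam}(p_H(cJ))$ is bounded by a uniform constant depending only on $\delta$ and $K$; moreover, each point of $p_H(cJ)$ has a preimage in $cJ \cap N_{2\delta+2K}(H)$. So for $\xi$ large enough we can guarantee that $S := cJ \cap N_{2\delta + 2K}(H)$ has cardinality exceeding any prescribed number.

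Next, I would apply pigeonhole. Let $B$ be the ball of radius $2\delta + 2K$ about $1 \in G$, which is finite since $G$ is finitely generated. Every element of $S$ lies in $Hg$ for some $g \in B$, so choosing $\xi$ large enough that $|S| > |B| \cdot \Delta$ (where $\Delta = \Delta(\delta)$ is the bound from Corollary \ref{corollary Brady}) yields some $g \in B$ with $|Hg \cap cJ| > \Delta$. For any two $x, y \in Hg \cap cJ$, the product $xy^{-1}$ lies in $Hg(Hg)^{-1} \cap cJ(cJ)^{-1} = H \cap cJc^{-1}$; fixing one such $y_0$ and varying $x$ gives $|H \cap cJc^{-1}| > \Delta$. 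By Corollary \ref{corollary Brady}, this subgroup cannot be finite, so by Proposition \ref{prop: existence infinite order elements} it contains an element $h$ of infinite order.

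Finally, Lemma \ref{lem: infinite order elements are quasi-geodesics} implies that $\{h^n\}_{n\in\ZZ}$ is an unbounded quasi-geodesic inside $H \cap cJc^{-1}$, and Lemma \ref{lem: distance lateral conjugate} places all these points within $2|c|$ of $p_H(cJ)$, so $\mathrm{diam}(p_H(cJ)) = \infty$. The main subtle point is the first step: one must ensure that the ``linear growth'' of $|S|$ with the diameter is quantitative enough to compare to $|B|\cdot \Delta$, which depends on $\delta$, $K$, and $G$ but not on $c$, giving a uniform threshold $\xi$. Beyond that, the argument is combinatorial and relies only on the ingredients already established earlier in the section.
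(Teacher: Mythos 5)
Your proposal is correct and follows essentially the same route as the paper: a quadrilateral/quasiconvexity argument to force many elements of \(cJ\) into the \((2\delta+2K)\)--neighborhood of \(H\), pigeonhole over the finite ball \(B\) against the bound \(\Delta\) from Corollary \ref{corollary Brady}, differences \(xy^{-1}\) landing in \(H\cap cJc^{-1}\), and an infinite-order element forcing \(\mathrm{diam}(p_H(cJ))=\infty\). The only differences are cosmetic (you work with \(Hg\cap cJ\) rather than \(cJg\cap H\), and you spell out the final unboundedness step via Lemmas \ref{lem: distance lateral conjugate} and \ref{lem: infinite order elements are quasi-geodesics}, which the paper leaves implicit).
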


We want to establish how subgroups relates with coarse inclusion.
\begin{lemma}\label{lem: coarse inclusion => infinite intersection}
	Let \(H,J\) be infinite quasiconvex subgroups of a \(\delta\)--hyperbolic group \(G\) and let \(a,b\) be elements of \(G\).
	Then if \(aH \precsim bJ\), then \(|aHa^{-1} \cap bJb^{-1} | = \infty\).
	\begin{proof}
	Let \(h\) be an infinite order element of \(H\). By Lemma \ref{lem: infinite order elements are quasi-geodesics}, \((h^n)_n\)
	is a quasi-geodesic of \(H\), and thus \((ah^{n})_n\) is a quasi-geodesic of \(aH\). By hypothesis, \((ah^n)_n\) is contained in a uniform
	neighborhood of \(bJ\). In particular, there is \(C = C(G, H)\) such that for each \( n \in \ZZ\), there is an element
	\(g\in B_C(\1)\) such that  \(ah^ng \in bJ\).
	Since there are only finitely many such \(g\), we get that there exist different numbers \(n, m \) and \(g_0 \in B_C(\1)\) such that
	\(ah^ng_0 \in bJ\) and \(ah^mg_0 \in bJ\). Thus \(ah^ng_0g_0^{-1}h^{-m}a^{-1} \in aHa^{-1}\cap bJb^{-1}\)
	is an element of the intersection that has infinite order. 	
	\end{proof}
\end{lemma}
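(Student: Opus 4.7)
The plan is to exploit the existence of an infinite-order element in \(H\) to produce enough elements in the intersection. First I would invoke Proposition~\ref{prop: existence infinite order elements} on the infinite hyperbolic subgroup \(H\) to obtain an infinite-order element \(h \in H\); Lemma~\ref{lem: infinite order elements are quasi-geodesics} then ensures that \((h^n)_{n \in \ZZ}\) is a quasi-geodesic in \(G\), and left-multiplying by \(a\) (which is an isometry) produces a sequence of distinct points \(\{ah^n\}_{n \in \ZZ} \subseteq aH\).

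Second, I would unfold the hypothesis \(aH \precsim bJ\) via Definition~\ref{def:coarse_inclusion} to get a uniform \(R\) with \(ah^n \in N_R(bJ)\) for every \(n \in \ZZ\). Equivalently, for each \(n\) there is an element \(g_n \in G\) with \(|g_n| \leq R\) such that \(ah^n g_n \in bJ\). Since the ball \(B_R(\1) \subseteq G\) is finite (as \(G\) is finitely generated) while \(\ZZ\) is infinite, a pigeonhole argument yields distinct integers \(n \neq m\) and a single \(g_0\) so that both \(ah^n g_0\) and \(ah^m g_0\) lie in \(bJ\).

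Third, writing \(ah^n g_0 = b j_1\) and \(ah^m g_0 = b j_2\) for some \(j_1, j_2 \in J\), a direct computation gives
\[ah^{n-m}a^{-1} = (ah^n g_0)(ah^m g_0)^{-1} = (bj_1)(bj_2)^{-1} = b(j_1 j_2^{-1})b^{-1},\]
which exhibits \(ah^{n-m}a^{-1}\) as an element of \(aHa^{-1} \cap bJb^{-1}\). Since \(h\) has infinite order and \(n \neq m\), this element is non-torsion, so the cyclic subgroup it generates furnishes infinitely many distinct elements of the intersection.

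There is no serious obstacle: the argument reduces to a pigeonhole on coset representatives in a finite ball, made possible by the presence of an infinite-order quasi-geodesic inside \(H\). The only slightly delicate point is ensuring that the constant \(R\) witnessing coarse inclusion is uniform in \(n\), which is automatic from the definition of \(\precsim\).
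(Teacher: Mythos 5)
Your proposal is correct and follows essentially the same argument as the paper: extract an infinite-order element \(h\in H\), use the coarse inclusion to place each \(ah^n\) within a uniformly bounded distance of \(bJ\), pigeonhole over the finite ball of coset representatives to find \(n\neq m\) and \(g_0\) with \(ah^ng_0, ah^mg_0\in bJ\), and conclude that \(ah^{n-m}a^{-1}\) is an infinite-order element of \(aHa^{-1}\cap bJb^{-1}\). The only cosmetic difference is that you make the appeal to Proposition \ref{prop: existence infinite order elements} and the computation \((ah^ng_0)(ah^mg_0)^{-1}=b(j_1j_2^{-1})b^{-1}\) explicit, which the paper leaves implicit.
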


\begin{Def}[Proximal pair]
Let \(\F=\{F_1, \dots, F_n\}\) be a finite family of subgroups of a group \(G\). 
Let \(F_i, F_j\) be elements of \(\F\), and let \(g\in G\).
We say that \(\left(F_i, gF_jg^{-1}\right)\) form a \emph{proximal pair} if \(\left| F_i \cap gF_jg^{-1}\right|= \infty\).
We define \(\overline{\mathrm{Prox}(\F)}\) to be the set of intersections of proximal pairs, namely:
\[\overline{\mathrm{Prox}(\F)} =\{F_i \cap gF_jg^{-1} \mid  \left| F_i \cap gF_jg^{-1}\right|= \infty\}\]

\end{Def}

Let \((F_i, gF_jg^{-1})\) be a proximal pair. It is clear that for each \(f \in F_i -\1\), we have that \(\left(F_i, fgF_j (fg)^{-1}\right)\) is also a proximal pair, thus the set \(\overline{\mathrm{Prox}(\F)}\) is, in general, infinite. 
However, it contains only finitely many conjugacy classes.

\begin{lemma}\label{finiteness of proximal elements}
Let \(G\) be a hyperbolic group and \(\F=\{F_1, \dots, F_n \}\) be a finite family of quasiconvex subgroups of \(G\).
Then \(\overline{\mathrm{Prox}(\F)}\) contains finitely many conjugacy classes.
\begin{proof}
Fix a finite set of generators on \(G\), and let \(\delta\) be the hyperbolicity constant.

We recall that for any two \(F_i, F_j\), we have \(F_i \cap gF_jg^{-1} \subseteq N_{2|g|} (p_{F_i}(gF_j))\).
Then we will show that, up to left multiplication by an element of \(F_i\), there are only finitely many choices for \(gF_j\) such that \(\mathrm{diam}(p_{F_i}(gF_j)) = \infty\), which implies the result. 

So let \(gyg^{-1} \in F_i \cap gF_jg^{-1}\). Then \(d(gyg^{-1}, gy) \leq |g|\). Thus, \(d(gy, F_i) \leq |g|\). Thus there is a point \(x\in p_{F_i}(F_j)\) with \(d(x, gy) \leq |g|\). By triangular inequality, \(d(gyg^{-1}, x) \leq 2|g|\), which proves the first claim. Let \(K\) be such that both \(F_i\) and \(F_j\) are \(K\)--quasiconvex. By a quadrilateral argument, if \(d(F_i, gF_j) > 2\delta + 2K\), we get that \(\mathrm{diam}\left( p_{F_i}(gF_j)\right)\leq 8\delta + 2K\). Thus, up to left multiplication by an element of \(F_i\), there are only finitely many \(gF_j\) such that \(\mathrm{diam}p_{F_i}(gF_j)\) is infinite, and thus finitely many conjugacy classes.

\end{proof}
\end{lemma}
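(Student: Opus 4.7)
The plan is to reduce the counting of conjugacy classes of intersections to counting double cosets $F_igF_j$, and then to bound the number of such double cosets that can give rise to an infinite intersection by a quadrilateral projection argument, exactly as in the proof of Proposition \ref{first B. inequality}.

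First I would set up the equivariance that makes the reduction to double cosets work. For any $f\in F_i$,
\begin{equation*}
F_i \cap (fg)F_j(fg)^{-1} \;=\; f\bigl(F_i \cap gF_jg^{-1}\bigr)f^{-1},
\end{equation*}
since $fF_if^{-1}=F_i$. Thus any two cosets $gF_j$, $g'F_j$ in the same $F_i$--orbit of $G/F_j$ (equivalently, the same double coset $F_igF_j$) yield intersections that are $F_i$--conjugate, and hence $G$--conjugate. Fixing the pair $(i,j)$, it therefore suffices to show that only finitely many double cosets $F_igF_j$ can satisfy $|F_i \cap gF_jg^{-1}|=\infty$, and then to sum over the finitely many pairs $(i,j)$.

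Next I would translate infinitude of the intersection into the geometric statement that $p_{F_i}(gF_j)$ has infinite diameter. Lemma \ref{lem: distance lateral conjugate} gives $F_i\cap gF_jg^{-1}\subseteq N_{2|g|}(p_{F_i}(gF_j))$; since $G$ is finitely generated (hence proper), an infinite intersection is unbounded in $G$, forcing $p_{F_i}(gF_j)$ to be unbounded as well. Then, letting $K$ denote a common quasiconvexity constant for $F_1,\dots,F_n$ and $\delta$ the hyperbolicity constant, a standard quadrilateral argument (as already used in the proof of Proposition \ref{first B. inequality}) yields that whenever $d(F_i,gF_j)>2\delta+2K$ one has $\mathrm{diam}(p_{F_i}(gF_j))\leq 8\delta+2K$. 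Contrapositively, infinite diameter forces $d(F_i,gF_j)\leq 2\delta+2K$.

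Finally I would finish the count. The last inequality means that $gF_j\cap N_{2\delta+2K}(F_i)\neq\emptyset$, so there exist $f\in F_i$ and $b\in B_{2\delta+2K}(\1)$ with $fb\in gF_j$, i.e.\ $F_igF_j=F_ibF_j$. Since the ball $B_{2\delta+2K}(\1)$ of the Cayley graph of $G$ is finite, only finitely many double cosets occur. The only subtle point, and essentially the only place where a bit of care is required, is the passage from ``infinite intersection'' to ``infinite-diameter projection'': the bound $2|g|$ depends on $g$, but this causes no trouble because properness of $G$ turns ``infinite'' into ``unbounded,'' and any unbounded set contained in a finite neighborhood of another set forces the latter to be unbounded as well.
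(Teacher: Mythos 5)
Your proposal is correct and follows essentially the same route as the paper: reduce to double cosets $F_igF_j$ (the paper's ``up to left multiplication by an element of $F_i$''), pass from infinite intersection to unbounded projection via $F_i\cap gF_jg^{-1}\subseteq N_{2|g|}(p_{F_i}(gF_j))$, and use the quadrilateral bound $d(F_i,gF_j)\leq 2\delta+2K$ together with finiteness of the ball to conclude. Your write-up merely makes explicit the conjugation-equivariance and the properness step that the paper leaves implicit.
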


This motivates the following definition:

\begin{Def}
For each conjugacy class \([H]\) of elements of \(\overline{\mathrm{Prox}(\F)}\), choose once and for a representative whose quasiconvexity constant is minimal among the elements of the class. 
Then we define \(\mathrm{Prox}(\F)\) to be the set of such representatives.
\end{Def}
Note that by Lemma \ref{finiteness of proximal elements}, if \(\F\) is a finite family, so is \(\mathrm{Prox}(\F)\).

In what follows, we will describe an inductive "closure process" for the family \(\F\). 

\begin{Def}\label{Definition families}
Given a finite family of infinite quasiconvex subgroups \(\F=\{F_1, \dots, F_n\}\), we will inductively describe a sequence of families as follows:
\begin{itemize}
\item[b)] Set \(\F^{(0)}=\F\).
\item[i)] Given \(\F^{(i)}\), define \(\F^{(i+1)}\) as \( \mathrm{Prox}(\F^{(i)})\).
\end{itemize}
\end{Def}
\begin{rmk}
Since \(\F\) consisted of only infinite elements, it is easy to see that for each \(i\),  \(\F^{(i)} \subseteq \F^{(i+1)}\).
Moreover, by Lemma \ref{finiteness of proximal elements}, we have that if \(\F^{(i)} \) is finite, so is \(\F^{(i+1)}\). In particular, for each finite set of generators of \(G\), there exists \(K = K(i)\) such that all the elements of the family
\(\F^{(i)}\) are \(K\)--quasiconvex. 
\end{rmk}

The motivating property of the above definition is the following.

\begin{lemma}\label{hausdorff distance for families}
	Let \(\F^{(i)}\) be constructed as in \ref{Definition families}. 
	There exists \(D = D (\F^{(i)})\) such that the following holds. For every \(a, b \in G\) and \(H, J \in \F^{(i)}\) 
	satisfying \(|p_{aH} (bJ)| = \infty\), 
	there exists \(c \in G\) and \(E \in \F^{(i+1)}\) such that \[d_{\mathrm{Haus}}(p_{aH} (bJ), cE) \leq D.\]
	\begin{proof}
	As previously remarked, there is \(K\) such that all elements of \(\F^{(i)}\) are \(K\)--quasiconvex.
	In particular, we can apply Lemma \ref{inclusion projection conjugation} to get that there is a uniform \(D\) such that
	\[d_{\mathrm{Haus}}\left(p_{aH} (bJ), a\left( H\cap a^{-1}bJb^{-1} a\right)\right) \leq D.\]
	In particular, \(|H \cap a^{-1}bJb^{-1}a| = \infty\), thus \((H, a^{-1}bJb^{-1}a)\) is a proximal pair for the family \(\F^{(i)}\).
	In particular, there exists a representative \(E= H \cap gJg^{-1}  \in \mathrm{Prox}(\F^{(i)})= \F^{(i+1)}\) and an element \(h \in G\) 
	such that \[H \cap a^{-1}bJb^{-1}a = h E h^{-1}.\]
	
	Thus, we have a uniform estimate of the Hausdorff between \( p_{aH} (bJ)\) and a conjugate of an element of \(\F^{(i+1)}\).
	Since \(d_{\mathrm{Haus}}(hE, hEh^{-1}) \leq |h|\), by Remark \ref{rmk:Koenigs Lemma} we get that there exists a uniform \(R\) such
	that \(d_{\mathrm{Haus}}(hE, hEh^{-1}) \leq R\). Setting \(c=ah\), we get the claim.
	\end{proof}	
\end{lemma}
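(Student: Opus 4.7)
The plan is to unpack \(p_{aH}(bJ)\) into something recognisable as (close to) a coset of an element of \(\F^{(i+1)}\), and then absorb the remaining error into a uniform Hausdorff bound. The first move is to apply Lemma \ref{inclusion projection conjugation}: since \(|p_{aH}(bJ)| = \infty\) we certainly have \(\mathrm{diam}(p_{aH}(bJ)) \geq 2(8\delta + 2K) + 1\), where \(K\) is a common quasiconvexity constant for all elements of the (finite) family \(\F^{(i)}\). Hence, for a uniform \(D_0 = D_0(\delta, K)\),
\[
d_{\mathrm{Haus}}\bigl(p_{aH}(bJ),\; a(H \cap a^{-1} b J b^{-1} a)\bigr) \;\leq\; D_0.
\]

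Next, I would observe that because \(p_{aH}(bJ)\) is infinite, so is \(H \cap a^{-1} b J b^{-1} a\); in particular the pair \((H,\, a^{-1} b J b^{-1} a)\) is proximal for \(\F^{(i)}\). By the definition of \(\F^{(i+1)} = \mathrm{Prox}(\F^{(i)})\), the conjugacy class of this intersection has a chosen representative \(E \in \F^{(i+1)}\), so \(H \cap a^{-1} b J b^{-1} a = h E h^{-1}\) for some \(h \in G\). Setting \(c = ah\) transforms the coset appearing above into \(a h E h^{-1}\), and it remains to compare this with \(c E = a h E\).

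The main (and really the only delicate) step is the last one: bounding \(d_{\mathrm{Haus}}(h E, h E h^{-1})\) independently of \(h\). Trivially this distance is at most \(|h|\), which is not uniform. The key point is that both \(hE\) and \(hE h^{-1}\) are quasi-isometrically embedded in \(G\) with uniform constants — uniformity coming from the finiteness of \(\F^{(i+1)}\) (Lemma \ref{finiteness of proximal elements}), which ensures there is a common quasiconvexity/q.i.-embedding gauge for all its elements and their conjugates. Since these two subsets are at finite Hausdorff distance, Remark \ref{rmk:Koenigs Lemma} upgrades this to a uniform bound \(R = R(\delta, G, \F^{(i)})\). Combining with the \(D_0\) estimate and using that left multiplication by \(a\) is an isometry yields the desired constant \(D = D_0 + R\).

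The obstacle worth flagging is exactly the uniformity in the last step: without the finite-height / finite-conjugacy-class control provided by Lemma \ref{finiteness of proximal elements}, the Hausdorff distance \(d_{\mathrm{Haus}}(hE, h E h^{-1})\) would blow up with \(|h|\), and the statement would fail. Everything else is a chain of identifications: projection \(\leadsto\) coset of intersection \(\leadsto\) conjugate of a distinguished representative \(\leadsto\) coset of that representative.
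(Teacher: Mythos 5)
Your proposal follows the paper's argument step for step: the same application of Lemma \ref{inclusion projection conjugation}, the same identification of \(H \cap a^{-1}bJb^{-1}a\) as \(hEh^{-1}\) for the chosen representative \(E \in \F^{(i+1)}\), the same choice \(c = ah\), and the same appeal to Remark \ref{rmk:Koenigs Lemma} to upgrade \(d_{\mathrm{Haus}}(hE, hEh^{-1}) \leq |h|\) to a uniform bound. One caveat on your stated justification of that last step: finiteness of \(\F^{(i+1)}\) does \emph{not} give a common quasiconvexity/q.i.-embedding gauge for ``all its elements and their conjugates'' --- the paper's own background lemma records that the constant for \(cHc^{-1}\) depends on the conjugator \(c\) --- so the uniformity you need for \(hEh^{-1}\) cannot be sourced there; it instead comes from the fact that \(a\,hEh^{-1} = a(H\cap a^{-1}bJb^{-1}a)\) is, by Lemma \ref{inclusion projection conjugation}, uniformly Hausdorff-close to the projection set \(p_{aH}(bJ)\), which is uniformly quasiconvex, so this particular conjugate is uniformly quasiconvex and hence uniformly quasi-isometrically embedded, as the remark requires. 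With that repair (or simply invoking the remark as the paper does), your argument is the paper's proof.
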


\begin{thm}[\cite{GMRS1998widths}]\label{GMRS}
	Let \(G\) be a hyperbolic group, and \(\{F_1, \dots, \F_n\}\) be a finite family of quasiconvex subgroups. 
	Then there exists \(c\) such that for each collection 
	\(\{g_{\alpha_i}F_{\alpha_i}g_{\alpha_i}^{-1}\}_{i=1}^c\) of \(c\) distinct conjugates, 
	the intersection \[\bigcap_{i=1}^{c} g_{\alpha_i}F_{\alpha_i}g_{\alpha_i}^{-1}\] is finite.
\end{thm}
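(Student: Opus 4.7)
The plan is to proceed by contradiction. Suppose no such $c$ exists; then for every $N$ one can find a collection of $N$ distinct conjugates of elements of $\F$ whose common intersection is infinite. Since $|\F| < \infty$, the pigeonhole principle over the members of $\F$ reduces the problem to the case of a single quasiconvex subgroup $F \in \F$: it suffices to show that the number of distinct cosets $g_1F,\dots,g_NF$ with $\bigcap_i g_iFg_i^{-1}$ infinite is bounded in terms of $G$ and $F$ (that is, that $F$ has finite height in $G$).

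Given such a collection, the first step is to produce an axis. Since the intersection $H = \bigcap_i g_iFg_i^{-1}$ is an infinite subgroup of the hyperbolic group $G$, Proposition \ref{prop: existence infinite order elements} yields an element $h \in H$ of infinite order, and by Lemma \ref{lem: infinite order elements are quasi-geodesics} the sequence $(h^n)_{n\in\ZZ}$ is a bi-infinite quasi-geodesic (with constants depending only on $\delta$) with endpoints $h^{\pm\infty}\in \partial G$. For each $i$, set $f_i := g_i^{-1} h g_i \in F$; this is again infinite order, so $(f_i^n)_n$ is a quasi-geodesic in $F$ with uniform constants. Left-translating by $g_i$ (an isometry) gives the quasi-geodesic $(g_i f_i^n)_n = (h^n g_i)_n \subseteq g_iF$, with the same constants and the same endpoints $h^{\pm\infty}$ as $(h^n)_n$. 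Applying the Morse Lemma (Theorem \ref{lem:Morse Lemma}) to these two bi-infinite quasi-geodesics sharing endpoints gives a uniform bound $D_1 = D_1(\delta)$ on their Hausdorff distance, independent of $i$. In particular, each representative $g_i$ lies within distance $D_1$ of the axis $A := \{h^n : n\in\ZZ\}$.

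The counting step exploits that $h$ stabilizes each coset $g_iF$ under left multiplication: since $g_i^{-1} h g_i \in F$, we have $h\cdot g_iF = g_iF$, and more generally $h^{m}g_i F = g_iF$ for all $m\in\ZZ$. Hence each $g_i$ may be replaced by any $h^{m_i}g_i$ without affecting the coset. Using this freedom to translate along $A$, one arranges each $g_i$ to lie within distance $D_1 + \ell(h)$ of a fixed basepoint, where $\ell(h)$ is the translation length of $h$ along $A$. Since a ball of this radius in $G$ is finite, the number of distinct cosets $g_iF$ that can arise is bounded in terms of $D_1$, $\ell(h)$, and the quasiconvexity constant of $F$; in particular, for any fixed $H$ the height is finite.

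The main obstacle is that the bound just obtained depends on $\ell(h)$ and hence on the particular intersection $H$, whereas the theorem demands a uniform $c = c(G,\F)$. Overcoming this requires upgrading the argument to the Gromov boundary: each coset $g_iF$ determines a pair $(g_i^{-1}h^{+\infty}, g_i^{-1}h^{-\infty}) \in \Lambda(F)\times\Lambda(F)\setminus\Delta$, and the compactness of $\Lambda(F)$ combined with the convergence (North--South) dynamics of $G$ on $\partial G$ prevents infinitely many distinct such pairs from appearing among the $g_iF$ — pairs accumulating in $\Lambda(F)\times\Lambda(F)$ force relations that identify cosets, via the fact that the centralizer of a hyperbolic element of $G$ is virtually cyclic. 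This is the delicate input where quasiconvexity, hyperbolicity of $G$, and boundary dynamics genuinely combine to yield the uniform height bound.
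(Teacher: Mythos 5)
This theorem is quoted in the paper from Gitik--Mitra--Rips--Sageev (\cite{GMRS1998widths}) and not proved there, so your proposal has to stand on its own; as written it has a genuine gap, and it sits exactly where you place the weight of the argument. The Morse-lemma step does not give a constant \(D_1=D_1(\delta)\): the path \((h^ng_i)_n\) is the left translate of \((f_i^n)_n\) with \(f_i=g_i^{-1}hg_i\), so its quasi-geodesic constants depend on \(f_i\) (hence on \(|g_i|\)), and the Hausdorff distance between \(\{h^n\}\) and \(\{h^ng_i\}\) is only bounded by \(|g_i|\), not uniformly. Consequently the conclusion you draw from it --- that the chosen representative \(g_i\) lies within uniform distance of the axis --- is false in general; the representative of a coset can be arbitrarily far from the axis, and nothing in your argument controls it. You then notice that your count depends on the configuration, but you misdiagnose the non-uniformity as coming only from \(\ell(h)\), and the paragraph meant to repair it (compactness of \(\Lambda(F)\), North--South dynamics, virtual cyclicity of centralizers ``forcing relations that identify cosets'') is a sketch, not an argument: no statement is formulated there that would yield the uniform constant \(c\) the theorem requires, and finiteness ``for any fixed \(H\)'' is weaker than the uniform bound.

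The standard (GMRS-style) way to close the gap replaces your Morse-lemma step by uniform quasiconvexity of the cosets themselves. Each coset \(g_iF\) is \(K\)--quasiconvex with the \emph{same} constant \(K\) as \(F\), since left multiplication is an isometry. The points \(h^ng_i\in g_iF\) converge to \(h^{\pm\infty}\), so applying quasiconvexity to the segments \([h^{-m}g_i,h^{m}g_i]\) and letting \(m\to\infty\) (a thin-quadrilateral limiting argument) shows that a fixed geodesic line \(\gamma\) with endpoints \(h^{\pm\infty}\) lies in \(N_{K+4\delta}(g_iF)\), uniformly in \(i\). Now fix one point \(p\in\gamma\): every coset \(g_iF\) contains an element \(g_if_i\) in the ball \(B(p,K+4\delta+1)\), and distinct cosets yield distinct such elements (if \(g_if_i=g_jf_j\) then \(g_iF=g_jF\)); since distinct conjugates force distinct cosets, the number of conjugates is bounded by the cardinality of a ball of radius depending only on \(\delta\) and \(K\). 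This gives the uniform \(c\) directly, with no boundary dynamics, centralizers, or dependence on \(\ell(h)\). So the skeleton of your first two steps (pigeonhole over \(\F\), existence of an infinite-order \(h\) in the intersection --- note you should justify this by observing the intersection is quasiconvex, hence hyperbolic, before invoking Proposition \ref{prop: existence infinite order elements}) is fine, but the geometric core needs to be the uniform closeness of the \emph{cosets} to the axis, not of the chosen conjugating elements.
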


\begin{corollary}\label{finite depth of families}
Given a hyperbolic group \(G\) and a family \(\{F_1, \dots, F_n\}\) of quasiconvex subgroups, there is \(M \in \NN\) such that for each 
\(n >M\), \(\F^{(n)}= \F^{(M)}\).
\end{corollary}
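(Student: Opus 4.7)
The plan is to show that the weakly increasing sequence $\F^{(0)} \subseteq \F^{(1)} \subseteq \cdots$ of finite sets (each $\F^{(i)}$ being finite by iterating Lemma \ref{finiteness of proximal elements}) has uniformly bounded cardinality, and therefore must stabilize. First I would observe, by a straightforward induction on $i$, that every $H \in \F^{(i)}$ is, up to conjugacy in $G$, an intersection $\bigcap_{j} g_j F_{l_j} g_j^{-1}$ of conjugates of elements of the original family $\F^{(0)} = \F$: the base case is trivial, and at stage $i+1$ one simply combines the two such intersections provided by the inductive hypothesis for the factors $H, J \in \F^{(i)}$ of a proximal pair $H \cap gJg^{-1}$. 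Since every element of $\F^{(i)}$ is infinite by construction of $\mathrm{Prox}$, Theorem \ref{GMRS} (GMRS) forces such an intersection to involve at most $c-1$ distinct conjugates, where $c$ is the GMRS constant for $\F$. Hence the conjugacy classes appearing in any $\F^{(i)}$ all lie in the set $\mathcal{J}$ of conjugacy classes of infinite subgroups of $G$ expressible as intersections of at most $c-1$ distinct conjugates of elements of $\F$.

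The main step is to show $\mathcal{J}$ is finite, which I would prove by induction on the number $k \leq c-1$ of conjugates. The case $k=1$ is immediate. For the inductive step, fix representatives $H_1^0, \ldots, H_r^0$ of the (finitely many, by induction) conjugacy classes of infinite intersections of $\leq k$ conjugates; any infinite intersection of $k+1$ conjugates is then, after a single simultaneous conjugation bringing its first $k$ factors into the form $H_j^0$, of the shape $H_j^0 \cap g F_l g^{-1}$ for some $j, l$ and some $g \in G$. Applying Lemma \ref{finiteness of proximal elements} to the finite quasiconvex family $\{H_1^0, \ldots, H_r^0\} \cup \F$ yields finitely many conjugacy classes at level $k+1$, closing the induction.

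Putting these together gives $|\F^{(i)}| \leq |\mathcal{J}| < \infty$ uniformly in $i$, so the monotone sequence $\F^{(i)}$ stabilizes, as claimed. The main technical point is the inductive finiteness of $\mathcal{J}$: what makes it go through is that at each step the enlarged family $\{H_j^0\} \cup \F$ remains finite and quasiconvex (since intersections of quasiconvex subgroups in a hyperbolic group are again quasiconvex), so Lemma \ref{finiteness of proximal elements} continues to apply, while GMRS supplies an \emph{a priori} cap on the depth of the induction.
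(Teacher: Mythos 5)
Your proof is correct, and it follows the route the paper intends: the paper states this corollary without any written proof, presenting it as an immediate consequence of Theorem \ref{GMRS}, and your argument is exactly the natural completion of that — GMRS caps an infinite intersection at \(c-1\) distinct conjugates, while Lemma \ref{finiteness of proximal elements}, applied inductively to the enlarged finite quasiconvex families \(\{H_1^0,\dots,H_r^0\}\cup\F\), supplies the level-by-level finiteness of conjugacy classes that the paper leaves implicit. The only external input you rely on beyond these is the paper's own remark that \(\F^{(i)}\subseteq\F^{(i+1)}\), which is what turns the uniform cardinality bound into stabilization, so the argument is sound as written.
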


\begin{thm}\label{thm:wfs for groups}
Let \(G\) be a \(\delta\)--hyperbolic space, and let \(\{F_1, \dots, F_n\}\) be a family of quasiconvex subgroups. Let \(M\) be the constant of Corollary \ref{finite depth of families}, and let \(\F_{\mathrm{cos}}\) be set of all left-cosets of the family \(\F^{(M)}\). Then \(\F_{\mathrm{cos}}\) is a weak-factor system for \(G\) with respect to each finite set of generators of \(G\).
Moreover, the natural action of \(G\) on itself extend to an action on \(\F_{\mathrm{cos}}\) with finitely many orbits. 
\begin{proof}
	As remarked, for each choice of generators of \(G\), the elements of the family \(\F^{(M)}\) are uniformly quasiconvex.
	In particular, this is preserved by left-multiplication. Thus \(\F_{\mathrm{cos}}\) is a family of uniformly quasiconvex subspaces of \(G\).
	It is clear that the \(G\) action preserves \(\F_{\mathrm{cos}}\). Moreover, since \(\F^{(M)}\) is finite, there are only finitely many \(G\)--orbits.
	\begin{enumerate}
	\item By Lemma \ref{lem: coarse inclusion => infinite intersection} we have that \(|a_1F_1a_1^{-1} \cap a_2 F_2 a_2^{-1}| = \infty\). 
		Since \(a_1 F_1 a_1^{-1} \subseteq N_{|a_1|}(a_1 F_1)\), and since \(\precsim\) is a transitive relation, we get that 
		\( \left(a_1F_1a_1^{-1} \cap a_2 F_2 a_2^{-1}\right) \precnsim a_3F_3\). 
		Proceeding in this way, we get that 
		\[\left| a_1F_1a_1^{-1} \cap \cdots \cap a_nF_na_n^{-1}\right|= \infty.\]
		Since we require the coarse inclusions to be proper, all the conjugates are distinct. 
		Hence, by Theorem \ref{GMRS}, we get that the length of every such chain is uniformly bounded.
	\item Lemma \ref{property xi} and Lemma \ref{hausdorff distance for families} give that there are \(\xi\) and \(B\) depending on the
		choice of generators such that given \(H,J \in \F_{\mathrm{cos}}\),  either \(\mathrm{diam}(p_H(J)) < \xi\), or there exists 
		\(L \in \F_{\mathrm{cos}}\) such that \(d_{\mathrm{Haus}}(L, p_H(J)) \leq B\).
	\item For each coset \(aH \in \F_{\mathrm{cos}}\) and element \(x \in aH\), if \(h\) is a generator of \(H\) that is not a torsion element, 
		then \(xa^n\) is a quasi-geodesic that contains \(x\). Since the elements of \(\F^{(M)}\) are finite, one can uniformly estimate the distance 		
		between \(x\) and an arbitrarily long geodesic segments with endpoints in \(aH\).
	\end{enumerate}

\end{proof}
\end{thm}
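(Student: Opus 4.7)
The plan is to verify the three axioms in Definition \ref{def: Weak factor system} for $\F_{\mathrm{cos}}$ (together with uniform quasiconvexity and the equivariance statement), assembling the preparatory lemmas of this section. I first observe that $\F^{(M)}$ is finite by Lemma \ref{finiteness of proximal elements} combined with Corollary \ref{finite depth of families}, so after fixing a generating set there is a uniform quasiconvexity constant $K$ for all its elements. Since left multiplication is an isometry of $G$, every coset in $\F_{\mathrm{cos}}$ is $K$-quasiconvex. Equivariance is then immediate: $\F_{\mathrm{cos}}$ is by construction closed under left translation, and because $\F^{(M)}$ has finitely many subgroups there are only finitely many $G$-orbits on $\F_{\mathrm{cos}}$.

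For the finite-height condition, given a chain $a_1 F_1 \precnsim \cdots \precnsim a_n F_n$ of proper coarse inclusions in $\F_{\mathrm{cos}}$, I would iterate Lemma \ref{lem: coarse inclusion => infinite intersection}: each consecutive pair yields conjugates with infinite intersection, and transitivity of $\precsim$ shows that all the intermediate intersections $\bigcap_{i\leq k} a_i F_i a_i^{-1}$ remain infinite. In particular the conjugates $a_i F_i a_i^{-1}$ are pairwise distinct, so Theorem \ref{GMRS} bounds $n$ by a constant depending only on $G$ and $\F^{(M)}$.

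For the projection-diameter condition, I would take $\xi'$ from Lemma \ref{property xi} and $B'$ from Lemma \ref{hausdorff distance for families}. Suppose $V=aH$ and $W=bJ$ lie in $\F_{\mathrm{cos}}$ with $\mathrm{diam}(p_V(W)) \geq \xi'$. Lemma \ref{property xi}, applied after conjugating to $H$ and $a^{-1}bJb^{-1}a$, upgrades this diameter to infinite; Lemma \ref{hausdorff distance for families} then produces $c \in G$ and $E \in \F^{(M+1)}$ with $d_{\mathrm{Haus}}(p_V(W), cE) \leq D$. The crucial point, which I expect to be the main obstacle, is to ensure that $cE$ actually lies in $\F_{\mathrm{cos}}$: this is precisely why one takes $M$ from Corollary \ref{finite depth of families}, so that $\F^{(M+1)} = \F^{(M)}$ and the projection-closure operation has stabilized. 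Without this stabilization the argument would only show that one can enlarge $\F_{\mathrm{cos}}$, not that it is already closed.

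Finally, for the quasi-geodesic condition, each $H \in \F^{(M)}$ is infinite by construction of the $\mathrm{Prox}$ operator, so Proposition \ref{prop: existence infinite order elements} produces an infinite-order element $h \in H$, and Lemma \ref{lem: infinite order elements are quasi-geodesics} shows that the orbit $(h^k)_{k \in \ZZ}$ is a uniform quasi-geodesic in $G$. For a coset $aH$ and a point $x = ah_0 \in aH$, the sequence $(a h_0 h^k)_{k \in \ZZ}$ is a quasi-geodesic lying entirely in $aH$ and passing through $x$. Truncating at $k = \pm N$ for $N$ arbitrarily large, and applying Lemma \ref{lem: Taming quasi-geodesics} to make it continuous, yields arbitrarily long $q$-quasi-geodesic segments with endpoints on $aH$ and $x$ at uniformly bounded distance from the midpoint. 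Because $\F^{(M)}$ has only finitely many conjugacy classes of infinite-order generators $h$, the constants $q$ and $D'$ can be chosen uniformly across $\F_{\mathrm{cos}}$.
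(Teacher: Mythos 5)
Your proposal is correct and follows essentially the same route as the paper: uniform quasiconvexity and equivariance from finiteness of \(\F^{(M)}\), finite height via Lemma \ref{lem: coarse inclusion => infinite intersection} and Theorem \ref{GMRS}, the projection condition via Lemmas \ref{property xi} and \ref{hausdorff distance for families}, and condition (3) from an infinite-order element of each subgroup. In fact you make explicit the stabilization \(\F^{(M+1)}=\F^{(M)}\) that the paper's proof only uses implicitly, and your axis \((ah_0h^k)_k\) corrects the paper's slip of writing \(xa^n\) for the quasi-geodesic in \(aH\).
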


As a corollary we get the main result of this section.

\begin{corollary}\label{cor: HHG structure on G}
Let \(G\) be a hyperbolic group and let \(\F= \{F_1, \dots, F_N\}\) be a finite family of infinite quasiconvex subgroups. 
Let \(\sim\) be the equivalence relation between subset of \(G\) given by having finite distance in \(\mathrm{Cay}(G)\)  (note that does not depend on the choice of generators).
Then there exists a finite family of quasiconvex subgroups \(\F^{(M)}\) that contains \(\F\) such that if \(\F_{\mathrm{cos}}\) is the set of cosets of \(\F^{(M)}\), then \((G, \F_{\mathrm{cos}} /_\sim )\) is a hierarchically hyperbolic group structure on \(G\). 
\end{corollary}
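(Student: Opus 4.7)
The plan is to assemble the ingredients already proved. By Theorem \ref{thm:wfs for groups}, the family \(\F_{\mathrm{cos}}\) of left cosets of the elements of \(\F^{(M)}\) is a weak factor system for the Cayley graph of \(G\) (with respect to any fixed finite generating set), and the left-multiplication action of \(G\) preserves \(\F_{\mathrm{cos}}\) with only finitely many orbits. Theorem \ref{thm: Main result for weak factor systems} then yields a hierarchically hyperbolic structure on \(G\) indexed by \(\{G\}\cup\H(\F_{\mathrm{cos}})\), whose associated hyperbolic spaces are the cone-offs \(\widehat{\preg_V}\). By the very definition of \(\H(\F_{\mathrm{cos}})\), two cosets are identified precisely when they are at finite Hausdorff distance in \(\mathrm{Cay}(G)\), so this index set is exactly \(\F_{\mathrm{cos}}/_\sim\).

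To upgrade this HHS structure to an HHG structure (Definition \ref{defn:hhs_automorphism}), I need an action of \(G\) by automorphisms of the hierarchy that is metrically proper, cobounded, and has finitely many orbits on the index set. The underlying action of \(G\) on itself by left multiplication is proper, cobounded, and isometric. Each \(g\in G\) induces a bijection \(g^\diamondsuit\) of \(\{G\}\cup\H(\F_{\mathrm{cos}})\) fixing \(G\) and sending \([V]\mapsto[gV]\); this is well-defined and preserves the partial order \(\sqsubseteq\) because both \(\sim\) and coarse inclusion are invariant under the isometric left action. As observed immediately after the definition of \(\preg_V\), the construction is equivariant: \(g\preg_V=\preg_{gV}\), and similarly \(g\H_{\preg_V}=\H_{\preg_{gV}}\). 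Hence left multiplication restricts to an isometry \(g^*([V])\colon\widehat{\preg_V}\to\widehat{\preg_{gV}}\).

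It remains to observe that the projections \(\pi_W\) and \(\rho^V_W\) from Definition \ref{Def rho} coarsely commute with the \(G\)--action, which is automatic because each of their building blocks---the closest-point projection in \(\mathrm{Cay}(G)\), the cone-off bijection \(i_W\), and the formation of \(\preg_V\)---is manifestly \(G\)--equivariant. Finite complexity of \(G\)--orbits on the index set is immediate from Theorem \ref{thm:wfs for groups}: there is exactly one \(G\)--orbit in \(\F_{\mathrm{cos}}\) per element of the finite set \(\F^{(M)}\), and this bound descends to \(\H(\F_{\mathrm{cos}})\). There is no genuine obstacle in this corollary: the honest work was done in Section \ref{sec: Obtaining a factor system} and Theorem \ref{thm:wfs for groups}, and what remains is bookkeeping of equivariance together with the verification that the equivalence relation stated in the Corollary coincides with the one used to form \(\H(\F_{\mathrm{cos}})\).
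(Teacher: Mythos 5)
Your proposal is correct and follows essentially the same route as the paper: apply Theorem \ref{thm:wfs for groups} to get the equivariant weak factor system \(\F_{\mathrm{cos}}\), then pass through Proposition \ref{prop: H factor system} / Theorem \ref{thm: Main result for weak factor systems} to get the HHS structure indexed by the classes under \(\sim\), and conclude the HHG structure from equivariance of the construction of the \(\preg_V\). The only difference is that you spell out the automorphism/equivariance bookkeeping that the paper's two-line proof leaves implicit, which is fine.
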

\begin{proof}
By Theorem \ref{thm:wfs for groups}, \(\F_{\mathrm{cos}}\) is a weak factor system for the Cayley graph of \(G\) on which \(G\) acts equivariantly. Proposition \ref{prop: H factor system}, yields an HHS structure on the Cayley graph of \(G\) on which \(G\) acts equivariantly,  with indexing set \(\F_{\mathrm{cos}}/_\sim\). Thus \((G, \F_{\mathrm{cos}} /_\sim )\) is a hierarchically hyperbolic group.

\end{proof}

The following is a useful consequence of Corollary \ref{cor: HHG structure on G}.

\begin{corollary}
Let \(G\) be a hyperbolic group, and let \(\F= \{F_1, \dots , F_N\}\) be a finite family of quasiconvex subgroups. 
Let \((G, \S)\) be the HHG structure on \(G\) provided by Corollary \ref{cor: HHG structure on G}. Then for each \(F \in \F\), we have that \((F, \S_{[F]})\) is an HHG structure on \(F\) Moreover the inclusion map \(i \colon F \hookrightarrow G\) induces an injective hieromorphism between \(F\) and \(G\), such that  for each \([H] \in \S_{[F]}\), we have \(i^\diamondsuit ([H]) = [H]\).
\begin{proof}
By Corollary \ref{cor:WeakSub-FactorSystem} we have that \((CF, \S_{[F]})\) is an HHS and that the inclusion map \(i\) satisfy the requirements.
Since \(\mathrm{Cay}(F)\) is contained in \(CF\) and it is quasi-isometric to it, and since \(\mathrm{(F)}\) is preserved under the \(F\) -action, it follows that \((F, \S_{[F]})\) is an HHG.
\end{proof}
\end{corollary}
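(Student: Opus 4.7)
The plan is to obtain the statement as an essentially immediate consequence of Corollary \ref{cor:WeakSub-FactorSystem} (which handles the HHS part), combined with the fact that \(F\) is quasi-isometric to \(CF=\preg_F\) and that the construction in Theorem \ref{thm:wfs for groups} was made equivariantly (which upgrades HHS to HHG). Concretely, I would apply Corollary \ref{cor:WeakSub-FactorSystem} to the weak factor system \(\F_{\mathrm{cos}}\) on \(\mathrm{Cay}(G)\): this says that \((\preg_F,\{\preg_V\mid [V]\in\H(\F_{\mathrm{cos}})_F\})\) is an HHS, i.e., \((CF,\S_{[F]})\) is an HHS.

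To transfer this to \(\mathrm{Cay}(F)\), I would invoke quasi-isometry invariance of HHS structures. Because \(F\) is quasiconvex in \(G\), the inclusion \(\mathrm{Cay}(F)\hookrightarrow\mathrm{Cay}(G)\) is a quasi-isometric embedding with image uniformly Hausdorff-close to \(\preg_F\) (indeed \(F\) itself lies in the defining union for \(\preg_F\) and \(\preg_F\) is a uniform neighbourhood of \(F\) by Lemma \ref{condition (3)}). Composing the inclusion with the HHS quasi-isometry \(\preg_F\to CF\) provides the HHS structure \((F,\S_{[F]})\). Since being an HHG is preserved by equivariant quasi-isometry, the HHG status of \(F\) reduces to checking that the natural left \(F\)-action is by automorphisms of this structure, is metrically proper and cobounded on \(\mathrm{Cay}(F)\), and has finitely many orbits on \(\S_{[F]}\).

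Properness and coboundedness are automatic. Equivariance follows from the remark in Section \ref{sec: Obtaining a factor system} that the spaces \(\preg_V\) are constructed \(G\)-equivariantly, so \(F\leq G\) permutes those \([V]\) nested in \([F]\) (since \(F\cdot[F]=[F]\) by \(F\cdot F=F\)). For finiteness of \(F\)-orbits on \(\S_{[F]}\): elements of \(\S_{[F]}\) are classes \([cH]\) with \(H\in\F^{(M)}\) and \(cH\precsim F\). By Lemma \ref{lem: coarse inclusion => infinite intersection}, \(cH\precsim F\) forces \(|cHc^{-1}\cap F|=\infty\), and by the argument of Lemma \ref{finiteness of proximal elements} (up to left multiplication by an element of \(F\), only boundedly many such \(cH\) can project far into \(F\)) together with the finite-height Theorem \ref{GMRS}, this produces only finitely many \(F\)-orbits.

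The remaining task is to verify that the inclusion \(i:F\hookrightarrow G\) is an injective hieromorphism with \(i^\diamondsuit([H])=[H]\). The map \(i^\diamondsuit\) is the tautological inclusion \(\S_{[F]}\hookrightarrow\S\), and it obviously preserves nesting and (vacuously) orthogonality; transversality is preserved because two classes incomparable in \(\S_{[F]}\) remain incomparable in \(\S\). The maps \(i^*([H])\) identify \(C[H]\) computed in \(F\) with \(C[H]\) computed in \(G\): both are obtained by coning off the same subspaces \(\preg_V\) for \([V]\sqsubsetneq[H]\), which all lie in \(\preg_H\subseteq\preg_F\subseteq G\), so \(i^*([H])\) is (coarsely) the identity. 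Coarse commutativity of the \(\pi\) and \(\rho\) maps with \(i\) follows from the fact that closest-point projection in the quasiconvex subspace \(F\subseteq G\) agrees up to bounded error with closest-point projection in \(G\) (Lemma \ref{lemma: commuting q.iso-projection}). The main obstacle I anticipate is the bookkeeping for finiteness of \(F\)-orbits in the previous paragraph; all the hieromorphism axioms themselves reduce to coarse-commutativity statements that follow directly from uniform quasiconvexity and the definitions in \ref{Def rho}.
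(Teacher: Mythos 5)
Your proposal is correct and follows essentially the same route as the paper: apply Corollary \ref{cor:WeakSub-FactorSystem} to get the HHS structure on \(\preg_F\), transfer it to \(\mathrm{Cay}(F)\) via the (equivariant) quasi-isometry coming from quasiconvexity of \(F\), and use \(F\)--equivariance of the construction to upgrade to an HHG. The paper's proof is just terser, leaving implicit the points you spell out (finitely many \(F\)--orbits on \(\S_{[F]}\) and the hieromorphism axioms), and your elaborations of those points are sound.
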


\section{Boundaries}\label{sec:boundaries}

Let \(\X\) be a hyperbolic space and let \(\F\) be a factor system for \(\X\). 
We will exploit the HHS structure on \(\X\) induced by \(\F\) to obtain a description of the boundary of \(\X\) that relates it to the boundaries of the various \(CU\) for \(U \in \F \cup \{\X\}\). We recall that for each \(U \in \F \cup \{\X\}\), the space \(CU\) is obtained from  \(U\) by coning off all elements \(V\) of \(\F\) that are strictly contained in \(U\).
 More precisely, we will describe a topology on 
\[\bigcup_{F \in \F \cup \X} \partial_{\infty} CU\] that makes it homeomerphic to the boundary of \(\X\) (see Theorem \ref{thm:main_result_bdry}). 
It is worth to remark that both the topology and the spaces \(CU\) can be defined without the hierarchical machinery. 

A special case of this result it the case of a space \(\X\) which is hyperbolic relatively to a set of uniformly hyperbolic spaces \(\F\), which was treated by Hamenst\"adt in \cite{Hamenstadt_Hyp_rel_hyp_graphs}. Indeed, in this case the space \(\X\) has to be hyperbolic (Theorem 2.4 of \cite{Hamenstadt_Hyp_rel_hyp_graphs}) and, due to a result of Dru\c{t}u and Sapir (\cite{DrutuSapir}), the Hausdorff distance between any two elements of \(\F\) which have infinite diameter must be infinite. Thus, up to discarding those elements of \(\F\) that are bounded, we have that \(\F\) is a factor system for \(\X\). Since the Gromov boundary of a bounded set is empty, our result recovers the previous result in this case. 

Moreover, in the case when \(\X\) is hyperbolic relatively to \(\F\), the explicit description of the Gromov boundary allows us to obtain an explicit description of the Bowditch boundary (under the mild hypothesis of \(\X\) being a proper metric space). Indeed, the Bowditch boundary is obtained from the Gromov boundary collapsing all the boundaries of the various \(CU\), for \(U \in \F\), to a point (see Theorem \ref{thm:Bowditch Boundary}). In the case when \(\X\) is a hyperbolic group and \(\F\) a family of peripheral subgoups, this recovers a well known result, proved by Tran in \cite{TranRelationsBetween} and, independently, by Manning in \cite{ManningBowditch_Boundary}. The former points out that a proof of this result can also be obtained from \cite{GerasimovFloyd, GerasimovPotyagailo} or \cite{MatsudaOguniYamagata}.

\subsection{Fixing notations: The Gromov boundary of a hyperbolic space}

It is a well known fact that the Gromov boundary of a hyperbolic space has several different characterizations. We will briefly recall the definitions and conventions used. For the proofs of the statements and a more precise exposition, we refer to \cite[Chapter III.H]{BridsonHaefliger}, or to the survey \cite{KapovichBenakliBoundary}.

Firstly, however, we will recall the definition of hierarchy path for an HHS.
\begin{Def}[Coarse map, unparametrized quasi-geodesic]
Let \(X\) be a metric space. A \emph{coarse map} \(f \) from \(Y\) to \(X\) is a map \(f \colon Y \rightarrow 2^{X}\) such that the image of each point has uniformly bounded diameter. A coarse \(f\colon [0,l] \rightarrow X\) is an \emph{unparametrized \((D,D)\)--quasi-geodesic} if there exists an increasing function \(g\colon [0, L] \rightarrow [0,l]\) such that \(f\circ g\) is a \((D,D)\)--quasi-isometric embedding, and for each \(x,y\in[0,L]\) with \(|x-y|\leq 1\), we have \(\mathrm{diam}(f(g(\{x,y\})) \leq D\).
\end{Def}

\begin{Def}[Hierarchy path, \cite{HHSII}, Definition 4.2]
Let \((\X, \S)\) be a hierarchically hyperbolic space. A path \(\gamma\) of \(\X\) is a \emph{D-hierarchy path} if 
\begin{enumerate}
\item \(\gamma\) is a \((D,D)\)--quasi-geodesic of \(\X\);
\item for each \(U \in \S\) the projection \(\pi_U(\gamma)\) is an unparametrized  \((D, D)\)--quasi-geodesic.
\end{enumerate}
\end{Def}

\begin{thm}[Existence of hierarchy paths, \cite{HHSII}, Theorem 4.4]\label{thm: Existence of hierarchy paths}
Let \(\X\) be a hierarchically hyperbolic space. 
Then there exists \(D_0\) so that any \(x,y \in \X\) are joined by a \(D_0\)--hierarchy path.
\end{thm}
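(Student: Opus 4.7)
The plan is to prove this by a two-pronged argument that combines the \emph{realization theorem} for HHSs with the \emph{distance formula}, both of which are standard consequences of the HHS axioms (proved, in turn, by induction on complexity using finite complexity, bounded geodesic image, and large links). Recall that the distance formula asserts
\[d_\X(x,y) \asymp \sum_{U \in \S} \{\!\{d_U(\pi_U(x),\pi_U(y))\}\!\}_K,\]
where $\{\!\{t\}\!\}_K$ equals $t$ if $t \geq K$ and $0$ otherwise, and the realization theorem says that any $\kappa$--consistent tuple $(b_U)_{U \in \S} \in \prod_U CU$ is $\theta(\kappa)$--realized by some point of $\X$. I would first establish these (by induction on complexity: base case $\X$ itself hyperbolic), and then build hierarchy paths from them.

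For the construction, given $x, y \in \X$, let $\mathrm{Rel}_K(x,y) = \{U \in \S : d_U(\pi_U(x),\pi_U(y)) \geq K\}$ be the set of \emph{relevant} domains, which is finite by the distance formula. For each $U \in \mathrm{Rel}_K(x,y)$, choose a $CU$--geodesic $\gamma_U$ from $\pi_U(x)$ to $\pi_U(y)$, and parametrize all these geodesics proportionally to length. Next, pick a finite sequence of times $0 = t_0 < t_1 < \cdots < t_n = 1$ fine enough that consecutive slices $\gamma_U(t_i), \gamma_U(t_{i+1})$ differ by at most a bounded amount in each $CU$. For each $t_i$, the tuple $(\gamma_U(t_i))_{U \in \mathrm{Rel}_K(x,y)}$ (extended by $\pi_U(x)$ on irrelevant coordinates) is coarsely consistent — this uses the consistency and bounded geodesic image axioms to check that for $U \pitchfork V$ or $U \sqsubseteq V$, at least one of the two projected positions lies near $\rho^U_V$ or $\rho^V_U$. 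By realization, choose $x_i \in \X$ realizing this tuple, with $x_0 = x$ and $x_n = y$. Join consecutive $x_i, x_{i+1}$ by any $\X$--geodesic, and concatenate to get $\gamma$.

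The first verification is that $\gamma$ is a $(D,D)$--quasi-geodesic in $\X$: by construction, the projections to each $CU$ of the $x_i$ trace out (coarsely) a geodesic of $CU$, so the distance formula bounds $\sum_i d_\X(x_i, x_{i+1}) \asymp \sum_U \ell(\gamma_U) \asymp d_\X(x,y)$, with the sum over relevant domains. The second verification is that $\pi_U(\gamma)$ is an unparametrized quasi-geodesic in $CU$ for every $U$: for relevant $U$, the projections of the $x_i$ coarsely follow the monotone parametrization of $\gamma_U$; for irrelevant $U$, $\pi_U(\gamma)$ has small total diameter by the distance formula and coarse Lipschitzness of $\pi_U$.

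The main obstacle is the consistency check at each time $t_i$. A priori the chosen tuple need not be consistent, since the positions on different $CU$'s are decoupled. The content is that if $U \pitchfork V$ are both relevant, then the \emph{order} in which $\gamma_U$ and $\gamma_V$ move past $\rho^V_U$ and $\rho^U_V$ respectively is forced by consistency at the endpoints $x, y$; similarly for nested pairs, bounded geodesic image dictates when the $CV$--projection can move along $\gamma_U$. Making these observations quantitative — and choosing a simultaneous parametrization of all the $\gamma_U$ that respects every such constraint — is where the bulk of the work lies, and it is precisely what forces the proof to proceed by induction on complexity rather than in a single sweep.
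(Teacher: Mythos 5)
This statement is not proved in the paper at all: it is imported verbatim from \cite{HHSII} (Theorem 4.4 there), so there is no internal argument to compare yours against; the relevant comparison is with the proof in that reference, which indeed runs on the ingredients you name (realization, induction on complexity, consistency bookkeeping along geodesics in the top-level hyperbolic space).

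Judged as a proof, however, your proposal has a genuine gap, and it is exactly where you place it in your last paragraph. The whole content of the theorem is the construction of $\kappa$--consistent interpolating tuples: choosing geodesics $\gamma_U$ in the relevant domains and parametrizing them proportionally to length does \emph{not} produce consistent slices. For $U\pitchfork V$ both relevant, the consistency inequalities at $x$ and $y$ force $\gamma_U$ to cross $\rho^V_U$ and $\gamma_V$ to cross $\rho^U_V$ in a coherent order, and for $V$ properly nested in $U$ the bounded geodesic image axiom confines all progress of $\gamma_V$ to the time window in which $\gamma_U$ is near $\rho^V_U$; one must exhibit a single monotone schedule compatible with all of these constraints simultaneously (in \cite{HHSII} this is achieved via the realization theorem applied to hulls and an induction on complexity, not by a direct scheduling argument). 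Without that step the realization theorem cannot be invoked at the intermediate times, and everything downstream --- the uniform closeness of consecutive realized points $x_i,x_{i+1}$, hence the distance-formula estimate showing $\gamma$ is a quasi-geodesic, and the monotonicity of the shadows $\pi_U(\gamma)$ --- collapses. Two further points are deferred rather than handled: the realization theorem and distance formula are themselves substantial results whose proofs you only indicate, and the consistency of the extended tuple (irrelevant coordinates pinned at $\pi_U(x)$) also requires an argument using the Behrstock-type inequalities, since a relevant $\gamma_V$ can a priori wander far from $\rho^U_V$ for an irrelevant transverse $U$. So the outline identifies the right tools, but the theorem's actual content is left unproved.
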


Let \(\X\) be a \(\delta\)--hyperbolic HHS and let \(\gamma_1, \gamma_2\) be two quasi-geodesic rays of \(\X\). We say that \(\gamma_1\) and \(\gamma_2\) are equivalent if their Hausdorff distance is finite. Let \(D\) be such that any two points of \(\X\) can be joined by a \(D\)--hierarchy path.
Since \(\X\) is hyperbolic, there is a constant \(H_D\) such that given two \(D\)--quasi-geodesic segments that share the same endpoints, their Hausdorff distance is at most \(H_D\). 
If \(\gamma, \eta\) are quasi geodesic rays that represent the same point in the Gromov boundary, then there is \(n= n(d(\gamma(0), \eta(0))\) such that the Hausdorff distance between \(\gamma\) and \(\eta\) outside the ball of radius \(n\) around \(\gamma(0)\) is at most \(H_D\). 
Let \(k = D+H_D\), and let \(H_k\) be such that if the Hausdorff distance between 2 \(k\)--hierarchy ray is finite, then it is at most \(H_k\).
We can identify the Gromov boundary with the set of equivalence classes of \(k\)--hierarchy rays. The reason why we choose \(k\) to be larger than \(D\) is that in this way we are allowed to "perturb hierarchy rays". 

Given a point \(x \in \X\) and \(p \in \partial_\infty \X\), there is always a \(D\)--hierarchy ray \(\gamma \in p\) such that \(\gamma(0) = x\). In that case we say that \(\gamma\) connects \(x\) and \(p\).

Let \(\overline{\X}_\infty\) be the union \(\partial_\infty \X \cup \X\). There is a natural topology on \(\overline{\X}_\infty\) such that \(\X\) is embedded in \(\overline{\X}_\infty\), and the latter is compact in case \(\X\) is proper. We will briefly recall how the topology of \(\overline{\X}_\infty\) is defined via prescribing a neighborhood base for each point. If \(x \in \X\), then we consider the standard base obtained by the metric of \(\X\). For a point at infinity, we recall the following lemma, that can be taken as a definition.

\begin{lemma}[Neighborhoods at infinity \cite{BridsonHaefliger} Chapter III.H: Theorem 1.7, Lemma 3.6]\label{lem:nbhd_point_at_infnity}
Let \(X\) be a \(\delta\)--hyperbolic space and \(x_0\) be a point of \(X\). Let \(r > 2 H_k\). Let \(\eta\) be a \(k\)--quasi-geodesic ray representing a point \(p \in \partial_\infty X\) and for each positive integer \(n\) let \(V_n(\eta)\) be the set of points \(q \in \overline{X}_\infty\) such that for all \(k\)--quasi-geodesic rays \(\gamma\) connecting \(x_0\) and \(q\) there is a point \(x\) on \(\eta\) such that \(d(x, \eta(0) ) \geq n\) and \(\gamma \cap B_r(x) \neq \emptyset\). Then \(\{V_n(\eta) \mid  n \in\NN\}\) is a fundamental system of neighborhoods for \(p\) in \(\partial_\infty X \cup X\). 
\end{lemma}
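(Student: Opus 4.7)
The plan is to compare the collection $\{V_n(\eta)\}_{n \in \NN}$ with the standard neighborhood base of $p$ in $\overline{X}_\infty$ and show that each refines the other. Take as standard base the sets
\[U_m(p) = \{q \in \overline{X}_\infty \mid (q \cdot p)_{x_0} \geq m\},\]
with the Gromov product extended to $\partial_\infty X$ as a $\liminf$ over representing sequences. The claim then reduces to two inclusions: (a) for each $n$, some $U_{m(n)}(p) \subseteq V_n(\eta)$; and (b) for each $m$, some $V_{n(m)}(\eta) \subseteq U_m(p)$.

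For (a), given $q$ with $(q \cdot p)_{x_0} \geq m$, pick any $k$-quasi-geodesic ray $\gamma$ from $x_0$ to $q$ and any $k$-quasi-geodesic ray $\eta'$ from $x_0$ to $p$. Thin triangles together with the Morse Lemma guarantee that $\gamma$ and $\eta'$ fellow-travel at distance at most $H_k$ up to height roughly $m$. Since $\eta$ and $\eta'$ represent the same boundary point, they are at Hausdorff distance at most $H_k$ outside a ball around $\eta(0)$ of radius depending on $d(x_0,\eta(0))$. Stacking these two estimates, $\gamma$ enters $B_{2H_k}(x) \subseteq B_r(x)$ (using $r > 2H_k$) for some $x$ on $\eta$ at height at least $n$, provided $m$ exceeds $n + d(x_0,\eta(0))$ by a constant depending only on $\delta$ and $k$. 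For (b), if $q \in V_n(\eta)$ then every $k$-quasi-geodesic ray $\gamma$ from $x_0$ to $q$ meets $B_r(x)$ at some $x$ on $\eta$ with $d(x,\eta(0)) \geq n$, so $\gamma$ and $\eta$ track each other up to height roughly $n-r$; a standard computation with the Gromov product in a $\delta$-hyperbolic space then yields $(q \cdot p)_{x_0} \geq n - r - C(\delta,k) - d(x_0,\eta(0))$, so $n(m) := m + r + C + d(x_0,\eta(0))$ works.

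Finally, to conclude that $\{V_n(\eta)\}$ is a neighborhood base rather than merely a filter base, I would verify that each $V_n(\eta)$ is open in $\overline{X}_\infty$: for $q \in V_n(\eta) \cap X$ this is immediate from Hausdorff-stability of $k$-quasi-geodesics with fixed initial point and a slightly perturbed terminal point, and for $q \in V_n(\eta) \cap \partial_\infty X$ it follows from Morse stability of $k$-quasi-geodesic rays based at $x_0$ whose boundary values are close to $q$. The main technical obstacle will be bookkeeping the additive constants so that the hypothesis $r > 2H_k$ is exactly what makes the argument go through uniformly: the factor of $2$ accommodates both Morse slippages appearing in the proof of (a), the one between $\gamma$ and $\eta'$ and the one between $\eta'$ and the chosen reference ray $\eta$, which is also what ensures that the resulting neighborhood base does not depend on the particular representative $\eta$ of $p$.
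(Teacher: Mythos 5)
The paper does not actually prove this lemma: it is quoted as background from Bridson--Haefliger (Chapter III.H, Theorem 1.7 and Lemma 3.6), and the surrounding text even remarks that it ``can be taken as a definition'', so there is no in-paper argument to compare against. Your sketch is the standard proof of this standard fact: compare \(\{V_n(\eta)\}_n\) with the Gromov-product basis \(U_m(p)\) and establish mutual cofinality using thin triangles, the Morse lemma, and the fact (also used in the paper's own conventions) that two \(k\)--quasi-geodesic rays asymptotic to the same boundary point are uniformly Hausdorff-close outside a ball whose radius depends on the distance between their initial points. Both inclusions (a) and (b) are correct in outline, up to the additive \(O(\delta,k)\) bookkeeping you acknowledge; in (b) note that you only need one ray from \(x_0\) to \(q\) to bound \((q\cdot p)_{x_0}\), and the ``for all rays'' quantifier is what you must verify in (a), which you do.

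Two small corrections. First, the concluding paragraph about openness of the sets \(V_n(\eta)\) is unnecessary, and the claim itself is dubious (the universal quantifier over rays does not obviously produce open sets): a fundamental system of neighbourhoods only requires each \(V_n(\eta)\) to be a neighbourhood of \(p\) and the family to be cofinal in the neighbourhood filter, and both follow already from your inclusions (a) and (b) together with the fact that \(\{U_m(p)\}_m\) is a fundamental system; so you should simply delete that step rather than attempt it. Second, the fellow-travelling constant between \(\gamma\) and a ray \(\eta'\) from \(x_0\) to \(p\) up to height roughly \(m\) is a Morse/thin-triangle constant depending on \(\delta\) and \(k\), not literally the constant \(H_k\) as the paper defines it (a bound on the Hausdorff distance of asymptotic \(k\)--rays), so the assertion that \(\gamma\) enters \(B_{2H_k}(x)\) on the nose needs the extra \(O(\delta)\) slack; the hypothesis \(r>2H_k\) should be read as ``\(r\) dominates the relevant stability constants'', which is how the cited references handle it.
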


\subsection{The HHS-boundary of a hyperbolic space} 

We recall the definition of HHS-boundary of a space introduced in \cite{HHSBoundaries} and some important results.

\begin{Def}[Support set, boundary point, HHS-boundary; \cite{HHSBoundaries}]\label{def:HHS-boundary}
Let \((\X, \S)\) be a hierarchically hyperbolic spaces. A \emph{support set} \(\overline{U} \subseteq \S\) is a set with \(U_i \perp U_j\) for all distinct \(U_i, U_j\in \overline{U}\). A \emph{boundary point} with support \(\overline{U}\) is a formal sum
\[p= \sum_{U \in \overline{U}} a_{U} p_{U},\] where \(p_U \in \partial_\infty CU\) and \(a_{U} > 0\), with the requirement that \(\sum_{\overline{U}} a_U=1\).
The \emph{HHS-boundary} \(\partial \X\) of \((\X,\S)\) is the set of all boundary points.
\end{Def}

It is a well-known fact that, in a hierarchically hyperbolic space \((\X, \S)\), each family of pairwise orthogonal elements has uniformly bounded size. See for instance \cite[Lemma 1.4]{HHSBoundaries}. In particular, the boundary points consist of uniformly finite sums.

\begin{thm}[\cite{HHSBoundaries}]\label{thm:Boundaries}
There is a topology on \(\partial \X\) such that the following holds:
\begin{itemize}
\item For each \(U \in \S\), the inclusion \( \partial_\infty CU \hookrightarrow \partial \X\) is an embedding.
\item The boundary \(\partial\X\) is closed in \(\X\).
\item The space \(\overline{\X} = \partial \X \cup \X\) is 
	\begin{rules}
	\item Hausdorff,
	\item separable, in case \(\X\) is separable,
	\item compact, in case \(\X\) is proper. 
	\end{rules}
\end{itemize}
\end{thm}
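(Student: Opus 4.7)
The plan is to construct the topology on \(\overline{\X} = \X \cup \partial \X\) by specifying a neighborhood basis at each point and then check the four asserted properties in turn. At an interior point \(x \in \X\) take metric balls. At a boundary point \(p = \sum_{U \in \overline{U}} a_U p_U\) (recall \(\overline{U}\) is finite because the number of pairwise orthogonal elements of \(\S\) is uniformly bounded), declare a basic neighborhood \(\mathcal{N}_{\epsilon, R}(p)\) to be the set of \(y \in \overline{\X}\) such that for every \(U \in \overline{U}\) the projection \(\pi_U(y)\) enters a Gromov-style neighborhood of \(p_U\) in \(\overline{CU}\) of the form from Lemma \ref{lem:nbhd_point_at_infnity}, and moreover, if \(y\) is itself a boundary point with support \(\overline{V}\), then each \(U \in \overline{U}\) has a nesting-comparable counterpart in \(\overline{V}\) with coefficient within \(\epsilon\). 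Consistency with the \(\rho^U_W\) constants for \(W\) transverse to or nesting above the support gives uniform control on those auxiliary projections as well.

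The first task is to verify the neighborhood axioms so the \(\mathcal{N}_{\epsilon, R}(p)\) really do form a basis; intersecting two such neighborhoods is handled by the triangle inequality in each hyperbolic \(\overline{CU}\) combined with the consistency and large-link axioms, which transfer bounds across \(\S\). The embedding statement \(\partial_\infty CU \hookrightarrow \partial \X\) is then direct: points of \(\partial_\infty CU\) are exactly boundary points with singleton support \(\{U\}\), and on these the restricted topology coincides with the Gromov topology of \(CU\). Closedness of \(\partial \X\) in \(\overline{\X}\) is immediate from the metric ball at any interior point. For Hausdorffness, distinct boundary points are separated case-by-case: if the supports differ, pick a \(U\) in one but not orthogonally represented in the other and separate via \(\partial_\infty CU\); if they share a support, then some \(p_U \neq q_U\) is separated inside \(\overline{CU}\) and the separation is pulled back to \(\overline{\X}\). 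An interior \(x\) and a boundary \(p\) are separated by a metric ball around \(x\) versus a deep enough \(\mathcal{N}_{\epsilon, R}(p)\).

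The hard part is compactness when \(\X\) is proper. In that case each \(CU\) is proper and hyperbolic, hence each \(\overline{CU}\) is compact. Given a sequence \((y_n) \subset \overline{\X}\), one extracts, by a diagonal argument across the countably many \(U \in \S\), a subsequence whose projections converge in every \(\overline{CU}\). The delicate step is to assemble the limit data into a valid boundary point: the set \(\overline{W} \subseteq \S\) of indices where the limit lies in \(\partial_\infty CU\) must be pairwise orthogonal and finite. Pairwise orthogonality follows from a Behrstock-style consistency inequality, since if \(U \pitchfork V\) or one is nested in the other then the projections to at most one of \(CU, CV\) can diverge, so two limits at infinity force \(U \perp V\). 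Finiteness is then the bounded orthogonal dimension. The coefficients \(a_U\) are read off as normalized contributions of the \(CU\)-legs of a hierarchy path from a basepoint to \(y_n\), using Theorem \ref{thm: Existence of hierarchy paths}. Separability, when \(\X\) is separable, is obtained by combining countable dense subsets of each \(\overline{CU}\) with rational coefficients supported on finitely many pairwise orthogonal indices.
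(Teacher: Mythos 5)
First, a structural point: the paper does not prove this statement at all --- Theorem \ref{thm:Boundaries} is imported verbatim from \cite{HHSBoundaries} --- so what you have written is a sketch of the Durham--Hagen--Sisto construction itself, and as such it has genuine gaps. The most serious one is the compactness argument. You extract a subsequence ``by a diagonal argument across the countably many \(U\in\S\)'', but nothing in Definition \ref{defn:space_with_distance_formula} makes \(\S\) countable (the index sets built in Sections \ref{sec: Obtaining a factor system} and \ref{section:Groups with quasiconvex} are already infinite families of cosets, and in general \(\S\) may be uncountable); and even granting sequential compactness, compactness of \(\overline{\X}\) does not follow without some countability or metrizability input, which you never establish. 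Worse, the heart of the matter is missing: after assembling the candidate limit \(p=\sum_U a_U p_U\), you never verify that the subsequence actually converges to \(p\) in the topology you defined. That verification is the hard part of the argument in \cite{HHSBoundaries}: one must control the projections of the sequence to all domains \emph{outside} the limiting support (using consistency, bounded geodesic image, large links and a passing-up type argument), show that the ratios defining the coefficients \(a_U\) converge along a further subsequence and are independent of the choices made, and check the neighborhood conditions at every scale. Saying the coefficients are ``read off as normalized contributions of the \(CU\)-legs of a hierarchy path'' does not carry out any of this, and your orthogonality claim for the limiting support, while correct in spirit, is only the easy first step.

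There are also two smaller but real defects. Your neighborhood basis constrains only the projections to the support of \(p\) (plus a vague coefficient condition); the topology of \cite{HHSBoundaries} --- and its hyperbolic special case recalled in Definition \ref{def:topology_HHS_bdry} --- also imposes conditions at domains \emph{not} in the support via the \(\rho\)--maps, and without those conditions neither Hausdorffness nor the claim that \(\partial_\infty CU\hookrightarrow\partial\X\) is an embedding can be verified as stated (two boundary points with disjoint, non-orthogonal supports are separated precisely through such remote-domain conditions). Finally, your separability argument fails for the same countability reason: unioning countable dense subsets of each \(\overline{CU}\) over a possibly uncountable \(\S\) does not yield a countable dense set; the correct route is simply that \(\X\) is dense in \(\overline{\X}\), so a countable dense subset of \(\X\) suffices.
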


There is an explicit description for the topology of the HHS-boundary of a general hierarchically hyperbolic space, and we refer to \cite{HHSBoundaries} for the precise definition. 

In the case of hyperbolic HHS, however, the HHS-boundary and its topology admit a significantly simpler description. This is because, under the mild assumption that  for all \(U \in \S\), \(CU\) has infinite diameter, there are no pairwise orthogonal elements (see \cite[Lemma 4.1]{HHSBoundaries}, or \cite[Subsection 5.1]{HHSII}). 
In this case, Definition \ref{def:HHS-boundary} translates as:
\[\partial\X = \bigcup_{U \in \S} \partial_\infty CU.\]
We will describe the topology of \(\partial \X\) via prescribing neighborhood basis at each point.

\begin{Def}[Boundary projections, \cite{HHSBoundaries}]\label{def:boundary_projections}
Let \((\X, \S)\) be a hyperbolic HHS and let \(U \in \S\). Let \(x\) be a point of \(\overline{\X}\). We define the projection \(\partial \pi_U \colon \overline{\X} \rightarrow \partial_\infty CU \cup CU\) as:
\begin{enumerate}
\item The projection \(\pi_U\) if \(x \in \X\).
\item The identity map if \(x \in \partial_\infty CU\).
\item The set \(\rho^V_U\) if \(x \in \partial_\infty CV\) and either \(V\) is properly nested in \(U\) or \(V \pitchfork U\).
\item The coarse map \(\rho^{\partial V}_U \colon \partial CV \rightarrow 2^{CU}\) defined as follows in all the other cases. Consider a \(D\)--hierarchy ray \(\gamma\) connecting \(\rho^U_V\) with \(q \in \partial CV\). 
Let \(E\) be the constant of the bounded geodesic image property.  We set \(\rho^{\partial V}_U (q)= \rho^V_U\left(\gamma - \left(N_{E+1} (\rho^U_V)\right)\right)\), which, by the bounded geodesic image property, has uniformly bounded diameter. 
\end{enumerate}
\end{Def}

\begin{Def}[Neighborhood basis for the topology, \cite{HHSBoundaries}]\label{def:topology_HHS_bdry}
Let \(p\) be a point in \(\partial_\infty CU\), and let \(O_p\) be an open neighborhood for \(p\) in the cone-topology for \(CU \cup \partial_\infty CU\). We define \(N_{O_p}(p) \subseteq \partial \X\) to be the set:

\[N_{O_p} (p) = \{ q\in \partial_\infty \X \mid  {\partial}\pi_U(q) \cap O_p \neq \emptyset\}.\]

\end{Def}

We declare the sets \(N_{O_p} (p)\) to be a neighborhood basis at \(p\), and this topology coincides (indeed, it is just a special case of the definition) with the topology of the HHS-boundary.

However, in this case much more it is true:

\begin{thm}[\cite{HHSBoundaries}, Theorem 4.3]\label{thm:HHS_Gromov_boundary}
Let \((\X, \S)\) be a hyperbolic HHS and let \(\overline{\X}_\infty\) be the union \(\partial_\infty \X \cup \X\). 
Then the identity map \(\X \rightarrow \X\) extends uniquely to a homeomorphism \(\overline{\X}_\infty \rightarrow \overline{\X}\).
\end{thm}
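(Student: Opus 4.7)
The plan is to build the extension $\Phi\colon \overline{\X}_\infty \to \overline{\X}$ of the identity on $\X$ using hierarchy rays, verify it is a bijection, and then establish bi-continuity; uniqueness of the extension then follows from Hausdorffness of both spaces together with density of $\X$. Fix a basepoint $x_0 \in \X$. Given $q \in \partial_\infty \X$, I would produce a $D_0$-hierarchy ray $\gamma$ from $x_0$ representing $q$ by taking a subsequential limit of $D_0$-hierarchy paths from $x_0$ to the points of a quasi-geodesic ray representing $q$ (Theorem \ref{thm: Existence of hierarchy paths} combined with hyperbolicity of $\X$). For each $U\in\S$, the projection $\pi_U(\gamma)$ is an unparametrized $(D_0,D_0)$-quasi-geodesic in the hyperbolic space $CU$, and the Uniqueness axiom (\ref{item:dfs_uniqueness}) applied to $\gamma(0)$ and $\gamma(t)$ as $t\to\infty$ forces the existence of some $U$ with $\mathrm{diam}_{CU}(\pi_U(\gamma))=\infty$. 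Define $\Phi(q)$ to be the resulting Gromov point $p_U\in\partial_\infty CU \subseteq \partial\X$.

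Well-definedness reduces to two checks: independence of the choice of $\gamma$, which follows because two hierarchy rays with the same Gromov endpoint are $\X$-Hausdorff-close and $\pi_U$ is coarsely Lipschitz (\ref{item:dfs_curve_complexes}); and independence of the choice of $U$, modulo the identifications between boundary points of different $CU$ built into Definition \ref{def:HHS-boundary}, which follows from the consistency axiom together with the bounded geodesic image property (\ref{item:dfs:bounded_geodesic_image}) applied to the relevant $\rho^U_V$. Surjectivity comes from the Partial Realization axiom (\ref{item:dfs_partial_realization}): for $p\in\partial_\infty CU$, pick a sequence $r_n\in CU$ escaping toward $p$ along a geodesic ray, apply partial realization to obtain $x_n\in\X$ with $\pi_U(x_n)$ uniformly close to $r_n$ and with controlled behavior of all other projections, then concatenate hierarchy paths $x_0\to x_1\to x_2\to\cdots$; this yields a quasi-geodesic ray (using the Large Link Lemma \ref{item:dfs_large_link_lemma} to rule out backtracking in the other factors) whose Gromov limit maps to $p$. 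Injectivity is immediate: if $\Phi(q)=\Phi(q')$ then hierarchy rays to $q$ and $q'$ have uniformly close projections in every $CV$, so by Uniqueness they are $\X$-Hausdorff-close, giving $q=q'$.

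The hard part is bi-continuity, because the Gromov-topology neighborhoods $V_n(\eta)$ of Lemma \ref{lem:nbhd_point_at_infnity} are described purely in terms of the intrinsic geometry of $\X$, while the HHS-topology neighborhoods $N_{O_p}(p)$ of Definition \ref{def:topology_HHS_bdry} are phrased through the boundary projections $\partial\pi_U$ into a single hyperbolic factor. For continuity of $\Phi$, if $q_n\to q$ in $\partial_\infty\X$ then representative hierarchy rays for $q_n$ and $q$ share arbitrarily long initial fellow-travelers in $\X$, and coarse Lipschitzness of $\pi_U$ transports this to cone-topology convergence $\partial\pi_U(q_n)\to\partial\pi_U(q)$ in $CU$. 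For continuity of $\Phi^{-1}$, given a cone-topology neighborhood $O_p$ of $p\in\partial_\infty CU$, the Bounded Geodesic Image property and the Large Link Lemma force any hierarchy ray whose $\pi_U$-projection enters a sufficiently small $O_p$ to fellow-travel with a fixed hierarchy ray to $\Phi^{-1}(p)$ for a prescribed initial segment, producing the required Gromov-topology neighborhood. When $\X$ is proper both boundaries are compact Hausdorff, so bijectivity plus continuity in one direction suffices; in the general case both directions must be verified by hand, and this is where the bulk of the technical work lies.
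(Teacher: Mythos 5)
First, note that the paper itself does not prove this statement: it is quoted verbatim from \cite{HHSBoundaries} (Theorem 4.3) and used as a black box, so there is no internal argument to compare yours against; what you have written is an attempt to reprove that external result. As a proof it has a genuine gap at the very first substantive step. You assert that the Uniqueness axiom, applied to \(\gamma(0)\) and \(\gamma(t)\) as \(t\to\infty\), forces a single \(U\in\S\) with \(\mathrm{diam}_{CU}(\pi_U(\gamma))=\infty\). Uniqueness only produces, for each pair of far-apart points, \emph{some} domain with large projection, and that domain may change with \(t\); nothing in the axiom by itself prevents a hierarchy ray from distributing its growth over infinitely many domains, each with bounded (but not uniformly bounded) projection. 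Excluding this behaviour is precisely the crux of the theorem and requires the hyperbolicity of \(\X\) together with Large Links, Bounded Geodesic Image and an induction on complexity; until it is done, your map \(\Phi\) is not even defined. A second foundational problem is the construction of the hierarchy ray itself as a ``subsequential limit'' of hierarchy paths: that limit needs properness (an Arzel\`a--Ascoli argument), which the statement does not assume, and you yourself flag the non-proper case as where the work lies --- but there the limiting step already breaks down, so one must work with sequences rather than limit rays.

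Beyond this, several steps presented as immediate are really the content of the argument. For injectivity, \(\Phi(q)=\Phi(q')\) does not immediately give that rays representing \(q\) and \(q'\) have uniformly close projections in \emph{every} \(CV\) (this needs consistency/BGI), and passing from ``close projections in all domains'' to Hausdorff-closeness of the rays requires pairing up points via a realization-type or distance-formula argument, not just a pointwise appeal to Uniqueness. Similarly, surjectivity via Partial Realization needs a verification that the concatenated path is a uniform quasi-geodesic ray (equivalently that the Gromov products of the \(x_n\) diverge), and both continuity directions hinge on the unproved assertion that once a ray stops fellow-travelling \(\gamma\), its projection to \(CU\) essentially stops moving --- a BGI/Large Links argument that is only gestured at. So the proposal is a reasonable plan in the spirit of the cited proof of Durham--Hagen--Sisto, but as written it has at least one step that does not follow as claimed and leaves the technical heart of the theorem unaddressed.
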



\subsection{Explicit description when the HHS structure comes from a factor system}

If \((\X, \S)\) is the hierarchically hyperbolic space structure obtained applying Theorem \ref{thm:main result for factor system}, we have a very explicit description of the projections \(\pi_U\) and the spaces \(\rho^U_V\). This allows us to give a more explicit description of the HHS boundary, and thus of the Gromov boundary.

Unraveling the construction of Theorem \ref{thm:main result for factor system}, we have that if \(\F\) is a factor system for \(\X\), then the elements of the set \(\S\) are in bijection with \(\F \cup \{\X\}\). In particular, each index \(U \in \S\) is naturally associated to a subspace \(F_U \in \F \cup \{\X\}\) of \(\X\) and the associated \(\delta\)--hyperbolic space \(CU\) is an appropriate cone-off of \(F_U\).
We also recall that the maps \(\pi_U\) and \(\rho^V_U\) are defined as closest point projection in \(\X\) on the subspace \(F_U\). Indeed, this defines a map with image in \(2^{CU}\) since \(CU\) is the cone-off of \(F_U\).

Since \(\partial \X = \bigcup_{U \in \S} \partial_\infty CU\), each point \(p \in \partial \X\) corresponds to an equivalence class of hierarchy rays in some \(CU\) and viceversa. 
Given a point \(q \in \partial_\infty CV\), and \(U \in\S\), we observe that the projection \(p_{F_U}(q)\) is coarsely well defined, in the sense that for each pair of representatives \(\gamma, \eta\) of \(q\), there is \(N\) such that,  for each \(n > N\), we have \(d_{\mathrm{Haus}}\left(p_{F_U}(\gamma[n, \infty)), p_{F_U}\big(\eta([n, \infty))\big)\right)\leq c\) for some uniform \(c\). 

It is easy to see that for each point \(p \in \partial_\infty CU\), the Hausdorff distance between \(p_{F_V}(p)\) and \(\partial \pi_V (p)\) is uniformly bounded, where \(\partial \pi_V (x)\) is as in Definition \ref{def:boundary_projections}.

We claim that substituting the projections \(\partial\pi_V\) with the shortest distance projections \(p_{F_V}\) in Definition \ref{def:topology_HHS_bdry} does not change the resulting topology. 
This will allow to give a non trivial decomposition of the Gromov boundary as the union of the boundaries of the various \(CU\), with \(U \in \S\). 

For brevity, given \(q \in \partial_\infty CV\), \(U \in \S\) and a set \(O \in CU\), we say that \(p_{F_U}(q) \cap O \neq \emptyset\) if there exists a representative \(\gamma \in q\) and \(N \in \NN\) such that for each \(n > N\), the projection \(p_{F_U} (\gamma([n, \infty))\) intersects \(O\). 
\begin{Def}[Alternative neighbourhood basis for the topology]\label{def:topology_bdry2}
Let \(p\) be a point in \(\partial_\infty CU\), and let \(O_p\) be an open neighborhood for \(p\) in the cone-topology for \(CU \cup \partial_\infty CU\). We define \(M_{O_p}(p) \subseteq \partial \X\) to be the set:

\[M_{O_p} (p) = \{  q\in \partial_\infty \X \mid   p_{F_U}(q) \cap O_p \neq \emptyset\}.\]
\end{Def}

The main result of this section is the following. 
\begin{thm}\label{thm:main_result_bdry}
Let \(X\) be a hyperbolic space and let \(\F\) be a factor system for \(X\). For each element \(U \in \F \cup \{X\}\) let \(CU\) be the cone-off of \(U\) with respect to all the elements \(V \in \F\) that are strictly contained in \(U\). Then the Gromov boundary of \(X\) decomposes as 
\[\partial_\infty X = \bigcup_{U \in \F \cup \{X\}} \partial_\infty CU,\]
with the topology described in Definition \ref{def:topology_bdry2}.
\end{thm}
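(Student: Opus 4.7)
The plan is to reduce the theorem to the HHS boundary theory of \cite{HHSBoundaries} and then upgrade the topological description to the intrinsic one in Definition \ref{def:topology_bdry2}. By Theorem \ref{thm:main result for factor system}, the factor system \(\F\) equips \(X\) with an HHS structure indexed by \(\F \cup \{X\}\). Since \(X\) is hyperbolic, discarding bounded factors (whose Gromov boundaries are empty) I may assume every \(CU\) has infinite diameter; as remarked just before Definition \ref{def:boundary_projections}, this forces the orthogonality relation to be empty, so Definition \ref{def:HHS-boundary} collapses to \(\partial X = \bigcup_{U \in \F \cup \{X\}} \partial_\infty CU\). Theorem \ref{thm:HHS_Gromov_boundary} identifies this HHS boundary with the Gromov boundary \(\partial_\infty X\), both as a set and topologically with respect to the basis \(\{N_{O_p}(p)\}\) of Definition \ref{def:topology_HHS_bdry}. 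What remains is purely topological: to verify that the basis \(\{M_{O_p}(p)\}\) of Definition \ref{def:topology_bdry2} generates the same topology.

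Equivalence of the two bases follows from the key estimate: there is a constant \(\kappa\), depending only on \(\F\) and the hyperbolicity constant of \(X\), such that for every \(V \in \F \cup \{X\}\) and every \(q \in \overline{X}_\infty\), the Hausdorff distance inside \(CV\) between \(\partial \pi_V(q)\) and \(p_{F_V}(q)\) is at most \(\kappa\) (where \(p_{F_V}(q)\) is understood as the coarse limit of \(p_{F_V}(\gamma([n,\infty)))\) over hierarchy rays \(\gamma\) representing \(q\), which is well defined by the discussion preceding Definition \ref{def:topology_bdry2}). Given this bound, any cone-topology neighborhood \(O_p \subseteq CU \cup \partial_\infty CU\) can be shrunk to a smaller \(O_p'\) whose \(\kappa\)-neighborhood lies in \(O_p\); then \(M_{O_p'}(p) \subseteq N_{O_p}(p)\) and symmetrically \(N_{O_p'}(p) \subseteq M_{O_p}(p)\), yielding mutual refinement and therefore topological equivalence.

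The uniform estimate is verified by inspecting the four cases of Definition \ref{def:boundary_projections}. When \(q \in X\) the projections coincide by construction of \(\pi_V\); when \(q \in \partial_\infty CV\) both reduce to \(q\). For \(q \in \partial_\infty CU\) with \(U\) properly nested in \(V\) or \(U \pitchfork V\), a hierarchy ray representing \(q\) de-electrifies to a quasi-geodesic that remains uniformly close to \(F_U\) by Proposition \ref{Bound Hausdorff distance}; then Lemma \ref{projection q.Lips} together with Corollary \ref{nbhd inclusion of proj} force its \(F_V\)-projection to lie uniformly close to \(p_{F_V}(F_U)\), which is exactly \(\rho^U_V\).

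The main obstacle is the last case, where \(V\) is properly nested in \(U\) and \(q \in \partial_\infty CU\), so \(\partial \pi_V(q) = \rho^{\partial U}_V(q)\) is defined via a hierarchy ray in \(CU\) and the bounded geodesic image axiom. My approach here is: fix a \(D_0\)-hierarchy ray \(\gamma \subseteq CU\) with \(\gamma(0)\) near \(\rho^V_U\) representing \(q\), and consider its total de-electrification \(\widetilde\gamma\) inside \(F_U\), which by Proposition \ref{Bound Hausdorff distance} is a genuine quasi-geodesic of \(F_U\). Apply Proposition \ref{bounded geodesic image} inside \(\widehat{F_U}\) to the tails \(\gamma([n,\infty))\), combined with the quasi-Lipschitz property of projections (Lemma \ref{projection q.Lips}) and Proposition \ref{second B. inequality}, to deduce that \(p_{F_V}(\widetilde\gamma([n,\infty)))\) eventually enters and remains in the \(\kappa\)-neighborhood of \(\rho^{\partial U}_V(q)\) inside \(CV\). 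Finite complexity of \(\F\) ensures all constants can be chosen uniformly in \(U\) and \(V\), completing the estimate and hence the theorem.
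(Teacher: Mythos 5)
Your reduction is exactly the paper's: Theorem \ref{thm:main result for factor system} gives the HHS structure, \cite{HHSBoundaries} (Theorem \ref{thm:HHS_Gromov_boundary}, plus the absence of orthogonality) identifies \(\partial_\infty X\) with \(\bigcup_U \partial_\infty CU\) carrying the topology of Definition \ref{def:topology_HHS_bdry}, and the whole content is Proposition \ref{prop:the_two_topologies_coincide}. Where you diverge is in how you prove that proposition: you package everything into one uniform estimate \(d_{\mathrm{Haus}}(\partial\pi_V(q),p_{F_V}(q))\leq\kappa\) and then run a symmetric ``shrink \(O_p\) so that its \(\kappa\)--neighborhood stays inside'' argument in both directions, whereas the paper argues asymmetrically: the inclusion \(M_{O_p}(p)\subseteq N_{O_p}(p)\) is proved with the \emph{same} \(O_p\) (no shrinking, just the identities \(\rho^V_U=p_{F_U}(F_V)\) in the transverse case and a hierarchy-ray perturbation in the nested case), and only the reverse inclusion uses a shrunk neighborhood, namely the explicit open set \(L\) built from the \(s\)--deep part of \(O_p\). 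Your packaging is cleaner, but be aware of two points. First, the clause ``any cone-topology neighborhood \(O_p\) can be shrunk to \(O_p'\) whose \(\kappa\)--neighborhood lies in \(O_p\)'' is not a formality: for neighborhoods of points at infinity this is precisely the openness of the paper's set \(L\), which is proved by the fellow-travelling argument with the basic neighborhoods \(V_n(\eta)\) of Lemma \ref{lem:nbhd_point_at_infnity}, and it is where most of the paper's proof of Proposition \ref{prop:the_two_topologies_coincide} is spent; your proposal should supply that argument rather than treat it as routine. Second, your uniform Hausdorff estimate is only literally meaningful when \(\partial\pi_V(q)\) is a bounded subset of \(CV\); in the case \(q\in\partial_\infty CU\) with \(V=U\) both ``projections'' are the boundary point itself, so the comparison there is of the membership conditions defining \(N_{O_p}\) and \(M_{O_p}\) (as in the paper, handled via basic neighborhoods), not of a Hausdorff distance. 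With these two points filled in — and your BGI/de-electrification treatment of the nested case is the right mechanism, matching the tools the paper uses in its own nested-case analysis — the argument goes through and yields the same statement.
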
 

The proof of Theorem \ref{thm:main_result_bdry} is an easy consequence of Theorem \ref{thm:HHS_Gromov_boundary} and the following proposition.

\begin{prop}\label{prop:the_two_topologies_coincide}
The topologies for \(\overline{\X}\) of Definition \ref{def:topology_HHS_bdry} and Definition  \ref{def:topology_bdry2} agree.
\end{prop}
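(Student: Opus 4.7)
The plan is to show that at each boundary point $p \in \partial_\infty CU$ the two neighborhood bases $\{N_{O_p}(p)\}$ and $\{M_{O_p}(p)\}$ are mutually cofinal, which immediately implies that they generate the same topology on $\overline{\X}$. The crucial input is the observation recorded just before the proposition: for every $q \in \overline{\X}$, the sets $p_{F_V}(q)$ and $\partial\pi_V(q)$, viewed inside $CV$ via the map $i_V$, lie within some uniform Hausdorff distance $c$, where $c$ depends only on the HHS constants (in particular the bounded-geodesic-image constant and the hyperbolicity constant of $CV$). When $q \in \X$ this is essentially tautological since $\pi_V = i_V \circ p_{F_V}$; when $q$ lies in $\partial_\infty CW$ one verifies it by choosing a representative ray and comparing its projection with the definitions of $\rho^W_V$ and $\rho^{\partial W}_V$.

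First I would reduce to the explicit basic neighborhoods $V_n(\eta)$ of Lemma \ref{lem:nbhd_point_at_infnity}, where $\eta$ is a fixed $k$-quasi-geodesic ray representing $p$. The Morse lemma, together with the definition of $V_n(\eta)$ as the set of endpoints $q$ of rays from $x_0$ that pass within $r$ of $\eta$ beyond the $n$-ball, gives a coarse monotonicity of the form $N_c(V_{n+c'}(\eta)) \subseteq V_n(\eta)$ for some $c' = c'(c,\delta,k)$. I would verify this by noting that if a $k$-quasi-geodesic from $x_0$ to some $q$ passes within $r$ of a point $\eta(t)$ with $t \geq n+c'$, then after perturbing by $c$ the perturbed ray still meets a ball of uniformly bounded radius around a point $\eta(t')$ with $t' \geq n$, up to tame adjustments of the constant $r$.

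With this monotonicity in hand the proof is immediate: if $q \in M_{V_{n+c'}(\eta)}(p)$, then $p_{F_U}(q)$ meets $V_{n+c'}(\eta)$, hence $\partial\pi_U(q)$ meets $N_c(V_{n+c'}(\eta)) \subseteq V_n(\eta)$, so $q \in N_{V_n(\eta)}(p)$. The reverse inclusion is proved symmetrically. Since the $V_n(\eta)$ are cofinal among open neighborhoods of $p$, the two bases are mutually cofinal, and the topologies coincide. The main obstacle I anticipate is the bookkeeping in the case where $q$ itself is a boundary point of some $\partial_\infty CW$: here $p_{F_U}(q)$ is interpreted via tails of representative rays, and one must check that the Hausdorff-distance bound between $p_{F_U}$ and $\partial\pi_U$ is uniform along the tail. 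This follows from the bounded geodesic image property (Lemma \ref{bounded geodesic image}) applied to the relevant hierarchy rays, together with Proposition \ref{Bound Hausdorff distance} to pass between hierarchy rays in $\widehat{\Gamma}$ and their de-electrifications in $\Gamma$; once this uniformity is established the argument above goes through without change.
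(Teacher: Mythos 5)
Your argument is correct in outline but organized quite differently from the paper's. The paper proves the two inclusions asymmetrically: for the inclusion of an \(M\)--neighborhood into a given \(N_{O_p}(p)\) it simply keeps \(O_p'=O_p\) and runs a case analysis on where \(q\) lives (containment of the tail projection in \(\rho^V_U\) when \(V\pitchfork U\), a perturbation of hierarchy rays, using \(k=D+H_D\), when \(U\sqsubseteq V\)); for the reverse inclusion it builds an explicit open set \(L\) sitting \(s\)--deep inside \(O_p\), proves that \(L\) is open in \(\overline{CU}_\infty\), and then uses the diameter bounds \(\mathrm{diam}(\rho^V_U)\leq\xi\) and the bounded geodesic image constant \(E\) to push the entire projection set into \(O_p\). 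You instead symmetrize both directions: reduce to the basic neighborhoods \(V_n(\eta)\), compare \(p_{F_U}(q)\) with \(\partial\pi_U(q)\) via the uniform Hausdorff bound recorded just before the proposition, and absorb the error through a coarse monotonicity \(N_c(V_{n+c'}(\eta))\subseteq V_n(\eta)\). This is a legitimate and cleaner route; quoting the Hausdorff comparison is allowed since the paper asserts it, and your appeal to bounded geodesic image (together with coarse well-definedness of tail projections) to make that comparison uniform along tails in the nested case is the right substitute for the paper's use of the \(E\)--diameter bound.

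One step needs more care than you give it: the monotonicity of the \(V_n(\eta)\). As defined in Lemma \ref{lem:nbhd_point_at_infnity}, \(V_n(\eta)\) uses a fixed radius \(r>2H_k\), so "tame adjustments of the constant \(r\)" are not available; after moving the endpoint by \(c\), the naive fellow-traveling argument only places the new ray within roughly \(r+c+H_k\) of \(\eta\), not within \(r\). To land back in the same \(r\)--ball one should use something like the standard Gromov-product sandwich for these sets (membership in \(V_m(\eta)\) is pinched between \((\,\cdot\,,p)_{x_0}\geq m\pm A\) for a constant \(A=A(\delta',k,r)\)), from which \(N_c(V_{n+c'}(\eta))\subseteq V_n(\eta)\) follows once \(c'\geq 2A+c+\delta'\); the same sandwich also handles the case \(q\in\partial_\infty CU\), where the Hausdorff comparison does not literally apply. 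This is precisely the kind of quasi-geodesic bookkeeping the paper spends on verifying that its shrunk set \(L\) is open, so the total work is comparable; your proof is not complete without that lemma, but the missing piece is standard hyperbolic-boundary technology rather than a flaw in the strategy.
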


Before proving Proposition \ref{prop:the_two_topologies_coincide}, we present one useful application. 
\begin{corollary}\label{cor:bdry_dec_for_gps}
Let \(G\) be a hyperbolic group and let \(\F\) be a finite family of quasiconvex subgroups of \(G\). Let \((G, \S)\) be the HHG structure on \(G\) induced by \(\F\) (Corollary \ref{cor: HHG structure on G}). Then 
\[\partial_\infty G = \bigcup_{U \in \S} \partial_{\infty}CU.\]
\end{corollary}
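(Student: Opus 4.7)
The plan is to chain together the two main results mentioned right before the statement: Corollary \ref{cor: HHG structure on G} gives us the hierarchically hyperbolic group structure \((G,\S)\), and Theorem \ref{thm:main_result_bdry} gives us the desired boundary decomposition, so the task is essentially to verify that the hypotheses of the latter are met by the HHG structure produced by the former and then invoke it.

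More concretely, I would first recall that by Corollary \ref{cor: HHG structure on G} the indexing set \(\S\) is (up to the equivalence relation \(\sim\) of finite Hausdorff distance) the set \(\F_{\mathrm{cos}}\) of left cosets of the enlarged family \(\F^{(M)}\) constructed from \(\F\) by iteratively taking conjugate intersections. Going through the proof of that corollary, one sees that this HHG structure is obtained by first checking (Theorem \ref{thm:wfs for groups}) that \(\F_{\mathrm{cos}}\) is a weak factor system for the Cayley graph of \(G\), and then applying Proposition \ref{prop: H factor system} to upgrade this weak factor system (on a suitable approximation graph, cf.\ Remark \ref{rmk:wfs_holds_for_q_geod}) to a genuine factor system, with the elements of \(\S\) corresponding to the subspaces \(\preg_V\) for \([V]\in\H(\F_{\mathrm{cos}})\).

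At this point, \(\mathrm{Cay}(G)\) is a \(\delta\)-hyperbolic graph equipped with a factor system, so Theorem \ref{thm:main_result_bdry} applies verbatim and yields
\[\partial_\infty \mathrm{Cay}(G)=\bigcup_{U\in\S}\partial_\infty CU,\]
with the topology of Definition \ref{def:topology_bdry2}. Since \(G\) and \(\mathrm{Cay}(G)\) are quasi-isometric and the Gromov boundary is a quasi-isometry invariant of hyperbolic spaces, \(\partial_\infty G\cong \partial_\infty\mathrm{Cay}(G)\), giving the desired decomposition.

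The only non-trivial bookkeeping point, which I would write out carefully, is to justify that ``the \(CU\) in Theorem \ref{thm:main_result_bdry}'' matches ``the \(CU\) in the HHG structure of Corollary \ref{cor: HHG structure on G}'': both are defined as the cone-off of the representative subspace \(F_U\) with respect to all strictly-smaller elements of the factor system, and the equivalence relation \(\sim\) only collapses representatives with bounded Hausdorff distance, which does not affect the quasi-isometry type of the cone-off nor its Gromov boundary. With that identification, the decomposition transfers back to \(G\) as stated. I do not expect any serious obstacle here, since the genuinely hard work (existence of the factor system, control of the boundary projections, and verification that Definitions \ref{def:topology_HHS_bdry} and \ref{def:topology_bdry2} agree) has already been done in the preceding sections.
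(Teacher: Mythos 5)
Your proposal is correct and follows essentially the same route as the paper, which states this corollary as an immediate application of Theorem \ref{thm:main_result_bdry} to the factor system underlying the HHG structure of Corollary \ref{cor: HHG structure on G} (via Theorem \ref{thm:wfs for groups}, Proposition \ref{prop: H factor system} and quasi-isometry invariance of the boundary). The bookkeeping you flag about identifying the cone-offs \(CU\) across the two statements is exactly the point the paper leaves implicit, and your handling of it is fine.
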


\begin{proof}[Proof of Proposition \ref{prop:the_two_topologies_coincide}.]

We need to show that every neighborhood of one of the two basis contains a neighborhood of the other.

\textit{For all \(p\) and \(O_p\), there exist \(O_p'\) such that \(M_{O_p'} (p) \subseteq N_{O_p} (p)\).}

Set \(O_p' = O_p\) and let \(q \in M_{O_p}\). 
Suppose that \(p \in \partial_\infty CU\). If \(q\in \partial_\infty CU\), then the conclusion follows.
So suppose that \(q \in \partial_\infty CV\) for some \(V \neq U\). 
If \( V \pitchfork U\), then, since \(q \in M_{O_p}(p)\), we have that there is \(N\) and \(\gamma \in q\) such that for each \(n \geq N\), we have \(p_{F_U} (\gamma[n, \infty))\cap O_p\neq \emptyset\). In particular, \((p_{F_U}(F_V)= \rho^V_U) \cap O_p \neq \emptyset\), thus \(q \in N_{O_p}\).

So suppose that \(U \sqsubseteq V\), that is \(F_U \subseteq F_V\). Since \(q \in M_{O_p}(p)\), there is \(\gamma \in q\) and \(N\) such that for each \(n >N\) one has \(p_{F_U}(\gamma([n, \infty))) \cap O_p\neq \emptyset\). Let \(\eta\in q\) be a \(D\)--hierarchy ray connecting \(\rho^U_V\) with \(q\). There is \(n\) such that outside a ball of radius \(n\) around \(\eta(0)\), the Hausdorff distance between \(\gamma\) and \(\eta\) is at most \(H_D\). Thus there is a \((D+H_D)\)--hierarchy ray \(\eta'\in q\) such that \(\eta'(0) = \eta(0)\) and \(\eta'\) and \(\gamma\) coincide outside a ball around \(\eta'(0)\). In particular, this implies that \(\partial\pi_U(q) \cap O_p \neq \emptyset\), and hence that \(q\) is an element of \(\subseteq N_{O_p} (p)\).

\textit{For all \(p\) and \(O_p\), there exist \(O_p'\) such that \(N_{O_p'} (p) \subseteq M_{O_p} (p)\).}

We recall that there are constants \(\xi\) and \(E\) such that for each pair of transverse \(U\) and \(V\), one has \(\mathrm{diam}(\rho^U_V) \leq \xi\), and for each pair of nested \(U \sqsubseteq V\) and \(k\)--hierarchy path \(\gamma\) of \(CV\), one has that if \(\gamma \cap N_E(\rho^U_V) = \emptyset\), then \(\mathrm{diam} (\rho^V_U(\gamma) \leq E)\) (See Definition \ref{defn:space_with_distance_formula}).
Let \(s\) be much larger than \(\xi\) and \(E\), for instance \(s = 10 (2\xi + 2E + 20\delta)\). 

Note that \(O_p \cap CU\) is an open set in \(CU\), that we will still denote \(O_p\). 

Let \(\widehat{L}\) be the \(s\)--neighborhood in \(CU\) of the complementary of \(O_p\), that is  \(\widehat{L}= \bigcup_{x \in CU - O_p} B_s(x)\). Note that \(\widehat{L}\) is open in \(CU\).
We claim that \(L= \widehat{L} \cup \left(O_p \cap \partial_\infty CU\right)\) is an open neighborhood of \(p\) in \(\overline{CU}_\infty\). 
In order to show this, we will prove that \(L\) contains a neighborhood of each of its points. Let \(x\) in \(L\) and suppose that \(x \in CU\). 
Then, since \(\widehat{L}\) is open, there is an open neighborhood of \(x\) contained in \(\widehat{L}\) and thus in \(L\).
So let \(q\) be a point in \(L \cap \partial_\infty CU\). 
Since \(O_p\) is open, there is \(m\) and a representative \(\eta\) of \(q\) such that \(V_m(\eta)\subseteq O_p\), where \(V_m(\eta)\) is defined as in \ref{lem:nbhd_point_at_infnity}. 

We claim that there exists \(n\) large enough such that \(V_n(\eta) \subseteq L\), which implies the claim.
In what follows, we will often use the following fact. Let \(\alpha\) and \(\beta\) be \(k\)--quasi-geodesic starting at a point \(\beta(0)\). Suppose that there is a point \(b\) of \(\beta\) at distance more than \(n\) from \(\beta(0)\), such that \(d(b,\alpha) \leq s\). Then, inside a ball of radius approximately \(n-s\) around \(\beta(0)\), the Hausdorff distance between \(\alpha\) and \(\beta\) is at most \(H_k\).

Suppose that the claim does not hold, that is, suppose that there is \(y \in V_n(\eta)\) that does not belong to \(L\). 
Since \(L\) and \(O_p\) coincide on the boundary, we must have that \( y \in CU\). 
This means that there is a point \(z\in CU\) such that \(d(y,z) < s\) and \(z \not \in O_p\). In particular, \(z \not \in V_m(\eta)\).

Let \(\alpha\) be a  \(k\)--hierarchy path joining \(\eta(0)\) and \(y\), and \(\beta\) a \(k\)--hierarchy path joining \(\eta(0)\) and \(z\). 
Since \(y\) and \(z\) are \(s\)--near, \(\alpha\) and \(\beta\) are \(H_k\) near approximately inside a ball of radius \(d(\eta(0), y) - s\geq n-s\), since \(d(\eta(0), y) \geq n\). But, since \(y \in V_n (\eta)\), inside a ball of radius \(n- r\), \(\alpha\) is \(H_k\) near to \(\eta\). 
Choosing \(n\) large enough such that both \(n -s\) and \(n - r\) are much larger than \(m + r\), we get that in a ball of radius \(m +r \), \(\beta\) and \(\eta\) are \(2 H_k\) near. 
Thus \(z \in V_m(\eta)\), which is a contradiction.  

Thus \(L\) is an open set.
We claim that \(N_{L}(p) \subseteq M_{O_p}\). 
As before, let \(q \in N_{L}\) and suppose that \(q \in \partial_\infty CV\). If \(V = U\), the conclusion follows since \(L \subseteq O_p\). 
If \(V \pitchfork U\), then \(\rho^V_U \cap L \neq \emptyset\). Since \(L \subseteq O_p\) and \(\mathrm{diam}(\rho^V_U) \leq s\),  we have that \(\rho^V_U \subseteq O_p\). In particular, \(\pi_{F_U}(q) \subseteq O_p\) and thus \(q \in M_{O_p}\). 

Finally, suppose that \(U \sqsubseteq V\). The fact that \(q\) is an element of \(N_{L}\) implies that there is \(\gamma\in q\) with endpoint on \(\rho^U_V\) such that \(p_{F_U}\left(\gamma([E, \infty))\right) \cap L \neq \emptyset\). Since \(\mathrm{diam}\left(p_{F_U} \left(\gamma([E, \infty))\right)\right) \leq E\), we get that \(p_{F_U} \left(\gamma([E, \infty))\right)\subseteq O_p\). In particular, \(q \in M_{O_p}\).
\end{proof}

\subsection{Bowditch Boundary}

\begin{Def}
Let \(F\) be a connected graph. The \emph{combinatorial horoball} associated to \(F\) is the graph \(\Gamma (F)\) with vertices \(V(F) \times \NN\) and the following edges: 
\begin{rules}
\item For each \(v \in V(F)\) and \(n \in \NN\), there is an edge between \((v,n)\) and \((v, n+1)\).
\item For each pair of vertices \(v,w\in V(F)\) such that \(d_F(v,w) \leq 2^n\), there is an edge between \((v,n)\) and \((w,n)\).
\end{rules}
\end{Def}

It is easily seen that vertical rays in \(\Gamma(F)\) are  infinite geodesic rays, and it is not hard to see that they are the only ones. 
In particular, for each \(F\) the Gromov boundary of \(\Gamma(F)\) consists of a single point that we denote \(\ast_F\).

\begin{Def}
Let \(X\) be a hyperbolic geodesic space, and let \(\F\) be a family of subspaces of \(X\).
For each \(F \in \F\), let \(\Omega(F)\) be a connected approximation graph for \(F\), with constants chosen uniformly for all elements of \(\F\).
The \emph{Bowditch space} \(\Gamma(X, \F)\) (or simply \(\Gamma(X)\)) is defined as the space obtained from \(X\) attaching to it the combinatorial horoballs \(\Gamma(\Omega(F))\) under the identification \((x,0)\sim x\). 
\end{Def}

\begin{Def}
A space \(X\) is said to be \emph{hyperbolic relative to the family} \(\F\) if the Bowditch space \(\Gamma(X, \F)\) is Gromov hyperbolic.
\end{Def}

\begin{convention}
From now, let \(X\) be a proper geodesic space hyperbolic relative to a family \(\F\), where all the elements of \(\F\) have infinite diameter, and suppose that, in addition, \(X\) is hyperbolic.
\end{convention}
The request that the elements of \(\F\) to have infinite diameter is because, for an \(F\) of finite diameter, we would have  \(\partial F = \emptyset\), but \(\partial \Gamma(F) = \{\ast_F\}\). There are several ways to fix this, but they all boil down to considering only the elements of \(\F\) that have infinite diameter.

It is an easy consequence of \cite{SistoOnMetricRelative} that the family \(\F\) forms a factor system for \(X\) and, for any two different \(F_1, F_2 \in \F\), we have \(F_1 \not\precsim F_2 \not\precsim F_1\), where \(\precsim\) denote the coarse inclusion (Definition \ref{def:coarse_inclusion}).
It is not hard to see that this implies that \(\Gamma(\F) = \{\Gamma(F) \mid  F \in \F\}\) is a factor system for \(\Gamma(X)\). 
Note, moreover, that the cone-off of \(X\) with respect to \(\F\) is quasi-isometric to the cone-off of \(\Gamma(X)\) with respect to \(\Gamma(\F)\). 
Thus, we will identify the latter with \(CX\).

In particular, applying Theorem \ref{thm:main_result_bdry} we obtain the following decomposition of the Gromov boundary of \(\Gamma (X)\): 
\[\partial \Gamma (X) = \partial CX \cup_{F \in \F} \partial \Gamma (F) = \partial CX \cup_{F \in \F} \ast_F,\]
where the topology is as in Definition \ref{def:topology_HHS_bdry}. 
Let \(\partial_\F X\) be the quotient of \(\partial X = \partial CX \cup_{F \in F} \partial CF\) obtained collapsing each \(\partial CF\) to a point, equipped with the quotient topology. We claim that 
\[\partial_\F X= \partial \Gamma(X),\]
which amounts to saying that the Bowditch boundary of \(X\) can be easily described as the quotient of \(\partial X\) by a suitable set of subspaces. 

It is clear by the above description that there is a bijection \(\phi \colon \partial_\F X \rightarrow \partial \Gamma (X)\). 
We want now to show that \(\phi\) is a homeomorphism. Since \(X\) is proper, so  is \(\Gamma(X)\). In particular, \(\overline{X}\), \(X \cup \partial_\F (X)\) and \(\overline{\Gamma(X)}\) are compact, and so are \(\partial_\F X\) and \(\partial \Gamma(X)\). 
Thus, it suffices to show that \(\phi\) is continuous.

\begin{thm}\label{thm:Bowditch Boundary}
Let \(X\) be a proper hyperbolic space, which is hyperbolic relative to a family \(\F\), where all the elements of \(\F\) have infinite diameter. Let  \(CX\) be the cone-off of \(X\) with respect to \(\F\). Then \(\partial X = \partial CX \cup_{F \in \F} \partial F\) and the Bowditch boundary \(\partial\Gamma(X, \F)\) is obtained from \(\partial X\) collapsing each \(\partial F\) to a point.
\end{thm}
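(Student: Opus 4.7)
The plan is to leverage Theorem \ref{thm:main_result_bdry} applied both to $X$ with factor system $\F$ and to the Bowditch space $\Gamma(X)$ with factor system $\Gamma(\F)=\{\Gamma(F) : F \in \F\}$, and to compare the two resulting boundary decompositions via a collapse map.

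First, I would verify both factor-system hypotheses. That $\F$ is a factor system for $X$ is noted in the paragraph preceding the theorem and follows from \cite{SistoOnMetricRelative}: once $X$ is hyperbolic relative to $\F$ and itself hyperbolic, distinct elements of $\F$ are coarsely disjoint, their pairwise projections have uniformly bounded diameter, and each $F \in \F$ is uniformly quasiconvex. That $\Gamma(\F)$ is a factor system for $\Gamma(X)$ follows because each $\Gamma(F)$ is uniformly quasiconvex in the hyperbolic space $\Gamma(X)$ by standard combinatorial horoball theory, and the remaining axioms of Definition \ref{Factor System} transfer from $\F$ via the inclusion $F \hookrightarrow \Gamma(F)$ at level $0$. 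In particular, the cone-off of $X$ by $\F$ is canonically quasi-isometric to the cone-off of $\Gamma(X)$ by $\Gamma(\F)$, so both may be identified and denoted $CX$. Applying Theorem \ref{thm:main_result_bdry} twice then yields:
\[\partial X \;=\; \partial CX \;\cup\; \bigcup_{F \in \F} \partial F, \qquad \partial \Gamma(X) \;=\; \partial CX \;\cup\; \bigcup_{F \in \F} \partial \Gamma(F),\]
each with the topology of Definition \ref{def:topology_bdry2}. The first equation is exactly the first assertion of the theorem. For the second, direct inspection of combinatorial horoballs shows that, up to bounded Hausdorff equivalence, the only geodesic rays in $\Gamma(F)$ are vertical, so $\partial \Gamma(F) = \{\ast_F\}$ is a single point.

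Next, I would define the natural surjection $\psi\colon \partial X \to \partial \Gamma(X)$ as the identity on $\partial CX$ and as the constant map $\partial F \mapsto \ast_F$ on each $\partial F$. This descends to a bijection $\phi\colon \partial_\F X \to \partial \Gamma(X)$, where $\partial_\F X = \partial X/\!\sim$ is the quotient appearing in the statement. Since $X$ is proper, so is $\Gamma(X)$, and by Theorem \ref{thm:Boundaries} both $\partial X$ and $\partial \Gamma(X)$ are compact Hausdorff; consequently $\partial_\F X$ is compact Hausdorff, and it suffices to prove that $\psi$ is continuous in order to conclude that $\phi$ is a homeomorphism.

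Finally, continuity of $\psi$ would be checked on the neighborhood bases from Definition \ref{def:topology_bdry2}. On $\partial CX$ the map is the identity and both topologies restrict to the same basis, since shortest-point projections onto $CX$ agree in $X$ and in $\Gamma(X)$. For a boundary point $q \in \partial F$ mapping to $\ast_F$, a basic neighborhood of $\ast_F$ has the form $M_{O_{\ast_F}}(\ast_F)$ where $O_{\ast_F}$ is the complement in $\Gamma(F) \cup \{\ast_F\}$ of a bounded set $B$. I would then show that $\psi^{-1}(M_{O_{\ast_F}}(\ast_F))$ is open in $\partial X$ and contains all of $\partial F$: containment holds because projections of points of $\partial F$ onto $\Gamma(F)$ escape every bounded set, while openness reduces to the translation that, for $q' \in \partial X$, the condition $\psi(q') \in M_{O_{\ast_F}}(\ast_F)$ is equivalent to $p_F(q')$ eventually exiting every bounded region of $F$, which is an open condition in the topology of $\partial X$. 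The main obstacle is precisely this translation at the parabolic points, which uses essentially that $F$ sits as the level-$0$ slice of $\Gamma(F)$ and that $\Gamma(F)$ is uniformly quasiconvex in $\Gamma(X)$, so unboundedness of projections onto $F$ inside $X$ corresponds to unboundedness of projections onto $\Gamma(F)$ inside $\Gamma(X)$.
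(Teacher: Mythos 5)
Your proposal follows essentially the same route as the paper: decompose \(\partial X\) and \(\partial \Gamma(X)\) via Theorem \ref{thm:main_result_bdry} (using that \(\F\) and \(\Gamma(\F)\) are factor systems with a common cone-off \(CX\)), pass to the collapse map \(\phi\), use properness and compactness to reduce to continuity, and check continuity on the neighborhood bases, which is precisely the content of Lemmas \ref{lem:nbhd_for_bijection_I} and \ref{lem:nbhd_for_bijection_II}. The only imprecision is at the parabolic points: for a fixed basic neighborhood of \(\ast_F\) (the complement of a fixed bounded set \(B\) in \(\Gamma(F)\)), membership of \(\psi(q')\) means that the projection of \(q'\) to \(\Gamma(F)\), equivalently to \(F\), meets the complement of \(B\) (not that it exits every bounded region), and instead of proving the full preimage open (these coarse basic sets need not be open sets) it suffices, and is what the paper does, to exhibit a basic neighborhood of each point of \(\partial F\) inside the preimage, obtained from the complement of a bounded set in \(CF\).
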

\begin{proof}
This is an immediate consequence of the Lemmas \ref{lem:nbhd_for_bijection_I} and \ref{lem:nbhd_for_bijection_II}, which describe the behavior of the fundamental neighborhoods under the map \(\phi\).
\end{proof}

\begin{notation}
We fix notations as follows: let \(\psi \colon \partial X\rightarrow \partial_\F X\) be the quotient map. Given a point \(p \in \partial CX\) and an open  set \(O_p \subseteq \overline{CX}\), that contains \(p\), we denote by \(N_{O_p}^X\) the neighborhood of \(p\) defined by \(O_p\) in \(\partial X\), and by \(N^{\Gamma(X)}_{O_p}\) the neighborhood in \(\partial \Gamma(X)\). 
\end{notation}

\begin{lemma}\label{lem:nbhd_for_bijection_I}
Let \(p \in \partial CX\). Then for each open set  \(O_p \in \overline{CX}\) that contains \(p\), one has \( \phi \circ \psi \left( N_{O_p}^X\right) = N_{O_p}^{\Gamma(X)}\) and \(\psi^{-1} \circ \phi^{-1}  \left(N_{O_p}^{\Gamma (X)} \right) = N_{O_p}^X\).
\begin{proof}
It is clear that \(\phi \circ \psi\) is a bijection on \(\partial CX\). Thus it is easy to see that a point \(q \in \partial CX\) belongs to \(N_{O_p}^X\) if and only if \(\phi(\psi(x))\) belongs to \(N^{\Gamma(X)}_{O_p}\). 
So, consider  the point \(\ast_F \in \partial \Gamma(F)\) for some \(F \in \F\). We have that \(\ast_F\) belongs to \(N_{O_p}^{\Gamma(X)}\) if \(\rho^F_X\) intersects \(O_p\), where \(\rho^F_X\) is defined to be the projection of \(\Gamma(F)\) on \(CX\). But this projection coincides with the projection of \(CF\) on \(CX\). 
Thus \(\ast_F\) belongs to \(N_{O_p}^{\Gamma(X)}\) if and only if \(\partial CF= \psi^{-1}\circ \phi^{-1} (\ast_F)\) belongs to \(N_{O_p}^X\). 
\end{proof}
\end{lemma}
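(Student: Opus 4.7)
The plan is to verify both identities simultaneously by checking, for every boundary point $q \in \partial X$, the equivalence
\[q \in N_{O_p}^X \iff \phi(\psi(q)) \in N_{O_p}^{\Gamma(X)}.\]
Since $\partial X = \partial CX \cup \bigcup_{F\in\F} \partial CF$, this splits naturally into two cases: boundary points in $\partial CX$, and boundary points in some $\partial CF$.

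First I would handle $q \in \partial CX$. The composition $\phi \circ \psi$ acts as the identity on this component, since $\psi$ is injective on $\partial CX$ and $\phi$ identifies that copy with the $\partial CX$ factor of $\partial \Gamma(X)$. Under Definition \ref{def:topology_bdry2}, membership in $N_{O_p}^X$ and membership in $N_{O_p}^{\Gamma(X)}$ are both determined by the condition that a coarse closest-point projection of $q$ into $CX$ meets $O_p$. Because the cone-off of $X$ with respect to $\F$ and the cone-off of $\Gamma(X)$ with respect to $\Gamma(\F)$ were identified immediately before Theorem \ref{thm:Bowditch Boundary}, the two projection conditions are literally the same, and this case is tautological.

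Next I would handle $q \in \partial CF$ for some $F \in \F$. The quotient $\psi$ collapses the whole of $\partial CF$ to a single point, which $\phi$ identifies with $\ast_F \in \partial \Gamma(F)$. It therefore suffices to show that $\partial CF \subseteq N_{O_p}^X$ if and only if $\ast_F \in N_{O_p}^{\Gamma(X)}$. Unwinding Definition \ref{def:topology_bdry2}, the first condition says exactly that the factor-system projection $\rho^F_X = p_{CX}(F)$ meets $O_p$; the second says that the analogous projection of $\Gamma(F)$ into $CX$, computed inside the factor-system structure of $\Gamma(X)$, meets $O_p$. Since $F$ is the base of the horoball $\Gamma(F)$ and every vertical ray in $\Gamma(F)$ projects onto a single vertex of $F$, the projection of $\Gamma(F)$ into $CX$ coincides with $\rho^F_X$ up to uniformly bounded Hausdorff distance, so the two conditions are equivalent.

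The main subtlety lies in this last step: one must verify that the projection of $\ast_F$ onto $CX$, as specified by Definition \ref{def:boundary_projections} applied inside the Bowditch HHS, really does coarsely coincide with $\rho^F_X = p_{CX}(F)$. This comes down to observing that any hierarchy ray in $\Gamma(X)$ representing $\ast_F$ is eventually vertical in the horoball $\Gamma(F)$, and its image in $CX$ under the closest-point projection is therefore contained in the bounded coarse image of $F$. Up to shrinking $O_p$ to a basic cone-neighborhood, this uniform-distance identification suffices to make the two neighborhood conditions literally agree, completing the proof.
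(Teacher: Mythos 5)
Your proposal is correct and follows essentially the same route as the paper: treat points of \(\partial CX\), where \(\phi\circ\psi\) is the identity and the neighborhood conditions agree under the identification of the two cone-offs, and then reduce the case of \(\ast_F\) versus \(\partial CF\) to the observation that the projection of the horoball \(\Gamma(F)\) to \(CX\) coarsely coincides with that of \(F\) (equivalently \(CF\)). The extra justification you give via vertical rays in the horoball is just a more detailed version of the paper's one-line identification of these projections.
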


\begin{lemma}\label{lem:nbhd_for_bijection_II}
Let \(F \in \F\). For each \(O_F \in \overline{\Gamma(F)}\) there is \(O_F' \in \overline{CF}\) such that \( N^X_{O_F'} \subseteq \psi^{-1}\circ\phi^{-1} \left(N_{O_F}^{\Gamma(X)} \right) \) and \(\partial CF \subseteq N^X_{O_F'}\).
\begin{proof}
Let \(O_F\) be open in \(\overline{\Gamma(F)}\). Then, there is a representative \(\eta\) of \(\ast_F\) and a number \(n\) such that \(V_n(\eta) \subseteq O_F\), where \(V_n(\eta)\) is defined as in Lemma \ref{lem:nbhd_point_at_infnity}.
Let \(\eta(0)\) be the starting point of \(\eta\). Without loss of generality we can assume that \(\eta(0)\) is a point of \(CF\). 

Let \(V_n(\eta)^c\) be the complement of \(V_n(\eta)\) in \(\Gamma(F)\). It is easy to see that \(V_n(\eta)^c\) is contained in a closed ball of finite radius around \(\eta(0)\). Let \(K\) be such a ball. Thus we have \(K^c \subseteq O_F\). 

The image of \(K\) in \(CF\) has also finite diameter. Thus we have that \(K^c\) is an open of \(\overline{CF}\). 
Since \(K\) has finite diameter, it is easy to see that \(\partial  CF \subseteq N_{K^c}^X\). Moreover, since by construction \(K^c \subseteq O_F\), we have that for each point \(q \in \partial CX\), if the projection of \(q\) on \(CF\) intersects \(K^c\), then it intersects \(O_F\), which shows \(N^X_{O_F'} \subseteq \psi^{-1}\circ\phi^{-1} \left(N_{O_F}^{\Gamma(X)} \right)\).
\end{proof}
\end{lemma}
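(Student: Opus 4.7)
The plan is to convert a fundamental neighborhood of $\ast_F$ in $\overline{\Gamma(F)}$ into one of $\partial CF$ in $\overline{CF}$ by replacing its bounded complement in $\Gamma(F)$ with the corresponding bounded set in $CF$. By Lemma \ref{lem:nbhd_point_at_infnity}, I may shrink $O_F$ so that $O_F = V_n(\eta)$ for some vertical ray $\eta$ in $\Gamma(F)$ with $\eta(0)\in F$ and some $n\in\NN$. The complement $K := V_n(\eta)^c$ is contained in a $\Gamma(F)$-ball of finite radius around $\eta(0)$, and since combinatorial-horoball distance is logarithmic in the $F$-distance, the trace of $K$ at level $0$ has finite $F$-diameter. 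Because no element of $\F$ is properly coarsely contained in $F$ in the relatively hyperbolic setting, $CF$ coincides with $F$; hence this trace $\widetilde K\subseteq CF$ is bounded, and $O_F':=\overline{CF}\setminus\widetilde K$ is an open subset of $\overline{CF}$.

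For $\partial CF\subseteq N^X_{O_F'}$, boundedness of $\widetilde K$ puts $\partial CF\subseteq O_F'$, so every $q\in\partial CF$ is in $N^X_{O_F'}$ tautologically. For $N^X_{O_F'}\subseteq\psi^{-1}\phi^{-1}(N^{\Gamma(X)}_{O_F})$, I take $q\in N^X_{O_F'}$ and split on the support of $q$. If $q\in\partial CF$ then $\phi(\psi(q))=\ast_F\in O_F$; if $q\in\partial CX$ or $q\in\partial CF'$ with $F'\ne F$, I argue that the projection of $q$ to $\Gamma(F)$ inside $\Gamma(X)$ is coarsely the same as $p_F(q)$, because the shortest $\Gamma(X)$-route from outside a horoball to its interior enters through the level-$0$ slice; thus if $p_F(q)$ meets $\widetilde K^c$ then it meets $K^c\cap F\subseteq V_n(\eta)=O_F$, and $\phi(\psi(q))\in N^{\Gamma(X)}_{O_F}$.

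The main subtlety is the coarse equivalence, at the level-$0$ slice, between the projection onto $\Gamma(F)$ inside $\Gamma(X)$ and the projection onto $F$ inside $X$. This does not follow from the abstract HHS framework but from the specific geometry of combinatorial horoballs: no shortcut to a point of $\Gamma(F)$ from outside can exploit the horoball's upper levels without first crossing its foot $F$. Once this observation is formalized, the matching of bounded regions across the two compactifications yields the stated inclusions.
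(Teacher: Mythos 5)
Your proof is correct and takes essentially the same route as the paper: shrink \(O_F\) to a basic neighbourhood \(V_n(\eta)\) of \(\ast_F\), note that its complement is contained in a bounded set \(K\), take \(O_F'\) to be the complement of (the level-zero trace of) \(K\) in \(\overline{CF}\), and deduce both inclusions from \(K^c \subseteq O_F\). The only difference is that you state and justify explicitly, via the horoball geometry, the coarse agreement between the projection onto \(\Gamma(F)\) inside \(\Gamma(X)\) and the projection onto \(F\) inside \(X\) — a step the paper's proof uses implicitly without comment.
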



\begin{thebibliography}{GMRS98}

\bibitem[Ago13]{AgolVirtualHaken}
Ian Agol.
\newblock The virtual {H}aken conjecture.
\newblock {\em Doc. Math.}, 18:1045--1087, 2013.
\newblock With an appendix by Agol, Daniel Groves, and Jason Manning.

\bibitem[BH99]{BridsonHaefliger}
Martin~R. Bridson and Andr{\'e} Haefliger.
\newblock {\em Metric spaces of non-positive curvature}, volume 319 of {\em
  Grundlehren der Mathematischen Wissenschaften [Fundamental Principles of
  Mathematical Sciences]}.
\newblock Springer-Verlag, Berlin, 1999.

\bibitem[BHS15]{HHSII}
Jason Behrstock, Mark~F. Hagen, and Alessandro Sisto.
\newblock {Hierarchically hyperbolic spaces II: Combination theorems and the
  distance formula}.
\newblock {\em ArXiv e-prints}, February 2015.

\bibitem[BHS17a]{HHSFlats}
J.~{Behrstock}, M.~F {Hagen}, and A.~{Sisto}.
\newblock {Quasiflats in hierarchically hyperbolic spaces}.
\newblock {\em ArXiv e-prints}, April 2017.

\bibitem[BHS17b]{HHSAsdim2015}
Jason Behrstock, Mark~F. Hagen, and Alessandro Sisto.
\newblock Asymptotic dimension and small-cancellation for hierarchically
  hyperbolic spaces and groups.
\newblock {\em Proc. Lond. Math. Soc. (3)}, 114(5):890--926, 2017.

\bibitem[BHS17c]{HHSI}
Jason Behrstock, Mark~F. Hagen, and Alessandro Sisto.
\newblock Hierarchically hyperbolic spaces, {I}: {C}urve complexes for cubical
  groups.
\newblock {\em Geom. Topol.}, 21(3):1731--1804, 2017.

\bibitem[Bow06]{bowditch2006intersection}
Brian~H Bowditch.
\newblock Intersection numbers and the hyperbolicity of the curve complex.
\newblock {\em Journal f{\"u}r die reine und angewandte Mathematik (Crelles
  Journal)}, 2006(598):105--129, 2006.

\bibitem[Bow08]{BowditchTightGeodesics}
Brian~H. Bowditch.
\newblock Tight geodesics in the curve complex.
\newblock {\em Invent. Math.}, 171(2):281--300, 2008.

\bibitem[Bow12]{BowditchRelHypGroups}
B.~H. Bowditch.
\newblock Relatively hyperbolic groups.
\newblock {\em Internat. J. Algebra Comput.}, 22(3):1250016, 66, 2012.

\bibitem[Bra00]{brady2000finite}
Noel Brady.
\newblock Finite subgroups of hyperbolic groups.
\newblock {\em International Journal of Algebra and Computation},
  10(04):399--405, 2000.

\bibitem[DHS17]{HHSBoundaries}
Matthew~Gentry Durham, Mark~F. Hagen, and Alessandro Sisto.
\newblock Boundaries and automorphisms of hierarchically hyperbolic spaces.
\newblock {\em Geom. Topol.}, 21(6):3659--3758, 2017.

\bibitem[DM17]{GradedRelHyp}
Fran\c{c}ois Dahmani and Mahan Mj.
\newblock Height, graded relative hyperbolicity and quasiconvexity.
\newblock {\em J. \'Ec. polytech. Math.}, 4:515--556, 2017.

\bibitem[DS05]{DrutuSapir}
Cornelia Dru\c{t}u and Mark Sapir.
\newblock Tree-graded spaces and asymptotic cones of groups.
\newblock {\em Topology}, 44(5):959--1058, 2005.
\newblock With an appendix by Denis Osin and Mark Sapir.

\bibitem[Ger12]{GerasimovFloyd}
Victor Gerasimov.
\newblock Floyd maps for relatively hyperbolic groups.
\newblock {\em Geom. Funct. Anal.}, 22(5):1361--1399, 2012.

\bibitem[GMRS98]{GMRS1998widths}
Rita Gitik, Mahan Mitra, Eliyahu Rips, and Michah Sageev.
\newblock Widths of subgroups.
\newblock {\em Transactions of the American Mathematical Society},
  350(1):321--329, 1998.

\bibitem[GP13]{GerasimovPotyagailo}
Victor Gerasimov and Leonid Potyagailo.
\newblock Quasi-isometric maps and {F}loyd boundaries of relatively hyperbolic
  groups.
\newblock {\em J. Eur. Math. Soc. (JEMS)}, 15(6):2115--2137, 2013.

\bibitem[Ham16]{Hamenstadt_Hyp_rel_hyp_graphs}
Ursula Hamenst\"adt.
\newblock Hyperbolic relatively hyperbolic graphs and disk graphs.
\newblock {\em Groups Geom. Dyn.}, 10(1):365--405, 2016.

\bibitem[HS16]{HagenSusse}
M.~F {Hagen} and T.~{Susse}.
\newblock {Hierarchical hyperbolicity of all cubical groups}.
\newblock {\em To appear in Bull. Lond. Math. Soc.}, September 2016.

\bibitem[KB02]{KapovichBenakliBoundary}
Ilya Kapovich and Nadia Benakli.
\newblock Boundaries of hyperbolic groups.
\newblock In {\em Combinatorial and geometric group theory ({N}ew {Y}ork,
  2000/{H}oboken, {NJ}, 2001)}, volume 296 of {\em Contemp. Math.}, pages
  39--93. Amer. Math. Soc., Providence, RI, 2002.

\bibitem[KK14]{KimKoberdaRAAGS}
Sang-Hyun Kim and Thomas Koberda.
\newblock The geometry of the curve graph of a right-angled {A}rtin group.
\newblock {\em Internat. J. Algebra Comput.}, 24(2):121--169, 2014.

\bibitem[KR14]{KapovichRafi2014}
Ilya Kapovich and Kasra Rafi.
\newblock {On hyperbolicity of free splitting and free factor complexes}.
\newblock {\em Groups, Geometry and Dynamics}, 8(2):391--414, November 2014.

\bibitem[{Man}15]{ManningBowditch_Boundary}
J.~F. {Manning}.
\newblock {The Bowditch boundary of $(G,\mathcal{H})$ when $G$ is hyperbolic}.
\newblock {\em ArXiv e-prints}, April 2015.

\bibitem[MM99]{MasurMinskyI}
Howard~A. Masur and Yair~N. Minsky.
\newblock Geometry of the complex of curves. {I}. {H}yperbolicity.
\newblock {\em Invent. Math.}, 138(1):103--149, 1999.

\bibitem[MOY12]{MatsudaOguniYamagata}
Y.~{Matsuda}, S.-i. {Oguni}, and S.~{Yamagata}.
\newblock {Blowing up and down compacta with geometrically finite convergence
  actions of a group}.
\newblock {\em ArXiv e-prints}, January 2012.

\bibitem[Osi06]{OsinRelHypGroups}
Denis~V. Osin.
\newblock Relatively hyperbolic groups: intrinsic geometry, algebraic
  properties, and algorithmic problems.
\newblock {\em Mem. Amer. Math. Soc.}, 179(843):vi+100, 2006.

\bibitem[Osi16]{OsinAcyl}
D.~Osin.
\newblock Acylindrically hyperbolic groups.
\newblock {\em Trans. Amer. Math. Soc.}, 368(2):851--888, 2016.

\bibitem[{Sis}11]{SistoContracting}
A.~{Sisto}.
\newblock {Contracting elements and random walks}.
\newblock {\em Accepted in {J}ournal f\"ur die reine und angewandte
  {M}athematik}, December 2011.

\bibitem[{Sis}12]{SistoOnMetricRelative}
A.~{Sisto}.
\newblock {On metric relative hyperbolicity}.
\newblock {\em ArXiv e-prints}, October 2012.

\bibitem[Tra13]{TranRelationsBetween}
Hung~Cong Tran.
\newblock Relations between various boundaries of relatively hyperbolic groups.
\newblock {\em Internat. J. Algebra Comput.}, 23(7):1551--1572, 2013.

\end{thebibliography}
\end{document}